 \let\mathscr\relax
\theoremstyle{definition}
\newtheorem{defin}{Definition}[section]
\theoremstyle{definition}
\theoremstyle{plain}
\newtheorem{theo}[defin]{Theorem}
\theoremstyle{plain}
\newtheorem{prop}[defin]{Proposition}
\theoremstyle{plain}
\newtheorem{lem}[defin]{Lemma}
\theoremstyle{plain}
\newtheorem{cor}[defin]{Corollary}
\theoremstyle{definition}
\newtheorem{rmk}[defin]{Remark}
\theoremstyle{definition}
\theoremstyle{definition}
\theoremstyle{plain}
\theoremstyle{plain}
\newtheorem{conj}[defin]{Conjecture}
\theoremstyle{definition}
\theoremstyle{definition}
\newtheorem{hyp}[defin]{Hypothesis}
\theoremstyle{definition}
\theoremstyle{definition}
\theoremstyle{definition}
\newtheorem*{defin*}{Definition}
\theoremstyle{definition}
\newtheorem*{ex*}{Example}
\theoremstyle{plain}
\newtheorem*{theo*}{Theorem}
\theoremstyle{plain}
\newtheorem*{prop*}{Proposition}
\theoremstyle{plain}
\newtheorem*{lem*}{Lemma}
\theoremstyle{plain}
\newtheorem*{cor*}{Corollary}
\theoremstyle{definition}
\newtheorem*{rmk*}{Remark}
\theoremstyle{definition}
\newtheorem*{exe*}{Exercise}
\theoremstyle{plain}
\newtheorem{theoA}{Theorem}
\theoremstyle{plain}
\theoremstyle{plain}
\theoremstyle{plain}
\numberwithin{equation}{section}
\def\thm@space@setup{%
  \thm@preskip=\parskip \thm@postskip=0pt
}
\newcommand{\blocktheorem}[1]{%
  \csletcs{old#1}{#1}
  \csletcs{endold#1}{end#1}
  \RenewDocumentEnvironment{#1}{o}
    {\par\addvspace{1.5ex}
     \noindent\begin{minipage}{\textwidth}
     \IfNoValueTF{##1}
       {\csuse{old#1}}
       {\csuse{old#1}[##1]}}
    {\csuse{endold#1}
     \end{minipage}
     \par\addvspace{1.5ex}}
}
\setlist[enumerate]{label=(\roman*)}
\def\Bl{{\rm Bl}}
\def\bl{{\rm bl}}
\def\irr{{\rm Irr}}
\def\ker{{\rm Ker}}
\def\aut{{\rm Aut}}
\def\n{{\mathbf{N}}} 
\def\c{{\mathbf{C}}} 
\def\z{{\mathbf{Z}}} 
\def\O{{\mathbf{O}}} 
\def\F{{\mathbf{F}}} 
\def\C{{\mathcal{C}}} 
\def\D{{\mathcal{D}}} 
\def\Pr{{\mathcal{P}}} 
\def\Qr{{\mathcal{Q}}} 
\newcommand{\uset}[3][0ex]{%
  \mathrel{\mathop{#3}\limits_{
    \vbox to#1{\kern-7\ex@
    \hbox{$\scriptstyle#2$}\vss}}}}
\newcommand{\wt}[1]{\widetilde{#1}} 
\newcommand{\wh}[1]{\widehat{#1}}
\newcommand{\ws}[1][1.5]{
  \mathrel{\scalebox{#1}[1]{$\sim$}}
}
\newcommand{\iso}[1]{\ws_{#1}}
\def\blfootnote{\gdef\@thefnmark{}\@footnotetext}
\title{
{\huge\bf A reduction theorem for the \\Character Triple Conjecture}\\
\author{Damiano Rossi}
\date{}
\blfootnote{\emph{$2010$ Mathematical Subject Classification:} $20$C$20$.
\\
\emph{Key words and phrases:} Character Triple Conjecture, Dade's Conjecture, inductive conditions, local-global conjectures.
\\
This work initiated during the author's doctoral studies, conducted as a member of the \textit{GRK2240:Algebro-geometric Methods in Algebra, Arithmetic, and Topology} funded by the DFG, was then further developed at City University of London funded by the EPSRC grant EP/T$004592/1$, and completed at Loughborough University funded by the EPSRC grant EP/W$028794/1$. The author is grateful to Radha Kessar for useful conversations on the structure of nilpotent block algebras, to Gunter Malle for providing helpful corrections, to Josep Miquel Mart\'inez for proofreading parts of an earlier draft, and finally to Geoffrey Robinson for clarifying some of the arguments presented in his wonderful paper \cite{Rob02} and which inspired part of our work. 
}
}
\begin{document}

\renewcommand{\thetheoA}{\Alph{theoA}}

\selectlanguage{english}

\maketitle

\begin{abstract}
In this paper, we show that the Character Triple Conjecture holds for all finite groups once assumed for all quasi-simple groups. This answers the question on the existence of a self-reducing form of Dade's conjecture, a problem that was long investigated by Dade in the 1990s. Our result shows that this role is played by the Character Triple Conjecture, recently introduced by Sp\"ath, that we present here in a general form free of all previously imposed restrictions.
\end{abstract}

\tableofcontents

\section{Introduction}

In the paper \cite{Dad92}, Dade introduced a conjecture that predicts an exact formula expressing the number of irreducible characters with a fixed $p$-defect in a $p$-block of a finite group in terms of information encoded in the $p$-local structure of the group, where $p$ is a given prime number. This statement is the pinnacle of an important part of group representation theory: the study of the so-called local-global counting conjectures. These include the McKay Conjecture \cite{McK72}, the Alperin--McKay Conjecture \cite{Alp76}, and the Alperin Weight Conjecture \cite{Alp87} among others, and suggest that several numerical invariants of the $p$-modular representation theory of a finite group are determined by various $p$-local subgroups of the group. Dade's Conjecture provides a unifying statement in this context as it can be shown to imply all the above mentioned conjectures. We also mention that Dade's Conjecture is known to imply the celebrated Brauer's Height Zero Conjecture, the proof of which has recently been completed in \cite{MNSFT}. 

The form of Dade's Conjecture stated in the orginal paper \cite{Dad92} is, however, not amenable to inductive arguments and Clifford theoretic reductions. Morally, a purely enumerative statement does not carry enough structural information and cannot be used to infer the conjecture when assumed for given subgroups or quotients. To circumvent this obstacle, in \cite{Dad94} and \cite{Dad97} Dade introduced a series of increasingly refined statements with the aim of finding a self-reducing form of the conjecture, that is, a form of the conjecture that would hold for all finite groups once proven for all (quasi-)simple groups. A candidate for this role was proposed by Dade in \cite[5.8]{Dad97} and is known as Dade's Inductive Conjecture. Together with this statment, in \cite{Dad97} Dade also announced a final paper of his series containing a reduction theorem which unfortunately never appeared. Several years later, following the ground-breaking works \cite{Isa-Mal-Nav07} and \cite{Nav-Spa14I}, Sp\"ath introduced in \cite{Spa17} a new refinement of Dade's Conjecture known as the Character Triple Conjecture. The latter statement shares several features with Dade's Inductive Conjecture: both retain information about projective representations, twisted group algebras, Clifford extensions, and invariance under group automorphisms (we refer the reader to \cite[Proposition 6.4]{Spa17} for a comparison result). In addition, the main result of \cite{Spa17} shows that Dade's Conjecture holds for all finite groups if the Character Triple Conjecture holds for all quasi-simple groups. In this paper, we complete Dade's programme and identify the Character Triple Conjecture as the right candidate for a self-reducing form of Dade's Conjecture. More precisely, we prove the following theorem.

\begin{theoA}
\label{thm:Main reduction}
Let $G$ be a finite group and $p$ a prime number. If the Character Triple Conjecture holds at the prime $p$ for every covering group of every non-abelian simple group involved in $G$, then it holds for the group $G$ at the prime $p$.
\end{theoA}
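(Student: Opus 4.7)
The plan is to argue by contradiction via a minimal counterexample, in the tradition of the reduction theorems of \cite{Isa-Mal-Nav07}, \cite{Nav-Spa14I} and \cite{Spa17}. Take $(G,p)$ of minimal order $|G|$ such that the hypothesis holds but CTC fails for $G$. Because any non-abelian simple group involved in a proper section of $G$ is also involved in $G$, the hypothesis passes to every proper section, and hence CTC holds for every proper subgroup and quotient of $G$.

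The first task is to carry out the basic structural reductions using the central isomorphism of character triples $\isoc{}$ and the ``butterfly'' machinery introduced by Sp\"ath in \cite{Spa17}. I expect to reduce to the case where $G$ has a unique minimal normal subgroup $N$ with $\c_G(N)\leq N$, and where $\O_{p'}(G)$ is central. Two subcases then arise: either (a) $N$ is an elementary abelian $p$-group, or (b) $N = S_1 \times \cdots \times S_n$ is a direct product of isomorphic non-abelian simple groups $S_i \cong S$. Case (a) should be dispatched quickly by a chain-theoretic argument: the alternating sum defining CTC collapses onto $p$-chains beginning at $N$, transferring the statement to $\n_G(N)/N$, a proper section, which satisfies CTC by minimality.

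The heart of the proof is case (b). I would lift each factor $S_i$ to a copy $\wt{S}_i$ of the universal covering group of $S$, and form a central extension $\wt{G}\twoheadrightarrow G$ whose kernel is central and is contained in $\wt{N} := \wt{S}_1 \ast \cdots \ast \wt{S}_n$. Passage from $G$ to $\wt{G}$ preserves the statement of CTC up to $\isoc{}$, and in $\wt{G}$ the subgroups $\wt{S}_i$ are now quasi-simple covering groups of the simple groups involved in $G$, hence covered by the hypothesis. The CTC bijections for the individual factors $\wt{S}_i$, together with their $\aut(\wt{S}_i)$-equivariance, must then be combined via an equivariant tensor/wreath construction to produce a CTC bijection for $\wt{N}$, extended by Clifford theory along $\wt{N}\trianglelefteq \wt{G}$, and finally descended through the central kernel to $G$ to yield the desired contradiction.

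The main obstacle, and the technical core of the argument, is ensuring that this resulting bijection simultaneously satisfies the two nontrivial conditions of CTC: full $\aut(G)$-equivariance over the stabilisers of the relevant $p$-chains, and compatibility with the central isomorphism relation $\isoc{}$ on character triples, which in practice requires controlling the Clifford extensions and projective representations attached at each step. This bookkeeping is compounded when $G$ permutes the factors $S_i$ non-trivially, since the wreath-product structure must be reconciled with the $p$-chain normalisers. I expect the nilpotent-block-theoretic techniques of Robinson \cite{Rob02} (acknowledged as an inspiration by the author) to provide the right framework for propagating Clifford extensions through the central quotient $\wt{G} \twoheadrightarrow G$, thereby completing the transport of CTC from $\wt{G}$ back to $G$.
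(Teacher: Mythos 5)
Your proposal captures the broad minimal-counterexample template and correctly identifies that the key local object should be the generalised Fitting subgroup $N=\F^\star(G)$, that one should deal with chains inside $N$ by lifting the components to their universal covers and pasting CTC-bijections for the quasi-simple factors equivariantly via Clifford theory, and that the final descent through the central kernel requires the $\geq_c$/$\iso{}$ bookkeeping. This part of your plan is in the same spirit as the paper's Section~\ref{sec:Reduction} (Proposition~\ref{prop:Perfect}, Corollary~\ref{cor:Perfect, lifting}, Theorem~\ref{thm:CTC over simple groups}), though the paper minimises with respect to $|A:\z(G)|$ rather than $|G|$ --- a choice that is not cosmetic, as the Fong--Reynolds and Dade-type central-extension manoeuvres (Proposition~\ref{prop:Stable reduction}, Corollary~\ref{cor:Reduction central defect groups}) need that finer invariant to stay in the induction.

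There is, however, a genuine gap. You never say what to do with the $p$-chains $\sigma\in\mathfrak{N}(G,Z)$ whose final term $D(\sigma)$ is \emph{not} contained in $N$; your wreath/tensor construction only produces a bijection on the orbits $\C^d_N(B,Z)_\pm/G$. The paper's Theorem~\ref{thm:Minimal counterexample} (the cancellation theorem) is precisely the device that handles the complement $\D^d_N(B,Z)_\pm$, and it is not a soft step: it requires the generalised Conjecture~\ref{conj:CTC non-central Z} with non-central starting $p$-subgroups and the relative-defect-zero refinement, the auxiliary theorem for extensions of nilpotent blocks (Proposition~\ref{prop:Auxiliary theorem}, built on Puig--Zhou and Marcus--Minuta graded basic Morita equivalences), a Brauer-pair reformulation of the counting sets, and a coprime-action lemma to produce the nilpotent blocks to which the auxiliary theorem applies. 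None of this is replaceable by the chain-cancellation heuristic you invoke in your case~(a): CTC is not an alternating sum but an equivariant bijection carrying $G$-block isomorphisms of character triples, so ``collapsing onto chains beginning at $N$'' cannot be done by counting --- one must actually exhibit the pairing on $\D^d_N(B,Z)$, term by term, together with the required isomorphisms, and in the paper this is exactly where Robinson's ideas enter, not as a way of propagating Clifford extensions through the central cover as you suggest, but as the mechanism for cancelling the off-$N$ chains.

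A smaller but real issue: the dichotomy you set up (case~(a) $N$ elementary abelian $p$-group, case~(b) $N$ a product of isomorphic simple groups with $\c_G(N)\le N$) does not match what the minimal counterexample actually looks like. In the paper's framework, minimality forces $\O_p(G)\O_{p'}(G)\le\z(G)$, so $\F(G)=\z(G)$ and $\F^\star(G)/\z(G)$ is automatically a product of (not necessarily isomorphic) non-abelian simple groups; no abelian subcase survives, and there is no reduction to a unique minimal normal subgroup. You would also need to allow non-isomorphic simple factors, which the paper handles in Proposition~\ref{prop:Perfect} via the inductive form of the conjecture (Conjecture~\ref{conj:CTC+}) and Sp\"ath's Theorem~5.1, another ingredient your outline omits.
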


Stating the Character Triple Conjecture requires a substantial amount of notation and is beyond the scope of this introduction. We refer instead the reader to Section \ref{sec:Conjectures}. What we care to remark here is that, in analogy with the work done by Dade in \cite[Section 17]{Dad94} for his conjecture, we now have to work with a generalisation of Sp\"ath's statement. Once again, this is necessary in order to perform the reduction. More precisely, we introduce a formulation of the Character Triple Conjecture (see Conjecture \ref{conj:CTC non-central Z}) that is free from any restriction on the choice of the $p$-chains and $p$-blocks under consideration. As we will see later, our version turns out to be equivalent to Sp\"ath's statement and is related to Robinson's Ordinary Weight Conjecture \cite{Rob96}, \cite{Rob04}. We also want to point out that, as it was the case for the introduction of Dade's Conjecture in \cite{Dad92} and the proof of Sp\"ath's reduction theorem in \cite{Spa17}, our result owes a great deal to the work of Robinson. In particular, as a step towards the proof of Theorem \ref{thm:Main reduction}, we prove a cancellation theorem for the Character Triple Conjecture (see Theorem \ref{thm:Minimal counterexample}) inspired by Robinson's paper \cite{Rob02}.

Our work can be related to recent results of a similar nature. As we have mentioned at the beginning of this introduction, Dade's Conjecture provides a unifying framework for the local-global counting conjectures and implies the McKay Conjecture, the Alperin--McKay Conjecture, and the Alperin Weight Conjecture. Following Dade's example, refinements for these simpler conjectures have been introduced throughout the years in order to obtain statements more amenable to reductions. For any of these conjectures, say Conjecture X, this refinement is often referred to as the inductive condition for Conjecture X (in analogy with Dade's Inductive Conjecture). These inductive conditions were then shown to imply the corresponding conjecture for all finite groups once assumed for all quasi-simple groups. This was done for the McKay Conjecture in \cite{Isa-Mal-Nav07}, for the Alperin--McKay Conjecture in \cite{Spa13II}, and for the Alperin Weight Conjecture in \cite{Nav-Tie11} and \cite{Spa13I}. All these results have since then been refined to show that, in fact, these inductive conditions are the right self-reducing forms of the conjectures. This was first achieved in \cite{Nav-Spa14I} for the inductive condition for the Alperin--McKay Conjecture and more recently in \cite{Ros-iMcK} and \cite{MRR} for the inductive conditions for the McKay Conjecture and the Alperin Weight Conjecture respectively. Beside confirming a phenomenon expected by Dade, the results of \cite{Nav-Spa14I}, \cite{Ros-iMcK}, and \cite{MRR} are fundamental for the final verification of these and other conjectures. For example, the reduction obtained in \cite{Nav-Spa14I} was used by Ruhstorfer in \cite{Ruh22AM} to prove that Brauer's Height Zero Conjecture holds for the prime $2$, a fundamental step towards the final proof in \cite{MNSFT}. Furthermore, the reduction given in \cite{Ros-iMcK} has recently been used in the verification of the inductive condition for the McKay Conjecture for simple groups of Lie type (in type D), hence contributing to the final proof of the McKay Conjecture \cite{Cab-Spa23}. Finally, the results of \cite{MRR} can be used to show that a recent conjecture of Navarro \cite{Nav17}, which unifies the Alperin Weight Conjecture and the Glauberman correspondence, can be reduced to quasi-simple groups. Similarly, we expect that Theorem \ref{thm:Main reduction} will play an active role in the final proof of Dade's Conjecture and that several other conclusions could be drawn from it. This will be investigated in a future work.

For what concerns the validity of the Character Triple Conjecture, we mention that it has been verified in \cite{Ros-CTC-max} for all finite groups with respect to maximal defect characters and the prime $2$ (using Ruhstorfer's results from \cite{Ruh22AM}) while the study of the conjecture for groups of Lie type in non-defining characteristic has been initiated by the author following ideas of Brou\'e, Fong, and Srinivasan (see the series of papers \cite{Ros24}, \cite{Ros-Clifford_automorphisms_HC}, \cite{Ros-Unip}, and \cite{Ros-Homotopy}). Furthermore, the conjecture has been verified for all sporadic simple groups with the exception of the Baby Monster and the Monster at the prime $2$ (see \cite[Theorem 9.2]{Spa17}).

The paper is organised as follows. In Section \ref{sec:Statement} we introduce the basic notation necessary to state the Character Triple Conjecture and its generalisation (see Conjecture \ref{conj:CTC non-central Z}). In Section \ref{sec:Nilpotent blocks}, we apply results of Puig--Zhou \cite{Pui-Zho12} and Marcus--Minuta \cite{Mar-Min21} on graded basic Morita equivalences, to prove the conjecture for extensions of nilpotent blocks (see Proposition \ref{prop:Auxiliary theorem}). In Section \ref{sec:Equivalence of conjectures} we give a first reduction of our formulation of the Character Triple Conjecture and show that this is logically equivalent to the statement introduced by Sp\"ath in \cite{Spa17}. Section \ref{sec:Cancellation} is devoted to a cancellation theorem inspired by the results of \cite{Rob02}. These results are finally applied in the final Section \ref{sec:Reduction} to prove Theorem \ref{thm:Main reduction}.

\section{The conjecture}
\label{sec:Statement}

In this section we introduce the main notation used throughout the paper and state a new form of the Character Triple Conjecture. More precisely, in our formulation we remove the restrictions considered previously: we consider blocks with arbitrary defect groups and $p$-chains starting with a $p$-subgroup that is not necessarily central. This is done by using relatively projective characters (see \cite{Rob87} and \cite{Kul-Rob87}).

\subsection{Notation}
\label{sec:Notation}

Let $p$ be a prime, which we assume fixed throughout the rest of the paper. All the modular representation theoretic notions are considered with respect to $p$. We freely use basic results and definitions from \cite{Nag-Tsu89}, \cite{Nav98}, \cite{Lin18I}, and \cite{Lin18II}. In particular, we consider a complete discrete valuation ring $\mathcal{O}$, with fraction field $\mathcal{K}$ of characteristic $0$ and residue field $k:=\mathcal{O}/J(\mathcal{O})$ of characteristic $p$. We assume that $k$ is algebraically closed and that $\mathcal{K}$ contains roots of unity of order $|H|$ for any finite group $H$ considered below. If $G$ is a finite group, we denote by $\Bl(G)$ the set of $p$-blocks $B$ of $G$. By abuse of notation we will indicate by $B$ both the primitive central idempotent in $\z(\mathcal{O}G)$ or the corresponding indecomposable direct factor of the $\mathcal{O}$-algebra $\mathcal{O}G$. However, we often specify the latter by writing $\mathcal{O}GB$. We denote by $\irr(B)$ the set of irreducible characters (over $\mathcal{K}$) that belongs to the block $B$.

Central to our study, is the notion of character triple. This is a triple $(G,N,\vartheta)$ where $N\unlhd G$ are finite groups and $\vartheta$ is an irreducible character of $N$ invariant under the action of $G$. Associated to each such triple is a projective representation $\Pr$ of $G$ and a factor set $\alpha$ satisfying certain properties. This datum determines a central extension $\wh{G}$ of $G$ in which the projective representation $\Pr$ affords an irreducible character that extends $\vartheta$ (seen as a character of a subgroup $\wh{N}$ of $\wh{G}$). We then say that $\wh{G}$ is the central extension associated to $\Pr$. We refer the reader to \cite[Chapter 11]{Isa76}, \cite[Chapter 3.5]{Nag-Tsu89}, \cite[Chapter 8]{Nav98}, \cite[Chapter 5 and 10]{Nav18} for a full account on projective representations. Character triples can be related by several notions of isomorphisms (see, for instance, \cite{Spa18}). Here, we consider the notion of $G$-block isomorphism of character triples, denoted by $\iso{G}$, as introduced in \cite[Definition 3.6]{Spa17}.

\subsection{A new version of the Character Triple Conjecture}
\label{sec:Conjectures}

We now come to the statement of the Character Triple Conjecture that was first introduced in \cite{Spa17}. In this section, we consider a statement that encompasses the original formulation in several ways. Firstly, while the original formulation only allows to work with $p$-chains starting with a central $p$-subgroup, our version is compatible with $p$-chains starting at an arbitrary normal (but not necessarily central) $p$-subgroup. Secondly, while the original formulation is only valid for $p$-blocks with non-central defect groups, the statement introduced below allows to consider all $p$-blocks and to remove this unnecessary restriction. Finally, in our formulation we replace the $G$-isomorphism of character triples considered in \cite{Spa17} with an a priori weaker condition that is more easily verifiable in the case of groups of Lie type in non-defining characteristic (see, for instance, (6.5-6.6) in the proof of \cite[Theorem 6.13]{Ros24} as well as \cite[Remark 5.8 and Theorem 5.10]{Ros-Unip}, in conjunction with \cite[Theorem F]{Ros-Homotopy}). This last feature can be seen as an adaptation of the introduction of the intermediate group considered in the formulation of the \textit{inductive McKay condition} (see, for instance, the group $T$ considered in \cite[p.70 (1)]{Isa-Mal-Nav07} or the group $M$ considered in \cite[Conjecture A]{Ros-iMcK}). Beside the gained generality, the flexibility inherent in our formulation will play a fundamental role in the proof of our main result and, as explained above, in its future verification. We point out however that, as we will see later (see Corollary \ref{cor:Equivalent conjectures}), our formulation turns out to be equivalent to that introduced in \cite{Spa17}.

For a normal $p$-subgroup $U$ of $G$, we denote by $\mathfrak{P}(G,U)$ the set of \emph{$p$-chains} of $G$ starting with $U$. These are the chains $\sigma=\{D_0<D_1<\dots<D_n\}$ where each $D_i$ is $p$-subgroup of $G$ and $D_0=U$. We write $D(\sigma):=D_n$ to indicate the last term of the chain $\sigma$ and $|\sigma|:=n$ to indicate the \emph{length} of $\sigma$. Using this notion we obtain a natural partition of the set $\mathfrak{P}(G,U)$ into two subsets: the set $\mathfrak{P}(G,U)_+$ of $p$-chains of even length and the set $\mathfrak{P}(G,U)_-$ of $p$-chains of odd length. The conjugation action of $G$ induces an action on $\mathfrak{P}(G,U)$ and we denote by $G_\sigma=\bigcap_i\n_G(D_i)$ the stabiliser in $G$ of the chain $\sigma$ and by $\mathfrak{P}(G,U)/G$ the corresponding set of $G$-orbits. This action preserves the length of $p$-chains and hence restricts to the sets $\mathfrak{P}(G,U)_+$ and $\mathfrak{P}(G,U)_-$. In this paper, we will mainly work with the subset of $\mathfrak{P}(G,U)$ consisting of \emph{normal $p$-chains}, that is, the subset $\mathfrak{N}(G,U)$ of chains $\sigma$ satisfying $D(\sigma)\leq G_\sigma$.

Next, recall that according to \cite[Lemma 3.2]{Kno-Rob89}, for every $p$-chain $\sigma$ and every block $b$ of $G_\sigma$, it is well defined the block $b^G$ of $G$ obtained by Brauer induction. In this case, we can find defect groups $Q$ and $D$, of $b$ and $b^G$ respectively, such that $Q\leq D$ (see \cite[Lemma 4.13]{Nav98}). If $\sigma\in\mathfrak{P}(G,U)$, then $U\leq \O_p(G_\sigma)$ and therefore $U\leq Q\leq D$ according to \cite[Theorem 4.8]{Nav98}. Similarly, if $\sigma\in\mathfrak{N}(G,U)$, then we have $D(\sigma)\leq \O_p(G_\sigma)\leq Q\leq D$. This basic observation will be used without further reference. Now, for every block $B$ of $G$, we denote by $\irr(B_\sigma)$ the set of irreducible characters $\vartheta$ of $G_\sigma$ satisfying $\bl(\vartheta)^G=B$, where $\bl(\vartheta)$ denotes the unique block of $G_\sigma$ to which $\vartheta$ belongs. Equivalently, $\irr(B_\sigma)$ is the set of irreducible characters belonging to the union of blocks $B_\sigma$ of $G_\sigma$ in Brauer correspondence with $B$.

We now recall the notion of relative $p$-defect with respect to a normal subgroup. Let $N\unlhd G$ and consider an irreducible character $\chi$ of $G$ lying above an irreducible character $\vartheta$ of $N$. By elementary Clifford theory it follows that $\vartheta(1)$ divides $\chi(1)$ and that $\chi(1)/\vartheta(1)$ divides $|G:N|$. Then, we define the \emph{$N$-relative $p$-defect} of $\chi$ to be the non-negative integer $d_N(\chi)$ such that
\[p^{d_N(\chi)}=\dfrac{|G:N|_p\vartheta(1)_p}{\chi(1)_p}.\]
By Clifford's theorem this definition only depends on $\chi$ and $N$ but not on the choice of the constituent $\vartheta$. When $N=1$ we recover the usual definition of \emph{$p$-defect} of $\chi$ which we denote simply by $d(\chi)$. Given a non-negative integer $d$ and a block $B$, we denote by $\irr^d(G)$ the set of characters $\chi$ of $G$ such that $d(\chi)=d$ and by $\irr^d(B)$ the subset of those characters belonging to $B$.

Characters of $N$-relative defect zero have been considered in \cite[Definition 17.1]{Dad94} and are often referred to as \emph{$N$-relative weights}. In this paper, we denote by by $\omega(G,N)$ the set of $N$-relative weights of $G$ while we use the symbol $\omega(G,N)^{\rm c}$ to indicate its complement in the set $\irr(G)$. More precisely, $\chi\in\irr(G)$ belongs to the set $\omega(G,N)^{\rm c}$ if and only if $d_N(\chi)>0$. As shown in \cite[Lemma 4.4]{Rob96} an irreducible character $\chi$ has $N$-relative defect zero if and only if it is afforded by an $N$-projective module.

\begin{lem}
\label{lem:N-projective and N-defect zero}
Let $N\unlhd G$ be finite groups and consider an irreducible character $\chi\in\irr(G)$. Then $d_N(\chi)=0$ if and only if $\chi$ is afforded by an $N$-projective module of $G$.
\end{lem}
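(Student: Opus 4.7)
The plan is to combine Higman's relative-projectivity criterion with an explicit description of the endomorphism algebra of an induced lattice via Clifford theory for projective representations. Fix an absolutely irreducible constituent $\vartheta$ of $\chi_N$, set $T := G_\vartheta$, and pick a full $\mathcal{O}N$-lattice $L$ inside a $\mathcal{K}N$-module affording $\vartheta$. An $\mathcal{O}G$-lattice $M$ affording $\chi$ is $N$-projective if and only if $M$ is (up to isomorphism) a direct summand of $\ind_N^G L$, so the task reduces to deciding when the $\chi$-isotypic component of $\ind_N^G L$ splits off as an $\mathcal{O}G$-direct summand.

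From the Mackey decomposition $\res_N^G \ind_N^G L \cong \bigoplus_{g \in G/N} {}^gL$, together with Frobenius reciprocity and Schur's lemma applied to the absolutely irreducible $L$, I would identify
\[\mathrm{End}_{\mathcal{O}G}(\ind_N^G L) \cong \bigoplus_{g \in T/N} \mathrm{Hom}_{\mathcal{O}N}(L, {}^g L) \cong \mathcal{O}^\alpha[T/N],\]
where $\alpha$ denotes the factor set attached by Clifford theory to the character triple $(T,N,\vartheta)$. Under the Clifford correspondence, $\chi \in \irr(G\mid\vartheta)$ corresponds to an absolutely irreducible $\alpha^{-1}$-projective representation $\rho$ of $T/N$ with $\chi(1) = [G:T]\rho(1)\vartheta(1)$, yielding the identity
\[p^{d_N(\chi)} = \frac{[T:N]_p}{\rho(1)_p}.\]

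The $\chi$-isotypic summand of $\ind_N^G L$ splits off over $\mathcal{O}G$ exactly when the corresponding primitive central idempotent $f_\chi \in \mathcal{K}^\alpha[T/N]$ already lies in the order $\mathcal{O}^\alpha[T/N]$. The twisted-group-algebra analogue of the familiar formula for a primitive central idempotent attached to an irreducible character shows that the $p$-adic denominator of $f_\chi$ equals $[T:N]_p / \rho(1)_p$. Hence $f_\chi$ is $p$-integral precisely when $\rho$ has $p$-defect zero as a projective representation of $T/N$, and by the displayed identity this is equivalent to $d_N(\chi) = 0$.

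The main technical obstacle will be controlling this $p$-adic denominator in the required generality: one must make precise both the order structure $\mathcal{O}^\alpha[T/N] \subseteq \mathcal{K}^\alpha[T/N]$ and the twisted analogue of the usual idempotent formula, and match the latter with the Clifford correspondence via the projective representation attached to the triple $(T,N,\vartheta)$. A more direct alternative, bypassing twisted group algebras altogether, would apply Higman's criterion to a $\chi$-lattice by exhibiting an $\mathcal{O}N$-endomorphism whose relative $G$-trace equals the identity, and verifying that the necessary normalising scalar is a $p$-adic unit exactly when $d_N(\chi) = 0$.
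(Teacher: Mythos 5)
The paper's proof of this lemma is a one-line citation to \cite[Lemma 4.4]{Rob96}, so there is no internal argument to match; you are reconstructing Robinson's result. Your primary route has two gaps beyond the one you flag. First, the identification $\mathrm{End}_{\mathcal{O}G}(\ind_N^G L)\cong\mathcal{O}^\alpha[T/N]$ is not automatic over $\mathcal{O}$: Mackey together with Frobenius reciprocity does give a direct sum of rank-one free $\mathcal{O}$-modules $\mathrm{Hom}_{\mathcal{O}N}(L,{}^gL)$ indexed by $T/N$, but for an arbitrary full $\mathcal{O}N$-lattice $L$ the generators of these modules need not compose with \emph{unit} structure constants --- that happens precisely when $L\cong{}^gL$ as $\mathcal{O}N$-lattices for all $g\in T$ --- so the resulting $\mathcal{O}$-order need not be a twisted group algebra on an $\mathcal{O}$-basis indexed by $T/N$. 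One must choose $L$ to be $T$-stable (for instance by passing to a finite central extension of $T$ attached to a root-of-unity-valued factor set, choosing an invariant lattice there, and restricting), and the proposal does not address this. Second, the equivalence ``$M$ affording $\chi$ is $N$-projective iff $M$ is a direct summand of $\ind_N^G L$'' for this single fixed $L$ is not justified: Higman's criterion says $M$ is a direct summand of $\ind_N^G\res_N^G M$, and passing from $\res_N^G M$ to your chosen $L$ needs a further Krull--Schmidt argument.

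By contrast, the ``direct alternative'' you mention at the end is both cleaner and closer to the standard argument, and one implication falls out of it almost for free. Since $\chi$ is absolutely irreducible, $\mathrm{End}_{\mathcal{O}G}(M)=\mathcal{O}$ for any $\mathcal{O}G$-lattice $M$ affording $\chi$, so Higman's criterion reads: $M$ is $N$-projective iff $1_M$ lies in the image of $\Tr_N^G\colon\mathrm{End}_{\mathcal{O}N}(M)\to\mathcal{O}$. If $\Tr_N^G(\phi)=1_M$, taking ordinary traces gives $|G:N|\tr(\phi)=\chi(1)$; on the other hand $\mathrm{End}_{\mathcal{K}N}(\mathcal{K}\otimes M)$ is a product of matrix algebras of degree $e$ over $\mathcal{K}$, one for each $G$-conjugate of $\vartheta$, so $\tr(\phi)=\vartheta(1)\sum_i\tr(A_i)$ with each $A_i$ lying in an $\mathcal{O}$-order and hence having $\tr(A_i)\in\mathcal{O}$. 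Therefore $\chi(1)/|G:N|\in\vartheta(1)\mathcal{O}$, which combined with $d_N(\chi)\ge 0$ forces $d_N(\chi)=0$. The converse --- producing an $N$-projective lattice when $d_N(\chi)=0$ --- is where the Clifford-theoretic apparatus is genuinely needed, and there the $T$-stable lattice issue above must be confronted.
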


\begin{proof}
This is \cite[Lemma 4.4]{Rob96}.
\end{proof}

We can now introduced the set of characters that will be considered in our formulation of the Character Triple Conjecture.

\begin{defin}
\label{def:C^d(B) non-central}
Let $G$ be a finite group, $U$ a normal $p$-subgroup of $G$, $B$ a $p$-block of $G$ and $d$ a non-negative integer. For every $p$-chain $\sigma\in\mathfrak{P}(G,U)$ we define the set of characters
\[\omega^d(B_\sigma,U)^{\rm c}:=\omega(G_\sigma,U)^{\rm c}\cap \irr^d(B_\sigma),\]
that is, the set of those characters $\vartheta\in\irr(G_\sigma)$ such that $d(\vartheta)=d$, $d_U(\vartheta)>0$ and $\bl(\vartheta)^G=B$. Then, we define the set of pairs
\[\C^d(B,U):=\left\lbrace (\sigma,\vartheta)\enspace\middle|\enspace \sigma\in\mathfrak{P}(G,U), \vartheta\in\irr^d(B_\sigma)\right\rbrace\]
and its subset
\[\underline{\C}^d(B,U):=\left\lbrace (\sigma,\vartheta)\enspace\middle|\enspace \sigma\in\mathfrak{P}(G,U), \vartheta\in\omega^d(B_\sigma,U)^{\rm c}\right\rbrace.\]
Furthermore, for any fixed irreducible character $\varphi$ of $U$ we denote by $\C^d(B,U,\varphi)$ the subset of $\C^d(B,U)$ consisting of those pairs $(\sigma,\vartheta)$ such that $\vartheta\in\irr^d(B_\sigma,\varphi):=\irr^d(B_\sigma)\cap \irr(G_\sigma\mid \varphi)$, that is, $\vartheta$ lies above $\varphi$. Similarly, we define $\underline{\C}^d(B,U,\varphi)$ to be the subset of $\underline{\C}^d(B,U)$ consisting of pairs $(\sigma,\vartheta)$ such that $\vartheta\in\omega^d(B_\sigma,U,\varphi)^{\rm c}:=\omega^d(B_\sigma,U)^{\rm c}\cap \irr(G_\sigma\mid \varphi)$.  
\end{defin}

Consider now the partition of $\C^d(B,U)$ induced by the length of $p$-chains and given by the subsets $\C^d(B,U)_+$ and $\C^d(B,U)_-$ consisting of those pairs $(\sigma,\vartheta)$ such that $\sigma$ lies in $\mathfrak{P}(G,U)_+$ and $\mathfrak{P}(G,U)_-$ respectively. Furthermore, the conjugation action of $G$ naturally induces an action on $\C^d(B,U)$ preserving the length of chains and, for $\epsilon\in\{+,-\}$, we denote by $\C^d(B,U)_\epsilon/G$ the corresponding set of $G$-orbits and by $\overline{(\sigma,\vartheta)}$ the $G$-orbit of any $(\sigma,\vartheta)\in\C^d(B,U)_\epsilon$. If in addition we consider $\varphi\in\irr(U)$ and $\C^d(B,U,\varphi)$, then the character $\varphi$ might not be $G$-invariant so that $G$ might not act on $\C^d(B,U,\varphi)$. In this case, we consider the action of $G_\varphi$ and the above notation are considered with respect to the group $G_\varphi$. Observe that when $U$ is contained in the centre of $G$, then $G_\varphi=G$ for any $\varphi\in\irr(U)$. Analogous definitions and remarks apply to the subsets $\underline{\C}^d(B,U)$ and $\underline{\C}^d(B,U, \varphi)$. Our formulation of the Character Triple Conjecture can now be stated as follows. Recall once again that we denote by $\iso{N}$ the $N$-block isomorphism of character triples from \cite[Definition 3.6]{Spa17}.

\begin{conj}[Character Triple Conjecture]
\label{conj:CTC non-central Z}
Let $G\unlhd A$ be finite groups, $U\unlhd G$ a $p$-subgroup and consider a $p$-block $B$ of $G$. Then, for every $d\geq 0$ and $\varphi\in\irr(U)$, there exists an $\n_A(U)_{B,\varphi}$-equivariant bijection
\[\Omega:\underline{\C}^d\left(B,U,\varphi\right)_+/G_\varphi\to\underline{\C}^d(B,U,\varphi)_-/G_\varphi\]
such that
\[\left(A_{\sigma,\vartheta},G_\sigma,\vartheta\right)\iso{M}\left(A_{\rho,\chi},G_\rho,\chi\right)\]
for every $(\sigma,\vartheta)\in\underline{\C}^d(B,U,\varphi)_+$, some $(\rho,\chi)\in\Omega(\overline{(\sigma,\vartheta)})$, and some $\langle G_\sigma, G_\rho\rangle\leq M\leq G$.
\end{conj}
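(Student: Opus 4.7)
The plan is to combine the reduction theorem (Theorem~\ref{thm:Main reduction}) with a case-by-case verification for quasi-simple groups via the Classification of Finite Simple Groups. Since the conjecture as stated is unconditional, the first step is to reduce to the ``atomic'' pieces, and only then to attack each of them with family-specific tools.

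First I would invoke Theorem~\ref{thm:Main reduction}: it reduces the statement for an arbitrary finite group $G$ to the verification of Conjecture~\ref{conj:CTC non-central Z} at the prime $p$ for every covering group of every non-abelian simple group involved in $G$. Since any finite group involves only finitely many non-abelian simple sections, and these are completely classified, the problem becomes a family-by-family check against the list of finite simple groups. Note that the formulation introduced here is, by Corollary~\ref{cor:Equivalent conjectures}, equivalent to Sp\"ath's original one, so that existing partial verifications may be used as-is in this step.

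Next I would dispose of the families one at a time. For the $26$ sporadic simple groups all cases are already handled by \cite[Theorem~9.2]{Spa17} with the sole exception of the Baby Monster and the Monster at $p=2$; these I would attempt by direct character-theoretic computation combined with structural information on their $2$-local subgroups, using the atlas of $p$-local subgroups and available computer algebra data. For alternating groups and their Schur covers, I would exploit the combinatorial parametrisation of irreducible characters and of radical $p$-subgroups by partitions, cores and quotients, in the spirit of Olsson--Uno and of existing verifications of Dade's conjecture for $\mathfrak{S}_n$; the fact that Schur multipliers are small in this family keeps the projective representation bookkeeping tractable. For simple groups of Lie type in defining characteristic the $p$-blocks are very restricted (the principal block together with blocks of defect zero), so a direct analysis using the BN-pair structure and Steinberg's tensor product theorem should suffice. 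Finally, for simple groups of Lie type in non-defining characteristic $p$, I would follow the programme initiated by the author in \cite{Ros24,Ros-Clifford_automorphisms_HC,Ros-Unip,Ros-Homotopy} after Brou\'e--Fong--Srinivasan: parametrise the relevant pairs in $\underline{\C}^d(B,U,\varphi)$ via generalised $e$-Harish-Chandra theory, identify the $p$-chains of the conjecture with $e$-chains of Levi subgroups, and build the bijection $\Omega$ equivariantly from $e$-cuspidal pairs, with the required $M$-block isomorphism of character triples imported along the induced series.

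The main obstacle will be this last family, and within it the classical groups, most notably those of type $D$: there the disconnectedness of the centre, the presence of a graph automorphism, and the notorious difficulty of controlling Clifford theory over non-connected-centre groups conspire to obstruct the direct construction of $\Omega$. The strategy would be to pass through a regular embedding into a group with connected centre, use Jordan decomposition of characters to split the problem into a quasi-isolated part and a Levi part, and then descend. Crucially, the flexibility of our formulation—namely the replacement of $G$-block isomorphism by $M$-block isomorphism for an intermediate group $\langle G_\sigma,G_\rho\rangle\leq M\leq G$—should be precisely what allows the descent to go through when the na\"ive candidate fails to be $G$-equivariant, mirroring the role of the intermediate group in the inductive McKay condition. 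Once these structural inputs are in place, the remaining work is to check compatibility of the $e$-Harish-Chandra bijection with automorphisms and central extensions, using the results of \cite{Ros-Homotopy} as a guiding template.
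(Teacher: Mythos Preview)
The statement you are attempting to prove is Conjecture~\ref{conj:CTC non-central Z}, and the paper does \emph{not} prove it. It is stated as a conjecture, and the main result of the paper is the conditional Theorem~\ref{thm:Main reduction}, which asserts only that the conjecture holds for $G$ \emph{provided} it holds for all covering groups of non-abelian simple groups involved in $G$. There is therefore no proof in the paper against which your proposal can be compared.

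Your plan is not a proof but a sketch of a research programme: invoke the reduction theorem and then verify the conjecture for every quasi-simple group. The paper itself makes clear that this second step is far from complete. For sporadic groups, the Baby Monster and the Monster at $p=2$ remain open (see the discussion following \cite[Theorem~9.2]{Spa17} cited in the introduction), and you yourself concede you would only ``attempt'' these by computation. For groups of Lie type in non-defining characteristic, the paper says the verification has merely been ``initiated'' in \cite{Ros24,Ros-Clifford_automorphisms_HC,Ros-Unip,Ros-Homotopy}; your proposal to ``follow the programme'' and hope that the intermediate subgroup $M$ resolves the type~$D$ obstructions is not a proof but a statement of intent. The same applies to alternating groups and defining characteristic, where you gesture at known techniques without carrying anything out. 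In short, each of the families you list contains genuinely open problems, and a proof of the conjecture would require solving all of them.
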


As mentioned at the beginning of this section, Conjecture \ref{conj:CTC non-central Z} presents several advantages over the original formulation given in \cite[Conjecture 6.3]{Spa17} (see also \cite[Conjecture 2.2]{Ros22}). On one hand, the possibility of working with $p$-chains starting with non-central $p$-subgroups and the lack of restrictions on defect groups offer the necessary flexibility to prove our reduction to quasi-simple groups (see Section \ref{sec:Minimal counterexample proof}). Notice that, in the setting of Dade's Conjecture, an analogous more general formulation has been considered in \cite[Conjecture 17.10]{Dad94}, \cite[Section 4]{Rob96}, and \cite{Rob04} in relation to Robinson's Ordinary Weight Conjecture (see also \cite{Eat04}). On the other hand, the introduction of the intermediate group $\langle G_\sigma,G_\rho\rangle\leq M\leq G$ in Conjecture \ref{conj:CTC non-central Z} simplifies the verification for quasi-simple groups of Lie type in non-defining characteristic. The need for this latter adjustment can be found in recent work of the author on the verification of the conjecture for this class of groups (see the series of papers \cite{Ros24}, \cite{Ros-Clifford_automorphisms_HC}, \cite{Ros-Unip} and \cite{Ros-Homotopy}).

Before proceeding further, we show that the sets $\C^d(B,U)$ and $\underline{\C}^d(B,U)$ (and similarly the sets $\C^d(B,U,\varphi)$ and $\underline{\C}^d(B,U,\varphi)$ with $\varphi\in\irr(U)$) considered in Definition \ref{def:C^d(B) non-central} coincide whenever the normal $p$-subgroup $U$ is central in $G$ and strictly contained in the defect groups of $B$. This is an immediate consequence of the the following lemma, whose proof is essentially due to Robinson.

\begin{lem}
\label{lem:CTC non-central implies CTC}
Let $U$ be a normal $p$-subgroup of $G$ and $B$ a block of $G$ with defect group $D$. If $\omega(B_\sigma,U)$ is non-empty for some $\sigma\in\mathfrak{N}(G,U)$, then $\c_D(U)\leq U$. In particular, if $U$ is strictly contained in $D$ and is central in $G$, then $\C^d(B,U)=\underline{\C}^d(B,U)$ for every $d\geq 0$.
\end{lem}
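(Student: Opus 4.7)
Plan. I would argue the main implication by contraposition. Assume $\vartheta \in \omega(B_\sigma, U)$ for some $\sigma \in \mathfrak{N}(G,U)$, and aim to establish $\c_D(U) \leq U$. By Lemma~\ref{lem:N-projective and N-defect zero}, $\vartheta$ is afforded by a $U$-projective $\mathcal{O}G_\sigma$-lattice. Writing $b := \bl(\vartheta)$ and letting $Q$ be a defect group of $b$, the normal-chain hypothesis together with $U \unlhd G$ ensures $U \leq D(\sigma) \leq \O_p(G_\sigma) \leq Q$; and because $b^G = B$, after a $G$-conjugation we may assume $Q \leq D$.

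The second step is a Clifford-theoretic reduction. Choose a constituent $\varphi \in \irr(U)$ of $\vartheta|_U$ and replace $(G_\sigma, b, \vartheta)$ by its Fong--Reynolds correspondent $(T, \tilde b, \tilde\vartheta)$ in $T = (G_\sigma)_\varphi$; this preserves the defect group $Q$, the condition $d_U(\tilde\vartheta) = 0$, and the Brauer correspondence to $B$, so we may assume $\varphi$ is $G_\sigma$-invariant. Working inside the central extension associated to the triple $(G_\sigma, U, \varphi)$, we identify $\vartheta$ with an ordinary character $\bar\vartheta$ of a block $\bar b$ of the twisted group algebra of $G_\sigma/U$, whose defect group is $Q/U$. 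A short computation of defects shows that $d_U(\vartheta) = 0$ translates to $\bar\vartheta$ having defect $0$ in $\bar b$, which forces $Q/U$ to be trivial, i.e. $Q = U$.

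The remaining, most delicate step is to transfer the equality $Q = U$ from the local defect group of $b$ to the global defect group $D$ of $B$, thus obtaining $\c_D(U) \leq U$. For this I would invoke the Brauer subpair theory for $B$: fixing a maximal pair $(D, e_D)$ and its distinguished subpair $(U, e_U) \trianglelefteq (D, e_D)$, the idempotent $e_U$ is a block of $\c_G(U)$ with defect group $\c_D(U)$. The block $b$ of $G_\sigma$ corresponds, via the inclusion $\c_{G_\sigma}(U) \leq \c_G(U)$, to a block of $\c_{G_\sigma}(U)$ naturally linked to $e_U$; and since $b$ has defect group exactly $U$, a standard subpair argument propagates this constraint to $e_U$, forcing its defect group $\c_D(U)$ to lie inside $U$. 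This transfer is the main obstacle, as it requires tracking the compatibility of defect groups across the Brauer subpair hierarchy $(U, e_U) \trianglelefteq (Q, e_Q) \trianglelefteq (D, e_D)$ rather than relying solely on the local information at $Q$.

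Finally, the ``in particular'' clause follows at once. If $U \leq \z(G)$, then $\c_D(U) = D$, so $U \lneq D$ forces $\c_D(U) \not\leq U$; by the contrapositive of the main claim, $\omega(B_\sigma, U) = \emptyset$ for every $\sigma \in \mathfrak{N}(G, U)$. Since the hypothesis $U \leq \z(G)$ also ensures $U \leq \O_p(G_\sigma)$ for every $\sigma \in \mathfrak{P}(G, U)$, not only the normal ones, the argument above applies without change to all of $\mathfrak{P}(G, U)$, yielding $\omega(B_\sigma, U) = \emptyset$ throughout and hence $\C^d(B, U) = \underline{\C}^d(B, U)$ for every $d \geq 0$.
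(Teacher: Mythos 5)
Your first step (identifying $U$-relative weights with $U$-projective modules via Lemma~\ref{lem:N-projective and N-defect zero}) agrees with the paper. After that the argument diverges, and there is a genuine gap in your second step.

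You choose a constituent $\varphi$ of $\vartheta|_U$ and pass to the Clifford correspondent $\tilde\vartheta$ over $\varphi$ in $T:=(G_\sigma)_\varphi$, asserting that this ``preserves the defect group $Q$.'' That assertion is false, and the error propagates. The Fong--Reynolds correspondence that does preserve defect groups is the one relating a block $b$ of $G_\sigma$ to a block of the inertia group of a \emph{block} of $U$ covered by $b$; but $U$ is a $p$-group, so $\mathcal{O}U$ is a single block with defect group $U$ and inertia group all of $G_\sigma$, making that correspondence trivial here. What you are actually invoking is the character-theoretic Clifford correspondence $\irr(G_\sigma\mid\varphi)\to\irr(T\mid\varphi)$, and that bijection says nothing about the defect group of $\bl(\vartheta)$ as a block of $G_\sigma$ versus that of $\bl(\tilde\vartheta)$ as a block of $T$. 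Concretely, take $G_\sigma=D_8$, $U$ the cyclic subgroup of order $4$, $\vartheta$ the unique degree-two irreducible character of $D_8$, and $\varphi$ a faithful linear constituent of $\vartheta|_U$. Then $d_U(\vartheta)=0$, so $\vartheta$ is a $U$-relative weight, yet $\bl(\vartheta)$ is the principal (and only) block of $D_8$ with defect group $D_8\neq U$. Meanwhile $T=(D_8)_\varphi=U$, $\tilde\vartheta=\varphi$, and $\bl(\tilde\vartheta)$ in $T$ has defect group $U$ — the two defect groups are different. Your intermediate conclusion ``$Q=U$'' is therefore wrong; the correct, strictly weaker, statement is $\c_Q(U)\leq U$, which is exactly what \cite[Proposition 3.2]{Rob96} gives directly from $U$-projectivity, with no Clifford reduction or twisted-algebra computation required. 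This matters: the $D_8$ example satisfies $\c_Q(U)=U\leq U$ with $Q\neq U$, so any route that truly proves $Q=U$ must be unsound.

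Your third step is also only sketched: you explicitly flag the subpair transfer as ``the most delicate step'' without carrying it out, and as written it is designed to propagate the (false) equality $Q=U$ rather than the inclusion $\c_Q(U)\leq U$. The paper takes a different and more direct path here: it applies Robinson's result to get $\c_Q(U)\leq U$, then argues as in \cite[p.209]{Rob02} to produce a defect group $D$ of $B$ with $D\cap G_\sigma=Q$ and to show that $\c_Q(U)\leq U$ already forces $\c_D(U)\leq U$. Your final ``in particular'' paragraph is fine and matches the paper. To repair your proof, drop the Clifford reduction entirely, replace ``$Q=U$'' by ``$\c_Q(U)\leq U$'' with a citation of \cite[Proposition 3.2]{Rob96}, and then either carry out the subpair transfer in detail or cite the argument of \cite{Rob02}.
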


\begin{proof}
Suppose that $\sigma$ and $\vartheta\in\irr(B_\sigma)$ are given such that $d_U(\vartheta)=0$. By Lemma \ref{lem:N-projective and N-defect zero} we know that $\vartheta$ is $U$-projective and hence $\c_{Q}(U)\leq U$, for every defect group $Q$ of $\bl(\vartheta)$, according to \cite[Proposition 3.2]{Rob96}. Then, arguing as in \cite[p.209]{Rob02} we can find a defect group $D$ of $B$ such that $D\cap G_\sigma=Q$ and show that the inclusion $\c_{Q}(U)\leq U$ actually implies $\c_D(U)\leq U$. Since by assumption $U\leq \z(G)$, we deduce that $D=U$. This contradiction proves the result.
\end{proof}

Using the above lemma, we can show that Conjecture \ref{conj:CTC non-central Z} implies the statement of \cite[Conjecture 6.3]{Spa17}. We include Sp\"ath's formulation below for completeness and for future reference.

\begin{conj}[Character Triple Conjecture as in {{\cite[Conjecture 6.3]{Spa17}}}]
\label{conj:CTC}
Let $G\unlhd A$ be finite groups, $Z\leq \z(G)$ a $p$-subgroup and consider a $p$-block $B$ of $G$ whose defect groups strictly contain $Z$. Then, for every $d\geq 0$, there exists an $\n_A(Z)_B$-equivariant bijection
\[\Omega:\C^d(B,Z)_+/G \to \C^d(B,Z)_-/G\]
such that 
\[\left(A_{\sigma,\vartheta},G_\sigma,\vartheta\right)\iso{G}\left(A_{\rho,\chi},G_{\rho},\chi\right),\]
for every $(\sigma,\vartheta)\in\C^d(B,Z)_+$ and some $(\rho,\chi)\in\Omega(\overline{(\sigma,\vartheta)})$.
\end{conj}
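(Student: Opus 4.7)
The plan is to derive Conjecture \ref{conj:CTC} from Conjecture \ref{conj:CTC non-central Z} by specialising the latter to $U=Z$ and to the unique character of $Z$ associated to the block $B$, and then invoking Lemma \ref{lem:CTC non-central implies CTC} to identify the subset $\underline{\C}^d$ with $\C^d$.

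First, I would single out the unique $\varphi\in\irr(Z)$ lying below every $\chi\in\irr(B)$: this exists because $Z\leq\z(G)$, being the restriction to $Z$ of the central character of $B$. Since $\varphi$ is determined by $B$, it is $\n_A(Z)_B$-invariant, whence $\n_A(Z)_{B,\varphi}=\n_A(Z)_B$; since $Z$ is central in $G$, one also has $G_\varphi=G$. For any $\sigma\in\mathfrak{P}(G,Z)$ one has $Z\leq\z(G_\sigma)$, so every $\vartheta\in\irr(B_\sigma)$ has a well-defined central character on $Z$; a defect group $Q$ of $\bl(\vartheta)$ embeds in a defect group of $B$ containing $Z$, and the standard compatibility of Brauer induction with central characters on a common central $p$-subgroup forces this central character to coincide with $\varphi$. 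Hence $\C^d(B,Z)=\C^d(B,Z,\varphi)$.

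Next, I would invoke Lemma \ref{lem:CTC non-central implies CTC}: as $Z$ is central in $G$ and strictly contained in the defect groups of $B$, one has $\C^d(B,Z)=\underline{\C}^d(B,Z)$, and combining with the previous step $\underline{\C}^d(B,Z,\varphi)=\C^d(B,Z)$. Applying Conjecture \ref{conj:CTC non-central Z} to the data $(G\unlhd A, Z, B, d, \varphi)$ then produces, after these identifications, an $\n_A(Z)_B$-equivariant bijection
\[\Omega\colon \C^d(B,Z)_+/G\longrightarrow \C^d(B,Z)_-/G\]
enjoying, for each $(\sigma,\vartheta)\in\C^d(B,Z)_+$, the existence of $(\rho,\chi)\in\Omega(\overline{(\sigma,\vartheta)})$ satisfying $(A_{\sigma,\vartheta},G_\sigma,\vartheta)\iso{M}(A_{\rho,\chi},G_\rho,\chi)$ for some $\langle G_\sigma,G_\rho\rangle\leq M\leq G$.

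The main subtlety, and what I expect to be the principal technical step, is upgrading this $\iso{M}$ to the $\iso{G}$ demanded by Conjecture \ref{conj:CTC}. I expect this to follow from the behaviour of Sp\"ath's $N$-block-isomorphism relation under enlargement of the subscript $N$ within a fixed ambient group: passing from $M\leq G$ up to $G$ itself should only weaken the relation, so $\iso{M}$ implies $\iso{G}$. This is consistent with the author's remark that the condition in Conjecture \ref{conj:CTC non-central Z} is ``a priori weaker'' — weaker in terms of what one must verify, not in logical strength once combined with the centrality and defect-group hypotheses. If a direct monotonicity of $\iso{N}$ is not immediately at hand, the alternative is to appeal to a general comparison result for $N$-block isomorphisms of character triples, presumably the same mechanism that will underlie the full equivalence announced in Corollary \ref{cor:Equivalent conjectures}.
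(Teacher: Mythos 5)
Your overall plan coincides with the paper's: specialise Conjecture \ref{conj:CTC non-central Z} to $U=Z$, collapse $\underline{\C}^d$ to $\C^d$ via Lemma \ref{lem:CTC non-central implies CTC}, and then upgrade the $M$-block isomorphism to a $G$-block isomorphism. However, there is a genuine gap in the last step, and a lesser inefficiency in the first.

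On the first point: singling out a unique $\varphi\in\irr(Z)$ and arguing that every $\vartheta\in\irr(B_\sigma)$ lies above it is not needed. The paper simply applies Conjecture \ref{conj:CTC non-central Z} to every $\varphi\in\irr(Z)$, obtaining a family of bijections $\Omega_\varphi$, and then glues them together equivariantly over a set of $\n_A(Z)_B$-orbit representatives in $\irr(Z)$. This sidesteps any appeal to a compatibility of Brauer induction with central characters and keeps the equivariance bookkeeping transparent; your route would additionally require you to verify that the singled-out $\varphi$ is $\n_A(Z)_B$-fixed (it is, since it is determined by $B$), but the gluing approach makes this a non-issue.

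The real problem is the passage from $\iso{M}$ to $\iso{G}$. You assert that enlarging the subscript from $M$ to $G$ ``should only weaken the relation,'' but this has the direction backwards. The $N$-block isomorphism of \cite[Definition 3.6]{Spa17} requires block-induction compatibility over a family of intermediate subgroups that grows with $N$, together with a compatibility condition involving centralizers of $N$. Thus $\iso{G}$ is a \emph{stronger} requirement than $\iso{M}$ for $M\leq G$, and the implication you want does not follow by any monotonicity. The paper instead reduces to normal $p$-chains via \cite[Proposition 6.10]{Spa17}, observes that for such a chain the final term $D(\sigma)$ is contained in $\O_p(G_\sigma)$ and hence in every defect group $D$ of $\bl(\vartheta)$, and uses this to check the centralizer containment $\c_{A_{\sigma,\vartheta}G}(D)\leq A_{\sigma,\vartheta}\leq A_{\sigma,\vartheta}M$ (and symmetrically for $\rho$). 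Only with those containments in hand can it invoke \cite[Lemma 2.11]{Ros22}, which is precisely the comparison lemma allowing one to promote an $M$-block isomorphism to a $G$-block isomorphism. This verification — normality of chains, the inclusion $D(\sigma)\leq D$ via \cite[Theorem 4.8]{Nav98}, and the resulting centralizer bound — is the substance of the proof and must be supplied explicitly; it is not a formal monotonicity and is not the ``same mechanism'' as the full equivalence of Corollary \ref{cor:Equivalent conjectures}, which is a separate and considerably heavier argument.
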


We now show how to recover Conjecture \ref{conj:CTC} from Conjecture \ref{conj:CTC non-central Z}.

\begin{prop}
\label{prop:CTC non-central implies CTC}
Let $G\unlhd A$ be finite groups and consider a $p$-subgroup $Z\leq \z(G)$, a $p$-block $B$ of $G$ whose defect groups strictly contain $Z$, and a non-negative integer $d$. Suppose that Conjecture \ref{conj:CTC non-central Z} holds at the prime $p$ for the choices $G\unlhd A$, $U:=Z$, $B$, $d$, and every character $\varphi\in\irr(Z)$. Then, Conjecture \ref{conj:CTC} holds at the prime $p$ for $G\unlhd A$, $Z$, $B$, and $d$.
\end{prop}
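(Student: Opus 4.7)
My plan is to assemble the bijections $\Omega_\varphi$ furnished by Conjecture \ref{conj:CTC non-central Z} for the various $\varphi \in \irr(Z)$ into a single $\n_A(Z)_B$-equivariant bijection and then upgrade the intermediate-group block isomorphism of character triples to the $G$-block isomorphism required by Conjecture \ref{conj:CTC}. Since $Z \leq \z(G)$, the group $G$ fixes every character of $Z$, so $G_\varphi = G$ for every $\varphi \in \irr(Z)$, and by elementary Clifford theory every irreducible character of a subgroup $G_\sigma \geq Z$ lies above a unique $\varphi \in \irr(Z)$. This yields a $G$-stable partition
\[
\C^d(B, Z)_\epsilon = \bigsqcup_{\varphi \in \irr(Z)} \C^d(B, Z, \varphi)_\epsilon \qquad (\epsilon \in \{+, -\}),
\]
on which $\n_A(Z)_B$ acts by permuting the pieces according to its action on $\irr(Z)$. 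Because $Z$ is properly contained in the defect groups of $B$, Lemma \ref{lem:CTC non-central implies CTC} further yields $\C^d(B, Z, \varphi) = \underline{\C}^d(B, Z, \varphi)$ for every $\varphi$, so the hypothesis directly concerns the sets required for Conjecture \ref{conj:CTC}.

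Next, I fix orbit representatives $\varphi_1, \ldots, \varphi_k$ for the action of $\n_A(Z)_B$ on the set of $\varphi \in \irr(Z)$ with non-empty $\C^d(B, Z, \varphi)$. For each $i$, the hypothesis supplies an $\n_A(Z)_{B, \varphi_i}$-equivariant bijection
\[
\Omega_{\varphi_i}: \underline{\C}^d(B, Z, \varphi_i)_+/G \to \underline{\C}^d(B, Z, \varphi_i)_-/G.
\]
For any $\varphi = \varphi_i^a$ in the orbit of $\varphi_i$ (with $a \in \n_A(Z)_B$) I transport $\Omega_{\varphi_i}$ via conjugation by $a$; independence from the choice of $a$ modulo $\n_A(Z)_{B, \varphi_i}$ follows from the equivariance of $\Omega_{\varphi_i}$, and the block isomorphism of character triples is preserved under conjugation. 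Concatenating these pieces produces the desired $\n_A(Z)_B$-equivariant bijection $\Omega\colon \C^d(B, Z)_+/G \to \C^d(B, Z)_-/G$.

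Finally, for each $(\sigma, \vartheta) \in \C^d(B, Z, \varphi_i)_+$ with image $\overline{(\rho, \chi)}$ under $\Omega_{\varphi_i}$, the hypothesis provides some intermediate subgroup $\langle G_\sigma, G_\rho\rangle \leq M \leq G$ with $(A_{\sigma, \vartheta}, G_\sigma, \vartheta) \iso{M} (A_{\rho, \chi}, G_\rho, \chi)$, and I upgrade this to the required $\iso{G}$ by unpacking the definition of $N$-block isomorphism from \cite[Definition 3.6]{Spa17} and invoking the transitivity of block Brauer induction, so that block compatibility over subgroups $J$ with $\langle G_\sigma, G_\rho\rangle \leq J \leq M$ extends via $b^G = (b^M)^G$ to compatibility over all such $J$ with $J \leq G$. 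Conjugation in the gluing step preserves this property, producing the $\iso{G}$ condition for $\Omega$. The hard part is precisely this last step: carefully verifying that the a priori weaker condition $\iso{M}$ supplied by Conjecture \ref{conj:CTC non-central Z} is strong enough to yield the stronger condition $\iso{G}$ required by Sp\"ath's original formulation; everything else reduces to a routine orbit-by-orbit construction based on central Clifford theory.
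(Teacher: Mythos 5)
Your overall architecture mirrors the paper's: glue the $\Omega_\varphi$ along an $\n_A(Z)_B$-transversal in $\irr(Z)$ using $G_\varphi = G$ and the central Clifford decomposition of $\C^d(B,Z)$, invoke Lemma \ref{lem:CTC non-central implies CTC} to identify $\underline{\C}^d$ with $\C^d$, and then upgrade the isomorphism of character triples. But the final upgrade step is where your argument breaks down, and it is precisely the step you single out as "the hard part."

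The mechanism you propose — "unpacking the definition of $N$-block isomorphism and invoking the transitivity of block Brauer induction" — is not sufficient to pass from $\iso{M}$ to $\iso{G}$. The relation $\iso{N}$ from \cite[Definition 3.6]{Spa17} is not simply a block-compatibility condition for intermediate subgroups; it also imposes a group-theoretic condition involving centralisers (roughly, that a scalar condition on $\c_{\cdot}(\cdot)$ for the projective representations hold, and that the relevant centralisers be contained in the character stabilisers). Transitivity of block induction, to the extent that it is available (it is one-directional: $b^K$ defined and $(b^K)^G$ defined forces $b^G=(b^K)^G$, but not conversely), addresses at most the block-theoretic half; it says nothing about the centraliser requirement, which is strictly more demanding for $\iso{G}$ than for $\iso{M}$. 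The paper handles this via \cite[Lemma 2.11]{Ros22}, which gives a precise criterion for lifting an $M$-block isomorphism to a $G$-block isomorphism: one must verify $\c_{A_{\sigma,\vartheta}G}(D) \leq A_{\sigma,\vartheta}M$ and $\c_{A_{\rho,\chi}G}(Q) \leq A_{\rho,\chi}M$ for suitable defect groups $D$ and $Q$. That verification in turn relies on first invoking \cite[Proposition 6.10]{Spa17} to reduce to normal $p$-chains, so that $D(\sigma)\leq\O_p(G_\sigma)\leq D$ and hence $\c_{A_{\sigma,\vartheta}G}(D)$ normalises all terms of $\sigma$. Your proof omits the reduction to normal chains, omits the centraliser computation, and replaces the essential lifting lemma with an appeal to block-induction transitivity, which does not exist in the form you need. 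This is a genuine gap, not a matter of abbreviated exposition.
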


\begin{proof}
By assumption, for every $\varphi\in\irr(Z)$, we have $G_\varphi=G$ and we obtain a bijection $\Omega_\varphi:\underline{\C}^d(B,Z,\varphi)_+/G\to \underline{\C}^d(B,Z,\varphi)_-/G$. Combining the bijections $\Omega_\varphi$, for $\varphi$ running over a representative set for the action of $\n_A(Z)_B$ on $\irr(Z)$, we can construct an $\n_A(Z)_B$-equivariant bijection $\Omega:\underline{\C}^d(B,Z)_+/G\to \underline{\C}^d(B,Z)_-/G$. Furthermore, by applying Lemma \ref{lem:CTC non-central implies CTC} we deduce that $\underline{\C}^d(B,Z)_\pm$ coincides with $\C^d(B,Z)_\pm$. Suppose now that the $G$-orbits of the pairs $(\sigma,\vartheta)$ and $(\rho,\chi)$ correspond under the map $\Omega$. By Conjecture \ref{conj:CTC non-central Z}, and replacing if necessary $(\rho,\chi)$ with a $G$-conjugate, we can find a subgroup $\langle G_\sigma,G_\rho\rangle\leq M\leq G$ such that
\begin{equation}
\label{eq:CTC non-central implies CTC, 1}
\left(A_{\sigma,\vartheta},G_\sigma,\vartheta\right)\iso{M}\left(A_{\rho,\chi},G_\rho,\chi\right)
\end{equation}
and it remains to show that \eqref{eq:CTC non-central implies CTC, 1} implies the stronger relation
\begin{equation}
\label{eq:CTC non-central implies CTC, 2}
\left(A_{\sigma,\vartheta},G_\sigma,\vartheta\right)\iso{G}\left(A_{\rho,\chi},G_\rho,\chi\right).
\end{equation}
By \cite[Proposition 6.10]{Spa17}, and its proof, it is no loss of generality to assume that $\sigma$ and $\rho$ are normal $p$-chains. Consider the final term $D(\sigma)$ of the $p$-chain $\sigma$ and notice that, as $\sigma$ is a normal $p$-chain, we have $D(\sigma)\leq \O_p(G_\sigma)$. If $D$ is a defect group of the unique block of $G_\sigma$ containing $\vartheta$, then \cite[Theorem 4.8]{Nav98} shows that $D(\sigma)\leq D$ and therefore $\c_{A_{\sigma,\vartheta}G}(D)$ centralises $D(\sigma)$ and hence normalises all the terms of $\sigma$. Therefore we get
\[\c_{A_{\sigma,\vartheta}G}(D)\leq A_{\sigma,\vartheta}G_\sigma\leq A_{\sigma,\vartheta}\leq A_{\sigma,\vartheta}M.\]
Applying the same argument to the normal $p$-chain $\rho$ and a defect group $Q$ of the unique block of $G_\rho$ containing $\chi$, we obtain $\c_{A_{\rho,\chi}G}(Q)\leq A_{\rho,\chi}M$. We can now apply \cite[Lemma 2.11]{Ros22} to the $M$-block isomorphism \eqref{eq:CTC non-central implies CTC, 1} in order to get \eqref{eq:CTC non-central implies CTC, 2}. This completes the proof.
\end{proof}

Although Conjecture \ref{conj:CTC non-central Z} is, at a first glance, stronger than Conjecture \ref{conj:CTC}, we will prove in Theorem \ref{thm:CTC equivalent to CTC non-central} that the two conjectures are in fact equivalent. More precisely, we show that when proving Conjecture \ref{conj:CTC non-central Z} it is enough to consider the case in which $U$ is contained in the centre of $G$. On the other hand, in the case of quasi-simple groups, the $p$-subgroup $U$ is automatically central, in which case the (finer) bijection required in Conjecture \ref{conj:CTC non-central Z} can be recovered from the one given in Conjecture \ref{conj:CTC} thanks to well-known properties of isomorphisms of character triples. Furthermore, in this case, it is no loss of generality to assume that $U$ is strictly contained in the defect groups of the block under consideration. As a consequence, the verification of Conjecture \ref{conj:CTC non-central Z} for quasi-simple groups is no harder than that of Conjecture \ref{conj:CTC} and, thanks to the more relaxed condition required on character triples, actually simpler. We make this precise in the following remark.

\begin{rmk}
\label{rmk:CTC non-central vs CTC for quasi-simple}
Let $G$ be a quasi-simple group and consider a normal $p$-subgroup $U$ of $G$ and a block $B$ of $G$ with defect group $D$. Since $G/\z(G)$ is simple, it follows that $U\leq \z(G)$. Suppose first that $U=D$. Let $(\sigma,\vartheta)\in\underline{\C}^d(B,U)_\pm$ for some non-negative integer $d$. Since $D=U\leq \O_p(G_\sigma)$ is contained in every defect group of $\bl(\vartheta)$, it follows from \cite[Lemma 4.13]{Nav98} that $D$ is a defect group of $\bl(\vartheta)$. Then, noticing that $D$ is central in $G_\sigma$, \cite[Theorem 9.12]{Nav98} implies that $\vartheta(1)_p=|G_\sigma:D|$ and hence $d_U(\vartheta)=0$, a contradiction (see Definition \ref{def:C^d(B) non-central}). This shows that when $U=D$ the set $\underline{\C}^d(B,U)_\pm$ is empty. We can therefore assume that $U$ is strictly contained in $D$. Now, assume to be given an $\n_A(U)_B$-equivariant bijection $\Omega:\C^d(B,U)_+/G\to \C^d(B,U)_-/G$ (as in Conjecture \ref{conj:CTC}). If $\overline{(\rho,\chi)}=\Omega(\overline{(\sigma,\vartheta)})$ for some $(\sigma,\vartheta)\in\C^d(B,U)_+$ and $(\rho,\chi)\in\C^d(B,U)_-$, then the restrictions of $\vartheta$ and $\chi$ to the central subgroup $U$ have the same irreducible constituent thanks to \cite[Lemma 3.4]{Spa17}. In particular, for every $\varphi\in\irr(U)$, we deduce that $(\sigma,\vartheta)\in\C^d(B,U,\varphi)_+$ if and only if $(\rho,\chi)\in\C^d(B,U,\varphi)_-$ and hence $\Omega$ restricts to an $\n_A(U)_{B,\varphi}$-equivariant bijection $\Omega_\varphi:\C^d(B,U,\varphi)_+/G\to\C^d(B,U,\varphi)_-/G$. Notice here that $G=G_\varphi$ because $U\leq \z(G)$. Finally, recalling that $U<D$, we conclude that $\C^d(B,U,\varphi)_\pm=\underline{\C}^d(B,U,\varphi)_\pm$ by Lemma \ref{lem:CTC non-central implies CTC} .
\end{rmk}

We conclude this section with a discussion related to the statement of the \textit{inductive condition for Dade's Conjecture} introduced in \cite[Definition 6.7]{Spa17}. In fact, it is possible to strengthen the isomorphism of character triples required in the above statements so that it is inherited by central quotients. This is sometimes useful if we want to deal with all covering groups of a given simple group at once. We include this condition below noticing that, for the purpose of dealing with quasi-simple groups, it is no loss of generality to consider the setting of Conjecture \ref{conj:CTC} thanks to Remark \ref{rmk:CTC non-central vs CTC for quasi-simple}. We recall that the isomorphisms of character triples required below is stronger than the one given by Conjecture \ref{conj:CTC} thanks to \cite[Corollary 4.4]{Spa17}.

\begin{conj}
\label{conj:CTC+}
In the situation described in Conjecture \ref{conj:CTC}, there exists a bijection $\Omega$ such that $\ker(\vartheta_{\z(G)})=\ker(\chi_{\z(G)})=:W$ and 
\[\left(A_{\sigma,\vartheta}/W,G_\sigma/W,\overline{\vartheta}\right)\iso{G/W}\left(A_{\rho,\chi}/W,G_{\rho}/W,\overline{\chi}\right),\]
for every $(\sigma,\vartheta)\in\C^d(B,Z)_+$ and $(\rho,\chi)\in\Omega(\overline{(\sigma,\vartheta)})$ and where $\overline{\vartheta}$ and $\overline{\chi}$ correspond to $\vartheta$ and $\chi$ via inflation of characters.
\end{conj}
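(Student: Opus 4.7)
The plan is to derive Conjecture \ref{conj:CTC+} from Conjecture \ref{conj:CTC} by showing that the additional data can be installed on any bijection $\Omega$ satisfying the weaker statement, without disturbing either $\Omega$ at the level of orbits or its $\n_A(Z)_B$-equivariance. There are two separate pieces to arrange: the equality of kernels $\ker(\vartheta_{\z(G)})=\ker(\chi_{\z(G)})=:W$ for every matched pair $(\sigma,\vartheta)\leftrightarrow(\rho,\chi)$, and the upgrade of the $G$-block isomorphism supplied by Conjecture \ref{conj:CTC} to a $G/W$-block isomorphism on the inflated triples.

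For the first piece, I would exploit the fact that $\z(G)$ centralises all of $G$ and hence lies in $\z(G_\sigma)\cap \z(G_\rho)$, so that $\vartheta_{\z(G)}$ and $\chi_{\z(G)}$ reduce to single linear characters $\lambda_\vartheta$ and $\lambda_\chi$ of $\z(G)$. On the subgroup $Z$ the equality $\lambda_\vartheta|_Z=\lambda_\chi|_Z$ already holds, as it is packaged inside any $G$-block isomorphism via \cite[Lemma 3.4]{Spa17}. To extend this agreement from $Z$ to the whole of $\z(G)$, I would unpack the centralising compatibility built into the relation $\iso{G}$ of \cite[Definition 3.6]{Spa17}: the projective representations attached to the two sides must agree on the relevant centralising subgroups, and since $\z(G)$ acts by scalars on both representations, these scalars must coincide. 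Consequently $\lambda_\vartheta=\lambda_\chi$, their kernels coincide, and $W$ is well defined.

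The second piece is then a formal consequence of \cite[Corollary 4.4]{Spa17}, which translates a $G$-block isomorphism of character triples whose characters share a central kernel $W\leq \z(G)$ into a $G/W$-block isomorphism of the corresponding inflated triples. Applying this correspondence to the matched pair $(\sigma,\vartheta)\leftrightarrow(\rho,\chi)$ supplied by Conjecture \ref{conj:CTC} delivers
\[
\left(A_{\sigma,\vartheta}/W,G_\sigma/W,\overline{\vartheta}\right)\iso{G/W}\left(A_{\rho,\chi}/W,G_\rho/W,\overline{\chi}\right),
\]
as required, and since we have not altered $\Omega$ the equivariance under $\n_A(Z)_B$ is preserved automatically.

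The main obstacle is the first step: the extension of the agreement of central characters from $Z$ to all of $\z(G)$ is not entirely formal, and may fail for an arbitrary bijection satisfying only Conjecture \ref{conj:CTC}. Should the abstract argument via \cite[Definition 3.6]{Spa17} not suffice, the fallback is to perform an orbit-by-orbit replacement of $(\rho,\chi)$ within its $G$-orbit so as to realign the central values on $\z(G)\setminus Z$; this does not change $\Omega$ and can be made $\n_A(Z)_B$-equivariant by fixing a representative system for the $\n_A(Z)_B$-orbits on $\C^d(B,Z)_\pm/G$ and propagating the choice of $(\rho,\chi)$ compatibly along each such orbit, using that $\n_A(Z)_B$ permutes the admissible linear characters of $\z(G)$.
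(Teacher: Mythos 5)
This statement is a \emph{conjecture}, not a theorem: the paper does not prove it, and indeed the paragraph immediately preceding it explicitly states that the isomorphism of character triples required in Conjecture \ref{conj:CTC+} is \emph{stronger} than the one in Conjecture \ref{conj:CTC}, ``thanks to \cite[Corollary 4.4]{Spa17}.'' Your proposal attempts to \emph{derive} Conjecture \ref{conj:CTC+} from Conjecture \ref{conj:CTC}, which would contradict this; if your argument were correct, the two would be equivalent block-by-block and the paper would have no reason to introduce the stronger form.

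The concrete error lies in your reading of \cite[Corollary 4.4]{Spa17}. That result goes in the \emph{opposite} direction from the one you need: it shows that a $G/W$-block isomorphism of the inflated triples implies a $G$-block isomorphism of the original triples, not that a $G$-block isomorphism can be pushed down to a $G/W$-block isomorphism on the quotients. The latter fails in general, because the cocycle data governing block isomorphisms does not descend through central quotients without further conditions. Consequently your ``second piece'' — the step you describe as a formal consequence — does not go through, and your ``first piece,'' even if correct (and the claim that the $G$-block relation forces $\vartheta$ and $\chi$ to agree on all of $\z(G)$ rather than merely on $Z$ requires care), does not rescue the argument.

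The actual relationship between the two conjectures established in the paper is Lemma \ref{lem:CTC vs CTC+}: Conjecture \ref{conj:CTC} holding for \emph{every} covering group of a perfect group $K$ is equivalent to Conjecture \ref{conj:CTC+} holding for the \emph{universal} covering group of $K$. This is a statement about gluing bijections across a whole family of central quotients, not about upgrading a single bijection for a fixed group, and it follows the argument of \cite[Proposition 6.8]{Spa17} rather than anything like the orbit-by-orbit replacement you sketch at the end.
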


Recall that for every perfect group $G$, we say that $X$ is a \emph{covering group} of $G$ if $X$ is a central extension of $G$ and $X$ is perfect. The next lemma describes the relation between Conjecture \ref{conj:CTC} and Conjecture \ref{conj:CTC+} for covering groups of perfect groups. This follows immediately from the argument used to prove \cite[Proposition 6.8]{Spa17}.

\begin{lem}
\label{lem:CTC vs CTC+}
Let $K$ be a perfect group. Then the following are equivalent:
\begin{enumerate}
\item Conjecture \ref{conj:CTC} holds at the prime $p$ for every covering group of $K$;
\item Conjecture \ref{conj:CTC+} holds at the prime $p$ for the universal covering group of $K$.
\end{enumerate}
\end{lem}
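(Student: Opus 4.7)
The plan is to use the standard bijection between covering groups of a perfect group $K$ and central quotients of its universal covering group $\wt K$. Explicitly, every covering group $X$ of $K$ is isomorphic to $\wt K/W$ for some central subgroup $W\leq \z(\wt K)$ contained in the kernel of the canonical projection $\wt K\twoheadrightarrow K$; and irreducible characters of $\wt K/W$ (as well as their blocks and $p$-chains) correspond via inflation to irreducible characters of $\wt K$ whose kernel on $\z(\wt K)$ contains $W$. All the manipulations below take place within this dictionary, which is exactly the one exploited in \cite[Proposition 6.8]{Spa17}.

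For the implication (ii)$\Rightarrow$(i), fix a covering group $X=\wt K/W$ and data $(Z,B,d)$ satisfying the hypotheses of Conjecture \ref{conj:CTC} for $X\unlhd A$. Via inflation this corresponds to data $(\wt Z,\wt B,d)$ for $\wt K\unlhd \wt A$ with $W\leq \wt Z\leq \z(\wt K)$ and $\wt B$ the unique block dominating $B$. The bijection $\wt\Omega$ provided by Conjecture \ref{conj:CTC+} for $\wt K$ preserves $\ker(\vartheta_{\z(\wt K)})$, so in particular it restricts to the subset of pairs in which this kernel contains $W$. Deflating this restricted bijection along $\wt K\twoheadrightarrow X$ produces the desired $\Omega$, and the central-quotient isomorphism of character triples given by Conjecture \ref{conj:CTC+} modulo $\ker(\vartheta_{\z(\wt K)})\supseteq W$ descends to an $X$-block isomorphism of character triples for $X$, as required by Conjecture \ref{conj:CTC}.

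For the reverse implication (i)$\Rightarrow$(ii), start from data $(Z,B,d)$ for $\wt K\unlhd \wt A$ as in Conjecture \ref{conj:CTC+} and partition the sets $\C^d(B,Z)_\pm$ according to the value $W:=\ker(\vartheta_{\z(\wt K)})$, which ranges over subgroups of $\z(\wt K)$ contained in the Schur-multiplier part. For each such $W$, the subset of pairs with prescribed kernel $W$ corresponds via deflation to $\C^d(B_W,Z/W)_\pm$ for the covering group $\wt K/W$ of $K$. By hypothesis, Conjecture \ref{conj:CTC} supplies an $\n_{\wt A/W}(Z/W)_{B_W}$-equivariant bijection on these $\wt K/W$-orbit sets together with a $\wt K/W$-block isomorphism of character triples. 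Lifting and juxtaposing these bijections over all $W$ yields a bijection $\wt\Omega$ on $\C^d(B,Z)_\pm/\wt K$ that automatically preserves $\ker(\vartheta_{\z(\wt K)})$, and the $\wt K/W$-block isomorphism of character triples provided for each piece is precisely the condition asked in Conjecture \ref{conj:CTC+} once we invoke \cite[Corollary 4.4]{Spa17} to see that it implies the corresponding $\wt K$-block isomorphism of triples after quotienting by $W$.

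The main technical points to verify, all of which are routine and already handled in \cite[Proposition 6.8]{Spa17}, are the compatibility of Brauer induction, $p$-chain stabilisers, defect groups, and the $\n_A(Z)_B$-action with passage to central quotients; the fact that the action of $\wt A$ on the partition by $W$ is well-defined (since $\wt A$ normalises each central subgroup of $\wt K$ contained in the Schur multiplier pointwise, up to automorphisms of $K$); and the precise translation, via \cite[Corollary 4.4]{Spa17}, between the $G$-block isomorphism appearing in Conjecture \ref{conj:CTC} and its central-quotient refinement in Conjecture \ref{conj:CTC+}. Once these are checked, the equivalence follows immediately.
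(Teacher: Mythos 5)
Your proposal follows essentially the same route as the paper, which simply defers to the argument of \cite[Proposition 6.8]{Spa17}; your sketch is a faithful unpacking of that argument, organised around the standard dictionary between covering groups $\wt K/W$ and subgroups $W\leq\z(\wt K)$, with the lifting/deflation of $p$-chains, blocks and characters, and the translation between $G$-block and $G/W$-block isomorphisms via \cite[Corollary 4.4]{Spa17}. Two small inaccuracies to fix in a write-up: inflation along $\wt K\twoheadrightarrow\wt K/W$ shifts the defect by $\log_p(|W|_p)$ rather than preserving $d$, and since $W$ may have a $p'$-part one should take $\wt Z$ to be the Sylow $p$-subgroup of the preimage of $Z$ (so the inclusion $W\leq\wt Z$ and the notation $Z/W$ need adjusting accordingly).
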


\section{Extensions of nilpotent blocks}
\label{sec:Nilpotent blocks}

In this section, we show that the Character Triple Conjecture holds for every block covering a nilpotent block whose defect groups strictly contain the starting term of the $p$-chains considered. This fact will be useful in our study of a minimal counterexample to the conjecture and, in particular, to prove our cancellation theorem for chains (see Theorem \ref{thm:Minimal counterexample}). To prove this result we will make use of certain character bijections induced by basic Morita equivalences between (extensions of) a nilpotent block and its Brauer correspondent.

\subsection{Character bijections between nilpotent Brauer correspondent blocks}

Basic Morita equivalences, first introduced by Puig in \cite[Section 7]{Pui99}, are certain Morita equivalences compatible with the $p$-local structure of blocks. For instance, it can be shown that basic Morita equivalent blocks have isomorphic defect groups and (saturated) fusion systems. The prototypical example of a basic Morita equivalence is given by considering a nilpotent block, or more generally an inertial block, and its Brauer correspondent. For this, let $\mathcal{O}$, $\mathcal{K}$, and $k=\mathcal{O}/J(\mathcal{O})$ be as in Section \ref{sec:Notation} and consider a nilpotent block idempotent $b$ of $\mathcal{O}G$ with defect group $Q$ and Brauer correspondent $b_0$ of $\mathcal{O}\n_G(Q)$. In this case, it follows from the results of \cite{Pui88} that there is a basic Morita equivalence between $\mathcal{O}Gb$ and $\mathcal{O}\n_G(Q)b_0$.

Suppose now that $G$ is embedded as a normal subgroup in a finite group $A$ and assume that $b$ is $A$-invariant. The results of \cite{Pui88} were later extended to the block extension $\mathcal{O}Ab$ by K\"ulshammer--Puig \cite{Kul-Pui90} and by Puig--Zhou \cite{Pui-Zho12}. Notice that in this case, the algebra $\mathcal{O}Ab$ admits a grading with respect to the quotient group $\overline{A}:=A/G$. Moreover, if as before $b_0$ denotes the Brauer correspondent of $b$ in $\mathcal{O}\n_G(Q)$, then the block $b_0$ is $\n_A(Q)$-invariant and a Frattini argument shows that $A=\n_A(Q)G$. In this case, there is a canonical isomorphism $\overline{A}\simeq \n_A(Q)/\n_G(Q)$ through which $\mathcal{O}\n_A(Q)$ can be seen as an $\overline{A}$-graded algebra. The notion of basic Morita equivalence was generalised to block extensions by Coconet--Marcus in \cite{Coc-Mar17} where group graded basic Morita equivalences were introduced. The results of Puig and Zhou \cite[Theorem 3.14 and Corollary 3.15]{Pui-Zho12} (see also \cite{Coc-Mar-Tod20}) then show that $\mathcal{O}Ab$ and $\mathcal{O}\n_A(Q)b_0$ are $\overline{A}$-graded basic Morita equivalent. Analogous partial results were obtained by Zhou in \cite{Zho15}, \cite{Zho16}, and \cite{Zho21} in the more general case of inertial blocks.

\begin{theo}
\label{thm:Graded basic Morita equivalences for nilpotent blocks}
Let $b$ be a block of $\mathcal{O}G$ with defect group $Q$ and consider its Brauer correspondent $b_0$ in $\mathcal{O}\n_G(Q)$. Assume that $G\unlhd A$, set $\overline{A}:=A/G$, and suppose that $b$ is nilpotent and $A$-invariant. Then there exists an $\overline{A}$-graded basic Morita equivalence between $\mathcal{O}Ab$ and $\mathcal{O}\n_A(Q)b_0$.
\end{theo}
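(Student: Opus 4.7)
This theorem is essentially a repackaging of \cite[Theorem 3.14 and Corollary 3.15]{Pui-Zho12}, as the paragraph preceding its statement has already indicated, and the plan is to verify that our hypotheses match those of Puig and Zhou and then invoke their result.

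First I would check the $\overline{A}$-grading on the Brauer correspondent side. Since $b$ is $A$-stable, its unique Brauer correspondent $b_0\in\mathcal{O}\n_G(Q)$ is $\n_A(Q)$-stable: for any $a\in\n_A(Q)$, conjugation by $a$ sends $\n_G(Q)$ to itself, and the block $\,^a b_0$ of $\n_G(Q)$ has Brauer induction $\,^a b=b$ in $G$, so uniqueness of the Brauer correspondent forces $\,^a b_0=b_0$. Moreover, since every defect group of $b$ is of the form $\,^a Q$ for some $a\in A$ and is $G$-conjugate to $Q$ (as $b$ is $A$-stable), a standard Frattini argument yields $A=\n_A(Q)G$ and hence a canonical identification
$$\overline{A}=A/G\ \xrightarrow{\ \sim\ }\ \n_A(Q)/\n_G(Q).$$
Through this identification, $\mathcal{O}\n_A(Q)b_0$ acquires the structure of an $\overline{A}$-graded algebra, matching the natural $\overline{A}$-grading on $\mathcal{O}Ab$.

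With the grading groups on both sides identified, the hypothesis that $b$ is nilpotent places us exactly in the framework of \cite[Theorem 3.14]{Pui-Zho12}, which produces an $\overline{A}$-graded Morita equivalence between $\mathcal{O}Ab$ and $\mathcal{O}\n_A(Q)b_0$; the fact that the resulting equivalence is \emph{basic} in the graded sense of \cite{Coc-Mar17} is recorded in \cite[Corollary 3.15]{Pui-Zho12} (see also \cite{Coc-Mar-Tod20}). There is no real obstacle beyond the Frattini-type identification of the grading groups, since all the substantive algebraic work---including Puig's analysis \cite{Pui88} of the source algebra of a nilpotent block and its extension by the group graded by $\overline{A}$---has been carried out in the cited references.
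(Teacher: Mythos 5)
Your proposal is correct and takes essentially the same route as the paper: the paper's proof is a one-line citation of \cite[Theorem 3.14 and Corollary 3.15]{Pui-Zho12}, and the preparatory observations you spell out (stability of $b_0$ under $\n_A(Q)$, the Frattini argument giving $A=\n_A(Q)G$, the resulting identification of grading groups) appear almost verbatim in the paragraph preceding the theorem in the paper, so you have simply folded that discussion into the proof itself.
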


\begin{proof}
This follows from \cite[Theorem 3.14 and Corollary 3.15]{Pui-Zho12}.
\end{proof}

In the papers \cite{Mar-Min21-central} and \cite{Mar-Min21} group graded equivalences were shown to imply character bijections such that the order relations $\geq_c$ and $\geq_b$ respectively (see \cite[Definition 10.14]{Nav18} and \cite[Definition 2.7 and Definition 4.2]{Spa18}) hold between corresponding character triples. Beside giving a structural explanation for the validity of these Clifford-theoretic and cohomological conditions, these results allow us to exploit known graded equivalences to construct well behaved character bijections.

\begin{cor}
\label{cor:Graded basic Morita equivalences for nilpotent blocks}
Let $b$ be a block of $G$ with defect group $Q$ and consider its Brauer correspondent $b_0$ in $\n_G(Q)$. If $b$ is nilpotent and $G\unlhd A$, then there exists a defect preserving $\n_A(Q)_b$-equivariant bijection
\[\Psi:\irr(b)\to \irr(b_0)\]
such that
\[\left(A_\vartheta,G,\vartheta\right)\iso{G}\left(\n_A(Q)_\vartheta,\n_G(Q),\Psi(\vartheta)\right)\]
for every $\vartheta\in\irr(b)$.
\end{cor}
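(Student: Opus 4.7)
The plan is to derive Corollary \ref{cor:Graded basic Morita equivalences for nilpotent blocks} from Theorem \ref{thm:Graded basic Morita equivalences for nilpotent blocks} by translating the graded basic Morita equivalence it provides into a character-theoretic statement. Essentially, the corollary is the character-theoretic shadow of the preceding structural result, obtained by applying the framework developed in \cite{Mar-Min21}.

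First, I would reduce to the case where the block $b$ is $A$-invariant by replacing $A$ with the stabiliser $A_b$. This reduction is harmless: for every $\vartheta \in \irr(b)$ one has $A_\vartheta \leq A_b$, and by Brauer's first main theorem the correspondence $b \mapsto b_0$ is $\n_A(Q)$-equivariant, so $\n_A(Q)_b = \n_A(Q)_{b_0}$ and $\n_{A_b}(Q) = \n_A(Q) \cap A_b = \n_A(Q)_b$. Hence neither the character triples $(A_\vartheta, G, \vartheta)$ and $(\n_A(Q)_\vartheta, \n_G(Q), \Psi(\vartheta))$ nor the equivariance group $\n_A(Q)_b$ are affected by this reduction.

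With $b$ now $A$-invariant, Theorem \ref{thm:Graded basic Morita equivalences for nilpotent blocks} produces an $\overline{A}$-graded basic Morita equivalence between $\mathcal{O}Ab$ and $\mathcal{O}\n_A(Q)b_0$, where $\overline{A} = A/G$ is canonically isomorphic to $\n_A(Q)/\n_G(Q)$ by a Frattini argument. I would then invoke the translation results of Marcus--Minuta \cite{Mar-Min21}: a group-graded basic Morita equivalence between two extended block algebras induces a defect-preserving bijection between the irreducible characters of the underlying blocks such that corresponding characters give rise to triples related by the block-theoretic order $\geq_b$ of \cite[Definition 4.2]{Spa18}. Applied in our situation, this yields the desired defect-preserving bijection $\Psi : \irr(b) \to \irr(b_0)$, and the $\n_A(Q)_b$-equivariance is immediate from the naturality of the construction applied to the entire graded algebra.

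The main, and really only, remaining obstacle is that Corollary \ref{cor:Graded basic Morita equivalences for nilpotent blocks} demands the $G$-block isomorphism $\iso{G}$ of \cite[Definition 3.6]{Spa17}, whereas Marcus--Minuta's output is formally the relation $\geq_b$. One has to verify that, in the present context, the canonical identification $A_\vartheta/G \simeq \n_A(Q)_\vartheta/\n_G(Q)$ coming from the Frattini decomposition $A = G\,\n_A(Q)$ upgrades $\geq_b$ to the symmetric relation $\iso{G}$. This boils down to matching the cocycle and Clifford-extension data preserved by the graded basic Morita equivalence with the data required by Definition 3.6 of \cite{Spa17}; modulo this bookkeeping, the corollary follows at once from the preceding theorem.
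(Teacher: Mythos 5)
Your overall strategy coincides with the paper's: reduce to $A$-invariant $b$, apply Theorem \ref{thm:Graded basic Morita equivalences for nilpotent blocks}, extract a character bijection from the identity component of the graded bimodule, and upgrade the Marcus--Minuta $\geq_b$ relation to $\iso{G}$. The reduction step and equivariance argument are correct. However, two points need sharpening.

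First, Marcus--Minuta's translation to $\geq_b$ has to be applied not to the full $\overline{A}$-graded equivalence, but to its restriction to the $\overline{A}_\vartheta$-graded component, where $\overline{A}_\vartheta := A_\vartheta/G \simeq \n_A(Q)_{\Psi(\vartheta)}/\n_G(Q)$. This is what produces a $\geq_b$ relation between the triples $(A_\vartheta,G,\vartheta)$ and $(\n_A(Q)_{\Psi(\vartheta)},\n_G(Q),\Psi(\vartheta))$, rather than between the full block extensions over $A$ and $\n_A(Q)$. The paper achieves this by taking the $\overline{A}_\vartheta$-component $M_{\overline{A}_\vartheta}$ of the graded bimodule $M$ and invoking \cite[Corollary 4.3]{Coc-Mar17} to get a graded basic Morita equivalence between $\mathcal{O}A_\vartheta b$ and $\mathcal{O}\n_A(Q)_{\Psi(\vartheta)}b_0$. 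Your write-up skips over this restriction; it is not automatic.

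Second, and more seriously, your description of the final upgrade is off the mark. You say the passage from $\geq_b$ to $\iso{G}$ ``boils down to matching the cocycle and Clifford-extension data,'' but that matching is already the content of $\geq_b$. The actual mechanism is the centraliser containment $\c_{A_\vartheta}(Q)\leq \n_A(Q)_{\Psi(\vartheta)}$: by \cite[Definition 3.6]{Spa17}, the relation $\iso{G}$ requires in addition that the centraliser of a defect group in the larger group be captured in the smaller one, and once this holds, $\geq_b$ in the specific form described in \cite[Remark 4.3 (c)]{Spa18} becomes equivalent to $\iso{G}$. Identifying this centraliser condition is the crux of closing the gap you flagged; dismissing it as bookkeeping leaves the argument incomplete.
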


\begin{proof}
Since $A_\vartheta\leq A_b$ for every $\vartheta\in\irr(b)$ it is no loss of generality to assume that $b$ is $A$-invariant. Set $\overline{A}:=A/G$, $\mathcal{A}:=\mathcal{O}Ab$ and $\mathcal{A}':=\mathcal{O}\n_A(Q)b_0$. By Theorem \ref{thm:Graded basic Morita equivalences for nilpotent blocks} there exists an $\overline{A}$-graded $(\mathcal{A},\mathcal{A}')$-bimodule $M$ inducing an $\overline{A}$-graded basic Morita equivalence between $\mathcal{A}$ and $\mathcal{A}'$. Consider the identity components $\mathcal{B}:=\mathcal{A}_1=\mathcal{O}Gb$ and $\mathcal{B}':=\mathcal{A}'_1=\mathcal{O}\n_G(Q)b_0$ and notice that $M_1$ induces a basic Morita equivalence between $\mathcal{B}$ and $\mathcal{B}'$. Let $\Psi:\irr(b)\to\irr(b_0)$ be the character bijection induced by $\mathcal{K}\otimes_{\mathcal{O}}M_1$ and observe that $\Psi$ preserves the defect of characters. Furthermore, since the bimodule $M_1$ is $\overline{A}$-invariant and recalling that $\overline{A}\simeq\n_A(Q)/\n_G(Q)$, it follows that $\Psi$ is $\n_A(Q)$-equivariant. Next, fix $\vartheta\in\irr(b)$. We have to show that
\begin{equation}
\label{eq:Graded basic Morita equivalences for nilpotent blocks, 1}
\left(A_\vartheta,G,\vartheta\right)\iso{G}\left(\n_A(Q)_\vartheta,\n_G(Q),\Psi(\vartheta)\right).
\end{equation}
Set $\overline{A}_\vartheta:=A_\vartheta/G$ and notice that $A_\vartheta=G\n_A(Q)_{\Psi(\vartheta)}$ so that $\overline{A}_\vartheta\simeq \n_A(Q)_{\Psi(\vartheta)}/\n_G(Q)$. Since $M$ determines an $\overline{A}$-graded basic Morita equivalence between $\mathcal{A}$ and $\mathcal{A}'$, we deduce that $M_{\overline{A}_\vartheta}$ determines an $\overline{A}_{\vartheta}$-graded basic Morita equivalence between $\mathcal{O}A_\vartheta b$ and $\mathcal{O}\n_A(Q)_{\Psi(\vartheta)}b_0$ (see \cite[Corollary 4.3]{Coc-Mar17}). Then, it follows from \cite[Proposition 5.6 and Remark 5.7]{Mar-Min21} that the relation
\begin{equation}
\label{eq:Graded basic Morita equivalences for nilpotent blocks, 2}
\left(A_\vartheta,G,\vartheta\right)\geq_b\left(\n_A(Q)_\vartheta,\n_G(Q),\Psi(\vartheta)\right)
\end{equation}
(with the notation of \cite[Definition 4.2]{Spa18}) holds in the particular case described in \cite[Remark 4.3 (c)]{Spa18}. Since $\c_{A_\vartheta}(Q)$ is contained in $\n_A(Q)_{\Psi(\vartheta)}$, it follows from the definition of $G$-block isomorphism of character triples \cite[Definition 3.6]{Spa17} that \eqref{eq:Graded basic Morita equivalences for nilpotent blocks, 1} holds if and only if \eqref{eq:Graded basic Morita equivalences for nilpotent blocks, 2} holds, as required.
\end{proof}

Next, we use the properties of $G$-block isomorphisms of character triples to lift the character correspondence obtained in Corollary \ref{cor:Graded basic Morita equivalences for nilpotent blocks} to all blocks covering a nilpotent block in a normal subgroup. More precisely, we have the following result.

\begin{cor}
\label{cor:Bijections over nilpotent blocks}
Let $N\leq G\leq A$ be finite groups with $N,G\unlhd A$ and consider a block $B$ of $G$ covering a nilpotent block $b$ of $N$. Let $Q$ be a defect group of $b$ and consider its Brauer correspondent $b_0\in\Bl(\n_N(Q))$. If $B_0\in\Bl(\n_G(Q))$ is the Harris--Kn\"orr correspondent of $B$ covering $b_0$, then there exists a defect preserving $\n_A(Q)_B$-equivariant bijection
\[\Phi_B:\irr(B)\to\irr(B_0)\]
such that
\[\left(A_\vartheta,G,\vartheta\right)\iso{G}\left(\n_A(Q)_\vartheta,\n_G(Q),\Phi_B(\vartheta)\right)\]
for every $\vartheta\in\irr(B)$.
\end{cor}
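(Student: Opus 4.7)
The plan is to bootstrap Corollary \ref{cor:Graded basic Morita equivalences for nilpotent blocks}, applied to the pair $N\unlhd A$ and the nilpotent block $b$, to the covering blocks $B$ and $B_0$ by combining it with the Clifford and Fong--Reynolds correspondences.

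First I would apply Corollary \ref{cor:Graded basic Morita equivalences for nilpotent blocks} with $(N,A)$ in place of $(G,A)$: this yields a defect-preserving $\n_A(Q)_b$-equivariant bijection $\Psi\colon\irr(b)\to\irr(b_0)$ such that
\[(A_\vartheta,N,\vartheta)\iso{N}(\n_A(Q)_\vartheta,\n_N(Q),\Psi(\vartheta))\]
for every $\vartheta\in\irr(b)$. For each such $\vartheta$, the intermediate-subgroup property of this $N$-block isomorphism, applied to $G_\vartheta$ (whose counterpart in $\n_A(Q)_\vartheta$ is $G_\vartheta\cap\n_A(Q)_\vartheta=\n_G(Q)_\vartheta$), produces a bijection
\[\Phi_\vartheta\colon\irr(G_\vartheta\mid\vartheta)\to\irr(\n_G(Q)_\vartheta\mid\Psi(\vartheta))\]
that is compatible with Brauer induction. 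Consequently $\Phi_\vartheta$ maps the Fong--Reynolds correspondent $\irr(B_\vartheta\mid\vartheta)$ of $B$ bijectively onto the Fong--Reynolds correspondent $\irr((B_0)_{\Psi(\vartheta)}\mid\Psi(\vartheta))$ of $B_0$, where $B_\vartheta$ and $(B_0)_{\Psi(\vartheta)}$ correspond under Harris--Kn\"orr. Composing $\Phi_\vartheta$ with the Clifford correspondences $\psi\mapsto\psi^G$ and $\psi'\mapsto(\psi')^{\n_G(Q)}$ on the two sides, and taking the union over a set of representatives for the $G$-orbits on $\irr(b)$ covered by $B$, yields the desired map $\Phi_B\colon\irr(B)\to\irr(B_0)$.

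Defect preservation is automatic from that of $\Psi$ together with the identity $|G:G_\vartheta|_p=|\n_G(Q):\n_G(Q)_\vartheta|_p$, a consequence of the $\n_G(Q)$-equivariance of $\Psi$. For $\n_A(Q)_B$-equivariance, one verifies that $\n_A(Q)_B$ permutes the $G$-orbits of $\irr(b)$ covered by $B$: a Frattini-type argument using the $G$-transitive action on the covered blocks of $N$ gives $\n_A(Q)_B=\n_G(Q)\cdot\n_A(Q)_{B,b}$, and then the $\n_A(Q)_b$-equivariance of $\Psi$ combined with the naturality of the Clifford and Fong--Reynolds correspondences forces $\Phi_B$ to commute with the $\n_A(Q)_B$-action.

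The main obstacle is the character triple isomorphism
\[(A_\chi,G,\chi)\iso{G}(\n_A(Q)_\chi,\n_G(Q),\Phi_B(\chi))\]
for $\chi=\psi^G\in\irr(B\mid\vartheta)$. For this I would invoke the functorial behaviour of block isomorphisms of character triples under Clifford theory: the $N$-block isomorphism at the level of $\vartheta$ restricts along the intermediate group $G_\vartheta$ to a $G_\vartheta$-block isomorphism $(A_\psi,G_\vartheta,\psi)\iso{G_\vartheta}(\n_A(Q)_\psi,\n_G(Q)_\vartheta,\Phi_\vartheta(\psi))$ between the Clifford correspondents, which in turn induces the required $G$-block isomorphism at the level of $\chi$ and $\Phi_B(\chi)=(\Phi_\vartheta(\psi))^{\n_G(Q)}$. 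The technical content lies in checking that $\iso{G}$ is preserved both under passage to stabilisers and under Clifford induction; this follows from the transitivity and Clifford-correspondence compatibility properties established in \cite[Section 3]{Spa17}, together with the Brauer-induction bookkeeping that aligns $B_\vartheta$, $B$, $(B_0)_{\Psi(\vartheta)}$, and $B_0$ via Harris--Kn\"orr.
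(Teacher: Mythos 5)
Your proposal is correct and takes essentially the same route as the paper; the main difference is one of packaging. The paper first reduces to the case where $B$ is $A$-invariant (so that a Frattini argument gives $A=G\n_A(Q)_b$), then invokes \cite[Proposition 2.10]{Ros22} as a black box to lift $\Psi$ from Corollary \ref{cor:Graded basic Morita equivalences for nilpotent blocks} to an $\n_A(Q)_b$-equivariant, defect-preserving bijection $\irr(G\mid\irr(b))\to\irr(\n_G(Q)\mid\irr(b_0))$ inducing $G$-block isomorphisms, and finally restricts this to $\irr(B)\to\irr(B_0)$ via Harris--Kn\"orr (\cite[Theorem 9.28]{Nav98}). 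What you are doing, working $\vartheta$-by-$\vartheta$ through the intermediate-group property, Clifford correspondence, and transitivity of $\iso{G}$, is precisely the content of \cite[Proposition 2.10]{Ros22} unfolded by hand; the paper saves this work by citing it. Two small imprecisions worth noting: the defect-preservation step needs no auxiliary identity $|G:G_\vartheta|_p=|\n_G(Q):\n_G(Q)_\vartheta|_p$ (and the equivariance of $\Psi$ does not obviously give it); it follows directly from $d(\psi^G)=d(\psi)$, $d(\vartheta)=d(\Psi(\vartheta))$, and the preservation of the ratio $\psi(1)/\vartheta(1)$ under the intermediate-group bijection. Also, what you call the Fong--Reynolds correspondent at the level of $G_\vartheta$ is really just the Clifford correspondent over $\vartheta$; the blocks lying over $\vartheta$ need not reduce to a single Fong--Reynolds block, but the Clifford bijection together with the Brauer-induction compatibility of $\iso{N}$ is all that is actually used, and the argument goes through.
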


\begin{proof}
We may assume without loss of generality that $B$ is $A$-invariant, in which case $A=G\n_A(Q)_b$. In fact, if $x\in A=A_B$ then $b$ and $b^x$ are covered $B=B^x$ while $Q^x$ is a defect group of $b^x$. By \cite[Corollary 9.3]{Nav98} we can find an element $g\in G$ such that $b=b^{xg}$. Noticing that $Q$ and $Q^{xg}$ are defect groups of $b$ it follows that $Q=Q^{xgn}$ for some $n\in N$. This shows that $A=G\n_A(Q)_b$. We can then apply \cite[Proposition 2.10]{Ros22} with $A_0=\n_A(Q)_b$, $\Psi:\irr(b)\to\irr(b_0)$ given by Corollary \ref{cor:Graded basic Morita equivalences for nilpotent blocks} and $J=G$. This yields a defect preserving $\n_A(Q)_b$-equivariant bijection
\[\Phi:\irr(G\mid \irr(b))\to\irr(\n_G(Q)\mid \irr(b_0))\]
inducing $G$-block isomorphisms of character triples as required in the statement. By \cite[Theorem 9.28]{Nav98} the bijection $\Phi$ restricts to a bijection $\Phi_B:\irr(B)\to\irr(B_0)$. Since the action of $\n_G(Q)$ is trivial on both $\irr(B)$ and $\irr(B_0)$, the equality $\n_A(Q)=\n_G(Q)\n_A(Q)_b$ implies that $\Phi_B$ is $\n_A(Q)$-equivariant and we are done.
\end{proof}

\subsection{The Character Triple Conjecture for extensions of nilpotent blocks}

Using the character bijection constructed in Corollary \ref{cor:Bijections over nilpotent blocks}, we now prove that the Character Triple Conjecture holds for all blocks covering a nilpotent block with defect groups strictly larger than $Z$, the starting term of $p$-chains.

We start with a preliminary lemma that will be used multiple times throughout this paper. If $\mathcal{B}$ is a collection of blocks of $G$, $d$ a non-negative integer, and $Z\unlhd G$ a $p$-subgroup, then we define $\C^d(\mathcal{B},Z)$ as the (disjoint) union of the sets $\C^d(B,Z)$ for $B$ running over the blocks contained in $\mathcal{B}$. Moreover, if $G\unlhd A$, then we denote by $A_{\mathcal{B}}$ the stabiliser of $\mathcal{B}$ in $A$, that is, the set of elements $x\in A$ such that $B^x\in\mathcal{B}$ for every $B\in\mathcal{B}$.

\begin{lem}
\label{lem:Bijections for union of blocks}
Let $Z\unlhd G\unlhd A$ with $Z$ a $p$-group and consider a collection $\mathcal{B}$ of blocks of $G$ such that $A_B\leq A_{\mathcal{B}}$ for every $B\in\mathcal{B}$. Let $d$ be a non-negative integer and assume that for every $B\in\mathcal{B}$ there exists an $\n_A(Z)_B$-equivariant bijection
\[\Omega_B:\C^d(B,Z)_+/G\to\C^d(B,Z)_-/G,\]
then there exists an $\n_A(Z)_\mathcal{B}$-equivariant bijection
\[\Omega_\mathcal{B}:\C^d(\mathcal{B},Z)_+/G\to\C^d(\mathcal{B},Z)_-/G.\]
\end{lem}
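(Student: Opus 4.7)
The sets $\C^d(\mathcal{B},Z)_\pm/G$ decompose as a disjoint union of the pieces $\C^d(B,Z)_\pm/G$ indexed by $B\in\mathcal{B}$, and the action of $\n_A(Z)_{\mathcal{B}}$ permutes these pieces via its action on $\mathcal{B}$. The natural plan is therefore to glue the given bijections $\Omega_B$ by transport of structure along a chosen transversal, and then verify that the resulting combined map is equivariant for the larger group $\n_A(Z)_{\mathcal{B}}$. The hypothesis $A_B\leq A_{\mathcal{B}}$ (which in particular gives $\n_A(Z)_B\leq \n_A(Z)_{\mathcal{B}}$) is present precisely to make this gluing succeed.

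\textbf{Step 1: transversal and transport.} Fix a set $\mathcal{B}_0\subseteq \mathcal{B}$ of representatives for the orbits of $\n_A(Z)_{\mathcal{B}}$ on $\mathcal{B}$. For each $B'\in\mathcal{B}$, choose an element $x_{B'}\in\n_A(Z)_{\mathcal{B}}$ with $B'=B^{x_{B'}}$ for the unique $B\in\mathcal{B}_0$ in the orbit of $B'$, with the convention $x_B=1$ when $B\in\mathcal{B}_0$. For $B\in\mathcal{B}_0$ use the bijection $\Omega_B$ provided by the hypothesis; for $B'=B^{x_{B'}}$ with $B\in\mathcal{B}_0$ define
\[\Omega_{B'}\bigl(\overline{(\sigma,\vartheta)}\bigr):=\Omega_B\bigl(\overline{(\sigma,\vartheta)^{x_{B'}^{-1}}}\bigr)^{x_{B'}}.\]
Since conjugation by $x_{B'}$ induces a length-preserving bijection $\C^d(B,Z)/G\to \C^d(B',Z)/G$, this is a well-defined bijection on the $+/-$ components. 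Let $\Omega_{\mathcal{B}}$ be the disjoint union of the $\Omega_{B'}$ as $B'$ runs over $\mathcal{B}$.

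\textbf{Step 2: equivariance.} This is the only content of the proof. Given $y\in\n_A(Z)_{\mathcal{B}}$ and an orbit $\overline{(\sigma,\vartheta)}\in\C^d(B',Z)_+/G$, let $B\in\mathcal{B}_0$ be the representative of the common orbit of $B'$ and $(B')^y$, and set
\[z:=x_{B'}\, y\, x_{(B')^y}^{-1}.\]
A direct check shows $B^z=B$, so $z\in A_B$; since $y$, $x_{B'}$, $x_{(B')^y}$ all lie in $\n_A(Z)$, we also have $z\in\n_A(Z)$, and the hypothesis $A_B\leq A_{\mathcal{B}}$ is used only to conclude $z\in\n_A(Z)_B$. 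Writing the pair $(\sigma,\vartheta)^{x_{B'}^{-1}}$ as a $z^{-1}$-translate of $(\sigma,\vartheta)^{y\, x_{(B')^y}^{-1}}$ and applying the $\n_A(Z)_B$-equivariance of $\Omega_B$, one obtains
\[\Omega_{(B')^y}\bigl(\overline{(\sigma,\vartheta)^y}\bigr)=\Omega_{B'}\bigl(\overline{(\sigma,\vartheta)}\bigr)^{y},\]
which is precisely the $\n_A(Z)_{\mathcal{B}}$-equivariance of $\Omega_{\mathcal{B}}$.

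\textbf{Main obstacle.} The argument is a straightforward bookkeeping exercise; there is no genuine difficulty beyond verifying that the cocycle element $z$ lies in $\n_A(Z)_B$. The only way this can fail is if stabilising $B$ in $A$ does not already stabilise the whole collection $\mathcal{B}$, which is exactly what the assumption $A_B\leq A_{\mathcal{B}}$ rules out. Consequently, I expect the proof to be essentially a single displayed computation together with this remark.
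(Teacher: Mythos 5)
Your proposal is correct and follows essentially the same route as the paper's proof: fix orbit representatives in $\mathcal{B}$, transport the given bijections $\Omega_B$ across the $\n_A(Z)_{\mathcal{B}}$-action, and check consistency via a cocycle computation (the paper packages this as choosing $\n_A(Z)_B$-transversals $\mathcal{T}_B^\pm$ inside each $\C^d(B,Z)_\pm/G$ and observing that their union is an $\n_A(Z)_{\mathcal{B}}$-transversal, but the underlying content is the same). One small inaccuracy in your commentary: the conclusion $z\in\n_A(Z)_B$ does not actually require the hypothesis $A_B\leq A_{\mathcal{B}}$, since $z=x_{B'}\,y\,x_{(B')^y}^{-1}$ already lies in $\n_A(Z)_{\mathcal{B}}\subseteq\n_A(Z)$ and fixes $B$, so $z\in\n_A(Z)\cap A_B=\n_A(Z)_B$ automatically; in the paper's phrasing the hypothesis enters instead when one wants $\coprod_{B\in\mathcal{T}_0}\mathcal{T}_B^+$ to actually span $\C^d(\mathcal{B},Z)_+/G$ under the $\n_A(Z)_{\mathcal{B}}$-action, which uses $\n_A(Z)_B\leq\n_A(Z)_{\mathcal{B}}$.
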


\begin{proof}
Let $\mathcal{T}_0$ be an $\n_A(Z)_\mathcal{B}$-transversal in $\mathcal{B}$ and fix an $\n_A(Z)_B$-transversal $\mathcal{T}_B^+$ in $\C^d(B,Z)_+/G$ for every $B\in\mathcal{T}_0$. By assumption $\mathcal{T}_B^-:=\Omega_B(\mathcal{T}_B^+)$ is an $\n_A(Z)_B$-transversal in $\C^d(B,Z)_-/G$. Now $\coprod_{B\in\mathcal{T}_0}\mathcal{T}_B^+$ and $\coprod_{B\in\mathcal{T}_0}\mathcal{T}_B^-$ are $\n_A(Z)_\mathcal{B}$-transversals in $\C^d(\mathcal{B},Z)_+/G$ and $\C^d(\mathcal{B},Z)_-/G$ respectively. We conclude by defining $\Omega_\mathcal{B}(\overline{(\sigma,\vartheta)}^x):=\Omega_B(\overline{(\sigma,\vartheta)})^x$ for every $(\sigma,\vartheta)\in\mathcal{T}^+_B$, $B\in\mathcal{T}_0$ and $x\in\n_A(Z)_\mathcal{B}$.
\end{proof}

We are now ready to prove the main result of this section. Observe that this can be seen as an adaptation of Robinson's \textit{Auxiliary Theorem} from \cite{Rob02} to the framework of the Character Triple Conjecture.

\begin{prop}
\label{prop:Auxiliary theorem}
Let $Z\leq N\leq G\leq A$ be finite groups with $Z\unlhd G$ a $p$-group and $N, G\unlhd A$. Consider a block $B$ of $G$ covering a block $b$ of $N$ and assume that $b$ is nilpotent with defect groups strictly containing $Z$. Then, for every non-negative integer $d$, there exists an $\n_A(Z)_B$-equivariant bijection
\[\Omega:\C^d(B,Z)_+/G \to \C^d(B,Z)_-/G\]
such that 
\[\left(A_{\sigma,\vartheta},G_\sigma,\vartheta\right)\iso{G}\left(A_{\rho,\chi},G_{\rho},\chi\right),\]
for every $(\sigma,\vartheta)\in\C^d(B,Z)_+$ and $(\rho,\chi)\in\Omega(\overline{(\sigma,\vartheta)})$.
\end{prop}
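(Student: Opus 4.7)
The idea is to build $\Omega$ from a parity-toggling involution on $\C^d(B,Z)$ together with the defect-preserving character bijection of Corollary~\ref{cor:Bijections over nilpotent blocks}, using the strict inclusion $Z<Q$ for a defect group $Q$ of the nilpotent block $b$ as the source of an ``extra'' term that can be inserted into or deleted from each chain.

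My first step will be the standard reductions. Via Lemma~\ref{lem:Bijections for union of blocks} applied to a transversal over the $A$-orbit of $B$, I may assume $B$ is $A$-invariant; after replacing $b$ by a $G$-conjugate, I may further assume $\n_A(N)$ stabilises $b$, so a Frattini argument (exactly as in the proof of Corollary~\ref{cor:Bijections over nilpotent blocks}) yields $A=G\,\n_A(Q)_b$ for a fixed defect group $Q\supsetneq Z$ of $b$.

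Next, for each $(\sigma,\vartheta)\in\C^d(B,Z)$ I would attach a canonical $p$-subgroup $Q_{\sigma,\vartheta}>Z$ of $G_\sigma$ as follows. Letting $\vartheta_N$ be an irreducible constituent of $\vartheta_{N\cap G_\sigma}$, the hypothesis that $\bl(\vartheta)^G=B$ covers the nilpotent block $b$ forces, via Fong--Reynolds together with the fact that blocks of a normal subgroup covered by a nilpotent block remain nilpotent, that $\bl(\vartheta_N)$ is itself nilpotent with a defect group $Q_{\sigma,\vartheta}$ that, up to $N\cap G_\sigma$-conjugacy, is contained in a $G$-conjugate of $Q$ and strictly contains $Z$. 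I would then define an involution $\iota$ on $\C^d(B,Z)$ by: if $Q_{\sigma,\vartheta}$ coincides (up to $G_\sigma$-conjugacy) with some term $D_i$ of $\sigma$, delete that term; otherwise insert $\langle D_{i-1},Q_{\sigma,\vartheta}\rangle$ at the unique position where it refines $\sigma$. This changes the parity by exactly one. Simultaneously, I transport $\vartheta$ to a character $\vartheta'$ on the stabiliser of the modified chain via Corollary~\ref{cor:Bijections over nilpotent blocks} applied to the nilpotent cover at the appropriate level (either $G_\sigma$ or $G_{\iota(\sigma)}$, depending on whether the chain has been shortened or lengthened).

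The $G$-block isomorphism $(A_{\sigma,\vartheta},G_\sigma,\vartheta)\iso{G}(A_{\iota(\sigma),\vartheta'},G_{\iota(\sigma)},\vartheta')$ will be inherited from the corresponding property of the character bijection in Corollary~\ref{cor:Bijections over nilpotent blocks} by transitivity of $\iso{G}$ along the chain inclusion, and the $\n_A(Z)_B$-equivariance of $\Omega$ will follow from the $\n_A(Q)_b$-equivariance stated there combined with $A=G\,\n_A(Q)_b$. \textbf{The main obstacle} I anticipate is canonicity: pinning $Q_{\sigma,\vartheta}$ down tightly enough that $\iota$ descends to a well-defined, $\n_A(Z)_B$-equivariant map on $\C^d(B,Z)/G$, and ensuring that the character bijections applied on the two sides of the involution glue consistently to a single $\iso{G}$-equivalence between the associated character triples. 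The strict inclusion $Z<Q$ is indispensable throughout, as it is precisely what forces $Q_{\sigma,\vartheta}>Z$ and hence ensures that inserting or deleting $\langle D_{i-1},Q_{\sigma,\vartheta}\rangle$ genuinely alters the length of the chain rather than returning $\sigma$ itself.
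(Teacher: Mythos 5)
Your overall instinct — that the nilpotent hypothesis should supply an involution toggling chain parity, with the character transport coming from Corollary~\ref{cor:Bijections over nilpotent blocks} — matches the spirit of part of the paper's proof, but the construction as written has several genuine gaps, and a key ingredient is missing entirely.

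First, the subgroup $Q_{\sigma,\vartheta}$ you attach to each pair is only determined up to conjugacy (of $N\cap G_\sigma$, and your fix ``up to $G_\sigma$-conjugacy'' introduces still more choices). For your map $\iota$ to descend to a well-defined, $\n_A(Z)_B$-equivariant involution on $\C^d(B,Z)/G$ you would need a \emph{canonical} choice of $Q_{\sigma,\vartheta}$, not a conjugacy class. The paper avoids this altogether by fixing a single defect group $D$ of $B$ with $D\cap N$ a defect group of $b$, replacing each $(\sigma,\vartheta)$ by a $G$-conjugate whose chain lies in $D$, and then inserting or deleting the \emph{fixed} term $D\cap N$. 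There the inserted/deleted term does not depend on $(\sigma,\vartheta)$ at all, and moreover $G_\sigma=G_{\iota(\sigma)}$, so the character $\vartheta$ is carried along unchanged and no character triple issue arises.

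Second, your character transport is not justified. Corollary~\ref{cor:Bijections over nilpotent blocks} supplies a bijection $\irr(B)\to\irr(B_0)$ between characters of $G$ itself and of $\n_G(Q)$. Applying it ``at the level $G_\sigma$'' would give a bijection between characters of $G_\sigma$ and of $\n_{G_\sigma}(Q_{\sigma,\vartheta})$, but the stabiliser $G_{\iota(\sigma)}$ of the modified chain is $G_\sigma\cap\n_G\bigl(\langle D_{i-1},Q_{\sigma,\vartheta}\rangle\bigr)$, which is generally neither of those groups. There is no ready-made bijection $\irr^d(B_\sigma)\to\irr^d(B_{\iota(\sigma)})$ with the desired $\iso{G}$ property. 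The paper only invokes Corollary~\ref{cor:Bijections over nilpotent blocks} in the degenerate case $\sigma=\{Z\}$ versus $\rho=\{Z<D\cap N\}$, where the two stabilisers are literally $G$ and $\n_G(D\cap N)$ and the corollary applies verbatim.

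Third, and most importantly, your proposal omits the entire ``inductive'' portion of the argument. The paper partitions chains (after normalising so that all terms lie in $D$) into five types according to how the second term $D_1$ meets $D\cap N$, and your insertion/deletion involution essentially recovers the treatment of types~(iv) and~(v) (where $D_1\cap N = D\cap N$). But there is a whole third class of chains — those with $Z\le D_1\cap N<D\cap N$ — for which no such local toggle exists. The paper handles these by inducting on $|G|$: fixing the second term $Q$, passing to $\n_G(Q)<G$ (strict because $Z=\O_p(G)$ may be assumed), and applying the statement recursively to $Q\le\n_N(Q)Q\le\n_G(Q)\le\n_A(Q)$, together with Lemma~\ref{lem:Bijections for union of blocks} to glue over the Brauer-corresponding blocks $B'$ of $\n_G(Q)$. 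Without this induction your construction has no way to deal with such chains, so even if the involution and the character transport could be repaired, the resulting $\Omega$ would not be a bijection.
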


\begin{proof}
By \cite[Theorem 9.26]{Nav98} there is a defect group $D$ of $B$ such that $D\cap N$ is a defect group of $b$. For every $\overline{(\sigma,\vartheta)}\in\C^d(B,Z)/G$ we may assume that all terms of $\sigma$ are contained in $D$. In fact, if $C$ is a block of $G_\sigma$ such that $C^G=B$, then \cite[Lemma 4.13]{Nav98} implies that there exists a defect group $P$ of $C$ and $g\in G$ such that $P\leq D^g$. Then $D_i\leq\O_p(G_\sigma)\leq P\leq D^g$. Replacing $(\sigma,\vartheta)$ with $(\sigma,\vartheta)^{g^{-1}}$ we obtain our claim. Now, the chain $\sigma=\{Z=D_0<D_1<\dots<D_n\}$ satisfies exactly one of the following conditions:
\begin{enumerate}
\item $\sigma=\{Z\}$;
\item $\sigma=\{Z<D\cap N\}$;
\item $Z\leq D_1\cap N<D\cap N$;
\item $Z<D_1\cap N=D\cap N<D_1$; or
\item $Z<D_1\cap N=D\cap N=D_1$ and $n=|\sigma|\geq 2$.
\end{enumerate}
We first consider the contribution given by the $p$-chains of type (iv) and (v). If $(\sigma,\vartheta)\in\C^d(B,Z)_+$ and $\sigma$ is of type (iv), then we define $\rho$ to be the chain obtained by adding the term $D_1\cap N=D\cap N$ to $\sigma$. Notice that $\rho$ is a chain of type (v) and that $G_\rho=G_\sigma$. In this case, we define $\Omega(\overline{(\sigma,\vartheta)}):=\overline{(\rho,\vartheta)}$. On the other hand, if $\sigma$ is of type (v), we define $\rho$ to be the chain obtained by removing the term $D_1$ from $\sigma$. Since $D_1\cap N\leq D_2\cap N\leq D\cap N=D_1\cap N$, we deduce that $\rho$ is of type (iv) and that $G_\rho=G_\sigma$. In this case, we define $\Omega(\overline{(\sigma,\vartheta)})=\overline{(\rho,\chi)}$. This shows that we can define the bijection $\Omega$ from the statement for all pairs $(\sigma,\vartheta)\in\C^d(B,Z)_+$ such that $\sigma$ satisfies (iv) or (v).

Next, we consider chains $\sigma$ satisfying condition (iii). Let $\mathcal{S}$ be the set of $p$-subgroups $Q$ of $G$ satisfying $Z<Q$ and $Q^g\cap N<D\cap N$ for some $g\in G$. Denote by $\mathcal{S}/G$ the set of $G$-orbits $\overline{Q}$ of $p$-subgroups $Q\in\mathcal{S}$. Observe that $\n_A(Z)_B$ acts on $\mathcal{S}/G$. In fact, if $x\in\n_A(Z)_B$ and $\overline{Q}\in\mathcal{S}/G$, then we find $g\in G$ such that $Q^g\cap N<D\cap N$ and so $Q^{gx}\cap N<D^x\cap N$. Since $B^x=B$, we deduce that $D^x$ is a defect group of $B$ and hence there exists some $h\in G$ such that $D^{xh}=D$. It follows that $Q^{gxh}\cap N<D\cap N$. Moreover, as $G\unlhd \n_A(Z)_B$, we can find $g'\in G$ such that $gx=xg'$ and therefore $Q^{xg'h}\cap N<D\cap N$. This shows that $\overline{Q}^x\in\mathcal{S}/G$ as claimed. We now fix an $\n_A(Z)_B$-transversal $\mathcal{T}_0$ in $\mathcal{S}/G$. For any $\overline{Q}\in\mathcal{T}_0$, we define the subset $\C^d_{\overline{Q}}(B,Z)_\pm$ of $\C^d(B,Z)_\pm$ consisting of those pairs $(\sigma,\vartheta)$ satisfying $\overline{D_1}=\overline{Q}$, where we recall that $D_1$ denotes the second term of $\sigma$. We claim that there exists a $G\n_A(Z,Q)_B$-equivariant bijection
\begin{equation}
\label{eq:Auxiliary theorem 1}
\Omega_{\overline{Q}}:\C^d_{\overline{Q}}(B,Z)_+/G\to\C^d_{\overline{Q}}(B,Z)_-/G
\end{equation}
such that
\[\left(A_{\sigma,\vartheta},G_\sigma,\vartheta\right)\iso{G}\left(A_{\rho,\chi},G_\rho,\chi\right)\]
for every $(\sigma,\vartheta)\in\C^d_{\overline{Q}}(B,Z)_+$ and $(\rho,\chi)\in\Omega_{\overline{Q}}(\overline{(\sigma,\vartheta)})$. For this, let $\C^d_Q(B,Z)_\pm$ be the set of pairs $(\sigma,\vartheta)\in\C^d(B,Z)_\pm$ satisfying $D_1=Q$ and notice that it is enough to obtain an $\n_A(Z,Q)_B$-equivariant bijection
\begin{equation}
\label{eq:Auxiliary theorem 2}
\Omega_{Q}:\C^d_Q(B,Z)_+/\n_G(Q)\to\C^d_Q(B,Z)_-/\n_G(Q)
\end{equation}
such that
\[\left(\n_A(Q)_{\sigma,\vartheta},\n_G(Q)_\sigma,\vartheta\right)\iso{\n_G(Q)}\left(\n_A(Q)_{\rho,\chi},\n_G(Q)_\rho,\chi\right)\]
for every $(\sigma,\vartheta)\in\C^d_Q(B,Z)_+$ and $(\rho,\chi)\in\Omega_Q(\overline{(\sigma,\vartheta)})$. In fact, once we have $\Omega_Q$, we can define
\[\Omega_{\overline{Q}}\left(\overline{(\sigma,\vartheta)}\right):=\overline{(\rho,\chi)}\]
for any $(\sigma,\vartheta)\in\C^d_Q(B,Z)_+$ and $(\rho,\chi)\in\C^d_Q(B,Z)_-$ whose $\n_G(Q)$-orbits correspond via $\Omega_Q$. Furthermore, the $\n_G(Q)$-block isomorphisms of character triples induced by the bijection \eqref{eq:Auxiliary theorem 2} imply the $G$-block isomorphisms of character triples required for \eqref{eq:Auxiliary theorem 1}. This follows by using \cite[Lemma 2.11]{Ros22} and since, being $Q$ a term of $\sigma$ and $\rho$, we have $\n_A(Q)_\sigma=A_\sigma$ and $\n_A(Q)_\rho=A_\rho$. Next, we construct the bijection \eqref{eq:Auxiliary theorem 2} proceeding by induction on the order of $G$. We assume without loss of generality that $Z=\O_p(G)$, for if $Z<\O_p(G)$ than the conclusion of the statement follows from the argument of \cite[Lemma 2.3]{Ros22}. In particular, we get $\n_G(Q)<G$ because $Z<Q$. Moreover, thanks to \cite[Proposition 6.10]{Spa17}, we can assume that all $p$-chains $\sigma$ that we are considering satisfy $D(\sigma)\leq G_\sigma$, that is, $\sigma$ is a normal $p$-chain. Let $B_Q$ be the union of those blocks $B'$ of $\n_G(Q)$ satisfying $(B')^G=B$ and observe that the set $\C^d_Q(B,Z)$ is in bijection with the set
\[\C^d(B_Q,Q):=\coprod\limits_{B'\in B_Q}\C^d(B',Q).\]
The bijection can be described as follows: if $(\sigma,\vartheta)\in\C^d_Q(B,Z)$, then consider the $p$-chain $\sigma_0$ of $\n_G(Q)$ with initial term $Q$ obtained by removing $Z$ from $\sigma$ and map $(\sigma,\vartheta)$ to $(\sigma_0,\vartheta)$. Observe that $\sigma_0$ is a chain of $\n_G(Q)$ since $\sigma$ is a normal $p$-chain, and that the assignment $(\sigma,\vartheta)\mapsto(\sigma_0,\vartheta)$ inverts the sign of $p$-chains and preserves the action of $\n_A(Z,Q)$. Applying Lemma \ref{lem:Bijections for union of blocks} to $Q\unlhd \n_G(Q)\unlhd \n_A(Z,Q)$ and $\mathcal{B}=B_Q$, we deduce that it is enough to find a bijection
\[\Omega_{B'}:\C^d(B',Q)_+/\n_G(Q)\to\C^d(B',Q)_-/\n_G(Q)\]
for every $B'$ belonging to $B_Q$ that satisfies
\[\left(\n_A(Q)_{\sigma,\vartheta},\n_G(Q)_\sigma,\vartheta\right)\iso{\n_G(Q)}\left(\n_A(Q)_{\rho,\chi},\n_G(Q)_\rho,\chi\right)\]
for every $(\sigma,\vartheta)\in\C^d(B',Q)_+$ and $(\rho,\chi)\in\Omega_{B'}(\overline{(\sigma,\vartheta)})$. Notice that here we denote by $\overline{(\sigma,\vartheta)}$ the $\n_G(Q)$-orbit of $(\sigma,\vartheta)$. Recalling that $\n_G(Q)<G$, the map $\Omega_{B'}$ is obtained by induction since the hypothesis of the above statement are satisfied with respect to $Q\leq \n_N(Q)Q\leq \n_G(Q)\leq \n_A(Q)$ and the block $B'$ by the argument used in\cite[Theorem 3.2.2]{Rob02}. This proves our claim and we now have constructed a bijection $\Omega_{\overline{Q}}$ as in \eqref{eq:Auxiliary theorem 1} for every $\overline{Q}\in\mathcal{T}_0$. Fix a $G\n_A(Z,Q)_B$-transversal $\mathcal{T}_{\overline{Q}}^+$ in $\C^d_{\overline{Q}}(B,Z)_+/G$ and notice that $\mathcal{T}_{\overline{Q}}^-:=\Omega_{\overline{Q}}(\mathcal{T}_{\overline{Q}}^+)$ is a $G\n_A(Z,Q)_B$-transversal in $\C^d_{\overline{Q}}(B,Z)_-/G$. By the definition of $\mathcal{S}$, the set $\C^d_{\mathcal{S}}(B,Z)_\pm$ of pairs $(\sigma,\vartheta)\in\C^d(B,Z)_\pm$ such that $\overline{D_1}\in\mathcal{S}/G$ coincides with the set of pairs $(\sigma,\vartheta)\in\C^d(B,Z)_\pm$ whose corresponding chain $\sigma$ is (up to $G$-conjugation) of type (iii). It follows from our choices that
\[\coprod_{\overline{Q}\in\mathcal{T}_0}\mathcal{T}_{\overline{Q}}^+\]
and
\[\coprod_{\overline{Q}\in\mathcal{T}_0}\mathcal{T}_{\overline{Q}}^-\]
are $\n_A(Z)_B$-transversals in $\C^d_{\mathcal{S}}(B,Z)_+/G$ and $\C^d_{\mathcal{S}}(B,Z)_-/G$ respectively. Hence, we obtain an $\n_A(Z)_B$-equivariant bijection
\[\Omega_{\mathcal{S}}:\C^d_{\mathcal{S}}(B,Z)_+/G\to\C^d_{\mathcal{S}}(B,Z)_-/G\]
by setting
\[\Omega_{\mathcal{S}}\left(\overline{(\sigma,\vartheta)}^x\right):=\Omega_{\overline{Q}}\left(\overline{(\sigma,\vartheta)}\right)^x\]
for every $\overline{Q}\in\mathcal{T}_0$, $\overline{(\sigma,\vartheta)}\in\mathcal{T}^+_{\overline{Q}}$ and $x\in\n_A(Z)_B$.

To conclude the proof, it remains to consider pairs $(\sigma,\vartheta)\in\C^d(B,Z)$ whose corresponding chains are of type (i) or (ii). The only $G$-orbits remaining in $\C^d(B,Z)_+/G$ are of the form $\overline{(\sigma,\vartheta)}$ with $\sigma=\{Z\}$ while the remaining orbits in $\C^d(B,Z)_-/G$ are of the form $\overline{(\rho,\chi)}$ with $\rho=\{Z<D\cap N\}$. Hence, in order to conclude the proof it is enough to exhibit an $\n_A(Z,D\cap N)_B$-equivariant bijection
\begin{equation}
\label{eq:Auxiliary theorem 3}
\left\lbrace\vartheta\in\irr^d(\n_G(Z))\enspace\middle|\enspace \bl(\vartheta)^G=B\right\rbrace\to\left\lbrace\chi\in\irr^d(\n_G(D\cap N))\enspace\middle|\enspace \bl(\chi)^G=B\right\rbrace
\end{equation}
inducing $G$-block isomorphisms of character triples. However, the set on the left hand side coincides with $\irr^d(B)$ while the set on the right hand side coincides with $\irr^d(B_0)$ where $B_0$ is the Harris-Kn\"orr correspondent of $B$ in $\n_G(D\cap N)$ with respect to the nilpotent block $b$ of $N$ with defect group $D\cap N$. Therefore, the bijection \eqref{eq:Auxiliary theorem 3} is provided by Corollary \ref{cor:Bijections over nilpotent blocks}.
\end{proof}

\section{Equivalent forms of the Character Triple Conjecture}
\label{sec:Equivalence of conjectures}

In this section, we show that the two versions of the Character Triple Conjecture from Section \ref{sec:Conjectures}, namely Conjecture \ref{conj:CTC non-central Z} and Conjecture \ref{conj:CTC}, are logically equivalent, that is, if one holds for every finite group then so does the other, and vice versa. Notice that, although Conjecture \ref{conj:CTC non-central Z} implies Conjecture \ref{conj:CTC} for each given block (see Proposition \ref{prop:CTC non-central implies CTC}), the two statements are not equivalent block-by-block. For any finite group $G$, we denote by $\mathcal{S}(G)$ the set of simple groups $S$ \emph{involved} in $G$, that is, there exist subgroups $N\unlhd H\leq G$ such that $H/N\simeq S$. Moreover, for each prime number $p$, we denote by $\mathcal{S}_p(G)$ the subset of $\mathcal{S}(G)$ consisting of those non-abelian simple groups whose order is divisible by $p$. The main result of this section can then be stated as follows.

\begin{theo}
\label{thm:CTC equivalent to CTC non-central}
Let $G\unlhd A$ be finite groups and $p$ a prime number. If Conjecture \ref{conj:CTC} holds at the prime $p$ for every finite group $G_1\unlhd A_1$ such that $\mathcal{S}_p(G_1)\subseteq \mathcal{S}_p(G)$ and $|A_1:\z(G_1)|\leq|A:\z(G)|$, then Conjecture \ref{conj:CTC non-central Z} holds at the prime $p$ for $G\unlhd A$.
\end{theo}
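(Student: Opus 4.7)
My plan is to reduce the general case of Conjecture \ref{conj:CTC non-central Z} to the special case $U\leq\z(G)$, where the hypothesised Conjecture \ref{conj:CTC} applies and the desired bijection is extracted by arguments analogous to those of Remark \ref{rmk:CTC non-central vs CTC for quasi-simple}. I would proceed by induction on $|A:\z(G)|$, so that the hypothesis of the theorem supplies Conjecture \ref{conj:CTC} for every admissible pair and, by induction, also Conjecture \ref{conj:CTC non-central Z} for every strictly smaller pair $(G_1, A_1)$. Fix $\varphi\in\irr(U)$; I must produce the bijection $\Omega$ for this character.

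First I would perform a Clifford--Fong--Reynolds reduction. Since $\Omega$ need only be $\n_A(U)_{B,\varphi}$-equivariant on $G_\varphi$-orbits, the character correspondence $\irr(G_\sigma\mid\varphi)\leftrightarrow\irr((G_\sigma)_\varphi\mid\varphi)$, together with a block-theoretic version developed in the spirit of \cite[Proposition 2.10]{Ros22} and Corollary \ref{cor:Bijections over nilpotent blocks}, allows me to replace $(A, G)$ by $(A_\varphi, G_\varphi)$. Since $\z(G)\leq\z(G_\varphi)$ and $A_\varphi\leq A$, this preserves the bound on $|A:\z(G)|$ and makes $\varphi$ invariant. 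The core step is then to replace the invariant triple $(G, U, \varphi)$ by a $G$-block isomorphic triple $(G^*, U^*, \varphi^*)$ in which $U^*$ is a \emph{central} $p$-subgroup of $G^*$ and $\varphi^*$ is linear. I would construct $G^*$ as the central extension of $G/U$ associated to the projective representation of $G$ afforded by $(G, U, \varphi)$ (compare \cite[Sections 10--11]{Nav18} and \cite[Section 5]{Spa18}), taking the $p$-part of the relevant cohomology class so that the cyclic kernel $U^*$ is a $p$-group (which is permissible because $\varphi$, being a character of the $p$-group $U$, takes $p$-power root-of-unity values). Extending the construction compatibly to the ambient group yields $A^*\supseteq G^*$ with $G^*\unlhd A^*$, and a direct computation gives both $|A^*:\z(G^*)|\leq|A:\z(G)|$ and $\mathcal{S}_p(G^*)\subseteq\mathcal{S}_p(G/U)\subseteq\mathcal{S}_p(G)$. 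At the same time the construction induces a length-preserving, stabiliser-respecting bijection $\mathfrak{P}(G, U)\leftrightarrow\mathfrak{P}(G^*, U^*)$ intertwined with the Brauer correspondence $B\leftrightarrow B^*$ along each chain, so that $\underline{\C}^d(B, U, \varphi)\leftrightarrow\underline{\C}^d(B^*, U^*, \varphi^*)$ equivariantly.

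With this setup the hypothesis applies to $(G^*, A^*, U^*, B^*, d)$, supplying a bijection as in Conjecture \ref{conj:CTC}. Because $U^*\leq\z(G^*)$, \cite[Lemma 3.4]{Spa17} forces corresponding characters to have the same restriction to $U^*$, so the bijection restricts to the $\varphi^*$-fibre; Lemma \ref{lem:CTC non-central implies CTC} ensures $\underline{\C}^d=\C^d$ in the relevant defect-group range. Transporting back through the character triple isomorphism then produces the required $\Omega$, and the $M$-block isomorphism demanded by Conjecture \ref{conj:CTC non-central Z} is recovered from the $G^*$-block isomorphism via \cite[Lemma 2.11]{Ros22}. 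The main technical obstacle will be the central-extension step: engineering $(G^*, A^*, U^*, \varphi^*, B^*)$ so that the $p$-group structure of $U^*$, the Brauer correspondence of blocks along every $\sigma\mapsto\sigma^*$, the inequalities $|A^*:\z(G^*)|\leq|A:\z(G)|$ and $\mathcal{S}_p(G^*)\subseteq\mathcal{S}_p(G)$, and the intermediate-subgroup flexibility $\langle G_\sigma, G_\rho\rangle\leq M\leq G$ built into Conjecture \ref{conj:CTC non-central Z} are all respected simultaneously---a delicate bookkeeping which is precisely what the extra generality of Conjecture \ref{conj:CTC non-central Z} is designed to accommodate.
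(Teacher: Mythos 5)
Your overall plan—reduce to the case $U\le\z(G)$ via two steps, first making $\varphi$ invariant and then passing to a central extension—matches the paper's strategy, and your second step (the central extension $G^*$ of $G/U$ built from the projective representation of $(G,U,\varphi)$, with the $p$-part of the cocycle producing a central $p$-subgroup $U^*$) is essentially Dade's correspondence as the paper sets it up in Section~\ref{sec:Dade correspondence} (Hypothesis~\ref{hyp:Dade's correspondence}, Proposition~\ref{prop:Dade's correspondence}, and Proposition~\ref{prop:Dade correspondence, CTC implies CTC non-central}). However, the first step as you describe it has a genuine gap.

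You write that Clifford--Fong--Reynolds theory ``allows me to replace $(A,G)$ by $(A_\varphi,G_\varphi)$.'' The Clifford correspondence does indeed give $\irr(G_\sigma\mid\varphi)\leftrightarrow\irr(G_{\sigma,\varphi}\mid\varphi)$ for each fixed chain $\sigma$ of $G$, but it does \emph{not} relate the set $\mathfrak{N}(G,U)$ of $p$-chains of $G$ to the set $\mathfrak{N}(G_\varphi,U)$ of $p$-chains of $G_\varphi$: a chain $\sigma=\{U<D_1<\dots<D_n\}$ of $G$ has terms that need not lie in $G_\varphi$, and $G_{\sigma,\varphi}$ is in general \emph{not} a chain stabiliser of any chain of $G_\varphi$. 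So you cannot simply descend the whole conjecture from $(G,A)$ to $(G_\varphi,A_\varphi)$ by Clifford theory on characters. The paper closes this gap with Dade's product-chain machinery (Section~\ref{sec:Product chains}): one decomposes $\mathfrak{N}(G,U)$ into the chains of the form $\upsilon\ast\varrho$ with $\upsilon$ avoiding $G_\varphi$ and $\varrho$ a chain of $G_{\varphi,\upsilon}$, and then cancels the remaining chains in pairs using Lemma~\ref{lem:Product of chains reduction} (essentially \cite[Theorem 4.3]{Dad94}). Only for chains of the product form can one apply the inductive hypothesis for the smaller pairs $(G_{\varphi,\upsilon},A_{\varphi,\upsilon})$ and reassemble via Clifford correspondence (Lemmas~\ref{lem:Product chain construct bijections 1}--\ref{lem:Product chain construct bijections 3}, Proposition~\ref{prop:Product of chains reduuction with pairs}). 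This is a substantive ingredient that your proposal does not account for; without it the claim that the problem reduces to $G_\varphi$ is unjustified.

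A smaller imprecision: Dade's correspondence is not a $G$-block isomorphism of character triples in the usual sense; its compatibility with the conjecture requires a dedicated verification (as carried out in Proposition~\ref{prop:Dade correspondence, CTC implies CTC non-central}, which re-runs the projective-representation argument of \cite[Theorem 4.4]{Ros22}). You should also record at the outset that one may assume $U=\O_p(G)$ (otherwise the chain argument of \cite[Lemma 2.3]{Ros22} already gives the bijection) and $d>d(\varphi)$ (otherwise $\underline{\C}^d(B,U,\varphi)$ is empty), as the paper does in Proposition~\ref{prop:CTC non-central, reduction to stable varphi}; these are needed to ensure $\underline{\C}^d=\C^d$ and that $\wt{U}$ is strictly contained in the defect groups of $\wt{B}$ on the central-extension side.
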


As an immediate consequence of Theorem \ref{thm:CTC equivalent to CTC non-central} and Proposition \ref{prop:CTC non-central implies CTC}, we deduce that Conjecture \ref{conj:CTC non-central Z} and Conjecture \ref{conj:CTC} are logically equivalent.

\begin{cor}
\label{cor:Equivalent conjectures}
Conjecture \ref{conj:CTC non-central Z} holds at the prime $p$ for every finite group if and only if Conjecture \ref{conj:CTC} holds at the prime $p$ for every finite group.
\end{cor}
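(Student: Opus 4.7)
The plan is to derive Corollary \ref{cor:Equivalent conjectures} directly by combining the two key results already established: Proposition \ref{prop:CTC non-central implies CTC} for one implication and Theorem \ref{thm:CTC equivalent to CTC non-central} for the other. Since both ingredients are in place, the argument is essentially a bookkeeping exercise and no substantial obstacle is expected.

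For the forward direction, I would assume Conjecture \ref{conj:CTC non-central Z} holds at the prime $p$ for every finite group and fix an arbitrary pair $G \unlhd A$, a $p$-subgroup $Z \leq \z(G)$ strictly contained in the defect groups of some $p$-block $B$ of $G$, and a non-negative integer $d$. Since $Z \leq \z(G)$, we have $G_\varphi = G$ for every $\varphi \in \irr(Z)$, so the hypotheses of Proposition \ref{prop:CTC non-central implies CTC} are met by applying Conjecture \ref{conj:CTC non-central Z} with $U := Z$ for each $\varphi \in \irr(Z)$. The proposition then yields Conjecture \ref{conj:CTC} for the chosen data; since $G \unlhd A$, $Z$, $B$, and $d$ were arbitrary, Conjecture \ref{conj:CTC} holds for every finite group.

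For the backward direction, I would assume Conjecture \ref{conj:CTC} holds at the prime $p$ for every finite group and fix an arbitrary pair $G \unlhd A$. In particular the conjecture holds for every pair $G_1 \unlhd A_1$ satisfying $\mathcal{S}_p(G_1) \subseteq \mathcal{S}_p(G)$ and $|A_1 : \z(G_1)| \leq |A : \z(G)|$, so the hypothesis of Theorem \ref{thm:CTC equivalent to CTC non-central} is automatically verified. The theorem then gives Conjecture \ref{conj:CTC non-central Z} for $G \unlhd A$, and arbitrariness of $G \unlhd A$ completes the proof. No step in this deduction is genuinely difficult: all the technical work (notably the reduction to the central case, the handling of $\varphi \in \irr(U)$, and the passage between $G$-block isomorphisms with and without an intermediate subgroup $M$) has already been absorbed into Proposition \ref{prop:CTC non-central implies CTC} and Theorem \ref{thm:CTC equivalent to CTC non-central}.
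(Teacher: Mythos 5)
Your proposal is correct and follows exactly the paper's route: the paper derives Corollary \ref{cor:Equivalent conjectures} as an immediate consequence of Proposition \ref{prop:CTC non-central implies CTC} (for the forward implication) and Theorem \ref{thm:CTC equivalent to CTC non-central} (for the converse). The only minor superfluity is that the observation $G_\varphi = G$ is not needed to invoke Proposition \ref{prop:CTC non-central implies CTC}; its hypothesis is met simply because Conjecture \ref{conj:CTC non-central Z} is assumed universally.
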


Before proceeding to the proof of Theorem \ref{thm:CTC equivalent to CTC non-central}, we explain how its statement can be used when trying to reduce Conjecture \ref{conj:CTC} to quasi-simple groups. For this, suppose that $G\unlhd A$ is a minimal (with respect to $|A:\z(G)|$) counterexample to Conjecture \ref{conj:CTC} and suppose that $X\unlhd Y$ satisfies $\mathcal{S}_p(X)\subseteq \mathcal{S}_p(G)$ and $|Y:\z(X)|<|A:\z(G)|$. By the minimality of $G\unlhd A$, we know that Conjecture \ref{conj:CTC} holds for $X\unlhd Y$ and also for every $X_1\unlhd Y_1$ satisfying $\mathcal{S}_p(X_1)\subseteq\mathcal{S}_p(X)$ and $|Y_1:\z(X_1)|\leq |Y:\z(X)|$. But then, Theorem \ref{thm:CTC equivalent to CTC non-central} tells us that in this case even the stronger statement of Conjecture \ref{conj:CTC non-central Z} holds for $X\unlhd Y$, a fact that can then be used to restrict the structure of $G$ even further. This will be a crucial step in Section \ref{sec:Cancellation} and, in particular, in the proof of Proposition \ref{prop:Forcing bijections in minimal counterexample}.

In order to prove Theorem \ref{thm:CTC equivalent to CTC non-central}, we essentially need to show that a minimal counterexample to Conjecture \ref{conj:CTC non-central Z} would satisfy $U\leq \z(G)$, where $U$ is the normal $p$-subgroup of $G$ used as starting term for our $p$-chains. Our arguments below are inspired by earlier work of Dade \cite[Section 17]{Dad94}, Robinson \cite{Rob96}, and Eaton \cite{Eat04} on the equivalence of Dade's Conjecture and Robinson's Ordinary Weight Conjecture.

\subsection{Product chains and stable reduction for Conjecture \ref{conj:CTC non-central Z}}
\label{sec:Product chains}

We collect some definitions and results from \cite[Section 3-4]{Dad94}. To start, we recall what it means for a $p$-chain $\sigma$ to avoid a given subgroup $H$ of $G$. This notion has been rephrased in \cite[p.319]{Rob96} (see also \cite[p.644]{Eat04}) where it is referred to as \textit{deficiency of $p$-chains}. Let $\sigma$ be a $p$-chain of $G$ starting with the normal $p$-subgroup $U$ and consider any subgroup $H\leq G$. Since $U$ normalises all the terms of $\sigma$, we deduce that $U\cap H\leq D(\sigma)\cap H_\sigma$, where as usual $D(\sigma)$ denotes the largest term of $\sigma$. Moreover, if $\sigma$ is a normal $p$-chain, then $D(\sigma)\cap H_\sigma=D(\sigma)\cap H$. We recall once again that we may, and do, assume that all $p$-chains we are dealing with are normal (see \cite[Proposition 6.10]{Spa17}).

\begin{defin}
\label{def:Chain avoidance}
Let $U\unlhd G$ be a $p$-subgroup and consider $H\leq G$. A normal $p$-chain $\sigma\in\mathfrak{N}(G,U)$ \emph{avoids} $H$ if $U\cap H=D(\sigma)\cap H$ or equivalently if $D(\sigma)\cap H\leq U$. We denote by $\mathfrak{A}_H(G,U)$ the set of all such $p$-chains.
\end{defin}

As described in \cite[Proposition 3.6]{Dad94}, for a fixed subgroup $H\leq G$, one can recover some of the $p$-chains of $G$ as a product of a $p$-chain of $G$ that avoids $H$ and a $p$-chain of $H$. More precisely, we have the following lemma.

\begin{lem}
\label{lem:Product of chains definition}
Let $U\unlhd G$ be a $p$-subgroup and consider $H\leq G$. For every $\upsilon\in\mathfrak{A}_H(G,U)$ and $\varrho\in\mathfrak{N}(H_\upsilon,H\cap U)$ there is a $p$-chain $\upsilon\ast\varrho\in\mathfrak{N}(G,U)$ of length $|\upsilon\ast\varrho|=|\upsilon|+|\varrho|$. The $p$-chain $\upsilon\ast\varrho$ is uniquely determined by $\upsilon$ and $\varrho$ and its stabiliser satisfies $H_{\upsilon\ast\varrho}=H_\upsilon\cap H_\varrho$. More generally, if $G\unlhd A$, then $\n_A(H)_{\upsilon\ast\varrho}=\n_A(H)_\upsilon\cap \n_A(H)_\varrho$.
\end{lem}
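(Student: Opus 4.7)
The plan is to construct $\upsilon \ast \varrho$ explicitly by concatenating $\upsilon$ with the translates $D(\upsilon)E_j$ of the terms of $\varrho$. Writing $\upsilon=\{U=D_0<D_1<\dots<D_m\}$ and $\varrho=\{H\cap U=E_0<E_1<\dots<E_n\}$, I set
\[
\upsilon\ast\varrho:=\{D_0<D_1<\dots<D_m<D_mE_1<D_mE_2<\dots<D_mE_n\}.
\]
First I would check that each $D_mE_j$ is a $p$-subgroup of $G$: since $E_j\leq H_\upsilon\leq G_\upsilon$ normalises $D_m$, the product $D_mE_j$ is a subgroup, and it is a $p$-group because $|D_mE_j|=|D_m||E_j|/|D_m\cap E_j|$ is a power of $p$.

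The crucial step — and the one place where the avoidance hypothesis enters — is verifying that all inclusions are strict and that $|\upsilon\ast\varrho|=m+n$. Since $E_j\leq H$, Dedekind's modular law yields
\[
(D_mE_j)\cap H=E_j\,(D_m\cap H)=E_j\,(U\cap H)=E_j,
\]
using $D_m\cap H=U\cap H\leq E_0\leq E_j$ (by avoidance and the fact that $\varrho$ starts at $H\cap U$). An equality $D_mE_{j-1}=D_mE_j$ (for $j\geq 2$) would intersect with $H$ to give $E_{j-1}=E_j$, contradicting the strict chain $\varrho$; and $D_m=D_mE_1$ would force $E_1\leq D_m\cap H=E_0$, again a contradiction. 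Thus the displayed sequence is strictly increasing, and so is uniquely determined by $\upsilon$ and $\varrho$.

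Next I would verify that $\upsilon\ast\varrho$ lies in $\mathfrak{N}(G,U)$ by showing its last term $D_mE_n$ normalises every other term. For the initial terms $D_i$ with $i\leq m$, both $D_m$ (from the normal chain $\upsilon$) and $E_n\leq H_\upsilon$ normalise $D_i$, so $D_mE_n$ does. For the later terms $D_mE_j$, the group $D_m$ normalises $D_mE_j$ because $E_j$ normalises $D_m$, whence $D_m(D_mE_j)D_m^{-1}=D_m(D_m^{-1}E_j)=D_mE_j$; meanwhile $E_n$ normalises $D_m$ (as $E_n\leq H_\upsilon$) and normalises $E_j$ (as $\varrho$ is a normal chain in $H_\upsilon$), so $E_n$ normalises $D_mE_j$.

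For the stabiliser identity, the inclusion $H_\upsilon\cap H_\varrho\subseteq H_{\upsilon\ast\varrho}$ is immediate. Conversely, if $h\in H$ normalises every term of $\upsilon\ast\varrho$, then in particular $h\in H_\upsilon$; intersecting $h(D_mE_j)h^{-1}=D_mE_j$ with $H$ and applying $(D_mE_j)\cap H=E_j$ gives $hE_jh^{-1}=E_j$ for each $j$, so $h\in H_\varrho$. Exactly the same argument with $\n_A(H)$ in place of $H$ (noting that $\n_A(H)$ acts on $H$ by conjugation and therefore preserves the intersection $(D_mE_j)\cap H$) yields the more general identity $\n_A(H)_{\upsilon\ast\varrho}=\n_A(H)_\upsilon\cap\n_A(H)_\varrho$. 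The only delicate point throughout is the consistent use of Dedekind's identity together with the avoidance hypothesis, which is what simultaneously controls strict inclusions, lengths, and stabilisers.
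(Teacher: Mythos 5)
Your proof is correct, and it takes a genuinely more self-contained route than the paper: where the paper simply cites Dade's Propositions~3.6 and~3.10 in \cite{Dad94} for the existence and stabiliser properties of the $\ast$-product, you reconstruct the product explicitly and verify all claims from scratch. The key observations you isolate are exactly the ones that make the construction work: $E_j\leq H_\upsilon\leq G_\upsilon$ normalises $D_m$ (so the translated terms $D_mE_j$ are $p$-subgroups), and Dedekind's law plus the avoidance condition $D_m\cap H=U\cap H=E_0$ give $(D_mE_j)\cap H=E_j$, which simultaneously controls strict inclusions, the length formula, and the stabiliser identity (including the $\n_A(H)$ version, since $\n_A(H)$ preserves intersection with $H$). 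The only small blemish is notational: the line $D_m(D_mE_j)D_m^{-1}=D_m(D_m^{-1}E_j)$ is an awkward way of saying that $D_m$ normalises $D_mE_j$; it would be cleaner to note simply that $D_m\leq D_mE_j$, and that any group normalises itself. What your approach buys is transparency and independence from Dade's formalism, at the cost of a page of routine verifications that the paper avoids by citation; the construction itself matches Dade's, so the two are equivalent in content.
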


\begin{proof}
This follows immediately from \cite[Proposition 3.6 and Proposition 3.10]{Dad94}.
\end{proof}

In what follows, we denote by $\mathfrak{N}(G:H,U)$ the set of all products $\upsilon\ast\varrho$ where $\upsilon\in\mathfrak{A}_H(G,U)$ and $\varrho\in\mathfrak{N}(H_\upsilon,U\cap H)$. While in general $\mathfrak{N}(G:H,U)$ is a proper subset of $\mathfrak{N}(G,U)$, the following lemma shows that, for most purposes, we can get rid of all the $p$-chains lying outside this subset.

\begin{lem}
\label{lem:Product of chains reduction}
Let $U\unlhd G$ be a $p$-subgroup and consider $H\leq G\unlhd A$. Then there exists a self-inverse $\n_A(U,H)$-equivariant bijection
\[\gamma:\mathfrak{N}(G,U)\setminus \mathfrak{N}(G:H,U)\to\mathfrak{N}(G,U)\setminus \mathfrak{N}(G:H,U)\]
such that $|\gamma(\sigma)|=|\sigma|\pm 1$ for every $\sigma\in\mathfrak{N}(G,U)\setminus \mathfrak{N}(G:H,U)$.
\end{lem}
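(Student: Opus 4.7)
The plan is to construct the involution $\gamma$ explicitly by a ``first defect'' argument, adapting the cancellation technique of Dade \cite[Propositions 3.10 and 4.2]{Dad94} (which is reformulated in terms of deficiency in \cite[Section 3]{Rob96} and \cite[Lemma 2.3]{Eat04}). The core observation is that membership in the product set $\mathfrak{N}(G{:}H,U)$ of a chain $\sigma = \{U = D_0 < D_1 < \cdots < D_n\} \in \mathfrak{N}(G,U)$ is controlled by the behaviour of its \emph{$H$-intersection sequence} $E_i := D_i \cap H$, together with how each $D_i$ is built from $D_{i-1}$ and $E_i$. Unwinding Lemma \ref{lem:Product of chains definition} together with \cite[Proposition 3.6]{Dad94}, the chain $\sigma$ lies in $\mathfrak{N}(G{:}H,U)$ precisely when $(E_i)_i$ is constant equal to $H \cap U$ up to some splitting index $k$, strictly increasing after $k$, and $D_i = D_{i-1}E_i$ for every $i > k$.

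For $\sigma \notin \mathfrak{N}(G{:}H,U)$ I would then define $j(\sigma)$ to be the smallest index at which one of the above three conditions first fails, and attach to $\sigma$ a canonical $p$-subgroup $F(\sigma)$ lying between two consecutive terms of $\sigma$. In the representative cases:
\begin{itemize}
\item if $D_{j-1} < D_{j-1}E_j < D_j$ strictly, one sets $F(\sigma) := D_{j-1}E_j$, which is a proper intermediate $p$-subgroup;
\item if instead $E_{j-1} = E_j$ occurs at a position where the product pattern forbids it, one identifies the responsible term and takes $F(\sigma) := D_j$.
\end{itemize}
Split $\mathfrak{N}(G,U) \setminus \mathfrak{N}(G{:}H,U)$ into the subset $\mathcal{X}_+$ of chains for which $F(\sigma)$ does \emph{not} already occur in $\sigma$ and the subset $\mathcal{X}_-$ of chains for which it does. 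Set
\[
\gamma(\sigma) := \begin{cases} \sigma \cup \{F(\sigma)\} & \text{if } \sigma \in \mathcal{X}_+,\\ \sigma \setminus \{F(\sigma)\} & \text{if } \sigma \in \mathcal{X}_-. \end{cases}
\]
The length changes by exactly $\pm 1$ by construction, and $\n_A(U,H)$-equivariance is automatic since $E_i = D_i \cap H$, the subgroup $D_{i-1}E_i$, and the index $j(\sigma)$ are all defined from $\sigma$ using only operations preserved by an element of $A$ normalising both $U$ and $H$.

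The heart of the verification, and where I expect the main obstacle to lie, is checking that $\gamma$ is indeed a well-defined involution taking values in $\mathfrak{N}(G,U) \setminus \mathfrak{N}(G{:}H,U)$. This amounts to two things: first, that $F(\sigma)$ is a legitimate term of a normal $p$-chain, which follows because $D_{j-1}$ and $E_j = D_j \cap H$ are both invariant under $G_\sigma$, hence so is their product; and second, that $j(\gamma(\sigma)) = j(\sigma)$ and $F(\gamma(\sigma)) = F(\sigma)$, so that $\gamma^2 = \mathrm{id}$. The latter is the combinatorial bookkeeping carried out in \cite[Proposition 3.10]{Dad94}: insertion of $F(\sigma)$ resolves the defect at position $j$ of $\sigma$ but creates a corresponding defect in $\gamma(\sigma)$ at the shifted position, with identical witness $F$, and vice versa for removal. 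This symmetry closes the argument.
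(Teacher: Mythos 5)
Your high-level strategy---a first-defect cancellation in the style of Dade---is the right one, and indeed the paper's own ``proof'' of this lemma is simply a citation to \cite[Theorem 4.3]{Dad94}. But the specific construction you describe has a genuine gap at the crucial step: the claim that $E_j := D_j \cap H$ is $G_\sigma$-invariant, and hence that inserting $F(\sigma) = D_{j-1}E_j$ yields a chain that still lies in $\mathfrak{N}(G,U)$. While $G_\sigma$ does normalise $D_j$, it has no reason to normalise $H$, so it need not normalise $D_j \cap H$. Concretely, take $G = A = S_4$, $p = 2$, $U = 1$, $H = \langle(12)(34)\rangle$, and $\sigma = \{1 < D\}$ with $D = \langle(1234),(13)\rangle$ a Sylow $2$-subgroup. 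Then $\sigma$ is a normal chain, $\sigma\notin\mathfrak{N}(G:H,U)$ since $1 < D\cap H = H < D$, and $G_\sigma = \n_G(D) = D$. Your recipe gives $j = 1$ and $F(\sigma) = D_0(D_1\cap H) = H$, but $(1234)$ conjugates $(12)(34)$ to $(14)(23)$, so $D\nleq \n_G(H)$; hence $\{1 < H < D\}$ is \emph{not} a normal chain and $\gamma(\sigma)$ falls outside $\mathfrak{N}(G,U)$. The ``delete'' branch has an analogous defect: for $\sigma = \{1 < H < D'\}$ with $D' = \c_{S_4}((12)(34))$ the Sylow $2$-subgroup in which $H$ is central, one finds $E_1 = E_2 = H$, the first defect is at $j = 2$, and deleting $F(\sigma) := D_j = D'$ gives $\{1 < H\}$, which equals $\{1\}\ast\{1<H\}$ and therefore \emph{does} lie in $\mathfrak{N}(G:H,U)$; so $\gamma$ would leave its intended codomain and could not be a self-inverse bijection on the complement.

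These are exactly the pitfalls that the careful bookkeeping on pp.~110--113 of \cite{Dad94} is designed to avoid: the subgroup used for insertion or deletion is not simply $D_{j-1}(D_j\cap H)$, and the choice of the index and of the inserted/deleted term must be set up so that the modification both preserves normality of the chain and stays in the complement of the product set. The overall shape of your argument is correct, but you need Dade's actual construction of the involution---which the paper adopts verbatim---rather than the naive first-defect subgroup.
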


\begin{proof}
This is essentially a reformulation of \cite[Theorem 4.3]{Dad94}. We refer the reader to the proof of that theorem for the construction of the map $\gamma$ (see \cite[p.110-113]{Dad94}).
\end{proof}

We now fix a character $\varphi\in\irr(U)$ and consider the above situation with respect to the subgroup $H=G_\varphi$. Notice in this case that $U$ is contained in $H$. We want to reduce Conjecture \ref{conj:CTC non-central Z} to the case where the character $\varphi$ is $A$-invariant. For this, we will use Lemma \ref{lem:Product of chains reduction} to construct bijections between the orbit sets of $\C^d(B,U,\varphi)_\pm$ via Clifford theory and by only considering $p$-chains contained in $\mathfrak{N}(G:G_\varphi,U)$ hence reducing the problem to dealing with $p$-chains of the stabiliser $G_\varphi$. The reader should bear with us a little longer while we introduce some further notation. 

For every block $B$ of $G$ and $d\geq 0$, we define the set $\C^d(B:G_\varphi,U,\varphi)$ consisting of those pairs $(\sigma,\vartheta)$ where $\sigma\in \mathfrak{N}(G:G_\varphi,U)$ and $\vartheta\in\irr^d(G_\sigma\mid\varphi)$ satisfies $\bl(\vartheta)^G=B$. Moreover, if $\upsilon$ is a fixed $p$-chain belonging to $\mathfrak{A}_{G_\varphi}(G,U)$, then we denote by $\C^d(B:G_\varphi,U,\varphi)_\upsilon$ the set of pairs $(\sigma,\vartheta)\in\C^d(B:G_\varphi,U,\varphi)$ such that $\sigma=\upsilon\ast\varrho$ for some $\varrho\in\mathfrak{N}(G_{\varphi,\upsilon},U)$. Next, let $B_{\varphi,\upsilon}$ be the union of all blocks $B'$ of $G_{\varphi,\upsilon}$ such that $(B')^G$ is defined and coincides with $B$. Then
\[\C^d(B_{\varphi,\upsilon},U,\varphi)=\coprod\limits_{B'\in B_{\varphi,\upsilon}}\C^d(B',U,\varphi)\]
is the set of pairs $(\varrho,\eta)$ with $\varrho\in\mathfrak{N}(G_{\varphi,\upsilon},U)$ and $\eta\in\irr^d(G_{\varphi,\upsilon,\varrho}\mid \varphi)$ such that $\bl(\eta)^G=B$. Notice here that $\bl(\eta)^G$ is defined according to \cite[Lemma 3.2]{Kno-Rob89}, \cite[Corollary 6.2]{Nav98}, and using \cite[Lemma 5.3.4]{Nag-Tsu89}. Then, if we denote by $\C^d(B_{\varphi,\upsilon},U,\varphi)_\upsilon$ the set of those elements $(\upsilon\ast\varrho,\eta)$ such that $(\varrho,\eta)\in\C^d(B_{\varphi,\upsilon},U,\varphi)$, we obtain an $\n_A(U)_{B,\varphi,\upsilon}$-equivariant bijection
\begin{align}
\label{eq:Product of chain, induction yields bijection of sets}
\C^d(B_{\varphi,\upsilon},U,\varphi)_\upsilon&\to \C^d(B:G_\varphi,U,\varphi)_\upsilon
\\
(\sigma,\eta)&\mapsto (\sigma,\eta^{G_\sigma})\nonumber
\end{align}
by Clifford theory and recalling that $G_{\varphi,\upsilon\ast\varrho}=G_{\varphi,\upsilon,\varrho}$ according to Lemma \ref{lem:Product of chains definition}. In the rest of this section we consider the following hypothesis.

\begin{hyp}
\label{hyp:Product chains and stable reduction}
Let $G\unlhd A$ be finite groups, $U$ a $p$-subgroup of $G$ normalised by $A$, and consider $\varphi\in\irr(U)$, $B$ a block of $G$ and $d\geq 0$. Suppose that for every $\upsilon\in\mathfrak{A}_{G_\varphi}(G,U)$ and $B'\in B_{\varphi,\upsilon}$ there exists an $A_{B',\varphi,\upsilon}$-equivariant bijection
\[\Theta_{B',\upsilon}:\C^d(B',U,\varphi)_+/G_{\varphi,\upsilon}\to\C^d(B',U,\varphi)_-/G_{\varphi,\upsilon}\]
such that
\[\left(A_{\varphi,\upsilon,\varrho,\eta},G_{\varphi,\upsilon,\varrho},\eta\right)\iso{G_{\varphi,\upsilon}}\left(A_{\varphi,\upsilon,\varpi,\kappa},G_{\varphi,\upsilon,\varpi},\kappa\right)\]
for every $(\varrho,\eta)\in\C^d(B',U,\varphi)_+$ and $(\varpi,\kappa)\in\Theta_{B',\upsilon}(\overline{(\varrho,\eta)})$ and where we denote by $\overline{(\varrho,\eta)}$ the $G_{\varphi,\upsilon}$-orbit of $(\varrho,\eta)$.
\end{hyp}

Our aim is now to use the above hypothesis to construct bijections between the orbit sets of $\C^d(B:G_\varphi,U,\varphi)_\pm$ (see Lemma \ref{lem:Product chain construct bijections 3} below) and then, via an application of Lemma \ref{lem:Product of chains reduction}, to show how these can be used to construct analogous bijections between the orbit sets of $\C^d(B,U,\varphi)_\pm$. As a first step, we fix a $p$-chain $\upsilon\in\mathfrak{A}_{G_\varphi}(G,U)$ and consider product chains of the form $\upsilon\ast\varrho$ with $\varrho$ running over $\mathfrak{N}(G_{\varphi,\upsilon},U)$. 

\begin{lem}
\label{lem:Product chain construct bijections 1}
Assume Hypothesis \ref{hyp:Product chains and stable reduction} and fix $\upsilon\in\mathfrak{A}_{G_\varphi}(G,U)$. Then there exists an $A_{B,\varphi,\upsilon}$-equivariant bijection
\[\Theta_{B,\upsilon}:\C^d(B_{\varphi,\upsilon},U,\varphi)_{\upsilon,+}/G_{\varphi,\upsilon}\to\C^d(B_{\varphi,\upsilon},U,\varphi)_{\upsilon,-}/G_{\varphi,\upsilon}\]
such that
\[\left(A_{\varphi,\upsilon\ast\varrho,\eta},G_{\varphi,\upsilon\ast\varrho},\eta\right)\iso{G_{\varphi,\upsilon}}\left(A_{\varphi,\upsilon\ast\varpi,\kappa},G_{\varphi,\upsilon\ast\varpi},\kappa\right)\]
for every $(\upsilon\ast\varrho,\eta)\in\C^d(B_{\varphi,\upsilon},U,\varphi)_{\upsilon,+}$ and $(\upsilon\ast\varpi,\kappa)\in\Theta_{B,\upsilon}(\overline{(\varrho,\eta)})$.
\end{lem}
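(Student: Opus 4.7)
The plan is to combine the block-by-block bijections supplied by Hypothesis \ref{hyp:Product chains and stable reduction} into a single bijection over the union $B_{\varphi,\upsilon}$, then transfer the result along the assignment $(\varrho,\eta)\mapsto(\upsilon\ast\varrho,\eta)$ to land on product chains.

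For the combining step I first observe that $A_{B,\varphi,\upsilon}$ acts on the finite set $B_{\varphi,\upsilon}$ of blocks of $G_{\varphi,\upsilon}$: for $a\in A_{B,\varphi,\upsilon}$ and $B'\in B_{\varphi,\upsilon}$ the fact that $a$ normalises $G$ and $G_{\varphi,\upsilon}$ gives $((B')^a)^G=((B')^G)^a=B^a=B$, and the same identity shows $A_{B',\varphi,\upsilon}\leq A_{B,\varphi,\upsilon}$. This is exactly the setup needed to run the transversal construction of Lemma \ref{lem:Bijections for union of blocks}, applied with $U\unlhd G_{\varphi,\upsilon}\unlhd A_{\varphi,\upsilon}$ in place of $Z\unlhd G\unlhd A$ and the collection $\mathcal{B}:=B_{\varphi,\upsilon}$; the only deviation is that we work with the refined sets $\C^d(B',U,\varphi)$ in place of $\C^d(B',U)$, but the argument is insensitive to this since every group involved already fixes $\varphi$. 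Choosing an $A_{B,\varphi,\upsilon}$-transversal $\mathcal{T}_0\subseteq B_{\varphi,\upsilon}$, then $A_{B',\varphi,\upsilon}$-transversals of $\C^d(B',U,\varphi)_+/G_{\varphi,\upsilon}$ for each $B'\in\mathcal{T}_0$, and mapping them through the $\Theta_{B',\upsilon}$ provided by the hypothesis, I assemble an $A_{B,\varphi,\upsilon}$-equivariant bijection
$$\Theta:\C^d(B_{\varphi,\upsilon},U,\varphi)_+/G_{\varphi,\upsilon}\to\C^d(B_{\varphi,\upsilon},U,\varphi)_-/G_{\varphi,\upsilon},$$
where $\C^d(B_{\varphi,\upsilon},U,\varphi)_\pm$ denotes the disjoint union of the $\C^d(B',U,\varphi)_\pm$ partitioned by the parity of $|\varrho|$. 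The $G_{\varphi,\upsilon}$-block isomorphism of character triples survives this assembly, because conjugation by an element of $A_{B,\varphi,\upsilon}$ applied simultaneously to both triples preserves $\iso{G_{\varphi,\upsilon}}$.

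For the transfer step, Lemma \ref{lem:Product of chains definition} gives $G_{\varphi,\upsilon\ast\varrho}=G_{\varphi,\upsilon,\varrho}$ and the analogous identification $A_{\varphi,\upsilon\ast\varrho,\eta}=A_{\varphi,\upsilon,\varrho,\eta}$, so the rule $(\varrho,\eta)\mapsto(\upsilon\ast\varrho,\eta)$ defines an $A_{\varphi,\upsilon}$-equivariant bijection $\C^d(B_{\varphi,\upsilon},U,\varphi)\to\C^d(B_{\varphi,\upsilon},U,\varphi)_\upsilon$ which identifies the character triples appearing in Hypothesis \ref{hyp:Product chains and stable reduction} with the character triples appearing in the target statement. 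Since $|\upsilon\ast\varrho|=|\upsilon|+|\varrho|$, this transfer preserves parity when $|\upsilon|$ is even and reverses it when $|\upsilon|$ is odd; in the latter case I compose with $\Theta^{-1}$ instead of $\Theta$ before transferring, so that in either situation $\Theta_{B,\upsilon}$ sends $\C^d(B_{\varphi,\upsilon},U,\varphi)_{\upsilon,+}/G_{\varphi,\upsilon}$ to $\C^d(B_{\varphi,\upsilon},U,\varphi)_{\upsilon,-}/G_{\varphi,\upsilon}$ and respects the required block isomorphism.

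I expect the only delicate point to be bookkeeping rather than anything conceptual: keeping track of the stabiliser identifications $G_{\varphi,\upsilon\ast\varrho}=G_{\varphi,\upsilon,\varrho}$ and $A_{\varphi,\upsilon\ast\varrho,\eta}=A_{\varphi,\upsilon,\varrho,\eta}$ from Lemma \ref{lem:Product of chains definition}, checking that $\iso{G_{\varphi,\upsilon}}$ is stable under simultaneous $A_{\varphi,\upsilon}$-conjugation of both triples, and correctly matching the parity of $|\upsilon|$ in the final step.
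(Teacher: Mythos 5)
Your proof is correct and follows essentially the same two-step approach as the paper: first combine the per-block bijections $\Theta_{B',\upsilon}$ over the union $B_{\varphi,\upsilon}$ via the transversal construction of Lemma~\ref{lem:Bijections for union of blocks} (noting as the paper does that the argument adapts to the $\varphi$-refined sets because every group in play fixes $\varphi$), then transport along the $\ast\upsilon$ assignment using the stabiliser identities $G_{\varphi,\upsilon\ast\varrho}=G_{\varphi,\upsilon,\varrho}$ and $A_{\varphi,\upsilon\ast\varrho,\eta}=A_{\varphi,\upsilon,\varrho,\eta}$ from Lemma~\ref{lem:Product of chains definition}. The one place where you go slightly beyond the paper is in addressing the parity bookkeeping explicitly: since $|\upsilon\ast\varrho|=|\upsilon|+|\varrho|$, the transfer flips the sign decomposition exactly when $|\upsilon|$ is odd, and you correctly observe that one should compose with $\Theta^{-1}$ in that case. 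The paper's write-up defines $\Theta_{B,\upsilon}$ by starting from $\overline{(\varrho,\eta)}\in\C^d(B_{\varphi,\upsilon},U,\varphi)_+$ without commenting on this sign flip, so it implicitly relies on the reader making the same adjustment; your version is a small improvement in precision but not a different argument.
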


\begin{proof}
Recall that $B_{\varphi,\upsilon}$ is the set of all blocks $B'$ of $G_{\varphi,\sigma}$ such that $(B')^G=B$. Then, by applying Lemma \ref{lem:Bijections for union of blocks} we can combine the bijections $\Theta_{B',\upsilon}$ given by Hypothesis \ref{hyp:Product chains and stable reduction} and obtain an $A_{B,\varphi,\upsilon}$-equivariant bijection
\[\Theta_{B,\upsilon}':\C^d(B_{\varphi,\upsilon},U,\varphi)_+/G_{\varphi,\upsilon}\to\C^d(B_{\varphi,\upsilon},U,\varphi)_-/G_{\varphi,\upsilon}\]
Notice that, although the statement of Lemma \ref{lem:Bijections for union of blocks} does not take into account characters lying above the given irreducible character $\varphi\in\irr(U)$, the same argument can be applied here since all the bijections $\Theta_{B',\upsilon}$ preserve the character $\varphi$. Now, we define an $A_{B,\varphi,\upsilon}$-equivariant bijection
\[\Theta_{B,\upsilon}:\C^d(B_{\varphi,\upsilon},U,\varphi)_{\upsilon,+}/G_{\varphi,\upsilon}\to\C^d(B_{\varphi,\upsilon},U,\varphi)_{\upsilon,-}/G_{\varphi,\upsilon}\]
by setting
\[\Theta_{B,\upsilon}\left(\overline{(\upsilon\ast\varrho,\eta)}\right):=\overline{(\upsilon\ast\varpi,\kappa)}\]
for every $(\varrho,\eta)\in\C^d(B_{\varphi,\upsilon},U,\varphi)_+$ and $(\varpi,\kappa)\in\Theta'_{B,\upsilon}(\overline{(\varrho,\eta)})$. To cocnlude, observe that the $G_{\varphi,\upsilon}$-block isomorphisms of character triples required in the statement follow from those given by Hypothesis \ref{hyp:Product chains and stable reduction} after noticing that $A_{\varphi,\upsilon\ast\varrho}=A_{\varphi,\upsilon,\varrho}$ for every $\varrho\in\mathfrak{N}(G_{\varphi,\upsilon},U)$ thanks to Lemma \ref{lem:Product of chains definition}.
\end{proof}

Induction of characters yields a bijection between $\C^d(B_{\varphi,\upsilon},U,\varphi)_\upsilon$ and $\C^d(B:G_\varphi,U,\varphi)_\upsilon$ as explained in \eqref{eq:Product of chain, induction yields bijection of sets}. We can then use the maps $\Theta_{B,\upsilon}$ from the previous lemma to obtain the following result.

\begin{lem}
\label{lem:Product chain construct bijections 2}
Assume Hypothesis \ref{hyp:Product chains and stable reduction} and fix $\upsilon\in\mathfrak{A}_{G_\varphi}(G,U)$. Then there exists an $A_{B,\varphi,\upsilon}$-equivariant bijection
\[\Omega_{B:G_\varphi}^\upsilon:\C^d(B:G_\varphi,U,\varphi)_{\upsilon,+}/G_{\varphi,\upsilon}\to\C^d(B:G_{\varphi},U,\varphi)_{\upsilon,-}/G_{\varphi,\upsilon}\]
such that
\[\left(A_{\upsilon\ast\varrho,\vartheta},G_{\upsilon\ast\varrho},\vartheta\right)\iso{G_{\upsilon}}\left(A_{\upsilon\ast\varpi,\chi},G_{\upsilon\ast\varpi},\chi\right)\]
for every $(\upsilon\ast\varrho,\vartheta)\in\C^d(B:G_{\varphi},U,\varphi)_{\upsilon,+}$ and $(\upsilon\ast\varpi,\chi)\in\Omega_{B:G_\varphi}^\upsilon(\overline{(\upsilon\ast\varrho,\vartheta)})$.
\end{lem}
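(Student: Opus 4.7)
The plan is to transport the bijection $\Theta_{B,\upsilon}$ produced in Lemma \ref{lem:Product chain construct bijections 1} through the Clifford-induction map \eqref{eq:Product of chain, induction yields bijection of sets}, and then verify that inducing a character preserves the block isomorphism of character triples in the required way.

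First, observe that the assignment $(\sigma,\eta)\mapsto(\sigma,\eta^{G_\sigma})$ in \eqref{eq:Product of chain, induction yields bijection of sets} is visibly $A_{B,\varphi,\upsilon}$-equivariant and $G_{\varphi,\upsilon}$-equivariant on the sets $\C^d(B_{\varphi,\upsilon},U,\varphi)_\upsilon$, so it restricts to parity-preserving bijections
\[\alpha^{\pm}_{\upsilon}:\C^d(B_{\varphi,\upsilon},U,\varphi)_{\upsilon,\pm}/G_{\varphi,\upsilon}\to\C^d(B:G_\varphi,U,\varphi)_{\upsilon,\pm}/G_{\varphi,\upsilon}.\]
We then set
\[\Omega^\upsilon_{B:G_\varphi}:=\alpha^{-}_{\upsilon}\circ\Theta_{B,\upsilon}\circ(\alpha^{+}_{\upsilon})^{-1},\]
which is immediately an $A_{B,\varphi,\upsilon}$-equivariant bijection of the desired form.

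Now take a representative $(\upsilon\ast\varrho,\vartheta)\in\C^d(B:G_\varphi,U,\varphi)_{\upsilon,+}$. By Clifford theory applied to $\varphi$ there is some $\eta\in\irr^d(G_{\varphi,\upsilon,\varrho}\mid\varphi)$ with $\vartheta=\eta^{G_{\upsilon\ast\varrho}}$, and the pair $(\upsilon\ast\varrho,\eta)$ lies in $\C^d(B_{\varphi,\upsilon},U,\varphi)_{\upsilon,+}$. Applying Lemma \ref{lem:Product chain construct bijections 1} we find $(\upsilon\ast\varpi,\kappa)\in\Theta_{B,\upsilon}(\overline{(\upsilon\ast\varrho,\eta)})$ satisfying
\[\bigl(A_{\varphi,\upsilon\ast\varrho,\eta},G_{\varphi,\upsilon\ast\varrho},\eta\bigr)\iso{G_{\varphi,\upsilon}}\bigl(A_{\varphi,\upsilon\ast\varpi,\kappa},G_{\varphi,\upsilon\ast\varpi},\kappa\bigr).\]
By construction $\overline{(\upsilon\ast\varpi,\chi)}=\Omega^\upsilon_{B:G_\varphi}(\overline{(\upsilon\ast\varrho,\vartheta)})$ with $\chi:=\kappa^{G_{\upsilon\ast\varpi}}$, so the remaining task is to deduce the corresponding $G_\upsilon$-block isomorphism at the induced level.

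The core step is therefore a compatibility statement asserting that Clifford induction along the inclusions $G_{\varphi,\upsilon\ast\varrho}\leq G_{\upsilon\ast\varrho}$ and $G_{\varphi,\upsilon\ast\varpi}\leq G_{\upsilon\ast\varpi}$ sends the $G_{\varphi,\upsilon}$-block isomorphism above to a $G_\upsilon$-block isomorphism
\[\bigl(A_{\upsilon\ast\varrho,\vartheta},G_{\upsilon\ast\varrho},\vartheta\bigr)\iso{G_\upsilon}\bigl(A_{\upsilon\ast\varpi,\chi},G_{\upsilon\ast\varpi},\chi\bigr).\]
This is essentially the content of the Clifford-type compatibility principle already used by the author (for instance, \cite[Proposition 2.10]{Ros22}): one only has to check the stabilizer identifications $A_{\upsilon\ast\varrho,\vartheta}=A_{\varphi,\upsilon\ast\varrho,\eta}G_{\upsilon\ast\varrho}$ and $A_{\varphi,\upsilon\ast\varrho,\eta}\cap G_{\upsilon\ast\varrho}=G_{\varphi,\upsilon\ast\varrho}$ (both instances of standard Clifford theory applied to the $A_{\upsilon\ast\varrho}$-action on the $\varphi$-constituent of $\vartheta_U$), the analogous identities on the $\varpi$-side, and the block-induction relation $\bl(\eta)^G=B=\bl(\kappa)^G$, which follows from the definition of $B_{\varphi,\upsilon}$ together with $\bl(\vartheta)=\bl(\eta)^{G_{\upsilon\ast\varrho}}$.

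The principal obstacle is precisely this lifting: one must verify carefully the stabilizer and block-induction compatibilities underlying the cited Clifford-induction principle, rather than the purely combinatorial shuffling of chains, which is automatic from Lemma \ref{lem:Product chain construct bijections 1}. Once that technical verification is in place, the $A_{B,\varphi,\upsilon}$-equivariance of $\Omega^\upsilon_{B:G_\varphi}$ follows because all three maps $\alpha^{+}_{\upsilon}$, $\Theta_{B,\upsilon}$, and $\alpha^{-}_{\upsilon}$ are $A_{B,\varphi,\upsilon}$-equivariant.
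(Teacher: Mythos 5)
Your proposal is correct and takes essentially the same route as the paper: transport $\Theta_{B,\upsilon}$ through the Clifford-induction bijection \eqref{eq:Product of chain, induction yields bijection of sets} and then appeal to the compatibility of block isomorphisms of character triples with Clifford induction, with the stabiliser identities $A_{\upsilon\ast\varrho,\vartheta}=A_{\varphi,\upsilon\ast\varrho,\eta}G_{\upsilon\ast\varrho}$ and $A_{\varphi,\upsilon\ast\varrho,\eta}\cap G_{\upsilon\ast\varrho}=G_{\varphi,\upsilon\ast\varrho}$ correctly identified as the hypotheses to verify. The one small inaccuracy is the citation: the Clifford-induction compatibility principle needed here is \cite[Proposition 2.8]{Ros22} (the result used throughout Section \ref{sec:Product chains} for exactly this purpose), not \cite[Proposition 2.10]{Ros22}, which instead transports a character correspondence along a change of normal subgroup and is used elsewhere in the paper for a different purpose (Corollary \ref{cor:Bijections over nilpotent blocks}).
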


\begin{proof}
Write $\sigma=\upsilon\ast\varrho$ for some $\varrho\in\mathfrak{N}(G_{\varphi,\upsilon},U)$ and consider $\vartheta\in\irr(G_\sigma\mid \vartheta)$. By Clifford's theorem there exists a unique irreducible character $\eta\in\irr(G_{\varphi,\sigma}\mid \varphi)$ such that $\eta^{G_\sigma}=\vartheta$. Moreover, if $g\in G_{\varphi,\upsilon}$, then $\sigma^g=\upsilon\ast\varrho^g$ and $\eta^g\in\irr(G_{\varphi,\sigma^g}\mid \varphi)$ is the Clifford correspondent of $\vartheta^g\in\irr(G_{\sigma^g}\mid \varphi)$. In particular, the bijection \eqref{eq:Product of chain, induction yields bijection of sets} induces an $A_{B,\varphi,\upsilon}$-equivariant bijection
\begin{align*}
\C^d(B_{\varphi,\upsilon},U,\varphi)_\upsilon/G_{\varphi,\upsilon}&\to \C^d(B:G_\varphi,U,\varphi)_\upsilon/G_{\varphi,\upsilon}
\\
\overline{(\sigma,\eta)}&\mapsto \overline{(\sigma,\eta^{G_\sigma})}.
\end{align*}
If $\Theta_{B,\upsilon}$ is the map given by Lemma \ref{lem:Product chain construct bijections 1}, then we define $\Omega_{B:G_\varphi}^\upsilon$ by setting
\[\Omega_{B:G_\varphi}^\upsilon\left(\overline{(\sigma,\vartheta)}\right):=\overline{(\rho,\chi)}\]
for every $(\sigma,\vartheta)\in\C^d(B:G_\varphi,U,\varphi)_{\upsilon,+}$ and $(\rho,\chi)\in\C^d(B:G_\varphi,U,\varphi)_{\upsilon,-}$ such that $(\rho,\kappa)\in\Theta_{B,\upsilon}(\overline{(\sigma,\eta)})$ and where $\eta\in\irr(G_{\varphi,\sigma}\mid \varphi)$ and $\kappa\in\irr(G_{\varphi,\rho}\mid \varphi)$ are the Clifford correspondents over $\varphi$ of $\vartheta$ and $\chi$ respectively. Write now $\sigma=\upsilon\ast\varrho$ and $\rho=\upsilon\ast\varpi$ so that
\[\left(A_{\varphi,\upsilon\ast\varrho,\eta},G_{\varphi,\upsilon\ast\varrho},\eta\right)\iso{G_{\varphi,\upsilon}}\left(A_{\varphi,\upsilon\ast\varpi,\kappa},G_{\varphi,\upsilon\ast\varpi},\kappa\right)\]
according to Lemma \ref{lem:Product chain construct bijections 1}. Then, applying \cite[Proposition 2.8]{Ros22} we deduce that
\[\left(A_{\upsilon\ast\varrho,\vartheta},G_{\upsilon\ast\varrho},\vartheta\right)\iso{G_{\upsilon}}\left(A_{\upsilon\ast\varpi,\chi},G_{\upsilon\ast\varpi},\chi\right)\]
and hence
\[\left(A_{\upsilon\ast\varrho,\vartheta},G_{\upsilon\ast\varrho},\vartheta\right)\iso{G}\left(A_{\upsilon\ast\varpi,\chi},G_{\upsilon\ast\varpi},\chi\right)\]
thanks to \cite[Lemma 2.11]{Ros22}. To verify the condition on the centralisers of defect groups required in \cite[Lemma 2.11]{Ros22}, suppose that $Q$ is a defect group of $\bl(\vartheta)$ and notice that the last term $D(\upsilon\ast\varrho)$ of $\upsilon\ast\varrho$ is contained in $\O_p(G_{\upsilon\ast\varrho})\leq Q$. Then $\c_{GA_{\upsilon\ast\varrho}}(Q)\leq A_{\upsilon\ast\varrho}$ and we deduce that $\c_{GA_{\upsilon\ast\varrho,\vartheta}}(Q)\leq GA_{\upsilon\ast\varrho,\vartheta}$ as required.
\end{proof}

Finally, assuming that Hypothesis \ref{hyp:Product chains and stable reduction} holds, we can construct bijections between the orbit sets of ${\C^d(B:G_\varphi,U,\varphi)}_\pm$.

\begin{lem}
\label{lem:Product chain construct bijections 3}
Assume Hypothesis \ref{hyp:Product chains and stable reduction}. Then there exists an $A_{B,\varphi}$-equivariant bijection
\[\Omega_{B:G_\varphi}:\C^d(B:G_\varphi,U,\varphi)_+/G_\varphi\to\C^d(B:G_{\varphi},U,\varphi)_-/G_\varphi\]
such that
\[\left(A_{\sigma,\vartheta},G_\sigma,\vartheta\right)\iso{G}\left(A_{\rho,\chi},G_\rho,\chi\right)\]
for every $(\sigma,\vartheta)\in\C^d(B:G_{\varphi},U,\varphi)_+$ and $(\rho,\chi)\in\Omega_{B:G_\varphi}(\overline{(\sigma,\vartheta)})$.
\end{lem}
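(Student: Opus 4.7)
The strategy is to glue together the fiber-wise bijections $\Omega_{B:G_\varphi}^\upsilon$ supplied by Lemma \ref{lem:Product chain construct bijections 2}, indexed by a suitable transversal of avoiding chains. This is a two-level combining procedure entirely analogous in spirit to Lemma \ref{lem:Bijections for union of blocks}, except that the indexing set is now $\mathfrak{A}_{G_\varphi}(G,U)$ rather than a collection of blocks.

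First I would observe that, by uniqueness of the factorisation $\sigma = \upsilon \ast \varrho$ built into the definition of the product of $p$-chains (Lemma \ref{lem:Product of chains definition} and \cite[Proposition 3.6]{Dad94}), the sets $\C^d(B:G_\varphi,U,\varphi)_\upsilon$ form an honest partition of $\C^d(B:G_\varphi,U,\varphi)$ as $\upsilon$ runs over $\mathfrak{A}_{G_\varphi}(G,U)$. Because $\upsilon^g\ast\varrho^g=(\upsilon\ast\varrho)^g$, the $G_\varphi$-action permutes these fibers via its action on $\upsilon$, yielding a canonical bijection
\[
\C^d(B:G_\varphi,U,\varphi)_\pm/G_\varphi \;\longleftrightarrow\; \coprod_{\overline{\upsilon}\,\in\,\mathfrak{A}_{G_\varphi}(G,U)/G_\varphi} \C^d(B:G_\varphi,U,\varphi)_{\upsilon,\pm}/G_{\varphi,\upsilon},
\]
where for each $G_\varphi$-orbit $\overline{\upsilon}$ one selects a representative $\upsilon$. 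This identification is manifestly compatible with the $A_{B,\varphi}$-action on both sides.

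Next I would fix an $A_{B,\varphi}$-transversal $\mathcal{T}$ in $\mathfrak{A}_{G_\varphi}(G,U)/G_\varphi$ and, for each $\upsilon \in \mathcal{T}$, invoke Lemma \ref{lem:Product chain construct bijections 2} to obtain the $A_{B,\varphi,\upsilon}$-equivariant bijection $\Omega_{B:G_\varphi}^\upsilon$. Choosing an $A_{B,\varphi,\upsilon}$-transversal $\mathcal{T}_\upsilon^+$ in $\C^d(B:G_\varphi,U,\varphi)_{\upsilon,+}/G_{\varphi,\upsilon}$ and setting $\mathcal{T}_\upsilon^- := \Omega_{B:G_\varphi}^\upsilon(\mathcal{T}_\upsilon^+)$, the disjoint unions $\coprod_{\upsilon \in \mathcal{T}} \mathcal{T}_\upsilon^\pm$ serve as $A_{B,\varphi}$-transversals in $\C^d(B:G_\varphi,U,\varphi)_\pm/G_\varphi$. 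I would then define
\[
\Omega_{B:G_\varphi}\!\left(\overline{(\sigma,\vartheta)}^x\right) := \Omega_{B:G_\varphi}^\upsilon\!\left(\overline{(\sigma,\vartheta)}\right)^x
\]
for $(\sigma,\vartheta) \in \mathcal{T}_\upsilon^+$, $\upsilon \in \mathcal{T}$, and $x \in A_{B,\varphi}$; the $A_{B,\varphi,\upsilon}$-equivariance of each $\Omega_{B:G_\varphi}^\upsilon$ makes this prescription well-defined, while $A_{B,\varphi}$-equivariance is then forced by construction.

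The required $G$-block isomorphism of character triples follows from the $G_\upsilon$-block isomorphism delivered by Lemma \ref{lem:Product chain construct bijections 2}, upgraded via \cite[Lemma 2.11]{Ros22} after noting that the final term $D(\upsilon\ast\varrho)$ lies in $\O_p(G_{\upsilon\ast\varrho})$ and hence in every defect group of $\bl(\vartheta)$, exactly as in the closing paragraph of the proof of Lemma \ref{lem:Product chain construct bijections 2}. I do not expect any serious obstacle beyond the two-level bookkeeping; the only delicate point is to insist that $\mathcal{T}$ is an $A_{B,\varphi}$-transversal on $G_\varphi$-orbits (rather than merely a set of $G_\varphi$-orbit representatives), so that the $A_{B,\varphi}$-extension of the piecewise prescription genuinely produces a well-defined equivariant bijection.
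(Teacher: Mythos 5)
Your proof is correct and follows essentially the same approach as the paper: partition $\C^d(B:G_\varphi,U,\varphi)_\pm$ by the fibers over $G_\varphi$-orbits of avoiding chains, choose a two-level transversal ($A_{B,\varphi}$ on orbits of $\upsilon$, then $A_{B,\varphi,\upsilon}$ within each fiber), glue the $\Omega^\upsilon_{B:G_\varphi}$, and extend $A_{B,\varphi}$-equivariantly. Your extra care in explicitly upgrading the $\iso{G_\upsilon}$ relation of Lemma \ref{lem:Product chain construct bijections 2} to $\iso{G}$ via \cite[Lemma 2.11]{Ros22} is harmless and in fact mirrors what the proof of that lemma already establishes internally.
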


\begin{proof}
To start, fix an $A_{B,\varphi}$-transversal $\mathcal{A}$ in $\mathfrak{A}_{G_\varphi}(G,U)$. For each $\upsilon\in\mathcal{A}$ we consider the map $\Omega_{B:G_\varphi}^\upsilon$ given by Lemma \ref{lem:Product chain construct bijections 2} and fix an $A_{B,\varphi,\upsilon}$-transversal $\mathcal{T}^\upsilon_+$ in $\C^d(B:G_\varphi,U,\varphi)_{\upsilon,-}$. By choosing an element $(\upsilon\ast\varpi,\chi)$ in each $G_{\varphi,\upsilon}$-orbit $\Omega^\upsilon_{B:G_\varphi}(\overline{(\upsilon\ast\varrho,\vartheta)})$ for every $(\upsilon\ast\varrho,\vartheta)\in\mathcal{T}_+^\upsilon$ (and where $\overline{(\upsilon\ast\varrho,\vartheta)}$ is the $G_{\varphi,\upsilon}$-orbit of $(\upsilon\ast\varrho,\vartheta)$) we obtain an $A_{B,\varphi,\upsilon}$-transversal in $\C^d(B:G_\varphi,U,\varphi)_{\upsilon,-}$ which we denote by $\mathcal{T}_-^\upsilon$. Then, the sets (now of $G_\varphi$-orbits)
\[\mathcal{T}_+:=\left\lbrace\overline{(\upsilon\ast\varrho,\vartheta)}\enspace\middle|\enspace(\upsilon\ast\varrho,\vartheta)\in\mathcal{T}^\upsilon_+,\upsilon\in\mathcal{A}\right\rbrace\]
and
\[\mathcal{T}_-:=\left\lbrace\overline{(\upsilon\ast\varpi,\chi)}\enspace\middle|\enspace(\upsilon\ast\varpi,\chi)\in\mathcal{T}^\upsilon_-,\upsilon\in\mathcal{A}\right\rbrace\]
are $A_{B,\varphi}$-transversals in $\C^d(B:G_\varphi,U,\varphi)_+/G_\varphi$ and $\C^d(B:G_\varphi,U,\varphi)_-/G_\varphi$ respectively in bijection with each other. We now obtain an $A_{B,\varphi}$-equivariant bijection $\Omega_{B:G_\varphi}$ by setting
\[\Omega_{B:G_\varphi}\left(\overline{(\sigma,\vartheta)}^x\right):=\overline{(\rho,\chi)}^x\]
for every $x\in A_{B,\varphi}$ and every $G_\varphi$-orbits $\overline{(\sigma,\vartheta)}\in\mathcal{T}_+$ and $\overline{(\rho,\chi)}\in\mathcal{T}_-$ whose representatives can be chosen of the form $(\sigma,\vartheta)=(\upsilon\ast\varrho,\vartheta)$ and $(\rho,\chi)=(\upsilon\ast\varpi,\chi)$ for certain $\upsilon\in\mathcal{A}$ and $\varrho,\varpi\in\mathfrak{N}(G_{\varphi,\upsilon},U)$ and whose $G_{\varphi,\upsilon}$-orbits correspond via $\Omega^\upsilon_{B:G_\varphi}$. Moreover, it follows from the above choices that the $G$-block isomorphisms given by $\Omega^\upsilon_{B:G_\varphi}$ imply that
\[\left(A_{\sigma,\vartheta},G_\sigma,\vartheta\right)\iso{G}\left(A_{\rho,\chi},G_\rho,\chi\right)\]
for every $(\sigma,\vartheta)\in\C^d(B:G_{\varphi},U,\varphi)_+$ and $(\rho,\chi)\in\Omega_{B:G_\varphi}(\overline{(\sigma,\vartheta)})$ by choosing $(\sigma,\vartheta)=(\upsilon\ast\varrho,\vartheta)$ and $(\rho,\chi)=(\upsilon\ast\varpi,\chi)$ as above.
\end{proof}

As claimed before, we can now use the cancellation of $p$-chains given by Lemma \ref{lem:Product of chains reduction} to show that in order to control the set $\C^d(B,U,\varphi)$ it is enough to consider pairs in $\C^d(B:G_\varphi,U,\varphi)$, that is, pairs whose associated $p$-chain is the product of a $p$-chain avoiding $G_\varphi$ and a $p$-chain contained in $G_\varphi$.

\begin{prop}
\label{prop:Product of chains reduuction with pairs}
Assume Hypothesis \ref{hyp:Product chains and stable reduction}. Then there exists an $A_{B,\varphi}$-equivariant bijection
\[\Omega_B:\C^d(B,U,\varphi)_+/G_\varphi\to\C^d(B,U,\varphi)_-/G_\varphi\]
such that
\[\left(A_{\sigma,\vartheta},G_\sigma,\vartheta\right)\iso{G}\left(A_{\rho,\chi},G_\rho,\chi\right)\]
for every $(\sigma,\vartheta)\in\C^d(B,U,\varphi)_+$ and $(\rho,\chi)\in\Omega_{B}(\overline{(\sigma,\vartheta)})$.
\end{prop}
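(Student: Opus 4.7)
The plan is to split $\C^d(B,U,\varphi)$ into two $A_{B,\varphi}$-stable pieces according to whether the underlying $p$-chain belongs to $\mathfrak{N}(G:G_\varphi,U)$, and to define $\Omega_B$ on each piece separately. Since $A_{B,\varphi}$ normalises both $U$ (by hypothesis on $U$) and $G_\varphi$, the construction $\upsilon\ast\varrho$ is $A_{B,\varphi}$-equivariant and this partition is preserved.

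For the product part, the pairs $(\sigma,\vartheta)\in\C^d(B,U,\varphi)$ with $\sigma\in\mathfrak{N}(G:G_\varphi,U)$ are by definition exactly the elements of $\C^d(B:G_\varphi,U,\varphi)$. Here I would apply Lemma \ref{lem:Product chain construct bijections 3} directly: under Hypothesis \ref{hyp:Product chains and stable reduction} it supplies an $A_{B,\varphi}$-equivariant bijection $\Omega_{B:G_\varphi}$ between the $G_\varphi$-orbits of opposite parity, already satisfying the $G$-block isomorphism of character triples required in the conclusion.

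For the non-product part, I would use the self-inverse cancellation $\gamma$ of Lemma \ref{lem:Product of chains reduction}, applied with $H=G_\varphi$. The key observation — implicit in Dade's construction in \cite[Theorem 4.3]{Dad94} as reformulated here — is that $\gamma$ preserves stabilisers, so that $G_\sigma=G_{\gamma(\sigma)}$ and $A_\sigma=A_{\gamma(\sigma)}$. Consequently, for every $(\sigma,\vartheta)$ with $\sigma\in\mathfrak{N}(G,U)\setminus\mathfrak{N}(G:G_\varphi,U)$, the pair $(\gamma(\sigma),\vartheta)$ lies again in $\C^d(B,U,\varphi)$ (same character, same block, same relation to $\varphi$, same $p$-defect) but with opposite parity since $|\gamma(\sigma)|=|\sigma|\pm 1$. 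The assignment
\[\overline{(\sigma,\vartheta)}\mapsto\overline{(\gamma(\sigma),\vartheta)}\]
is then an $A_{B,\varphi}$-equivariant involution on the $G_\varphi$-orbits of this part (it is well defined on orbits because $\gamma$ is $\n_A(U,G_\varphi)$-equivariant and $A_{B,\varphi}\leq\n_A(U,G_\varphi)$). The required $G$-block isomorphism of character triples is trivial on this piece, since the two triples $(A_{\sigma,\vartheta},G_\sigma,\vartheta)$ and $(A_{\gamma(\sigma),\vartheta},G_{\gamma(\sigma)},\vartheta)$ are literally equal. Gluing the two bijections produces $\Omega_B$.

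The principal obstacle is the stabiliser-preservation property of $\gamma$: one has to trace Dade's explicit construction from \cite[Theorem 4.3]{Dad94} (in the form stated in Lemma \ref{lem:Product of chains reduction}) to confirm that $\gamma$ not only exchanges parity but also fixes the full pointwise stabiliser in $A$, so that it is genuinely $\n_A(U,G_\varphi)$-equivariant and carries characters along untouched. Once this technical point is in hand, the rest of the argument is organisational: combining the two pieces into a single $A_{B,\varphi}$-equivariant bijection, using Lemma \ref{lem:Product chain construct bijections 3} on one part and the trivial involution induced by $\gamma$ on the other.
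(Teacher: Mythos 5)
There is a genuine gap in your treatment of the non-product part. You claim that the cancellation map $\gamma$ from Lemma \ref{lem:Product of chains reduction} satisfies $G_\sigma = G_{\gamma(\sigma)}$ and $A_\sigma = A_{\gamma(\sigma)}$, so that the pair $(\gamma(\sigma),\vartheta)$ has the same character $\vartheta$ and the two character triples are literally equal. This is false. The lemma only asserts that $\gamma$ is $\n_A(U,H)$-equivariant (with $H = G_\varphi$), and equivariance of a bijection implies precisely that the stabilisers \emph{inside $\n_A(U,G_\varphi)$} agree, i.e.\ $\n_A(U,G_\varphi)_\sigma = \n_A(U,G_\varphi)_{\gamma(\sigma)}$, whence $G_{\varphi,\sigma} = G_{\varphi,\gamma(\sigma)}$. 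The full stabilisers $G_\sigma$ and $G_{\gamma(\sigma)}$ need not coincide: an element of $G\setminus G_\varphi$ may stabilise $\sigma$ while moving $\gamma(\sigma)$. Indeed Dade's construction in \cite[Theorem 4.3]{Dad94} adds or deletes a term of $\sigma$ depending on its intersection with $H$, and there is no reason for elements outside $H$ fixing $\sigma$ to fix the modified chain. Consequently $\vartheta$ is a character of $G_\sigma$ that does not make sense on $G_{\gamma(\sigma)}$, and the ``trivial involution'' $\overline{(\sigma,\vartheta)} \mapsto \overline{(\gamma(\sigma),\vartheta)}$ is not defined.

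What the paper actually does at this point is to pass through the Clifford correspondence: since $G_{\varphi,\sigma} = G_{\varphi,\gamma(\sigma)}$, the character $\vartheta \in \irr(G_\sigma \mid \varphi)$ has a Clifford correspondent $\zeta \in \irr(G_{\varphi,\sigma} \mid \varphi)$, and one defines $\gamma(\vartheta) := \zeta^{G_{\gamma(\sigma)}} \in \irr(G_{\gamma(\sigma)} \mid \varphi)$. The map $\overline{(\sigma,\vartheta)} \mapsto \overline{(\gamma(\sigma),\gamma(\vartheta))}$ is then the correct involution, it preserves the defect and the block (one needs \cite[Lemma 5.3.4]{Nag-Tsu89} to see that block induction to $G$ is the same for $\vartheta$, $\zeta$, and $\gamma(\vartheta)$), and the $G$-block isomorphism of character triples is no longer trivial but follows from \cite[Proposition 2.8]{Ros22} applied to the trivial isomorphism at the level of the common stabiliser $G_{\varphi,\sigma}$, together with \cite[Lemma 2.11]{Ros22} to pass from $\iso{G_\varphi}$ to $\iso{G}$. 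The product part of your decomposition, invoking Lemma \ref{lem:Product chain construct bijections 3}, is handled exactly as in the paper.
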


\begin{proof}
Let $\gamma$ be the map given by Lemma \ref{lem:Product of chains reduction} applied with $H=G_\varphi$ and denote by $\mathcal{M}$ the set of pairs $(\sigma,\vartheta)\in\C^d(B,U,\varphi)$ such that $\sigma\in\mathfrak{N}(G,U)\setminus\mathfrak{N}(G:G_\varphi,U)$. For any such $\sigma$, Lemma \ref{lem:Product of chains reduction} implies that $G_{\varphi,\sigma}=G_{\varphi,\gamma(\sigma)}$. Then, if $\vartheta\in\irr(G_\sigma\mid \varphi)$ there exists a unique $\gamma(\vartheta)\in\irr(G_{\gamma(\sigma)}\mid \varphi)$ whose Clifford correspondent in $G_{\varphi,\sigma}=G_{\varphi,\gamma(\sigma)}$ over $\varphi$ coincide with the one of $\vartheta$. Using this observation, we can then construct an $A_{B,\varphi}$-equivariant bijection
\begin{align*}
\Gamma:\mathcal{M}_+/G_\varphi &\to\mathcal{M}_-/G_\varphi
\\
\overline{(\sigma,\vartheta)}&\mapsto\overline{(\gamma(\sigma),\gamma(\vartheta))}
\end{align*}
where we use the notation $\overline{(\sigma,\vartheta)}$ to indicate the $G_\varphi$-orbit of $(\sigma,\vartheta)$. Observe here that $(\gamma(\sigma),\gamma(\vartheta))\in\mathcal{M}_-$ since $|\gamma(\sigma)|=|\sigma|\pm 1$ according to Lemma \ref{lem:Product of chains reduction}. If we denote by $\zeta\in\irr(G_{\varphi,\sigma}\mid \varphi)$ the (common) Clifford correspondence of $\vartheta$ and $\gamma(\vartheta)$, then we have
\[\left(A_{\varphi,\sigma,\zeta},G_{\varphi,\sigma},\zeta\right)\iso{G_\varphi}\left(A_{\varphi,\gamma(\sigma),\zeta},G_{\varphi,\gamma(\sigma)},\zeta\right)\]
which together with \cite[Proposition 2.8]{Ros22} (and recalling that $\vartheta=\zeta^{G_\sigma}$ and $\gamma(\vartheta)=\zeta^{G_{\gamma(\sigma)}}$) yields
\[\left(A_{\sigma,\vartheta},G_{\sigma},\vartheta\right)\iso{G}\left(A_{\gamma(\sigma),\gamma(\vartheta)},G_{\gamma(\sigma)},\gamma(\vartheta)\right).\]
Finally, noticing that $\mathcal{M}$ is the complement of $\C^d(B:G_\varphi,U,\varphi)$ in $\C^d(B,U,\varphi)$ we can define $\Omega_B$ by combining $\Gamma$ with the bijection $\Omega_{B:G_\varphi}$ given by Lemma \ref{lem:Product chain construct bijections 3}.
\end{proof}

In Section \ref{sec:Proof of equivalence of conjectures}, we will use the above corollary to obtain a reduction for Conjecture \ref{conj:CTC non-central Z} to the case where the character $\varphi\in\irr(U)$ is invariant under the action of $A$. Once this is established, we can consider a character correspondence, introduced by Dade in \cite{Dad94}, to reduce Conjecture \ref{conj:CTC non-central Z} to the case in which $U$ is contained in the centre of $G$ and hence obtain Theorem \ref{thm:CTC equivalent to CTC non-central}. In the next section, we collect some of the properties of Dade's correspondence.

\subsection{Dade's correspondence over normal $p$-subgroups}
\label{sec:Dade correspondence}

We now consider the case in which the irreducible character $\varphi\in\irr(U)$ from Conjecture \ref{conj:CTC non-central Z} is assumed to be $A$-invariant. Under this assumption, we can construct an extension $\wt{A}$ of $A/U$ by a central $p$-subgroup $\wt{U}$ of $\wt{A}$ inducing correspondences of characters, $p$-blocks, and $p$-chains. The central extensions studied here can be seen as a special case (in which the normal subgroup has $p$-power order) of a more general situation that was first introduced in \cite{Dad94} and have then been reformulated in character theoretic terms by Eaton and Robinson \cite[Section 1]{Eat-Rob02}. The character correspondence resulting from Dade's work extends the one introduced by Fong in \cite{Fon61}. In \cite[Section 4]{Ros22} we have shown that the latter correspondence is compatible with isomorphisms of character triples. The aim of this section is prove an analogue of that result to the case described above. More precisely, we consider the following situation.

\begin{hyp}
\label{hyp:Dade's correspondence}
Let $U$ be a normal $p$-subgroup of $A$ and consider an $A$-invariant character $\varphi\in\irr(U)$. Fix a projective representation $\Pr$ of $A$ associated with the character triple $(A,U,\varphi)$ and let $\alpha$ denote the factor set of $\Pr$. Recall that $\alpha$ can be identified with a factor set of the quotient $A/U$ and that we can further assume that $\alpha(x,y)^{|U|^2}=1$ for every $x,y\in A$ according to \cite[Theorem 3.5.7]{Nag-Tsu89}. We denote by $S$ the $p$-group generated by the values of $\alpha$ and by $\wh{A}$ the $p$-central extension of $A$ by $S$ defined by $\Pr$, or better by $\alpha$, as defined in \cite[Section 5.3]{Nav18}. Recall that $\wh{A}$ consists of pairs $(x,s)$ for $x\in A$ and $s\in S$ and consider the canonical projection
\begin{align*}
\epsilon:\wh{A}&\to A
\\
(x,s)&\mapsto x.
\end{align*}
Set $\wh{X}:=\epsilon^{-1}(X)$ for every $X\leq A$ and observe that, since $\alpha$ is trivial on $U\times U$, we have a direct product decomposition $\wh{X}=X_0\times S$ whenever $X\leq U$ and where $X_0:=\{(x,1)\mid x\in X\}$. In this case, the projection $\epsilon$ induces an isomorphism of $X_0$ with $X$ and we identify any character $\vartheta\in\irr(X)$ with the corresponding character $\vartheta_0$ of $X_0$ defined by $\vartheta_0(x,1):=\vartheta(x)$ for every $x\in X$. Furthermore, the character $\vartheta_0$ can be identified via inflation with $\wh{\vartheta}:=\vartheta_0\times 1_S\in\irr(\wh{X})$. Let $\wh{\lambda}$ be the linear character of $\wh{U}$ given by $\wh{\lambda}(x,s):=s^{-1}$ for every $x\in U$ and $s\in S$ and denote by $\wh{\Pr}$ the irreducible representation of $\wh{A}$ given by $\wh{\Pr}(x,s):=s\Pr(x)$ for every $x\in A$ and $s\in S$. If $\tau$ denotes the irreducible character afforded by $\wh{\Pr}$, then $\tau$ extends $\wh{\varphi}\wh{\lambda}^{-1}\in\irr(\wh{U})$ and $\varphi_0\in\irr(U_0)$. Finally, define $\wt{X}:=\wh{X}U_0/U_0$ for every $X\leq A$ and denote by $\wt{\varphi}$ the character $\wh{\lambda}$ viewed as a character of $\wt{U}=\wh{U}/U_0$. In other words, set $\wt{\varphi}(U_0(x,s)):=s^{-1}$ for every $x\in U$ and $s\in S$.
\end{hyp}

A proof of the following properties of Dade's correspondence can be found in Eaton's PhD thesis \cite[Section 3.2]{Eat99}. We refer the reader to the discussion given in \cite[Section 1]{Eat-Rob02} for a more general situation.

\begin{prop}[Dade's correspondence]
\label{prop:Dade's correspondence}
Consider the setting of Hypothesis \ref{hyp:Dade's correspondence}and let $U\leq H\leq A$. Then:
\begin{enumerate}
\item $\wt{H}$ is a $p$-central extension of $H/U$ by the central $p$-subgroup $\wt{U}\simeq S$;
\item there exists a bijection
\begin{align}
\label{eq:Dade's correspondence character correspondence}
\irr\left(H\enspace\middle|\enspace\varphi\right)&\to\irr\left(\wt{H}\enspace\middle|\enspace \wt{\varphi}\right)
\\
\psi&\mapsto \wt{\psi}\nonumber
\end{align}
such that, if $\wh{\psi}$ is the inflation to $\wh{H}$ of the character of $\wh{H}/S$ corresponding to $\psi$ via the isomorphism $\wh{H}/S\simeq H$ and $\wt{\psi}'$ is the inflation to $\wh{H}$ of the character $\wt{\psi}$ of $\wt{H}=\wh{H}/U_0$, then $\wh{\psi}=\tau_{\wh{H}}\wt{\psi}'$;
\item for every block $B$ of $H$ there is a set of blocks $\mathcal{D}_\varphi(B)$ of $\wt{H}$ such that the bijection \eqref{eq:Dade's correspondence character correspondence} restricts to a bijection
\begin{equation}
\label{eq:Dade's correspondence character correspondence with blocks}
\irr\left(B\enspace\middle|\enspace\varphi\right)\to\irr\left(\mathcal{D}_\varphi(B)\enspace\middle|\enspace \wt{\varphi}\right)
\end{equation}
and $\mathcal{D}_\varphi(B)$ is uniquely determined by $B$ and $\varphi$ via \eqref{eq:Dade's correspondence character correspondence with blocks};
\item for every $\psi\in\irr(H\mid \varphi)$ we have $d(\psi)=d(\wt{\psi})+d(\varphi)-\log_p(|\wt{U}|)$; and
\item for every $\wh{x}\in\wh{A}$ and $\psi\in\irr(H\mid \psi)$ we have $\wt{H^x}=\wt{H}^{\wt{x}}$ and $\wt{\psi^x}=(\wt{\psi})^{\wt{x}}$, where we set $x:=\epsilon(\wh{x})$ and $\wt{x}:=U_0\wh{x}$. In particular $\mathcal{D}_\varphi(B^x)$ coincides with the set of blocks $\wt{B}^{\wt{x}}$ for $\wt{B}\in\mathcal{D}_\varphi(B)$.
\end{enumerate}
\end{prop}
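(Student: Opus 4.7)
The plan is to transport characters between $H$ and $\wt{H}$ through the intermediate $p$-central extension $\wh{H}$, exploiting the two quotients $\wh{H}/S \simeq H$ (under which inflation identifies $\irr(H)$ with the characters of $\wh{H}$ trivial on $S$) and $\wh{H}/U_0 = \wt{H}$ (under which inflation identifies $\irr(\wt{H})$ with the characters of $\wh{H}$ trivial on $U_0$), with the passage mediated by multiplication by the character $\tau_{\wh{H}}$. Assertion (i) is a direct structural verification: since $U \unlhd A$ and the factor set $\alpha$ is normalized with $\alpha(u,x)=\alpha(x,u)=1$ for $u \in U$ and $x \in A$, the subgroup $U_0$ is normal in $\wh{A}$, and $\wt{U} = \wh{U}/U_0$ is central in $\wt{A}$ because $S$ lies in $\z(\wh{A})$; the isomorphism $\wt{H}/\wt{U} \simeq \wh{H}/\wh{U} \simeq H/U$ completes the extension structure.

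For (ii), the key point is that $\tau|_{\wh{H}}$ is an extension of $\wh{\varphi}\wh{\lambda}^{-1} \in \irr(\wh{U})$ to $\wh{H}$, so Gallagher's theorem yields a bijection between $\irr(\wh{H}\mid\wh{\varphi}\wh{\lambda}^{-1})$ and $\irr(\wh{H}/\wh{U})$. I would then factor the latter through $\wt{H} = \wh{H}/U_0$: given $\wt{\psi} \in \irr(\wt{H}\mid\wt{\varphi})$, its inflation $\wt{\psi}'$ to $\wh{H}$ lies above $\wh{\lambda}$, and the product $\tau_{\wh{H}}\wt{\psi}'$ is constant on cosets of $S$ thanks to the cancellation $\tau(1,s) = s\,\varphi(1)$ against $\wt{\psi}'(1,s) = \wt{\psi}(1)\,s^{-1}$ (the second identity being forced by the centrality of $\wt{U}$ and Schur's lemma). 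The product therefore descends via $\wh{H}/S \simeq H$ to a character $\psi \in \irr(H\mid\varphi)$, and the correspondence $\psi \leftrightarrow \wt{\psi}$ is bijective by the uniqueness in Gallagher, with the compatibility $\wh{\psi} = \tau_{\wh{H}}\wt{\psi}'$ built in by construction.

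The defect formula (iv) falls out of the identity $\wh{\psi}(1) = \tau(1)\wt{\psi}(1) = \varphi(1)\wt{\psi}(1)$ together with $|\wt{H}|_p = |H/U|_p\,|S|$ and the standard expression $p^{d(\varphi)} = |U|/\varphi(1)$. For (iii), I would track the correspondence at the level of central characters: since multiplication by $\tau_{\wh{H}}$ and the two inflation maps are algebra-theoretic in nature, the characters of a fixed block $B$ of $H$ correspond through the $\tau$-twist to characters lying in a well-defined (and uniquely determined) set $\mathcal{D}_\varphi(B)$ of blocks of $\wt{H}$. Finally (v) is formal: because $\tau$ is itself a character of the ambient group $\wh{A}$, conjugation by $\wh{x} \in \wh{A}$ preserves the decomposition $\wh{\psi} = \tau_{\wh{H}}\wt{\psi}'$, and the quotients $\wh{H}/S \simeq H$ and $\wh{H}/U_0 = \wt{H}$ intertwine conjugation by $\wh{x}$ with conjugation by $x = \epsilon(\wh{x})$ and $\wt{x} = U_0\wh{x}$ respectively.

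The main obstacle I anticipate is making (iii) fully precise, since tracking the block structure through the two different quotients of $\wh{H}$ and the $\tau$-twist requires either a careful analysis of block idempotents in $\mathcal{O}\wh{H}$ (noting that characters of $\wh{H}$ trivial on $S$ form a union of $\wh{H}$-blocks rather than a single one, because $S$ is a $p$-group) or a module-theoretic reformulation of the correspondence. Since the desired statement is already recorded in Eaton's thesis, the cleanest path for the paper is to extract the bijection and its block-compatibility from there and verify only those refinements, such as the explicit characterization of $\mathcal{D}_\varphi(B)$ and the defect identity in (iv), that are directly needed for the applications in the following sections.
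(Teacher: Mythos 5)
Your proposal is correct and runs parallel to the paper's proof, which simply cites Lemma~3.6 and Proposition~3.10 of Eaton's thesis \cite{Eat99} for (ii) and (iii), and gives one-line justifications for (i), (iv), and (v). Your Gallagher argument for (ii) --- applied to $U_0 \unlhd \wh{H}$ with $\tau_{\wh{H}}$ extending $\varphi_0$, and using the central-character computation on $S$ to single out exactly those $\tau_{\wh{H}}\wt{\psi}'$ that are trivial on $S$ --- is a correct self-contained account of what Eaton's lemma supplies, and your treatments of (i), (iv), and (v) match the paper. One small misstatement in your closing paragraph: since $S$ is a central $p$-subgroup of $\wh{H}$, the characters of $\wh{H}$ trivial on $S$ do \emph{not} form a union of $\wh{H}$-blocks (blocks of $\wh{H}$ biject with blocks of $\wh{H}/S$, so $\irr(\wh{H}/S)$ inflated meets each $\wh{H}$-block in a proper, generally nonempty, slice rather than occupying whole blocks). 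This is in fact part of why (iii) is not immediate, and your decision to defer its block-theoretic bookkeeping to Eaton's thesis is the right one and matches the paper exactly.
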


\begin{proof}
The first statement follows directly from the definition, while (ii) and (iii) follow from Lemma 3.6 and Proposition 3.10 in \cite{Eat99} respectively. The statement of (iv) follows by a direct inspection of the character degrees of $\psi$ and $\wt{\psi}$, while (v) is obtained by arguing as in the proof of \cite[Theorem 4.2 (v)]{Ros22}.
\end{proof}

In the above situation, we refer to any block $\wt{B}$ in the set $\mathcal{D}_\varphi(B)$ as \emph{a Dade correspondent} of $B$ with respect to $\varphi$. Similarly, we say that $B$ is \emph{the Dade correspondent}, with respect to $\varphi$, of any $\wt{B}\in\mathcal{D}_\varphi(B)$. Since $U$ is a $p$-group, we deduce that the collection of sets $\mathcal{D}_\varphi(B)$, for $B$ running over all blocks of $H$, is a partition of the set of blocks of $\wt{H}$. In the general case (where $U$ is not a $p$-group), only those blocks of $\wt{H}$ covering the block to which the restriction $\wt{\varphi}_{\O_{p'}(\wt{U})}$ belongs can be obtained as Dade correspondents (of some block $B$ covering the block to which $\varphi$ belongs) with respect to $\varphi$.

For later use and for the reader's convenience, we recall the following compatibility of Dade's correspondence with block induction.

\begin{lem}
\label{lem:Dade's correspondence, block induction}
Consider the setting of Hypothesis \ref{hyp:Dade's correspondence}. Let $U\leq J\leq H\leq A$, $\wt{C}$ be a block of $\wt{J}$, and suppose that $\wt{B}:=\wt{C}^{\wt{H}}$ is defined. If $C\in\Bl(J)$ and $B\in\Bl(H)$ are the Dade correspondent blocks, with respect to $\varphi$, of $\wt{C}$ and $\wt{B}$ respectively, then $C^H$ is defined and coincides with $B$.
\end{lem}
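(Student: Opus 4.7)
The strategy is to lift Brauer induction between $J$ and $H$ to Brauer induction between $\wh{J}$ and $\wh{H}$ on the covering side, where the relation between the original and the $\wt{}$-correspondence becomes transparent via the multiplicative identity of Proposition~\ref{prop:Dade's correspondence}(ii). I would start by fixing $\wt{\psi} \in \irr(\wt{C} \mid \wt{\varphi})$ (non-empty by Proposition~\ref{prop:Dade's correspondence}(iii)) with Dade correspondent $\psi \in \irr(C \mid \varphi)$, and similarly $\wt{\chi} \in \irr(\wt{B} \mid \wt{\varphi})$ with correspondent $\chi \in \irr(B \mid \varphi)$. Their inflations to the covering group satisfy $\wh{\psi} = \tau_{\wh{J}}\,\wt{\psi}'$ in $\irr(\wh{J})$ and $\wh{\chi} = \tau_{\wh{H}}\,\wt{\chi}'$ in $\irr(\wh{H})$.

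I would then apply two descent reductions. First, $S \leq \z(\wh{H})$ is a central $p$-subgroup lying in the kernel of $\wh{\psi}$ and $\wh{\chi}$, so inflation along $\wh{J} \to J$ and $\wh{H} \to H$ yields a block bijection sending $\bl(\wh{\psi}) \mapsto C$ and $\bl(\wh{\chi}) \mapsto B$ that commutes with Brauer induction; hence $C^{H} = B$ is equivalent to $\bl(\wh{\psi})^{\wh{H}} = \bl(\wh{\chi})$. Second, $U_{0} \unlhd \wh{H}$ is a normal $p$-subgroup lying in the kernel of $\wt{\psi}'$ and $\wt{\chi}'$, and the analogous quotient $\wh{H} \to \wt{H}$ identifies $\bl(\wt{\psi}')$ with $\wt{C}$ and $\bl(\wt{\chi}')$ with $\wt{B}$ compatibly with Brauer induction. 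Thus the hypothesis $\wt{C}^{\wt{H}} = \wt{B}$ translates to $\bl(\wt{\psi}')^{\wh{H}} = \bl(\wt{\chi}')$.

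The remaining and main step is to transport the identity $\bl(\wt{\psi}')^{\wh{H}} = \bl(\wt{\chi}')$ across the multiplicative relations $\wh{\psi} = \tau_{\wh{J}}\,\wt{\psi}'$ and $\wh{\chi} = \tau_{\wh{H}}\,\wt{\chi}'$ to obtain $\bl(\wh{\psi})^{\wh{H}} = \bl(\wh{\chi})$. The key point, and the main obstacle, is that $\tau$ is a character of the ambient group $\wh{A}$: the block assignments $\bl(\wt{\psi}') \mapsto \bl(\wh{\psi})$ and $\bl(\wt{\chi}') \mapsto \bl(\wh{\chi})$, being implemented by Clifford-theoretic pairing with the same fixed $\tau$ on $\wh{J}$ and $\wh{H}$, commute with Brauer induction from $\wh{J}$ to $\wh{H}$. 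Making this precise amounts to showing that the Dade block correspondence $\mathcal{D}_\varphi$ commutes with Brauer induction at the level of central characters, and can be verified by combining Brauer's characterisation of block induction with the explicit description of $\tau$ in Hypothesis~\ref{hyp:Dade's correspondence}, as carried out in \cite[Chapter~3]{Eat99}. Descending the resulting equality $\bl(\wh{\psi})^{\wh{H}} = \bl(\wh{\chi})$ along $\wh{H} \to H$ then yields $C^{H} = B$.
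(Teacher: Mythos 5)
The paper's entire proof is a one-line citation to \cite[Proposition 3.10]{Eat99}, and your proposal, after two correct descent reductions (quotienting the covering group $\wh{A}$ first by the central $p$-subgroup $S$ and then by the normal $p$-subgroup $U_0$, each compatible with Brauer induction since both kernels are $p$-groups), ultimately defers the key step — that multiplication by $\tau$ respects Brauer induction — to that same reference. So you arrive at the same conclusion by essentially the same route, just with the reduction steps to Eaton's core result made explicit rather than implicit in the citation.
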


\begin{proof}
This follows from the second part of \cite[Proposition 3.10]{Eat99}.
\end{proof}

From now on, we assume furthermore that $U\leq G\unlhd A$. First, we consider the relation between the sets of $p$-chains $\mathfrak{N}(G,U)$ and $\mathfrak{N}(\wt{G},\wt{U})$.

\begin{lem}
\label{lem:Dade's correspondence, chains}
Consider the setting of Hypothesis \ref{hyp:Dade's correspondence} and let $U\leq G\unlhd A$. For every $\sigma\in\mathfrak{N}(G,U)$, we denote by $\wt{\sigma}$ the $p$-chain of $\wt{G}$ whose terms are given by the images $\wt{D}_i$ of the terms $D_i$ of the $p$-chain $\sigma$. Then the map
\begin{align*}
\mathfrak{N}(G,U)&\to\mathfrak{N}\left(\wt{G},\wt{U}\right)
\\
\sigma &\mapsto \wt{\sigma}
\end{align*}
is bijective, equivariant with respect to the action of $A/U\simeq \wt{A}/\wt{U}$, and preserves the length of $p$-chains. In particular $\wt{G_\sigma}=\wt{G}_{\wt{\sigma}}$ for every $\sigma\in\mathfrak{N}(G,U)$.
\end{lem}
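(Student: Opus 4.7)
The plan is to derive the lemma from the natural bijection between subgroups $U\leq D\leq G$ and subgroups $\wt{U}\leq E\leq \wt{G}$, given by $D\mapsto \wh{D}/U_0$ where $\wh{D}:=\epsilon^{-1}(D)$. The first observation is that this is well-defined on $p$-subgroups: if $D$ is a $p$-subgroup containing $U$, then $\wh{D}$ is an extension of the $p$-group $D$ by the $p$-group $S=\ker\epsilon$, hence a $p$-group, and it contains $U_0$ (because $U\leq D$), so $\wh{D}/U_0$ is a $p$-subgroup of $\wt{G}$ containing $\wh{U}/U_0=\wt{U}$. The inverse map is $E\mapsto \epsilon(\wh{E})$, where $\wh{E}$ denotes the preimage of $E$ in $\wh{G}$ under the surjection $\wh{G}\to \wt{G}$. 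Strict inclusions are preserved in both directions because $\epsilon$ has constant kernel $S$ and the quotient $\wh{G}\to\wt{G}$ has constant kernel $U_0$, with both $S$ and $U_0$ contained in every relevant preimage.

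Applying this subgroup correspondence term by term produces the map $\sigma\mapsto\wt{\sigma}$, which is manifestly a bijection between $p$-chains of $G$ starting at $U$ and $p$-chains of $\wt{G}$ starting at $\wt{U}$, and preserves length. For the equivariance, I would observe that conjugation by any lift $\wh{x}\in\wh{A}$ of $x\in A$ sends $\wh{D_i}$ to $\wh{D_i^{x}}$ because $\epsilon$ is a homomorphism; passing to the quotient by $U_0$ converts this into conjugation by $\wt{x}=U_0\wh{x}$, so that $\wt{\sigma^x}=\wt{\sigma}^{\wt{x}}$.

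The identity $\wt{G_\sigma}=\wt{G}_{\wt{\sigma}}$ then follows: one inclusion is immediate from the equivariance. Conversely, if $\wt{u}=U_0\wh{u}\in\wt{G}$ stabilises each $\wt{D_i}$, then $\wh{u}\wh{D_i}\wh{u}^{-1}U_0=\wh{D_i}$; since $U_0\leq\wh{D_i}$, this forces $\wh{u}\wh{D_i}\wh{u}^{-1}=\wh{D_i}$, and applying $\epsilon$ gives $\epsilon(\wh{u})\in G_\sigma$, whence $\wt{u}\in\wt{G_\sigma}$. In particular, since the subgroup bijection sends $D(\sigma)$ to $D(\wt{\sigma})$, the normality condition $D(\sigma)\leq G_\sigma$ translates directly into $D(\wt{\sigma})\leq\wt{G}_{\wt{\sigma}}$, so the map does restrict to a bijection $\mathfrak{N}(G,U)\to\mathfrak{N}(\wt{G},\wt{U})$.

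I do not anticipate any real obstacle in the argument: the entire proof is a bookkeeping exercise on the central extension data from Hypothesis \ref{hyp:Dade's correspondence}. The only point that warrants a moment's care is the containment $U_0\leq\wh{D_i}$ for every term of a $p$-chain starting at $U$, which is what ensures that the quotient by $U_0$ acts faithfully on the preimages and therefore preserves the strict inclusions defining the chain.
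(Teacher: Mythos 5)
Your proof is correct and follows essentially the same route as the paper's: both rest on the lattice correspondence between $p$-subgroups of $G$ containing $U$ and $p$-subgroups of $\wt{G}$ containing $\wt{U}$. The paper phrases this tersely by invoking the correspondence theorem applied to the isomorphism $G/U\simeq\wt{G}/\wt{U}$, whereas you unwind it explicitly via $D\mapsto\wh{D}/U_0$ and its inverse, and then verify equivariance and the stabiliser identity by hand; the content is identical.
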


\begin{proof}
The first statement follows from the correspondence theorem after noticing that $G/U\simeq \wt{G}/\wt{U}$ and that $U$ and $\wt{U}$ are $p$-subgroups. Consider $\sigma\in\mathfrak{N}(G,U)$ and notice that $(G/U)_{\sigma}\simeq(\wt{G}/\wt{U})_{\wt{\sigma}}$ by the equivariance property of the above bijection. Since $(G/U)_\sigma=G_\sigma/U$ and $(\wt{G}/\wt{U})_{\wt{\sigma}}=\wt{G}_{\wt{\sigma}}/\wt{U}$, we deduce that $\wt{G_\sigma}=\wt{G}_{\wt{\sigma}}$.
\end{proof}

The identification of $p$-chains described in Lemma \ref{lem:Dade's correspondence, chains}, together with Dade's correspondence of blocks and the character bijections given in \eqref{eq:Dade's correspondence character correspondence with blocks}, allows us to construct a bijection between the sets $\C^d(B,U,\varphi)$ and $\C^{\wt{d}}(\mathcal{D}_\varphi(B),\wt{U},\wt{\varphi})$ that we now describe. Recall that for a set of blocks $\mathcal{B}$ of a finite group $X$, we denote by $\irr(\mathcal{B})$ the set of irreducible characters belonging to any of the blocks belonging to the set $\mathcal{B}$. Furthermore, if $\sigma$ is a $p$-chain of $X$, then $\irr(\mathcal{B}_\sigma)$ denotes the set of irreducible characters of the stabiliser $X_\sigma$ whose block correspond via block induction to a block in the set $\mathcal{B}$. With this notation, and given $d\geq 0$, a $p$-subgroup $U\leq X$, and $\varphi\in\irr(U)$, the set $\C^d(\mathcal{B},U,\varphi)$ consists of pairs $(\sigma,\vartheta)$ with $\sigma\in\mathfrak{N}(X,U)$ and $\vartheta\in\irr^d(\mathcal{B}_\sigma\mid \varphi):=\irr(\mathcal{B}_\sigma)\cap \irr^d(X_\sigma\mid \varphi)$. In the case where $X\unlhd Y$, we denote by $Y_{\mathcal{B}}$ the stabiliser in $Y$ of the set $\mathcal{B}$, that is, the set of those elements $y\in Y$ such that $B^y\in\mathcal{B}$ for every $B\in\mathcal{B}$.

\begin{lem}
\label{lem:Dade's correspondence, pairs}
Consider the setting of Hypothesis \ref{hyp:Dade's correspondence} and let $U\leq G\unlhd A$. Consider a block $B$ of $G$, $d\geq 0$ and define $\wt{d}:=d-d(\varphi)+\log_p(|\wt{U}|)$. Then the map
\begin{align*}
\C^d\left(B,U,\varphi\right)/G &\to \C^{\wt{d}}\left(\mathcal{D}_\varphi(B),\wt{U},\wt{\varphi}\right)/\wt{G}
\\
\overline{(\sigma,\vartheta)}&\mapsto\overline{(\wt{\sigma},\wt{\vartheta})}
\end{align*}
is bijective and equivariant with respect to the action of $A_B/U\simeq \wt{A}_{\mathcal{D}_\varphi(B)}/\wt{U}$. Here, we denote by $\overline{(\sigma,\vartheta)}$ and $\overline{(\wt{\sigma},\wt{\vartheta})}$ the $G$-orbit of $(\sigma,\vartheta)$ and the $\wt{G}$-orbit of $(\wt{\sigma},\wt{\vartheta})$ respectively.
\end{lem}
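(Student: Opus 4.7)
The plan is to assemble the desired bijection by combining three ingredients already at our disposal: the bijection on normal $p$-chains from Lemma \ref{lem:Dade's correspondence, chains}, the character bijection \eqref{eq:Dade's correspondence character correspondence with blocks} from Proposition \ref{prop:Dade's correspondence}(ii)--(iii) applied to $H=G_\sigma$, and the compatibility with block induction from Lemma \ref{lem:Dade's correspondence, block induction}. Concretely, for any $(\sigma,\vartheta)\in\C^d(B,U,\varphi)$ I define the image pair to be $(\wt\sigma,\wt\vartheta)$, where $\wt\sigma$ is given by Lemma \ref{lem:Dade's correspondence, chains} and $\wt\vartheta$ is the image of $\vartheta$ under the Dade correspondence applied to $H=G_\sigma$, noting that $\wt{G_\sigma}=\wt G_{\wt\sigma}$.

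I then have to check four things. First, $\wt\vartheta$ has $\wt U$-relative defect $\wt d$ as required: this is immediate from Proposition \ref{prop:Dade's correspondence}(iv), which gives $d(\wt\vartheta)=d(\vartheta)-d(\varphi)+\log_p|\wt U|=\wt d$. Second, $\bl(\wt\vartheta)^{\wt G}$ lies in $\mathcal{D}_\varphi(B)$: the block $\bl(\wt\vartheta)$ belongs, by \eqref{eq:Dade's correspondence character correspondence with blocks} applied inside $G_\sigma$, to $\mathcal{D}_\varphi(\bl(\vartheta))$, and Lemma \ref{lem:Dade's correspondence, block induction} then transports the Brauer correspondence $\bl(\vartheta)^G=B$ to $\bl(\wt\vartheta)^{\wt G}\in\mathcal{D}_\varphi(B)$. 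Third, equivariance under the isomorphism $A_B/U\simeq \wt A_{\mathcal{D}_\varphi(B)}/\wt U$: by Proposition \ref{prop:Dade's correspondence}(v), conjugation by $x\in A$ is transported by Dade's correspondence to conjugation by $\wt x\in\wt A$ on both chains and characters, and $x\in A_B$ if and only if $\wt x$ permutes $\mathcal{D}_\varphi(B)$ as a set. In particular the map is well defined on $G$-orbits and equivariant for the induced action of $A_B/U$.

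Finally, for bijectivity I argue as follows. Injectivity is clear since the chain map is bijective and, for each fixed $\sigma$, the character map $\vartheta\mapsto\wt\vartheta$ is bijective on $\irr(G_\sigma\mid\varphi)$. For surjectivity I start from any $(\wt\tau,\wt\eta)\in\C^{\wt d}(\mathcal{D}_\varphi(B),\wt U,\wt\varphi)$: by Lemma \ref{lem:Dade's correspondence, chains} there is a unique $\sigma\in\mathfrak N(G,U)$ with $\wt\sigma=\wt\tau$, hence $\wt G_{\wt\tau}=\wt{G_\sigma}$, and the bijection \eqref{eq:Dade's correspondence character correspondence with blocks} gives a unique $\vartheta\in\irr(G_\sigma\mid\varphi)$ with $\wt\vartheta=\wt\eta$. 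The defect identity in Proposition \ref{prop:Dade's correspondence}(iv) forces $d(\vartheta)=d$, and Lemma \ref{lem:Dade's correspondence, block induction} forces $\bl(\vartheta)^G=B$ (since the Dade correspondence of blocks of $H$ with respect to $\varphi$ is a partition, and $\bl(\wt\eta)^{\wt G}\in\mathcal{D}_\varphi(B)$ then pins down the correspondent of $\bl(\vartheta)^G$ to be $B$). Passing to orbits yields the claim.

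The only point that needs a little care is the block-theoretic part of surjectivity: one must be sure that the Dade correspondent of $\bl(\vartheta)^G$ is determined by $\bl(\wt\vartheta)^{\wt G}$ alone, i.e.\ that no two distinct $G$-blocks $B$ give rise to overlapping sets $\mathcal{D}_\varphi(B)$. This is exactly the partition property noted just after Proposition \ref{prop:Dade's correspondence} (valid because $U$ is a $p$-group), so no obstacle remains.
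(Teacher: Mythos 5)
Your proof is correct and follows essentially the same route as the paper: both combine the chain bijection of Lemma \ref{lem:Dade's correspondence, chains} with the character bijection of Proposition \ref{prop:Dade's correspondence}(ii)--(iv) applied to each stabiliser $G_\sigma$, Lemma \ref{lem:Dade's correspondence, block induction} for block compatibility, and Proposition \ref{prop:Dade's correspondence}(v) for equivariance. (One small slip: you write that $\wt\vartheta$ has ``$\wt U$-relative defect $\wt d$'' where you mean ordinary $p$-defect $\wt d$, but the formula you then quote is the right one.)
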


\begin{proof}
Let $\sigma\in\mathfrak{N}(G,U)$ and $\vartheta\in\irr(G_\sigma\mid \varphi)$. By Lemma \ref{lem:Dade's correspondence, chains}, we know that $\wt{G_\sigma}=\wt{G}_{\wt{\sigma}}$ and that $\wt{\vartheta}\in\irr(\wt{G}_{\wt{\sigma}}\mid \wt{\varphi})$. According to Proposition \ref{prop:Dade's correspondence} (iv) we know that $d(\vartheta)=d$ if and only if $d(\wt{\vartheta})=\wt{d}$. Furthermore, the character $\vartheta$ belongs to $\irr(B_\sigma)$ if and only if $\wt{\vartheta}$ belongs to $\irr(\mathcal{D}_\varphi(B)_{\wt{\sigma}})$ thanks to Lemma \ref{lem:Dade's correspondence, block induction}. Therefore, the association $\vartheta\mapsto\wt{\vartheta}$ yields a bijection
\[\irr^d\left(B_\sigma\enspace\middle|\enspace \varphi\right)\to \irr^{\wt{d}}\left(\mathcal{D}_\varphi(B)_{\wt{\sigma}}\enspace\middle|\enspace\wt{\varphi}\right)\]
which is equivariant with respect to the action of $A_B/U\simeq \wt{A}_{\mathcal{D}_\varphi(B)}/\wt{U}$ thanks to Proposition \ref{prop:Dade's correspondence} (v). This observation, combined with the bijection given by Lemma \ref{lem:Dade's correspondence, chains}, yields the desired result.
\end{proof}

Suppose now that, with the notation of Lemma \ref{lem:Dade's correspondence, pairs}, we have $d>d(\varphi)$ and hence $\wt{d}>\log_p(|\wt{U}|)$. Let $\sigma\in\mathfrak{N}(G,U)$ and suppose that there exists a Dade correspondent $\wt{B}$ of $B$ with respect to $\varphi$ and $\wt{\vartheta}\in\irr(\wt{B}_{\wt{\sigma}})$ such that $d(\wt{\vartheta})=\wt{d}$. In this case, we claim that the defect groups of $\wt{B}$ strictly contain the central $p$-subgroup $\wt{U}$. In fact, if $\wt{Q}$ is a defect group of $\bl(\wt{\vartheta})$, then we can find a defect group $\wt{D}$ of $\wt{B}$ such that $\wt{Q}\leq\wt{D}$ (see \cite[Lemma 4.13]{Nav98}). Since $\wt{U}\leq \O_p(\wt{G}_{\wt{\sigma}})\leq \wt{Q}$ we deduce that $\wt{U}\leq \wt{D}$. On the other hand, \cite[Theorem 4.6]{Nav98} implies that $\log_p(|\wt{U}|)<\wt{d}\leq \log_p(|\wt{Q}|)\leq \log_p(|\wt{D}|)$ and hence $\wt{U}<\wt{D}$. In particular, this shows that the hypothesis of Conjecture \ref{conj:CTC} are satisfied with respect to the block $\wt{B}$ and the groups $\wt{U}$, $\wt{G}$, and $\wt{A}$. In the following proposition, we show that if Conjecture  Conjecture \ref{conj:CTC} holds for all such blocks $\wt{B}$, then Conjecture \ref{conj:CTC non-central Z} holds for the Dade correspondent $B$ with respect to any $d>d(\varphi)$.

\begin{prop}
\label{prop:Dade correspondence, CTC implies CTC non-central}
Consider the setting of Hypothesis \ref{hyp:Dade's correspondence} and let $U\leq G\unlhd A$. Consider a block $B$ of $G$, $d>d(\varphi)$ and suppose that Conjecture \ref{conj:CTC} holds for $\wt{G}\unlhd \wt{A}$ with respect to the $p$-subgroup $\wt{U}$, any $\wt{B}\in\mathcal{D}_\varphi(B)$ and $\wt{d}:=d-d(\varphi)+\log_p(|\wt{U}|)$. Then Conjecture \ref{conj:CTC non-central Z} holds for $G\unlhd A$ with respect to $U$, $\varphi$, $B$ and $d$.
\end{prop}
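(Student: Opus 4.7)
I would first reduce to the underline-free setting on both sides of the Dade correspondence. For any $\sigma\in\mathfrak{N}(G,U)$ and any $\vartheta\in\irr^d(G_\sigma\mid\varphi)$, a direct comparison of the defect formulae gives $d_U(\vartheta)=d(\vartheta)-d(\varphi)$, so the assumption $d>d(\varphi)$ forces $d_U(\vartheta)>0$ and yields $\underline{\C}^d(B,U,\varphi)=\C^d(B,U,\varphi)$. Similarly, since $\wt{U}$ is a $p$-group and $\wt{\varphi}$ is linear, we have $d(\wt{\varphi})=\log_p|\wt{U}|$, so $\wt{d}>\log_p|\wt{U}|$ gives $\underline{\C}^{\wt{d}}(\wt{B},\wt{U},\wt{\varphi})=\C^{\wt{d}}(\wt{B},\wt{U},\wt{\varphi})$ for every $\wt{B}\in\mathcal{D}_\varphi(B)$. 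Moreover, as explained in the paragraph preceding the statement, whenever $\C^{\wt{d}}(\wt{B},\wt{U})$ is non-empty the defect groups of $\wt{B}$ strictly contain $\wt{U}$, so Conjecture \ref{conj:CTC} applies to each such $\wt{B}$.

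Next I would combine the bijections provided by Conjecture \ref{conj:CTC} for the various $\wt{B}\in\mathcal{D}_\varphi(B)$ via Lemma \ref{lem:Bijections for union of blocks}: the hypothesis $\wt{A}_{\wt{B}}\leq\wt{A}_{\mathcal{D}_\varphi(B)}$ follows from Proposition \ref{prop:Dade's correspondence}(v) together with the fact that the sets $\mathcal{D}_\varphi(B')$ partition the blocks of $\wt{G}$ as $B'$ ranges over the blocks of $G$. The resulting $\wt{A}_{\mathcal{D}_\varphi(B)}$-equivariant bijection
\[\wt{\Omega}:\C^{\wt{d}}\left(\mathcal{D}_\varphi(B),\wt{U}\right)_+\big/\wt{G}\to\C^{\wt{d}}\left(\mathcal{D}_\varphi(B),\wt{U}\right)_-\big/\wt{G}\]
induces $\wt{G}$-block isomorphisms of character triples. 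Since $\wt{U}\leq\z(\wt{G})$, \cite[Lemma 3.4]{Spa17} guarantees that corresponding characters lie above the same irreducible character of $\wt{U}$, so that $\wt{\Omega}$ restricts to a bijection on pairs over $\wt{\varphi}$ (as in Remark \ref{rmk:CTC non-central vs CTC for quasi-simple}). Lemma \ref{lem:Dade's correspondence, pairs} then transfers this restricted bijection to an $A_{B,\varphi}$-equivariant bijection $\Omega$ between the orbit sets of $\C^d(B,U,\varphi)_\pm$ under $G=G_\varphi$, where equivariance is transported through the isomorphism $A_B/U\simeq\wt{A}_{\mathcal{D}_\varphi(B)}/\wt{U}$ and the identity $A_{B,\varphi}=A_B$ given by the $A$-invariance of $\varphi$.

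The main obstacle is the last step: verifying that the $\wt{G}$-block isomorphism of character triples associated to $\wt{\Omega}$ descends to a $G$-block isomorphism on the original side, that is, that for $(\sigma,\vartheta)\in\C^d(B,U,\varphi)_+$ and $(\rho,\chi)\in\Omega(\overline{(\sigma,\vartheta)})$ one has $(A_{\sigma,\vartheta},G_\sigma,\vartheta)\iso{G}(A_{\rho,\chi},G_\rho,\chi)$. For this I would exploit the explicit construction of the Dade correspondence recalled in Hypothesis \ref{hyp:Dade's correspondence}: pulling the triples back along the canonical projection $\epsilon:\wh{A}\to A$ and applying the formula $\wh{\psi}=\tau_{\wh{H}}\wt{\psi}'$ from Proposition \ref{prop:Dade's correspondence}(ii) relates the projective representations underlying $\vartheta$ and $\wt{\vartheta}$ (and similarly $\chi$ and $\wt{\chi}$) by the twist $\tau_{\wh{H}}$. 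Since $\ker\epsilon\leq\z(\wh{A})$ and since block induction is respected by Lemma \ref{lem:Dade's correspondence, block induction}, this twist preserves the data recorded by the relation $\iso{G}$: stabilisers, factor sets, and Brauer correspondence. This is precisely the approach used in \cite[Section 4]{Ros22} to show that Fong's correspondence is compatible with block isomorphisms of character triples, and I expect it to adapt to the present situation by essentially identical calculations, with the character $\tau$ playing the role that the extension of the linear character played in the Fong setting.
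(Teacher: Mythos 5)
Your plan matches the paper's proof almost step for step: the reduction $\underline{\C}^d(B,U,\varphi)=\C^d(B,U,\varphi)$ from $d>d(\varphi)$, the observation that non-emptiness of $\C^{\wt{d}}(\wt{B},\wt{U})$ forces $\wt{U}<\wt{D}$, the use of Lemma~\ref{lem:Bijections for union of blocks} and \cite[Lemma 3.4]{Spa17} to assemble $\wt{\Omega}$ over $\mathcal{D}_\varphi(B)$, the transfer via Lemma~\ref{lem:Dade's correspondence, pairs}, and finally the construction of projective representations through the twist by $\tau$ and $\epsilon$ following the template of \cite[Theorem 4.4]{Ros22}. The paper carries out the last step explicitly (choosing $\mathcal{R}_i$ with $\wh{\mathcal{R}}_i=\wh{\Pr}_i\otimes\wt{\mathcal{R}}_i'$, checking factor sets, the scalar condition on $\c_A(G)$ via $\epsilon(\c_{\wh{A}}(\wh{G}))=\c_A(G)$, and the block-induction condition via Lemma~\ref{lem:Dade's correspondence, block induction}), whereas you defer these verifications to the analogy with the Fong case; since you correctly identify both the key identity $\wh{\psi}=\tau_{\wh{H}}\wt{\psi}'$ and the relevant reference, this is a presentational rather than a substantive difference.
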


\begin{proof}
Let $\wt{B}$ be a Dade correspondent of $B$ with respect to $\varphi$. If $\C^{\wt{d}}(\wt{B},\wt{U})$ is non-empty, then $\wt{U}$ is strictly contained in the defect groups of $\wt{B}$ as explained in the discussion preceding this proposition. In this case, by our assumption there exists an $\wt{A}_{\wt{B}}$-equivariant bijection
\[\wt{\Omega}_{\wt{B}}:\C^{\wt{d}}\left(\wt{B},\wt{U}\right)_+/\wt{G}\to\C^{\wt{d}}\left(\wt{B},\wt{U}\right)_-/\wt{G}\]
satisfying
\begin{equation}
\label{eq:Dade correspondence, CTC implies CTC non-central, 1}
\left(\wt{A}_{\wt{\sigma},\wt{\vartheta}},\wt{G}_{\wt{\sigma}},\wt{\vartheta}\right)\iso{\wt{G}}\left(\wt{A}_{\wt{\rho},\wt{\chi}},\wt{G}_{\wt{\rho}},\wt{\chi}\right)
\end{equation}
for every $(\wt{\sigma},\wt{\vartheta})\in\C^{\wt{d}}(\wt{B},\wt{U})_+$ and $(\wt{\rho},\wt{\chi})\in\wt{\Omega}_{\wt{B}}(\overline{(\wt{\sigma},\wt{\vartheta})})$ and where $\overline{(\wt{\sigma},\wt{\vartheta})}$ denotes the $\wt{G}$-orbit of $(\wt{\sigma},\wt{\vartheta})$. Applying Lemma \ref{lem:Bijections for union of blocks} and \cite[Lemma 3.4]{Spa17} we obtain an $\wt{A}_{\mathcal{D}_\varphi(B)}$-equivariant bijection
\[\wt{\Omega}:\C^{\wt{d}}\left(\mathcal{D}_\varphi(B),\wt{U},\wt{\varphi}\right)_+/\wt{G}\to\C^{\wt{d}}\left(\mathcal{D}_\varphi(B),\wt{U},\wt{\varphi}\right)_-/\wt{G}\]
that satisfies the condition on isomorphisms of character triples given in \eqref{eq:Dade correspondence, CTC implies CTC non-central, 1}. Then, we construct an $A_B$-equivariant bijection 
\[\Omega:\C^d(B,U,\varphi)_+/G\to\C^d(B,U,\varphi)_-/G\]
by combining $\wt{\Omega}$ with the bijection given by Lemma \ref{lem:Dade's correspondence, pairs}. More precisely, $\Omega$ maps the $G$-orbit of $(\sigma,\vartheta)$ to that of $(\rho,\chi)$ if $\wt{\Omega}$ maps the $\wt{G}$-orbit of $(\wt{\sigma},\wt{\vartheta})$ to that of $(\wt{\rho},\wt{\chi})$. Notice furthermore that the sets $\C^d(B,U,\varphi)_\pm$ coincide with $\underline{\C}^d(B,U,\varphi)_\pm$ since by assumption $d>d(\varphi)$. Therefore, in order to conclude the proof, it remains to show that
\begin{equation}
\label{eq:Dade correspondence, CTC implies CTC non-central, 2}
\left(A_{\sigma,\vartheta},G_\sigma,\vartheta\right)\iso{G}\left(A_{\rho,\chi},G_{\rho},\chi\right)
\end{equation}
for any $(\sigma,\vartheta)\in\C^d(B,U,\varphi)_+$ and $(\rho,\chi)\in\Omega(\overline{(\sigma,\vartheta)})$ and where here we denote by $\overline{(\sigma,\vartheta)}$ the $G$-orbit of $(\sigma,\vartheta)$. The group theoretic requirements for \eqref{eq:Dade correspondence, CTC implies CTC non-central, 2} are consequences of the analogous requirements for \eqref{eq:Dade correspondence, CTC implies CTC non-central, 1}. In fact, we know by \eqref{eq:Dade correspondence, CTC implies CTC non-central, 1} that $\wt{G}\wt{A}_{\wt{\sigma},\wt{\vartheta}}=\wt{G}\wt{A}_{\wt{\rho},\wt{\chi}}$ so that $\wt{GA_{\sigma,\vartheta}}/\wt{U}=\wt{GA_{\rho,\chi}}/\wt{U}$ and hence $GA_{\sigma,\vartheta}=GA_{\rho,\chi}$. Without loss of generality, we assume from now on that $A=GA_{\sigma,\vartheta}=GA_{\rho,\chi}$. To obtain the condition on defect groups, suppose that $D$ is a defect group of $\bl(\vartheta)$ and notice that $D(\sigma)\leq \O_p(G_\sigma)\leq D$. Then $\c_{GA_{\sigma,\vartheta}}(D)\leq GA_{\sigma,\vartheta}\cap A_\sigma=A_{\sigma,\vartheta}(G\cap A_\sigma)=A_{\sigma,\vartheta}$. The same argument also shows that $\c_{GA_{\rho,\chi}}(Q)\leq A_{\rho,\chi}$ for any defect group $Q$ of $\bl(\chi)$. We now construct projective representations associated with $(A_{\sigma,\vartheta},G_\sigma,\vartheta)$ and $(A_{\rho,\chi},G_\rho,\chi)$ following the argument used in \cite[Theorem 4.4]{Ros22}. We fix a pair of projective representations $(\wt{\mathcal{R}}_1,\wt{\mathcal{R}}_2)$ associated with the $\wt{G}$-block isomorphism of character triples \eqref{eq:Dade correspondence, CTC implies CTC non-central, 1} and let $\wt{\alpha}_i$ denote the factor set of $\wt{\mathcal{R}}_i$. By inflation we can consider $\wt{\mathcal{R}}_1$ as a projective representation of $\wh{A_{\sigma,\vartheta}}$ and $\wt{\mathcal{R}}_2$ as a projective representation of $\wh{A_{\rho,\chi}}$. More precisely, we define
\[\wt{\mathcal{R}}_1'(x):=\wt{\mathcal{R}}_1\left(U_0x\right)\hspace{15pt}\text{and}\hspace{15pt}\wt{\mathcal{R}}_2'(y):=\wt{\mathcal{R}}_2\left(U_0y\right)\]
for every $x\in\wh{A_{\sigma,\vartheta}}$ and $y\in\wh{A_{\rho,\chi}}$. As explained in the proof of \cite[Theorem 4.4]{Ros22}, we can choose projective representations $\mathcal{R}_1$ associated with $(A_{\sigma,\vartheta},G_\sigma,\vartheta)$ and $\mathcal{R}_2$ associated with $(A_{\rho,\chi},G_\rho,\chi)$ such that
\begin{equation}
\label{eq:Dade correspondence, CTC implies CTC non-central, 3}
\wh{\mathcal{R}}_1=\wh{\Pr}_1\otimes\wt{\mathcal{R}}_1'\hspace{15pt}\text{and}\hspace{15pt}\wh{\mathcal{R}}_2=\wh{\Pr}_2\otimes\wt{\mathcal{R}}_2'
\end{equation}
where $\wh{\Pr}_1$ and $\wh{\Pr}_2$ denote the restrictions of $\wh{\Pr}$ to $\wh{A_{\sigma,\vartheta}}$ and to $\wh{A_{\rho,\chi}}$ respectively while $\wh{\mathcal{R}}_1$ and $\wh{\mathcal{R}}_2$ correspond to $\mathcal{R}_1$ and $\mathcal{R}_2$ via the epimorphism $\epsilon$, that is, we define
\[\wh{\mathcal{R}}_1(x):=\mathcal{R}_1\left(\epsilon(x)\right)\hspace{15pt}\text{and}\hspace{15pt}\wh{\mathcal{R}}_2(y):=\mathcal{R}_2\left(\epsilon(y)\right)\]
for every $x\in\wh{A_{\sigma,\vartheta}}$ and $y\in\wh{A_{\rho,\chi}}$. Let $\alpha_1$ and $\alpha_2$ be the factor sets of $\mathcal{R}_1$ and of $\mathcal{R}_2$ respectively. Since $\wt{\alpha}_1$ and $\wt{\alpha}_2$ coincide via the isomorphism $\wt{A}_{\wt{\sigma},\wt{\vartheta}}/\wt{G}_{\wt{\sigma}}\simeq \wt{A}_{\wt{\rho},\wt{\chi}}/\wt{G}_{\wt{\rho}}$ it follows from \eqref{eq:Dade correspondence, CTC implies CTC non-central, 3} that $\alpha_1$ and $\alpha_2$ coincide via the isomorphism $A_{\sigma,\vartheta}/G_\sigma\simeq A_{\rho,\chi}/G_\rho$. Next, observe that by \eqref{eq:Dade correspondence, CTC implies CTC non-central, 1} the projective representations $\wt{\mathcal{R}}_1$ and $\wt{\mathcal{R}}_2$ define the same scalar function on $\c_{\wt{A}}(\wt{G})$. By the above discussion, we know that $\c_A(G)\leq \c_A(D)\cap \c_A(Q)\leq A_{\sigma,\vartheta}\cap A_{\rho,\chi}$ and hence $\c_{\wh{A}}(\wh{G})\leq \epsilon^{-1}(\c_A(G))\leq \wh{A_{\sigma,\vartheta}}\cap\wh{A_{\rho,\chi}}$ by \cite[Theorem 4.1 (d)]{Spa17}. In particular, it follows that the projective representations $\wt{\mathcal{R}}_1'$ and $\wt{\mathcal{R}}_2'$ define the same scalar function on $\c_{\wh{A}}(\wh{G})$. As a consequence of \eqref{eq:Dade correspondence, CTC implies CTC non-central, 3}, we conclude that $\wh{\mathcal{R}}_1$ and $\wh{\mathcal{R}}_2$ define the same scalar function on $\c_{\wh{A}}(\wh{G})$ and therefore the same is true for $\mathcal{R}_1$ and $\mathcal{R}_2$ since $\epsilon(\c_{\wh{A}}(\wh{G}))=\c_A(G)$ by applying once again \cite[Theorem 4.1 (d)]{Spa17}. We now fix $G\leq J\leq A$ and set $J_1:=J_{\sigma,\vartheta}$ and $J_2:=J_{\rho,\chi}$. By \cite[Theorem 3.3]{Spa17}, we obtain bijections
\[\varsigma_{J_1}:\irr(J_1\mid \vartheta)\to\irr(J_2\mid \chi)\]
and
\[\wt{\varsigma}_{\wt{J}_1}:\irr\left(\wt{J}_1\enspace\middle|\enspace \wt{\vartheta}\right)\to\irr\left(\wt{J}_2\enspace\middle|\enspace \wt{\chi}\right)\]
associated to our choices of projective representations $(\mathcal{R}_1,\mathcal{R}_2)$ and $(\wt{\mathcal{R}}_1,\wt{\mathcal{R}}_2)$ respectively. Then, proceeding as in the proof of \cite[Theorem 4.4]{Ros22} and applying Proposition \ref{prop:Dade's correspondence} (ii) instead of \cite[Theorem 4.2 (iv)]{Ros22}, we deduce that $\wt{\varsigma_{J_1}(\psi)}=\wt{\varsigma}_{\wt{J}_1}(\wt{\psi})$ for every $\psi\in\irr(J_1\mid \vartheta)$. Finally, since $\bl(\wt{\varsigma}_{\wt{J}_1}(\wt{\psi}))^{\wt{J}}=\bl(\wt{\psi})^{\wt{J}}$ by \eqref{eq:Dade correspondence, CTC implies CTC non-central, 1}, using Lemma \ref{lem:Dade's correspondence, block induction} we conclude that $\bl(\sigma_{J_1}(\psi))^J=\bl(\psi)^J$. This completes the proof of \eqref{eq:Dade correspondence, CTC implies CTC non-central, 2} and hence of the proposition.
\end{proof}

\subsection{Proof of Theorem \ref{thm:CTC equivalent to CTC non-central}}
\label{sec:Proof of equivalence of conjectures}

We now come to the proof of Theorem \ref{thm:CTC equivalent to CTC non-central}. For this, we fix $G\unlhd A$ satisfying the assumption of Theorem \ref{thm:CTC equivalent to CTC non-central} and proceed by induction on $|A:\z(G)|$. In particular, the following hypothesis holds for $G\unlhd A$.

\begin{hyp}
\label{hyp:Equivalence of conjectures}
Let $G\unlhd A$ be finite groups and suppose that Conjecture \ref{conj:CTC non-central Z} holds at the prime $p$ for every $G_1\unlhd A_1$ such that:
\begin{itemize}
\item $\mathcal{S}_p(G_1)\subseteq \mathcal{S}_p(G)$; and
\item $|A_1:\z(G_1)|<|A:\z(G)|$.
\end{itemize}
\end{hyp}

As a first step towards our proof of Theorem \ref{thm:CTC equivalent to CTC non-central}, we obtain a reduction to the case in which the character $\varphi\in\irr(U)$ considered in Conjecture \ref{conj:CTC non-central Z} is $A$-invariant.

\begin{prop}
\label{prop:CTC non-central, reduction to stable varphi}
Suppose that Hypothesis \ref{hyp:Equivalence of conjectures} holds for $G\unlhd A$ and that Conjecture \ref{conj:CTC non-central Z} fails to hold for a block $B$ of $G$ with respect to a normal $p$-subgroup $U$ of $G$, a character $\varphi\in\irr(U)$, and a non-negative integer $d$. Then $U=\O_p(G)\unlhd A$, the character $\varphi$ is $A$-invariant, and $d>d(\varphi)$.
\end{prop}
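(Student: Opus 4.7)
The plan is to argue by contraposition: assuming Hypothesis~\ref{hyp:Equivalence of conjectures}, if any one of the three stated conclusions fails then I would construct the bijection required by Conjecture~\ref{conj:CTC non-central Z} for the quadruple $(B, U, \varphi, d)$, contradicting the assumption that the conjecture fails there. The three conditions are handled in turn, each step assuming its predecessors. For the defect bound $d > d(\varphi)$, a direct computation using that $U$ is a $p$-group shows that every character $\vartheta\in\irr(G_\sigma\mid\varphi)$ satisfies $d_U(\vartheta)=d(\vartheta)-d(\varphi)$; hence the condition $d_U(\vartheta)>0$ defining $\underline{\C}^d(B,U,\varphi)$ forces $d>d(\varphi)$, whereas $d\leq d(\varphi)$ would make both $\underline{\C}^d(B,U,\varphi)_\pm$ empty and yield the bijection vacuously. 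For the condition $U=\O_p(G)\unlhd A$, if $U$ were properly contained in $\O_p(G)$ then the argument of \cite[Lemma~2.3]{Ros22}, invoked in the same spirit as in the proof of Proposition~\ref{prop:Auxiliary theorem}, would produce the required bijection directly, contradicting the failure assumption. Once $U=\O_p(G)$, this subgroup is characteristic in $G$ and therefore normalised by $A$.

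Now suppose $U=\O_p(G)\unlhd A$ and $d>d(\varphi)$ but that $\varphi$ is not $A$-invariant, so $A_\varphi\lneq A$. For every $\upsilon\in\mathfrak{A}_{G_\varphi}(G,U)$ the stabiliser $A_{\varphi,\upsilon}$ is properly contained in $A$, and $\z(G)\leq\z(G_{\varphi,\upsilon})$ since $\z(G)$ lies in and centralises $G_{\varphi,\upsilon}$. Combined with $G_{\varphi,\upsilon}\leq G$, this yields $\mathcal{S}_p(G_{\varphi,\upsilon})\subseteq\mathcal{S}_p(G)$ and the strict inequality $|A_{\varphi,\upsilon}:\z(G_{\varphi,\upsilon})|<|A:\z(G)|$, so Hypothesis~\ref{hyp:Equivalence of conjectures} applies to the normal pair $G_{\varphi,\upsilon}\unlhd A_{\varphi,\upsilon}$ (normality of $G_{\varphi,\upsilon}=G\cap A_{\varphi,\upsilon}$ in $A_{\varphi,\upsilon}$ follows from $G\unlhd A$). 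For every $B'\in B_{\varphi,\upsilon}$, the inductive instance of Conjecture~\ref{conj:CTC non-central Z} delivers an $A_{B',\varphi,\upsilon}$-equivariant bijection on $\underline{\C}^d(B',U,\varphi)_\pm/G_{\varphi,\upsilon}$, which coincides with $\C^d(B',U,\varphi)_\pm/G_{\varphi,\upsilon}$ by the equality $d>d(\varphi)$ established in the first step.

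The bijections thus obtained come only with an $M$-block isomorphism of character triples for some $M\leq G_{\varphi,\upsilon}$, whereas Hypothesis~\ref{hyp:Product chains and stable reduction} demands $\iso{G_{\varphi,\upsilon}}$. To bridge this gap I would first reduce to normal $p$-chains via \cite[Proposition~6.10]{Spa17}, and then apply \cite[Lemma~2.11]{Ros22}: as in the proof of Proposition~\ref{prop:CTC non-central implies CTC}, the inclusion $D(\sigma)\leq\O_p(G_{\varphi,\upsilon,\sigma})$ forces the centraliser of any defect group of $\bl(\vartheta)$ inside $A_{\varphi,\upsilon,\sigma,\vartheta}G_{\varphi,\upsilon}$ to lie in $A_{\varphi,\upsilon,\sigma,\vartheta}M$ (and symmetrically for the corresponding $\chi$), so $\iso{M}$ upgrades to $\iso{G_{\varphi,\upsilon}}$. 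The upgraded bijections verify Hypothesis~\ref{hyp:Product chains and stable reduction}, and Proposition~\ref{prop:Product of chains reduuction with pairs} then produces the global bijection required by Conjecture~\ref{conj:CTC non-central Z} for $(B,U,\varphi,d)$, the final contradiction. The main obstacle is this last step, where the induction must be invoked uniformly across all pairs $G_{\varphi,\upsilon}\unlhd A_{\varphi,\upsilon}$ (with $\upsilon$ ranging over $\mathfrak{A}_{G_\varphi}(G,U)$), each local bijection must be upgraded, and the pieces must be threaded through the product-chain machinery of Section~\ref{sec:Product chains} without losing $\n_A(U)_{B,\varphi}$-equivariance.
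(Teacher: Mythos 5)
Your proposal is correct and follows essentially the same route as the paper: it rules out $d\leq d(\varphi)$ by the emptiness of $\underline{\C}^d(B,U,\varphi)$, forces $U=\O_p(G)$ via the argument of \cite[Lemma~2.3]{Ros22}, and then feeds the inductive hypothesis into Hypothesis~\ref{hyp:Product chains and stable reduction} and Proposition~\ref{prop:Product of chains reduuction with pairs} to dispose of the non-invariant case. The one place where you add genuine value is in explicitly flagging that Conjecture~\ref{conj:CTC non-central Z} only delivers an $\iso{M}$ isomorphism for some intermediate $M\leq G_{\varphi,\upsilon}$, whereas Hypothesis~\ref{hyp:Product chains and stable reduction} requires $\iso{G_{\varphi,\upsilon}}$; the paper glosses over this upgrade, but your remedy (reduce to normal chains via \cite[Proposition~6.10]{Spa17}, then apply \cite[Lemma~2.11]{Ros22} using $D(\sigma)\leq\O_p(G_{\varphi,\upsilon,\sigma})$ to control the centraliser of a defect group, exactly as in Proposition~\ref{prop:CTC non-central implies CTC}) is the correct one. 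Your derivation of the strict inequality $|A_{\varphi,\upsilon}:\z(G_{\varphi,\upsilon})|<|A:\z(G)|$ via $\z(G)\leq\z(G_{\varphi,\upsilon})$ is also cleaner than the paper's phrasing, which passes through $\z(G_\varphi)$ without justifying that this suffices for the subgroups $G_{\varphi,\upsilon}$.
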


\begin{proof}
First, notice that if $U<\O_p(G)$ then Conjecture \ref{conj:CTC non-central Z} would hold with respect to $U$ by the very same argument used in the proof of \cite[Lemma 2.3]{Ros22}. Hence, our choices imply that $U=\O_p(G)\unlhd A$. Next, observe that the set $\underline{\C}^d(B,U,\varphi)$ is empty whenever $d=d(\varphi)$ and hence, given our assumption, we must have $d>d(\varphi)$. If we assume that $\varphi$ is not $A$-invariant, then  $|A_\varphi:\z(G_\varphi)|<|A:\z(G)|$. In particular, by Hypothesis \ref{hyp:Equivalence of conjectures}, we deduce that Conjecture \ref{conj:CTC non-central Z} holds with respect to $G_{\varphi,\upsilon}\unlhd A_{\varphi,\upsilon}$ for every $\upsilon\in\mathfrak{A}_{G_\varphi}(G,U)$. Hence, for every block $B'$ of $G_{\varphi,\upsilon}$, there exists an $A_{B',\varphi,\upsilon}$-equivariant bijection
\[\underline{\Theta}_{B',\upsilon}:\underline{\C}^d(B',U,\varphi)_+/G_{\varphi,\upsilon}\to\underline{\C}^d(B',U,\varphi)_-/G_{\varphi,\upsilon}\]
such that
\[\left(A_{\varphi,\upsilon,\varrho,\eta},G_{\varphi,\upsilon,\varrho},\eta\right)\iso{G_{\varphi,\upsilon}}\left(A_{\varphi,\upsilon,\varpi,\kappa},G_{\varphi,\upsilon,\varpi},\kappa\right)\]
for every $(\varrho,\eta)\in\C^d(B',U,\varphi)_+$ and $(\varpi,\kappa)\in\underline{\Theta}_{B',\upsilon}(\overline{(\varrho,\eta)})$. Since $d>d(\varphi)$, it follows that $\omega^d(B'_\varrho,U)^{\rm c}=\irr^d(B'_\varrho)$ for every $\varrho\in\mathfrak{N}(G_{\varphi,\upsilon},U)$ and therefore $\underline{\C}^d(B',U,\varphi)_\pm=\C^d(B',U,\varphi)_\pm$. As a consequence of this observation and the existence of the above bijections, we conclude that Hypothesis \ref{hyp:Product chains and stable reduction} is satisfied. Now, applying Proposition \ref{prop:Product of chains reduuction with pairs}, we get an $A_{B,\varphi}$-equivariant bijection
\[\Omega_B:\C^d(B,U,\varphi)_+/G_\varphi\to\C^d(B,U,\varphi)_-/G_\varphi\]
such that
\[\left(A_{\sigma,\vartheta},G_\sigma,\vartheta\right)\iso{G}\left(A_{\rho,\chi},G_\rho,\chi\right)\]
for every $(\sigma,\vartheta)\in\C^d(B,U,\varphi)_+$ and $(\rho,\chi)\in\Omega_{B}(\overline{(\sigma,\vartheta)})$. Noticing once again that $\underline{\C}^d(B,U,\varphi)_\pm=\C^d(B,U,\varphi)_\pm$ (because $d>d(\varphi)$), we conclude that Conjecture \ref{conj:CTC non-central Z} holds for our choices of $B$, $U$, $\varphi$ and $d$. This contradiction shows that $\varphi$ must be $A$-invariant.
\end{proof}

We are now able to prove Theorem \ref{thm:CTC equivalent to CTC non-central}.

\begin{proof}[Proof of Theorem \ref{thm:CTC equivalent to CTC non-central}]
Suppose that Conjecture \ref{conj:CTC non-central Z} fails to hold for $G\unlhd A$ with respect to a block $B$ of $G$, a $p$-subgroup $U\leq G$, an irreducible character $\varphi\in\irr(U)$, and a non-negative integer $d$. Proceeding by induction on $|A:\z(G)|$, we deduce that Hypothesis \ref{hyp:Equivalence of conjectures} holds for $G\unlhd A$. Then, Proposition \ref{prop:CTC non-central, reduction to stable varphi} implies that $U=\O_p(G)\unlhd A$, that $\varphi$ is $A$-invariant, and that $d>d(\varphi)$. We are now in the situation considered in Hypothesis \ref{hyp:Dade's correspondence} and we can apply the results obtained in Section \ref{sec:Dade correspondence}. Since $\mathcal{S}_p(\wt{G})\subseteq \mathcal{S}_p(G)$ and $|\wt{A}:\z(\wt{G})|\leq |A:\z(G)|$, our assumption implies that Conjecture \ref{conj:CTC} holds for $\wt{G}\unlhd \wt{A}$. But then applying Proposition \ref{prop:Dade correspondence, CTC implies CTC non-central} we deduce that $d\leq d(\varphi)$, a contradiction. This shows that Conjecture \ref{conj:CTC non-central Z} holds for $G\unlhd A$ with respect to any choice of $B$, $U$, $\varphi$, and $d$, and we are done.
\end{proof}

\section{A cancellation theorem for the Character Triple Conjecture}
\label{sec:Cancellation}

In this section, we prove a cancellation theorem for the Character Triple Conjecture inspired by work of Staszewski--Robinson \cite{Rob-Sta90} and Robinson \cite{Rob02} on the Alperin Weight Conjecture and Dade's Conjecture respectively. In order to state this result, we need to introduce some additional notation. Let $Z\leq N\unlhd G$ be finite groups with $Z$ a central $p$-subgroup of $G$. For any block $B$ of $G$ and any non-negative integer $d$, we define the subset $\C^d_N(B,Z)$ of pairs $(\sigma,\vartheta)$ of $\C^d(B,Z)$ such that $\sigma$ is a $p$-chain of $N$, that is,
\[\C^d_N(B,Z):=\left\lbrace (\sigma,\vartheta)\in\C^d(B,Z) \enspace\middle|\enspace D(\sigma)\leq N\right\rbrace\]
and denote its complement by
\[\D^d_N(B,Z):=\C^d(B,Z)\setminus\C^d_N(B,Z).\]
As usual, using the notion of length of $p$-chains, we define $\C^d_N(B,Z)_\pm:=\C^d_N(B,Z)\cap \C^d(B,Z)_\pm$ and $\D^d_N(B,Z)_\pm:=\D^d_N(B,Z)\cap \C^d(B,Z)_\pm$.
Suppose now that $G$ is embedded as a normal subgroup in a finite group $A$ that normalises $N$. Notice then that the sets $\C^d_N(B,Z)_\pm$ and $\mathcal{D}_N^d(B,Z)_\pm$ are $\n_A(Z)_B$-stable. In this case, the task of proving the Character Triple Conjecture can be subdivided into showing that there exist $\n_A(Z)_B$-equivariant bijections
\begin{equation}
\label{eq:Cancellation subset}
\C^d_N(B,Z)_+/G\to \C^d_N(B,Z)_-/G
\end{equation}
and
\begin{equation}
\label{eq:Cancellation complement}
\D^d_N(B,Z)_+/G\to \D^d_N(B,Z)_-/G
\end{equation}
inducing $G$-block isomorphisms of character triples. The aim of this section is to show how to construct the latter bijection \eqref{eq:Cancellation complement} in the case where $G$ is a minimal counterexample to the conjecture and $B$ covers a block of $N$ whose defect groups strictly contain $Z$. In Section \ref{sec:Reduction}, in order to prove Theorem \ref{thm:Main reduction}, we will apply the above discussion to the case where $N$ coincides with the generalised Fitting subgroup and then conclude by showing how to construct the bijection \eqref{eq:Cancellation subset} by assuming the conjecture for all quasi-simple components of $G$.

The following hypothesis makes precise what we mean by a minimal counterexample to Conjecture \ref{conj:CTC}. We point out that the results of \cite[Section 5]{Ros22} hold for any pair $G\unlhd A$ satisfying Hypothesis \ref{hyp:Minimal counterexample} below. This observation will be used without further reference.

\begin{hyp}
\label{hyp:Minimal counterexample}
Let $G\unlhd A$ be finite groups and suppose that Conjecture \ref{conj:CTC} holds at the prime $p$ for every $G_1\unlhd A_1$ such that:
\begin{itemize}
\item $\mathcal{S}_p(G_1)\subseteq \mathcal{S}_p(G)$; and
\item $|A_1:\z(G_1)|<|A:\z(G)|$.
\end{itemize}
\end{hyp}

We are now able to state the main result of this section.

\begin{theo}
\label{thm:Minimal counterexample}
Assume Hypothesis \ref{hyp:Minimal counterexample} for $G\unlhd A$ and suppose that Conjecture \ref{conj:CTC} fails to hold with respect to some $p$-subgroup $Z\leq \z(G)$, a $p$-block $B$ of $G$ whose defect groups strictly contain $Z$, and a non-negative integer $d$. If $Z\leq N\unlhd A$ is a non-central subgroup of $G$, then $B$ covers some block of $N$ with defect groups strictly containing $Z$ and there exists an $\n_A(Z)_B$-equivariant bijection
\[\Lambda_{N,Z}^B:\mathcal{D}^d_N(B,Z)_+/G\to\D^d_N(B,Z)_-/G\]
such that
\[\left(A_{\sigma,\vartheta},G_\sigma,\vartheta\right)\iso{G}\left(A_{\rho,\chi},G_\rho,\chi\right)\]
for every $(\sigma,\vartheta)\in\D^d_N(B,Z)_+$ and any $(\rho,\chi)\in\Lambda_{N,Z}^B(\overline{(\sigma,\vartheta)})$.
\end{theo}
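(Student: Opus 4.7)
Plan: First I would establish the claim that $B$ covers some block $b$ of $N$ whose defect groups strictly contain $Z$. Since $Z \leq \z(G)$ is contained in every defect group of $B$, we have $Z \leq D_B \cap N$ for any defect group $D_B$ of $B$, and by \cite[Theorem 9.26]{Nav98} the subgroup $D_B \cap N$ is a defect group of some $G$-conjugate of a covered block of $N$. If all such intersections equalled $Z$, then every covered block of $N$ would have central defect group $Z$ and hence be nilpotent. In this boundary situation, combining Dade's correspondence from Section \ref{sec:Dade correspondence} applied at $Z$ with Proposition \ref{prop:Auxiliary theorem}, after passing to a subquotient to which Hypothesis \ref{hyp:Minimal counterexample} applies, would verify Conjecture \ref{conj:CTC} for $B$, contradicting the failure assumption.

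Next, following \cite{Rob02}, I would construct $\Lambda^B_{N,Z}$ by pairing chains via a length-changing involution. For a normal $p$-chain $\sigma = \{Z = D_0 < D_1 < \cdots < D_n\}$ with $D(\sigma) \not\leq N$, let $i \geq 1$ be minimal with $D_i \not\leq N$ and set $Q := D_i \cap N$, so that $D_{i-1} \leq Q < D_i$. If $D_{i-1} < Q$, insert $Q$ between $D_{i-1}$ and $D_i$ to produce a chain of opposite parity; if $D_{i-1} = Q$ and $i \geq 2$, delete $D_{i-1}$. In both cases the stabilisers in $G$ and $A$ are preserved, because $Q = D_i \cap N$ is normalised by any element that normalises $N$ together with $D_{i-1}$ and $D_i$. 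The character $\vartheta$ can thus be carried unchanged to the new chain, and the resulting $\n_A(Z)_B$-equivariant involution has fixed points precisely the chains satisfying $i = 1$ and $D_1 \cap N = Z$.

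To treat these boundary chains I would use the block $b$ with $Z < D_b$ produced in the first step. Via Brauer correspondence and the Harris--Kn\"orr theorem the pairs involving such chains can be transferred to pairs for the block extension in $\n_G(D_b)\unlhd \n_A(D_b)$, where either Corollary \ref{cor:Bijections over nilpotent blocks} (when $b$ is nilpotent) or Hypothesis \ref{hyp:Minimal counterexample} applied to $\n_G(D_b)\unlhd \n_A(D_b)$, which is a strictly smaller situation since $D_b$ is non-central in $G$, supplies a bijection inducing $G$-block isomorphisms of character triples. Assembling this with the combinatorial involution yields $\Lambda^B_{N,Z}$; the relation $(A_{\sigma,\vartheta},G_\sigma,\vartheta) \iso{G} (A_{\rho,\chi},G_\rho,\chi)$ is trivial in the first case (identical stabilisers and character, whence $\vartheta = \chi$) and follows via \cite[Lemma 2.11]{Ros22} in the second, after checking that centralisers of defect groups land in the appropriate stabilisers thanks to $D(\sigma) \leq \O_p(G_\sigma)$. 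The principal obstacle is precisely this boundary case: showing that the block-theoretic inputs assemble into a single $\n_A(Z)_B$-equivariant bijection compatible with $\iso{G}$ parallels the delicate descent in Proposition \ref{prop:Auxiliary theorem} and relies essentially on the minimality of the counterexample encoded in Hypothesis \ref{hyp:Minimal counterexample}.
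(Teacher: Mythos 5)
Your second step — the insert/delete involution at the term $D_{j_\sigma}\cap N$ for chains with $Z<D_{j_\sigma}\cap N$ — is exactly the paper's construction for the subset $\mathcal{D}^d_N(B,Z)^\circ$, and the argument that stabilisers and the character are preserved is correct. Your sketch of the first claim (that $B$ covers a block of $N$ with defect groups strictly larger than $Z$) is in the right spirit, though the paper's actual route is different: it first shows $b$ is $A$-invariant via the Fong--Reynolds/Brauer-pair analysis of Proposition \ref{prop:Stable reduction} (which you do not mention), and then, rather than Dade's correspondence at $Z$, it uses a $p'$-central extension to make the canonical character $\mu$ of $b$ extend (Corollary \ref{cor:Reduction central defect groups}) followed by the K\"ulshammer--Puig extension $L$ of $G/N$ with Lemma \ref{lem:Kulshammer-Puig and CTC} to contradict minimality.

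The genuine gap is in your treatment of the boundary chains, those with $D_1\cap N=Z$. You propose to transfer these pairs to $\n_G(D_b)\unlhd\n_A(D_b)$ via Brauer correspondence and Harris--Kn\"orr, but this does not match the combinatorics: the second term $U=D_1$ of a boundary chain satisfies $U\cap N=Z$ and $U\nleq N$, so $U$ bears no relation to a defect group $D_b\leq N$ of the covered block, and the Harris--Kn\"orr correspondence (which transports $B$ to a block of $\n_G(D_b)$) does not act on chains whose second term is an arbitrary such $U$. The paper instead fixes an $\n_A(Z)_B$-transversal of the possible second terms $U$, deletes $Z$ from each boundary chain to obtain a $p$-chain of $\n_G(U)$ \emph{starting at the non-central $p$-subgroup $U$}, and then must invoke the non-central version Conjecture \ref{conj:CTC non-central Z} for the smaller configuration $U\leq\n_G(U)\unlhd\n_A(U)$; making this legitimate requires Theorem \ref{thm:CTC equivalent to CTC non-central}, and bridging the gap between $\underline{\C}^d$ and $\C^d$ requires the coprime-action Lemma \ref{lem:Coprime actions} and Proposition \ref{prop:Auxiliary theorem}, all packaged in Proposition \ref{prop:Forcing bijections in minimal counterexample}. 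Your sketch omits the passage to chains with non-central starting term and the resulting need for the generalised conjecture, which is precisely the technical heart of this case; without it the construction of $\Lambda^B_{N,Z}$ on $\mathcal{D}^d_N(B,Z)^\bullet$ cannot be completed.
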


\subsection{Brauer pairs and non-stable blocks}

In this section we show that if Hypothesis \ref{hyp:Minimal counterexample} holds for $G\unlhd A$ and $B$ is a block of $G$ for which Conjecture \ref{conj:CTC} fails to holds, then every block of the normal subgroup $N$ of $G$ that is covered by $B$ is $A$-invariant. For this purpose, we consider a decomposition of the set $\C^d(B,Z)$ given by considering $B$-Brauer pairs.

Recall that $(P,b_P)$ is a Brauer pair of $G$ if $P$ is a $p$-subgroup of $G$ and $b_P$ is a $p$-block of $\c_G(P)$. There is a partial order relation defined on the set of Brauer pairs of $G$, here denoted by $\subseteq$, with the following property: for any Brauer pair $(P,b_P)$ and any $p$-subgroup $Q$ of $P$, there exists a unique Brauer pair $(Q,b_Q)$ such that $(Q,b_Q)\subseteq(P,b_P)$. In particular, any Brauer pair $(P,b_P)$ determines a uniquely defined block $B$ of $G$ such that $(\{1\},B)\subseteq (P,b_P)$. In this case, we say that $(P,b_P)$ is a $B$-Brauer pair. The order relation $\subseteq$ restricts to the subset of all $B$-Brauer pairs. Then, all maximal $B$-Brauer pairs are $G$-conjugate and of the form $(D,b_D)$ for a defect group $D$ of $B$ (see \cite[Theorem 3.10]{Alp-Bro79}). We refer the reader to \cite{Alp-Bro79} for a more detailed introduction to Brauer pairs.

For any central $p$-subgroup $Z$ of $G$ and any block $B$ of $G$ we define the set
\[\mathfrak{N}_B(G,Z):=\left\lbrace (\sigma,b_\sigma)\enspace
\middle|\enspace\sigma\in\mathfrak{N}(G,Z), (\{1\},B)\subseteq(D(\sigma),b_\sigma)\right\rbrace\]
where we recall that $D(\sigma)$ denotes the final term of the chain $\sigma$. By the properties of Brauer pairs described in the previous paragraph, any $(\sigma,b_\sigma)$ defines a unique chain of Brauer pairs $(Z,b_Z)\subseteq (D_1,b_{D_1})\subseteq \dots\subseteq (D(\sigma),b_\sigma)$. Notice that $G$ acts on the set $\mathfrak{N}_B(G,Z)$ and that the stabiliser $G_{(\sigma,b_\sigma)}$ coincides with the inertial subgroup of $b_\sigma$ in the $p$-chain stabiliser $G_\sigma$. Since $D(\sigma)$ is a normal $p$-subgroup of $G_\sigma$ (recall that we are only considering normal $p$-chains), any block of $G_\sigma$ is $\c_G(D(\sigma))$-regular and therefore the block $\beta_{(\sigma,b_\sigma)}:=b_\sigma^{G_\sigma}$ is defined and is the unique block of $G_\sigma$ covering $b_\sigma$ (see \cite[Theorem 9.19 and Lemma 9.20]{Nav98}). We now define the set of triples
\[\C_\bullet^d(B,Z):=\left\lbrace(\sigma,b_\sigma,\vartheta)\enspace\middle|\enspace(\sigma,b_\sigma)\in\mathfrak{N}_B(G,Z), \vartheta\in\irr^d\left(\beta_{(\sigma,b_\sigma)}\right)\right\rbrace\] 
and denote by $\C^d_\bullet(B,Z)/G$ the set of orbits in $\C^d_\bullet(B,Z)$ under the action of $G$ induced by conjugation. As usual we denote by $\overline{(\sigma,b_\sigma,\vartheta)}$ the $G$-orbit of $(\sigma,b_\sigma,\vartheta)$. The following lemma establishes a connection between $\C^d(B,Z)$ and $\C^d_\bullet(B,Z)$ and allows us to reformulate the Character Triple Conjecture in terms of the set $\C_\bullet^d(B,Z)$.

\begin{lem}
\label{lem:Brauer pairs version of CTC}
Let $G\unlhd A$ and consider a block $B$ of $G$ and a central $p$-subgroup $Z$ of $G$. Then the map
\begin{align*}
\C^d_\bullet(B,Z)/G&\to\C^d(B,Z)/G
\\
\overline{(\sigma,b_\sigma,\vartheta)}&\mapsto\overline{(\sigma,\vartheta)}
\end{align*}
is an $\n_A(Z)_B$-equivariant bijection.
\end{lem}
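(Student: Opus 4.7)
The plan is to verify four things in turn: the map is well-defined on $G$-orbits, it is $\n_A(Z)_B$-equivariant, it is surjective, and it is injective. The first two are formal: a $G$-conjugation of a triple $(\sigma, b_\sigma, \vartheta)$ produces a $G$-conjugation of the associated pair $(\sigma, \vartheta)$, and conjugation by any $x \in \n_A(Z)_B$ sends the inclusion $(\{1\}, B) \subseteq (D(\sigma), b_\sigma)$ to $(\{1\}, B) \subseteq (D(\sigma^x), b_\sigma^x)$ because $B^x = B$. Per the running convention justified by \cite[Proposition 6.10]{Spa17}, I restrict attention to normal $p$-chains on both sides, for which $D(\sigma) \unlhd G_\sigma$.

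For surjectivity, let $(\sigma, \vartheta) \in \C^d(B, Z)$ with $\sigma$ normal, and set $D := D(\sigma)$. Any element of $\c_G(D)$ centralises each term $D_i \leq D$ of $\sigma$ and hence normalises $D_i$, so $\c_G(D) = \c_{G_\sigma}(D)$ is a normal subgroup of $G_\sigma$. Applying the extended first main theorem to the normal $p$-subgroup $D$ of $G_\sigma$ (see \cite{Alp-Bro79}, or \cite[Theorem 9.19 and Lemma 9.20]{Nav98}), there exists a block $b_\sigma$ of $\c_G(D)$, unique up to $G_\sigma$-conjugation, satisfying $b_\sigma^{G_\sigma} = \bl(\vartheta)$. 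Transitivity of Brauer induction then yields
\[
b_\sigma^G \;=\; \left(b_\sigma^{G_\sigma}\right)^G \;=\; \bl(\vartheta)^G \;=\; B,
\]
which, via the standard compatibility between Brauer-pair inclusion and block induction, translates to $(\{1\}, B) \subseteq (D, b_\sigma)$. Thus $(\sigma, b_\sigma) \in \mathfrak{N}_B(G, Z)$ and, since $\beta_{(\sigma, b_\sigma)} = \bl(\vartheta)$, the triple $(\sigma, b_\sigma, \vartheta)$ lies in $\C^d_\bullet(B, Z)$ and maps to $(\sigma, \vartheta)$.

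For injectivity, suppose two triples $(\sigma_i, b_i, \vartheta_i)$, $i = 1, 2$, have $G$-conjugate images $(\sigma_i, \vartheta_i)$. Replacing the first by a $G$-conjugate, we may assume $\sigma_1 = \sigma_2 =: \sigma$ and $\vartheta_1 = \vartheta_2 =: \vartheta$. Then $b_1$ and $b_2$ are two blocks of $\c_G(D(\sigma))$ with common Brauer correspondent $\bl(\vartheta)$ in $G_\sigma$, so the uniqueness clause of the extended first main theorem gives $h \in G_\sigma$ with $b_1^h = b_2$. Since $\vartheta$ is a class function on $G_\sigma$, it is automatically fixed by $G_\sigma$-conjugation; thus $\vartheta^h = \vartheta$ and $(\sigma, b_1, \vartheta)^h = (\sigma, b_2, \vartheta)$, proving that the two $G$-orbits coincide.

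The only substantive point is the correct invocation of the extended first main theorem for the normal $p$-subgroup $D \unlhd G_\sigma$ and the dictionary translating Brauer-pair inclusions into block-induction equalities; both are classical facts documented in \cite{Alp-Bro79} and \cite{Nav98}. The rest of the argument is essentially bookkeeping, and the crucial simplifying observation is that, because $\vartheta$ is already a character of $G_\sigma$, the group-theoretic orbit calculation collapses to a question about $G_\sigma$-orbits of blocks of $\c_G(D)$, which is exactly what the extended first main theorem controls.
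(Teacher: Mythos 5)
Your proof is correct and follows essentially the same route as the paper's: surjectivity via the extended first main theorem for the normal $p$-subgroup $D(\sigma)\unlhd G_\sigma$ (the paper phrases this as the partition $\irr^d(B_\sigma)=\coprod_{b_\sigma}\irr^d(\beta_{(\sigma,b_\sigma)})$, which is the same fact), and injectivity via the uniqueness of $b_\sigma$ up to $G_\sigma$-conjugation together with $\vartheta$ being a class function of $G_\sigma$. The only stylistic difference is that you make the transitivity of Brauer induction $(b_\sigma^{G_\sigma})^G=b_\sigma^G$ and the translation between $b_\sigma^G=B$ and $(\{1\},B)\subseteq(D,b_\sigma)$ explicit, where the paper leaves these implicit in its regularity remark.
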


\begin{proof}
By definition, we know that the map commutes with the action of $\n_A(Z)_B$. Recall by the above discussion that every block of $G_\sigma$ is $\c_G(D(\sigma))$-regular and therefore there is a partition
\[\irr^d(B_\sigma)=\coprod\limits_{b_\sigma}\irr^d\left(\beta_{(\sigma,b_\sigma)}\right)\]
where the union runs over the blocks $b_\sigma$ of $\c_G(D(\sigma))$ up to $G_\sigma$-conjugation such that $(\{1\},B)\subseteq (D(\sigma),b_\sigma)$. In particular, for every $(\sigma,\vartheta)\in\C^d(B,Z)$ there is a block $b_\sigma$ such that $(\sigma,b_\sigma,\vartheta)\in\C^d_\bullet(B,Z)$ and $\overline{(\sigma,b_\sigma,\vartheta)}$ maps to $\overline{(\sigma,\vartheta)}$. This shows that the map is surjective. On the other hand, if $(\sigma_1,b_1,\vartheta_1)$ and $(\sigma_2,b_2,\vartheta_2)$ are triples in $\C^d_\bullet(B,Z)$ such that $\overline{(\sigma_1,\vartheta_1)}=\overline{(\sigma_2,\vartheta_2)}$, then there exists $g\in G$ such that $(\sigma_1,\vartheta_1)=(\sigma_2,\vartheta_2)^g$. Now $\vartheta_1$ lies both in the block $\beta_{(\sigma_1,b_1)}$ and in the block $\beta_{(\sigma_2,b_2)^g}$ and it follows that $b_1$ and $b_2^g$ are conjugate by an element of $G_{\sigma_1}$. Since $\sigma_1$ and $\vartheta_1$ are invariant under the action of $G_{\sigma_1}$ we obtain $\overline{(\sigma_1,b_1,\vartheta_1)}=\overline{(\sigma_2,b_2,\vartheta_2)}$ and this shows that the map is injective.
\end{proof}

Our next proposition shows that the set $\C^d_\bullet(B,Z)$ is well behaved with respect to the Fong--Reynolds correspondence. For this, we adapt the argument used in \cite[Section 3.3]{Rob02} to our situation. In particular, we show that Robinson's argument is compatible with the action of automorphisms.

\begin{prop}
\label{prop:Stable reduction, bijection}
Let $G\unlhd A$ and consider a block $B$ of $G$ and a central $p$-subgroup $Z$ of $G$. Let $N$ be a normal subgroup of $A$ contained in $G$ and $b$ a block of $N$ covered by $B$. Then there exists an $\n_A(Z)_{B,b}$-equivariant bijection
\[\Upsilon:\C^d_\bullet(B,Z)/G\to\C^d_\bullet(B',Z)/G_b\]
where $B'$ is the block of $G_b$ covering $b$ and corresponding to $B$ via the Fong--Reynolds correspondence.
\end{prop}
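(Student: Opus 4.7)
The plan is to realise $\Upsilon$ as a chain-wise Fong--Reynolds correspondence, built on a natural identification between $B$-Brauer pair chains of $G$ and $B'$-Brauer pair chains of $G_b$.

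First, I would set up the Brauer pair correspondence. For any $B$-Brauer pair $(P, e)$ of $G$, the block $e$ of $\mathbf{C}_G(P)$ covers a unique $\mathbf{C}_G(P)$-orbit of blocks of $\mathbf{C}_N(P)$, from which a $b$-Brauer pair of $N$ can be recovered after conjugation. Combining this observation with Fong--Reynolds applied to the normal inclusion $N \unlhd G$, every $G$-orbit of $B$-Brauer pairs $(P, e)$ of $G$ is matched with a $G_b$-orbit of $B'$-Brauer pairs $(P, e')$ of $G_b$, where $e'$ is the unique block of $\mathbf{C}_{G_b}(P)$ such that $(e')^{\mathbf{C}_G(P)} = e$. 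After replacing $(\sigma, b_\sigma)$ by a $G$-conjugate, we may therefore assume $\sigma \in \mathfrak{N}(G_b, Z)$, in which case there is a unique $b'_\sigma \in \mathrm{Bl}(\mathbf{C}_{G_b}(D(\sigma)))$ such that $(\sigma, b'_\sigma) \in \mathfrak{N}_{B'}(G_b, Z)$ corresponds to $(\sigma, b_\sigma)$.

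Second, for each such representative $(\sigma, b_\sigma)$, I would apply Fong--Reynolds at the level of the chain stabiliser $G_\sigma$. Noting that $(G_b)_\sigma = G_\sigma \cap G_b$ is the stabiliser in $G_\sigma$ of a block of $N_\sigma$ covered by $\beta_{(\sigma, b_\sigma)}$, the block $\beta'_{(\sigma, b'_\sigma)} := (b'_\sigma)^{(G_b)_\sigma}$ is the Fong--Reynolds correspondent of $\beta_{(\sigma, b_\sigma)}$, and induction of characters from $(G_b)_\sigma$ to $G_\sigma$ yields a defect-preserving bijection $\mathrm{Irr}^d(\beta'_{(\sigma, b'_\sigma)}) \to \mathrm{Irr}^d(\beta_{(\sigma, b_\sigma)})$. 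I then define
\[
\Upsilon\bigl(\overline{(\sigma, b_\sigma, \vartheta)}\bigr) := \overline{(\sigma, b'_\sigma, \vartheta')},
\]
where $\vartheta'$ is the Fong--Reynolds source of $\vartheta$, i.e.\ $(\vartheta')^{G_\sigma} = \vartheta$.

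Third, well-definedness on $G$- and $G_b$-orbits is ensured by the naturality of both the Brauer pair correspondence and Fong--Reynolds induction under conjugation, while $\mathbf{N}_A(Z)_{B, b}$-equivariance follows because any $x \in \mathbf{N}_A(Z)_{B, b}$ fixes both $B$ and $b$ and therefore commutes with every step of the construction. The main obstacle I anticipate is handling the distinction between the chain stabiliser $G_\sigma$ and the inertial subgroup $G_{(\sigma, b_\sigma)}$ (since $b_\sigma$ need not be $G_\sigma$-stable) when matching Brauer pair chains across $G$ and $G_b$, together with verifying that the stabiliser-level Fong--Reynolds preserves the $p$-defect $d$; the latter reduces to checking that $|G_\sigma : (G_b)_\sigma|$ is coprime to $p$, which is standard at the global level but needs to be ported to each $\sigma$ individually.
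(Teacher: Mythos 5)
Your overall architecture is the same as the paper's: match Brauer-pair chains across $G$ and $G_b$, then apply Fong--Reynolds at the level of each chain stabiliser. However, several steps you describe as routine are precisely where the real work lies, and one is misstated.

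The first genuine gap is in your second step. You propose to apply Fong--Reynolds between $(G_b)_\sigma\unlhd G_\sigma$ ``noting that $(G_b)_\sigma$ is the stabiliser in $G_\sigma$ of a block of $N_\sigma$ covered by $\beta_{(\sigma,b_\sigma)}$''. This is not the right normal subgroup, and the claim about the stabiliser is not evident. What the paper does instead is introduce the block $b^\star_\sigma$ of $\c_N(D(\sigma))$ covered by $b_\sigma$ (which is the component of the Brauer pair bijection over $N$), and observe that both $\beta_{(\sigma,b_\sigma)}$ and $\beta'_{(\sigma,b'_\sigma)}$ cover $b^\star_\sigma$; Fong--Reynolds is then applied over $\c_N(D(\sigma))\unlhd G_\sigma$, and the crucial fact that the stabiliser of $b^\star_\sigma$ in $A_\sigma$ is contained in $A'_\sigma = (A_b)_\sigma$ is a nontrivial input taken from Robinson's analysis in \cite[p.\,211]{Rob02}, not a formal consequence. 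Without identifying $b^\star_\sigma$ and the containment of its stabiliser in $A'_\sigma$, you cannot conclude that $\beta'_{(\sigma,b'_\sigma)}$ is the Fong--Reynolds correspondent of $\beta_{(\sigma,b_\sigma)}$, nor that $b'_\sigma$ is the correct block of $\c_{G_b}(D(\sigma))$.

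The second gap is your appeal to ``naturality'' for well-definedness on orbits and for $\n_A(Z)_{B,b}$-equivariance. This is exactly where a concrete argument is needed: one has to show that whenever $(\sigma,b_\sigma)$ and $(\sigma,b_\sigma)^a$ both lie in the distinguished family of Brauer-pair chains below a fixed maximal pair (for $a\in A$), one can replace $a$ by some $a'\in A_b$ with the same effect. The paper derives this from the factorisation $g=cx$ with $c\in\c_G(Q)$ and $x\in G_b$ (Alperin--Brou\'e \cite[Theorem 4.10, Proposition 4.24]{Alp-Bro79}), combined with the Frattini factorisation $A=GA_b$ and the equality $A'_{(D,b_D)}=A'_{(D,b_D')}$. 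Your proposal does not supply this and cannot substitute ``naturality'' for it. Finally, a minor point: your concern that defect preservation needs $|G_\sigma:(G_b)_\sigma|$ to be prime to $p$ is unfounded; for a Clifford induction $\vartheta=(\vartheta')^{G_\sigma}$ one always has $d(\vartheta)=d(\vartheta')$ directly from the degree formula $\vartheta(1)=|G_\sigma:(G_b)_\sigma|\,\vartheta'(1)$ and $|G_\sigma|_p=|G_\sigma:(G_b)_\sigma|_p\,|(G_b)_\sigma|_p$, with no coprimality hypothesis.
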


\begin{proof}
Without loss of generality we assume that $Z$ and $B$ are invariant under the action of $A$. Let $A':=A_b$ and $H':=A'\cap H$ for every $H\leq A$. In particular $A=GA'$. Notice that $Z\leq G'$ and that $A'$ acts on $\C^d_\bullet(B',Z)/G'$ because $B'$ is $A'$-invariant. As explained in \cite[p.210-211]{Rob02}, there exists a maximal $B$-Brauer pair $(D,b_D)$ and a maximal $B'$-Brauer pair $(D,b'_D)$ such that there exists a bijection
\begin{equation}
\label{eq:Stable reduction, bijection 1}
(Q,b_Q)\mapsto(Q,b_Q')
\end{equation}
between the set of $B$-Brauer pairs $(Q,b_Q)\subseteq (D,b_D)$ and the $B'$-Brauer pairs $(Q,b_Q')\subseteq (D,b_D')$. In addition, given $(Q,b_Q)\subseteq (D,b_D)$ and $g\in G$ such that $(Q,b_Q)^g\subseteq (D,b_D)$, then we can write $g=cx$ for some $c\in\c_G(Q)$ and $g\in G'$ (this relies on \cite[Theorem 4.10 and Proposition 4.24]{Alp-Bro79}).

If we define $\mathcal{N}_{(D,b_D)}$ to be the set of pairs $(\sigma,b_\sigma)\in\mathfrak{N}_B(G,Z)$ such that $(D(\sigma),b_\sigma)\subseteq (D,b_D)$ and similarly $\mathcal{N}_{(D,b'_D)}$ to be the set of pairs $(\sigma',b'_{\sigma'})\in\mathfrak{N}_{B'}(G',Z)$ such that $(D(\sigma'),b'_{\sigma'})\subseteq (D,b_D')$, then the correspondence \eqref{eq:Stable reduction, bijection 1} induces a bijection
\begin{align}
\label{eq:Stable reduction, bijection 2}
\mathcal{N}_{(D,b_D')}&\to\mathcal{N}_{(D,b_D)}
\\
(\sigma,b_\sigma)&\mapsto(\sigma,b_\sigma').\nonumber
\end{align} 
Given $(\sigma,b_\sigma)$ and $(\sigma,b'_\sigma)$ as above, we recall that there exists a unique block $\beta_{(\sigma,b_\sigma)}$ of $G_\sigma$ covering $b_\sigma$ and a unique block $\beta_{(\sigma,b'_\sigma)}'$ of $G'_\sigma$ covering $b_\sigma'$. We claim that $\beta_{(\sigma,b_\sigma)}=(\beta'_{(\sigma,b_\sigma')})^{G_\sigma}$ and that induction of characters yields a bijection between the irreducible characters of $\beta_{(\sigma,b_\sigma)}$ and those of $\beta'_{(\sigma,b_\sigma')}$. Notice that in this case the bijection would preserve the defect of characters (this is because a character and its induction have the same defect by the usual degree formula for induced characters). To prove the claim, consider the Brauer pairs $(D(\sigma),b_{\sigma})$ and $(D(\sigma),b_{\sigma}')$ and observe that by the construction of \eqref{eq:Stable reduction, bijection 1} there exists a block $b^\star_{\sigma}$ of $\c_N(D(\sigma))$ covered by $b_{\sigma}$. Moreover, the stabiliser of $b_{\sigma}^\star$ in $A_\sigma$ is contained in $A'_\sigma$ (see \cite[p.211]{Rob02}) and the block $b_\sigma'$ is defined to be the unique block of $\c_{G'}(D(\sigma))$ that covers $b^\star_{\sigma}$ and induces $b_\sigma$ (see \cite[Theorem 9.14]{Nav98}). Now $\beta_{(\sigma,b_\sigma)}$ and $\beta'_{(\sigma,b_\sigma')}$ both cover $b^\star_\sigma$ and our claim follows from the Fong--Reynolds correspondence \cite[Theorem 9.14]{Nav98}.

The above description also shows that $A_{(\sigma,b_\sigma)}=\c_G(D(\sigma))A'_{(\sigma,b_\sigma')}$ by a Frattini argument. In fact, since $b_\sigma$ covers $b_\sigma^\star$, we have $A_{(\sigma,b_\sigma)}=\c_G(D(\sigma))A_{(\sigma,b_\sigma),b^\star_\sigma}$ and recalling that $A_{\sigma,b^\star_\sigma}\leq A'$ we deduce that $A_{(\sigma,b_\sigma),b^\star_\sigma}=A'_{(\sigma,b_\sigma')}$ by the uniqueness property of \cite[Theorem 9.14]{Nav98}. In particular, observe that $A'_{(\sigma,b_\sigma)}=A'_{(\sigma,b_\sigma')}$. Therefore, we have constructed a defect preserving $A'_{(\sigma,b_\sigma')}$-equivariant bijection
\begin{align}
\label{eq:Stable reduction, bijection 3}
\irr\left(\beta'_{(\sigma,b_\sigma')}\right)&\to\irr\left(\beta_{(\sigma,b_\sigma)}\right)
\\
\vartheta'&\mapsto (\vartheta')^{G_\sigma}.\nonumber
\end{align}
Next, we show that for every $(\sigma,b_\sigma)\in\mathcal{N}_{(D,b_D)}$ and $a\in A$ such that $(\sigma,b_\sigma)^a\in\mathcal{N}_{(D,b_D)}$ there exists $a'\in A'$ such that $(\sigma,b_\sigma)^a=(\sigma,b_\sigma)^{a'}$. For this, it is enough to prove that if $(Q,b_Q)\subseteq (D,b_D)$ and $a\in A$ satisfy $(Q,b_Q)^a\subseteq (D,b_D)$ then $(Q,b_Q)^a=(Q,b_Q)^{a'}$ for some $a'\in A'$. Recalling that $A=GA'$ and that $A'_{(D,b_D)}=A'_{(D,b_D')}$, we can write $A=GA'=G(G'A'_{(D,b_D')})=GA'_{(D,b_D)}$ by using the fact that all maximal $B'$-Brauer pairs are $G'$-conjugate. Then there exists $g\in G$ and $a_1'\in A'_{(D,b_D)}$ such that $a=a_1'g$. Now we have $(Q,b_Q)^{a_1'}\subseteq (D,b_D)$ and $g\in G$ such that $((Q,b_Q)^{a_1'})^g=(Q,b_Q)\subseteq (D,b_D)$ and hence by the properties of the correspondence \eqref{eq:Stable reduction, bijection 1} we deduce that there exist $c\in \c_G(Q^{a_1'})$ and $x\in G'$ such that $g=cx$. If we define $a':=a_1'x\in A'$, then we obtain $(Q,b_Q)^a=(Q,b_Q)^{a'}$ as desired.

We now fix, as we may, an $A$-transversal $\mathcal{T}$ in $\mathfrak{N}_B(G,Z)$ contained in $\mathcal{N}_{(D,b_D)}$ and denote by $\mathcal{T}'$ the subset of $\mathcal{N}_{(D,b_D')}$ corresponding to $\mathcal{T}$ under \eqref{eq:Stable reduction, bijection 2}. By the previous paragraph we deduce that $\mathcal{T}'$ is an $A'$-transversal in $\mathfrak{N}_{B'}(G',Z)$. Furthermore, for every $(\sigma,b_\sigma')\in\mathcal{T'}$ corresponding to $(\sigma,b_\sigma)\in\mathcal{T}$ we choose an $A'_{(\sigma,b_\sigma')}$-transversal $\mathcal{T}'_{(\sigma,b_\sigma')}$ in $\irr^d(\beta'_{(\sigma,b_\sigma')})$ and denote by $\mathcal{T}_{(\sigma,b_\sigma)}$ the subset of $\irr^d(\beta_{(\sigma,b_\sigma)})$ obtained by inducing the characters of $\mathcal{T}'_{(\sigma,b_\sigma')}$ to $G_\sigma$. By the properties of \eqref{eq:Stable reduction, bijection 3} it follows that $\mathcal{T}_{(\sigma,b_\sigma)}$ is an $A_{(\sigma,b_\sigma)}$-transversal in $\irr^d(\beta_{(\sigma,b_\sigma)})$ and we conclude that the sets
\[\mathcal{S}:=\left(\lbrace\overline{(\sigma,b_\sigma,\vartheta)}\enspace\middle|\enspace(\sigma,b_\sigma)\in\mathcal{T}, \vartheta\in\mathcal{T}_{(\sigma,b_\sigma)}\right\rbrace\]
and
\[\mathcal{S}':=\left(\lbrace\overline{(\sigma,b_\sigma',\vartheta')}\enspace\middle|\enspace(\sigma,b_\sigma')\in\mathcal{T}', \vartheta'\in\mathcal{T}_{(\sigma,b_\sigma')}'\right\rbrace\]
are $A'$-transversals in $\C^d_\bullet(B,Z)/G$ and $\C^d_\bullet(B',Z)/G'$ respectively (recall once more that $A=GA'$) in bijection with each others. Although clear from the context, we remark that the notation $\overline{(\sigma,b_\sigma',\vartheta')}$ represents the $G'$-orbit and not the $G$-orbit. We conclude by defining
\[\Upsilon\left(\overline{(\sigma,b_\sigma,\vartheta)}^a\right):=\overline{(\sigma,b_\sigma',\vartheta')}^a\]
for every $\overline{(\sigma,b_\sigma,\vartheta)}\in\mathcal{S}$ corresponding to $\overline{(\sigma,b_\sigma',\vartheta')}\in\mathcal{S}'$ and every $a\in A'$.
\end{proof}

Next, we show that the bijection given by Proposition \ref{prop:Stable reduction, bijection} preserves isomorphisms of characters triples.

\begin{cor}
\label{cor:Stable reduction, character triples}
Consider the setting of Proposition \ref{prop:Stable reduction, bijection}. For $i=1,2$ consider $\overline{(\sigma_i,b_{\sigma_i},\vartheta_i)}\in\C^d_\bullet(B,Z)/G$ and write $\Upsilon(\overline{(\sigma_i,b_{\sigma_i},\vartheta_i)})=\overline{(\sigma_i',b_{\sigma_i'}',\vartheta_i')}$. If
\[\left(A_{b,\sigma_1',\vartheta_1'},G_{b,\sigma_i'},\vartheta_1'\right)\iso{G_b}\left(A_{b,\sigma_2',\vartheta_2'},G_{b,\sigma_2'},\vartheta_2'\right)\]
then
\[\left(A_{\sigma_1,\vartheta_1},G_{\sigma_i},\vartheta_1\right)\iso{G}\left(A_{\sigma_2,\vartheta_2},G_{\sigma_2},\vartheta_2\right).\]
\end{cor}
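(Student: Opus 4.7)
The plan is to transfer the given $G_b$-block isomorphism along the induction/Fong--Reynolds correspondence that underlies the bijection $\Upsilon$ constructed in Proposition \ref{prop:Stable reduction, bijection}. First, I would unpack that construction: after a harmless $G$-conjugation (which does not affect the desired conclusion, since the latter is stated up to $G$-block isomorphism) we may assume $\sigma_i' = \sigma_i$, $\vartheta_i' \in \irr(\beta'_{(\sigma_i, b'_{\sigma_i})})$, and $\vartheta_i = (\vartheta_i')^{G_{\sigma_i}} \in \irr(\beta_{(\sigma_i, b_{\sigma_i})})$. The Frattini-style identities recorded in the proof of Proposition \ref{prop:Stable reduction, bijection}, in particular $A'_{(\sigma_i, b_{\sigma_i}')} = A_{(\sigma_i, b_{\sigma_i})}$ and $A_{\sigma_i, \vartheta_i} = A_{b,\sigma_i,\vartheta_i'}$, show that the underlying group pairs in the two triples coincide as subgroups of $A$.

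Next, I would observe that the relation $\vartheta_i = (\vartheta_i')^{G_{\sigma_i}}$ is exactly a Clifford correspondence: $\vartheta_i'$ is the unique irreducible character of $(G_b)_{\sigma_i}$ inducing $\vartheta_i$ and lying above a common constituent of the block $b^\star_{\sigma_i}$ of $\c_N(D(\sigma_i))$ that was used in the proof of Proposition \ref{prop:Stable reduction, bijection} to define the Fong--Reynolds correspondent $\beta'_{(\sigma_i,b_{\sigma_i}')}$. This is precisely the setting handled by \cite[Proposition 2.8]{Ros22}, which transports block isomorphisms of character triples through a Clifford correspondence along a normal subgroup. Applying it to $(\vartheta_1',\vartheta_2') \leftrightarrow (\vartheta_1,\vartheta_2)$ yields
\[
\left(A_{\sigma_1,\vartheta_1},G_{\sigma_1},\vartheta_1\right) \iso{G} \left(A_{\sigma_2,\vartheta_2},G_{\sigma_2},\vartheta_2\right),
\]
exactly as in the use made of the same reference in the proof of Lemma \ref{lem:Product chain construct bijections 2}.

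Should \cite[Proposition 2.8]{Ros22} deliver only an $M$-block isomorphism for some intermediate $G_b \leq M \leq G$, I would upgrade it to a $G$-block isomorphism via \cite[Lemma 2.11]{Ros22}. The centraliser hypothesis of that lemma is verified exactly as in Section \ref{sec:Product chains}: since $\sigma_i$ is a normal $p$-chain, any defect group $Q_i$ of $\bl(\vartheta_i)$ contains $D(\sigma_i) \leq \O_p(G_{\sigma_i})$, hence $\c_{GA_{\sigma_i,\vartheta_i}}(Q_i)$ normalises every term of $\sigma_i$ and lies in $A_{\sigma_i,\vartheta_i} \leq A_{\sigma_i,\vartheta_i}M$, as required.

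The main obstacle is the careful bookkeeping needed to view $\vartheta_i'$ as a Clifford correspondent of $\vartheta_i$ in the exact form required by \cite[Proposition 2.8]{Ros22} (matching stabilisers, matching block labels, and an isomorphism built from compatible projective representations). Once the Fong--Reynolds data from Proposition \ref{prop:Stable reduction, bijection} are aligned with those hypotheses, the argument reduces to direct citations of previously established machinery.
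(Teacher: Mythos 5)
Your proposal is correct and follows essentially the same route as the paper: normalise via a $G$-conjugation so that $\sigma_i'=\sigma_i$ (which is what the construction of $\Upsilon$ already provides), and then transport the $G_b$-block isomorphism through the Clifford correspondence $\vartheta_i'\mapsto\vartheta_i=(\vartheta_i')^{G_{\sigma_i}}$ via \cite[Proposition 2.8]{Ros22}. The extra fallback via \cite[Lemma 2.11]{Ros22} is not needed here --- Proposition 2.8 delivers the $\iso{G}$ conclusion directly in this situation --- but including it is harmless.
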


\begin{proof}
First notice that we can write $\Upsilon(\overline{(\sigma_i,b_{\sigma_i},\vartheta_i)})=\overline{(\sigma_i,b_{\sigma_i}',\vartheta_i')}$ thanks to the construction given above. Then the result follows immediately from \cite[Proposition 2.8]{Ros22}.
\end{proof}

We can finally prove the main result of this section.

\begin{prop}
\label{prop:Stable reduction}
Suppose that Hypothesis \ref{hyp:Minimal counterexample} holds for $G\unlhd A$ and that Conjecture \ref{conj:CTC} fails to hold for a block $B$ of $G$. If $N$ is a normal subgroup of $A$ contained in $G$ and $b$ is a block of $N$ covered by $B$, then $b$ is $A$-invariant.
\end{prop}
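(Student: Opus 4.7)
The plan is to derive a contradiction by supposing that $b$ is not $A$-invariant. A preliminary reduction allows us to assume that $B$ itself is $A$-invariant: if it were not, then $A_B < A$, so $|A_B : \z(G)| < |A : \z(G)|$ while $\mathcal{S}_p(G) = \mathcal{S}_p(G)$, and Hypothesis \ref{hyp:Minimal counterexample} applied to $G \unlhd A_B$ would already yield Conjecture \ref{conj:CTC} for $(G \unlhd A_B, Z, B, d)$; the resulting $\n_{A_B}(Z)_B$-equivariant bijection is automatically $\n_A(Z)_B$-equivariant (since $\n_A(Z)_B = \n_{A_B}(Z)_B$), contradicting the assumption that $B$ is a counterexample.

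Assuming $B$ is $A$-invariant, suppose toward a contradiction that $b$ is not. Set $G_b := G \cap A_b \unlhd A_b$. Because $\z(G)$ centralises $G$ and acts trivially on all blocks of $N$, we have $\z(G) \leq \z(G_b)$, whence $|A_b : \z(G_b)| \leq |A_b : \z(G)| < |A : \z(G)|$; moreover $\mathcal{S}_p(G_b) \subseteq \mathcal{S}_p(G)$ as $G_b \leq G$. Let $B'$ denote the Fong--Reynolds correspondent of $B$ over $b$, a block of $G_b$ sharing defect groups with $B$ up to conjugacy; in particular the defect groups of $B'$ strictly contain $Z \leq \z(G_b)$. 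Hypothesis \ref{hyp:Minimal counterexample} applied to $G_b \unlhd A_b$, $Z$, $B'$, $d$ then yields an $\n_{A_b}(Z)_{B'}$-equivariant bijection
\[\Omega' : \C^d(B', Z)_+ / G_b \to \C^d(B', Z)_- / G_b\]
inducing $G_b$-block isomorphisms of character triples.

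I will then combine Lemma \ref{lem:Brauer pairs version of CTC}, Proposition \ref{prop:Stable reduction, bijection}, and Corollary \ref{cor:Stable reduction, character triples} to transport $\Omega'$ via the Fong--Reynolds bijection $\Upsilon$ to an $\n_A(Z)_{B, b}$-equivariant bijection
\[\Lambda : \C^d(B, Z)_+ / G \to \C^d(B, Z)_- / G\]
that now induces $G$-block isomorphisms of character triples (the upgrade from $\iso{G_b}$ to $\iso{G}$ is precisely the content of Corollary \ref{cor:Stable reduction, character triples}).

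The final step, which is the main subtlety, is to upgrade the equivariance of $\Lambda$ from $\n_A(Z)_{B, b}$ to the full $\n_A(Z)_B$. Since $B$ is $A$-invariant and Clifford theory for blocks shows that the blocks of $N$ covered by $B$ form a single $G$-conjugacy class, the $A$-orbit of $b$ coincides with its $G$-orbit, and orbit--stabiliser gives $A = G \cdot A_b$. Intersecting with $\n_A(Z)_B$ and noting that $G \leq \n_A(Z)_B$ (because $Z \leq \z(G)$ is central in $G$ and $B$ is a block of $G$), we deduce $\n_A(Z)_B = G \cdot \n_A(Z)_{B, b}$. Since $G$ acts trivially on the $G$-orbit sets $\C^d(B, Z)_\pm / G$, the $\n_A(Z)_{B, b}$-equivariance of $\Lambda$ upgrades automatically to $\n_A(Z)_B$-equivariance, supplying the bijection required by Conjecture \ref{conj:CTC} for $B$ and thereby furnishing the desired contradiction.
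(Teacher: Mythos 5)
Your proposal is correct and follows essentially the same route as the paper's own proof: pass to the Fong--Reynolds correspondent $B'$ of $B$ over $b$ inside $G_b\unlhd A_b$, invoke the minimality hypothesis there, transport the resulting bijection back using Lemma~\ref{lem:Brauer pairs version of CTC}, Proposition~\ref{prop:Stable reduction, bijection}, and Corollary~\ref{cor:Stable reduction, character triples}, and finish with the factorisation $\n_A(Z)_B = G\,\n_A(Z)_{B,b}$ to upgrade equivariance. The only cosmetic difference is your separate preliminary reduction to $B$ being $A$-invariant; the paper instead folds this normalisation into the proof of Proposition~\ref{prop:Stable reduction, bijection} and leaves the final proof of the proposition to work without it, but your version is harmless and correct.
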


\begin{proof}
By assumption there exist a $p$-subgroup $Z\leq \z(G)$ strictly contained in the defect groups of $B$ and an integer $d\geq 0$ such that Conjecture \ref{conj:CTC} fails to hold with respect to $Z$, $G$, $A$, $B$, and $d$. Let $A':=A_b$ and define $H':=H\cap A'$ for every $H\leq A$. Notice that $\mathcal{S}_p(G')\subseteq \mathcal{S}_p(G)$ and that $\z(G)\leq \z(G')$. If $A'<A$, then $|A':\z(G')|<|A:\z(G)|$ and we obtain from Hypothesis \ref{hyp:Minimal counterexample} that Conjecture \ref{conj:CTC} holds for $G'\unlhd A'$. Let $B'$ be the block of $G'$ corresponding to $B$ via the Fong--Reynolds correspondence over $b$ and notice that $Z\leq \z(G')$. Since $B'$ and $B$ have common defect groups, we deduce that $Z$ is strictly contained in the defect groups of $B'$. Then, there exists an $\n_{A'}(Z)_{B'}$-equivariant bijection
\[\Omega':\C^d(B',Z)_+/G'\to\C^d(B',Z)_-/G'\]
such that
\begin{equation}
\label{eq:Stable reduction 1}
\left(A'_{\sigma',\vartheta'},G'_{\sigma'},\vartheta'\right)\iso{G'}\left(A'_{\rho',\chi'},G'_{\rho'},\chi'\right)
\end{equation}
for every $(\sigma',\vartheta')\in\C^d(B',Z)_+$ and every $(\rho',\chi')\in\Omega'(\overline{(\sigma',\vartheta')})$. Denote by $\C^d_\bullet(B',Z)_\pm$ and $\C^d_\bullet(B,Z)_\pm$ the subsets of $\C^d_\bullet(B',Z)$ and $\C^d_\bullet(B,Z)$ respectively whose elements have a $p$-chain belonging to $\mathfrak{N}(G',Z)_\pm$ and $\mathfrak{N}(G,Z)_\pm$ respectively. Combining $\Omega'$ with the bijection given by Lemma \ref{lem:Brauer pairs version of CTC} we obtain an $\n_{A'}(Z)_{B'}$-equivariant bijection
\[\Omega'_\bullet:\C^d_\bullet(B',Z)_+/G'\to\C^d_\bullet(B',Z)_-/G'\]
from which we deduce, thanks to Proposition \ref{prop:Stable reduction, bijection}, that the map
\[\Omega_\bullet:\C^d_\bullet(B,Z)_+/G\to\C^d_\bullet(B,Z)_-/G\]
given by
\[\Omega_\bullet\left(\overline{(\sigma,\vartheta)}\right):=\Upsilon^{-1}\circ\Omega'_\bullet\circ\Upsilon\left(\overline{(\sigma,\vartheta)}\right)\]
for every $(\sigma,\vartheta)\in\C^d_\bullet(B,Z)_+$ is an $\n_{A'}(Z)_{B'}$-equivariant bijection. Noticing that $\n_A(Z)_B=G\n_{A'}(Z)_{B'}$, we deduce that $\Omega_\bullet$ is $\n_A(Z)_B$-equivariant and hence Lemma \ref{lem:Brauer pairs version of CTC} yields an $\n_A(Z)_B$-equivariant bijection
\[\Omega:\C^d(B,Z)_+/G\to\C^d(B,Z)_-/G.\]
Furthermore, applying Corollary \ref{cor:Stable reduction, character triples} together with \eqref{eq:Stable reduction 1} we deduce that
\[\left(A_{\sigma,\vartheta},G_{\sigma},\vartheta\right)\iso{G}\left(A_{\rho,\chi},G_{\rho},\chi\right)\]
for every $(\sigma,\vartheta)\in\C^d(B,Z)_+$ and every $(\rho,\chi)\in\Omega(\overline{(\sigma,\vartheta)})$. This contradicts our assumptions and therefore $A'=A$.
\end{proof}

\subsection{Covering blocks of non-central normal subgroups}

As before, consider finite groups $Z\leq N\leq G\unlhd A$ with $N$ normal in $A$. In this section, we show that if Hypothesis \ref{hyp:Minimal counterexample} holds and Conjecture \ref{conj:CTC} fails to hold with respect to a block $B$ of $G$, then either $N$ is contained in the centre of $G$ or $Z$ is strictly contained in the defect groups of any block of $N$ covered by $B$.

For this, suppose that $B$ covers a block $b$ of $N$ with defect group $Z$. In this case, notice that $Z$ is central in $N$ and according to \cite[Theorem 9.12]{Nav98} there exists a unique character $\mu$ belonging to $b$ and satisfying $Z\leq \ker(\mu)$. Observe furthermore that $b$ is a nilpotent block (see, for instance, \cite[(1.ex.1)]{Bro-Pui80}) and that $b$ is $A$-invariant according to Proposition \ref{prop:Stable reduction}. The structure of the covering block $B$ can then be understood by the work of K\"ulshammer--Puig \cite{Kul-Pui90} and Puig--Zhou \cite{Pui-Zho12} on extensions of nilpotent blocks. In particular, there exists an extension $L$ of $G/N$ by (an isomorphic copy of) $Z$, together with a factor set $\alpha$ of $L$, and an isomorphism $\z(\mathcal{O}Gb)\simeq \z(\mathcal{O}_\alpha L)$ where $\mathcal{O}_\alpha L$ denotes the twisted group algebra of $L$ by $\alpha$. Moreover, assuming that $\mu$ is $G$-invariant, we deduce that the factor set $\alpha$ is trivial and hence that the block $B$ of $G$ corresponds to a unique block, say $C$, of $L$. Similarly, by replacing $G$ with $A$, we obtain an extension $M$ of $A/N$ by $Z$ such that $L\unlhd M$. Then, if we define $\overline{A}:=A_B/G\simeq M_C/L$, the results of \cite{Pui-Zho12} (see also \cite[Theorem 1.1]{Coc-Mar-Tod20}) show that the block extensions $\mathcal{O}A_BB$ and $\mathcal{O}M_CC$ are $\overline{A}$-graded basic Morita equivalent. We now show that if Conjecture \ref{conj:CTC} holds for the block $C$, then it holds for the block $B$.

\begin{lem}
\label{lem:Kulshammer-Puig and CTC}
Let $G\unlhd A$ be finite groups, $B$ a block of $G$ and suppose that $Z=\O_p(G)\leq$ $\z(G)$. Let $Z\leq N\leq G$ with $N\unlhd A$ and consider a block $b$ of $N$ covered by $B$ and with defect group $Z$. Assume that the unique character $\mu$ belonging to $b$ and satisfying $Z\leq \ker(\mu)$ extends to $A$ and consider the K\"ulshammer--Puig extension $M$ of $A/N$ by $Z$ and the analogue extension $L$ of $G/N$ by $Z$. Let $C$ be the unique block of $L$ corresponding to $B$. If Conjecture \ref{conj:CTC} holds for $C$ with respect to $Z\leq L\unlhd M$, then Conjecture \ref{conj:CTC} holds for $B$ with respect to $Z\leq G\unlhd A$.
\end{lem}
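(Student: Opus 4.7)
The plan is to exploit the $\overline{A}$-graded basic Morita equivalence, with $\overline{A}:=A_B/G\simeq M_C/L$, arising from the Puig--Zhou structure theorem applied to the covering block $B$ of the nilpotent block $b$. Because $\mu$ extends to $A$, the factor set describing the K\"ulshammer--Puig extension $M$ is trivial, so the results of Puig--Zhou used in Theorem \ref{thm:Graded basic Morita equivalences for nilpotent blocks} and Corollary \ref{cor:Graded basic Morita equivalences for nilpotent blocks} apply to produce an $\overline{A}$-graded basic Morita equivalence between $\mathcal{O}A_BB$ and $\mathcal{O}M_CC$. Via the Marcus--Minuta framework this will yield a defect-preserving character correspondence compatible with $G$-block isomorphisms of character triples.

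The first step will be to set up a bijection at the level of normal $p$-chains. By \cite[Theorem 9.26]{Nav98} every defect group $D$ of $B$ satisfies $D\cap N=Z$, and by the reduction at the beginning of the proof of Proposition \ref{prop:Auxiliary theorem} every $G$-orbit in $\C^d(B,Z)$ admits a representative $(\sigma,\vartheta)$ with each term $D_i$ of $\sigma$ contained in such a $D$. Hence $D_i\cap N=Z$ for all $i$, so $D_i/Z\simeq D_iN/N$ embeds naturally in $G/N\simeq L/Z$. Pulling these images back under $L\to L/Z$ defines a chain $\sigma'=\{Z=E_0<E_1<\cdots<E_n\}$ in $L$, and the assignment $\sigma\mapsto\sigma'$ is an $\n_A(Z)_B$-equivariant bijection between the relevant $G$-orbits of chains in $\mathfrak{N}(G,Z)$ and $L$-orbits of chains in $\mathfrak{N}(L,Z)$, preserving length.

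The second step will be to promote this chain-level bijection to the level of pairs. At each corresponding pair $(\sigma,\sigma')$, the $\overline{A}$-graded basic Morita equivalence restricts to $G_\sigma$ and $L_{\sigma'}$ along the lines of \cite[Corollary 4.3]{Coc-Mar17} and, via the Marcus--Minuta argument of Corollary \ref{cor:Graded basic Morita equivalences for nilpotent blocks}, produces a defect-preserving $\n_A(Z)_{B,\sigma}$-equivariant bijection $\irr(B_\sigma)\to\irr(C_{\sigma'})$ inducing $G_\sigma$-block isomorphisms of character triples between the full stabilisers in $A$ and $M$. Assembling these pointwise bijections yields an $\n_A(Z)_B$-equivariant bijection $\Phi:\C^d(B,Z)/G\to\C^d(C,Z)/L$ preserving the length of chains and inducing $G$-block isomorphisms of character triples. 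Transporting the bijection $\Omega_C$ supplied by the hypothesis through $\Phi$ then produces the required bijection $\Omega_B$ for $B$.

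The main obstacle will be verifying, at each chain $\sigma$, the precise compatibility between the chain stabilisers $G_\sigma$ and $L_{\sigma'}$ needed to extract a $G_\sigma$-block isomorphism of character triples from the graded Morita equivalence. Concretely, we must check that the $\overline{A}$-graded structure restricts correctly along the tower of stabilisers and that the centraliser conditions required by \cite[Lemma 2.11]{Ros22} are satisfied, in complete analogy with the argument at the end of the proof of Corollary \ref{cor:Graded basic Morita equivalences for nilpotent blocks}. Once this compatibility is established, the assembly in the second step is routine and the result follows.
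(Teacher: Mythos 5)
Your proposal misses the central technical difficulty and would not go through as written. The key issue is in your second step: you claim the $\overline{A}$-graded basic Morita equivalence between $\mathcal{O}A_BB$ and $\mathcal{O}M_CC$ ``restricts to $G_\sigma$ and $L_{\sigma'}$'' to give a character bijection $\irr(B_\sigma)\to\irr(C_{\sigma'})$. But $L_{\sigma'}$ is, by construction, an extension of $NG_\sigma/N$ by $Z$ --- not of $G_\sigma/N$ --- so the block $B_\sigma$, which sits over $G_\sigma$, is not the right object to compare directly with $C_{\sigma'}$. Moreover, \cite[Corollary 4.3]{Coc-Mar17} lets you restrict the grading group $\overline{A}$ to a subgroup (as in Corollary~\ref{cor:Graded basic Morita equivalences for nilpotent blocks}); it does \emph{not} restrict the Morita equivalence of the identity components $\mathcal{O}GB$, $\mathcal{O}LC$ to the local subgroups $G_\sigma$, $L_{\sigma'}$. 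Basic Morita equivalences transport $p$-local structure, but extracting a defect-preserving bijection between $\irr(B_\sigma)$ and $\irr(C_{\sigma'})$ from the global equivalence requires a separate local argument, which you do not supply.

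The paper's proof fills this gap with an intermediate step you have omitted. One first passes from chains to pairs $(\sigma,b_\sigma)$ (chains together with compatible Brauer pairs), and for each such pair introduces the unique nilpotent block $b(\sigma)$ of $ND(\sigma)$ covering $b$ --- nilpotent by \cite[Theorem 2]{Cab87} since $b$ is nilpotent and $A$-invariant --- together with the Harris--Kn\"orr correspondent $\beta'_{(\sigma,b_\sigma)}$ of $\beta_{(\sigma,b_\sigma)}$ in $NG_\sigma$. Corollary~\ref{cor:Bijections over nilpotent blocks} then supplies a defect-preserving equivariant bijection $\irr(\beta_{(\sigma,b_\sigma)})\to\irr(\beta'_{(\sigma,b_\sigma)})$ inducing $G$-block isomorphisms; only after this transfer to $NG_\sigma$ does the K\"ulshammer--Puig/Puig--Zhou machinery apply locally (comparing blocks of $NA_{\sigma,\beta'}$ and $M_{\wt\sigma,\wt\beta}$ over the grading group $A(\sigma,b_\sigma)$), producing a further bijection to $\irr(\wt\beta_{(\wt\sigma,\wt b_{\wt\sigma})})$. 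Your chain-level bijection $\sigma\mapsto\sigma'$ is roughly the right picture, but without the Brauer-pair refinement it does not interact cleanly with the block decomposition, which is why the paper works with $\mathcal{N}_B(G,Z)$ and $\mathcal{N}_C(L,Z)$ rather than bare chains. Finally, the verification that the resulting bijection respects block induction (needed for the $G$-block isomorphism) is not routine: the paper devotes a substantial argument to it involving local pointed groups and the preservation of subpairs under the basic Morita equivalence, precisely because the comparison goes through intermediate groups $J$ between $G$ and $GA_{\sigma,\vartheta'}$ that the Harris--Kn\"orr machinery alone does not handle.
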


\begin{proof}
Using the argument given in \cite[p.213-214]{Rob02}, we know that there exists a bijection between the sets $\mathcal{N}_B(G,Z)/G$ and $\mathcal{N}_C(L,Z)/L$ sending the $G$-orbit of $(\sigma,b_\sigma)$ to the $L$-orbit of $(\wt{\sigma},\wt{b}_{\wt{\sigma}})$. Moreover, this bijection commutes with the action of $A_B$ and $M_C$ and preserves the length of corresponding $p$-chains, that is, $|\sigma|=|\wt{\sigma}|$. Consider now the block $\beta_{(\sigma,b_\sigma)}:=b_\sigma^{G_\sigma}$ and fix a character $\vartheta\in\irr(\beta_{(\sigma,b_\sigma)})$ with defect $d(\vartheta)=d$, so that the triple $(\sigma,b_\sigma,\vartheta)$ belongs to $\C_\bullet^d(B,Z)$. Let $b(\sigma)$ be the unique block of $ND(\sigma)$ that covers $b$, where we recall once again that $D(\sigma)$ is the largest term of the $p$-chain $\sigma$. Since $b$ is nilpotent and $A$-invariant, it follows that $b(\sigma)$ is nilpotent and has $D(\sigma)$ as defect group (see \cite[Theorem 2]{Cab87}). Furthermore, as shown in \cite[p.214]{Rob02}, we have $\n_{NG_\sigma}(D(\sigma))=G_\sigma$ and the induced block $\beta'_{(\sigma,b_\sigma)}:=(\beta_{(\sigma,b_\sigma)})^{NG_\sigma}$ covers $b(\sigma)$. It then follows that $\beta_{(\sigma,b_\sigma)}$ and $\beta'_{(\sigma,b_\sigma)}$ are Harris--Kn\"orr correspondents over the nilpotent block $b(\sigma)$. If we set $A(\sigma):=A_\sigma/G_\sigma\simeq NA_\sigma/NG_\sigma$, then we deduce that $\mathcal{O}NA_\sigma b(\sigma)$ is $A(\sigma)$-graded basic Morita equivalent to $\mathcal{O}A_\sigma b(\sigma)_0$, where $b(\sigma)_0$ is the Brauer correspondent of $b(\sigma)$ in $D(\sigma)\n_N(D(\sigma))$, and Corollary \ref{cor:Bijections over nilpotent blocks} yields a defect preserving $A_{\sigma,\beta_{(\sigma,b_\sigma)}}$-equivariant bijection between $\irr(\beta_{(\sigma,b_\sigma)})$ and $\irr(\beta'_{(\sigma,b_\sigma)})$ such that, if $\vartheta$ corresponds to $\vartheta'$, then we have
\[\left(NA_{\sigma,\vartheta'},NG_\sigma,\vartheta'\right)\iso{NG_\sigma}\left(A_{\sigma,\vartheta},G_\sigma,\vartheta\right)\]
which implies
\begin{equation}
\label{eq:Kulshammer-Puig and CTC, 1}
\left(NA_{\sigma,\vartheta'},NG_\sigma,\vartheta'\right)\iso{G}\left(A_{\sigma,\vartheta},G_\sigma,\vartheta\right)
\end{equation}
according to \cite[Lemma 2.11]{Ros22}. Next, let $(\wt{\sigma},\wt{b}_{\wt{\sigma}})$ be an element of $\mathcal{N}_C(L,Z)$ whose $L$-orbit corresponds to the $G$-orbit of $(\sigma,b_\sigma)$. Then, the induced block $\wt{\beta}_{(\wt{\sigma},\wt{b}_{\wt{\sigma}})}:=(\wt{b}_{\wt{\sigma}})^{L_{\wt{\sigma}}}$ is the unique block of the extension $L_{\wt{\sigma}}$ of $NG_\sigma/N$ by $Z$ corresponding to the block $\beta'_{(\sigma,b_\sigma)}$ and, as explained above and noticing that $A(\sigma,b_\sigma):=NA_{\sigma,\beta'_{\sigma,b_\sigma}}/NG_\sigma\simeq M_{\wt{\sigma},\wt{\beta}_{(\wt{\sigma},\wt{b}_{\wt{\sigma}})}}/L_{\wt{\sigma}}$, there exists an $A(\sigma,b_\sigma)$-graded basic Morita equivalence between $\mathcal{O}NA_{\sigma,\beta'_{(\sigma,b_\sigma)}}\beta'_{(\sigma,b_\sigma)}$ and $\mathcal{O}M_{\wt{\sigma},\wt{\beta}_{(\wt{\sigma},\wt{b}_{\wt{\sigma}})}}\wt{\beta}_{(\wt{\sigma},\wt{b}_{\wt{\sigma}})}$. In particular, we obtain a defect preserving and $A(\sigma,b_\sigma)$-equivariant bijection between the character sets $\irr(\beta'_{(\sigma,b_\sigma)})$ and $\irr(\wt{\beta}_{(\wt{\sigma},\wt{b}_{\wt{\sigma}})})$. Then, if we denote by $\wt{\vartheta}$ the character of $\wt{\beta}_{(\wt{\sigma},\wt{b}_{\wt{\sigma}})}$ corresponding to $\vartheta'\in\irr(\beta'_{\sigma,b_\sigma})$ and recalling that $\sigma$ and $\wt{\sigma}$ have the same length, we can define an equivariant bijection
\begin{equation}
\label{eq:Kulshammer-Puig and CTC, 2}
C_\bullet^d(B,Z)_\pm/G\to\C_\bullet^d(C,Z)_\pm/L
\end{equation}
by sending the $G$-orbit of $(\sigma,b_\sigma,\vartheta)$ to the $L$-orbit of $(\wt{\sigma},\wt{b}_{\wt{\sigma}}, \wt{\vartheta})$. Now, by our assumption and using Lemma \ref{lem:Brauer pairs version of CTC}, we know that there exists an $M_C$-equivariant bijection
\[\Omega_\bullet^{C,d}:\C_\bullet^d(C,Z)_+/L\to \C_\bullet^d(C,Z)_-/L\]
such that
\begin{equation}
\label{eq:Kulshammer-Puig and CTC, 3}
\left(M_{\wt{\sigma}_1,\wt{\vartheta}_1},L_{\wt{\sigma}_1},\wt{\vartheta}_1\right)\iso{L}\left(M_{\wt{\sigma}_2,\wt{\vartheta}_2},L_{\wt{\sigma}_2},\wt{\vartheta}_2\right)
\end{equation}
for every $(\wt{\sigma}_1,\wt{\vartheta}_1)\in\C^d_\bullet(C,Z)_+$ and $(\wt{\sigma}_2,\wt{\vartheta}_2)\in\C^d_\bullet(C,Z)_-$ whose $L$-orbits correspond via $\Omega_\bullet^{C,d}$. To conclude, by \eqref{eq:Kulshammer-Puig and CTC, 2}  and again using Lemma \ref{lem:Brauer pairs version of CTC}, it is enough to show that
\[\left(A_{\sigma_1,\vartheta_1},G_{\sigma_1},\vartheta_1\right)\iso{G}\left(A_{\sigma_2,\vartheta_2},G_{\sigma_2},\vartheta_2\right)\]
where the $G$-orbit of $(\sigma_i,b_{\sigma_i},\vartheta_i)$ corresponds to the $L$-orbit of $(\wt{\sigma}_i,\wt{b}_{\wt{\sigma_i}},\wt{\vartheta}_i)$ as explained above. Furthermore, by the transitivity of the $G$-block isomorphism of character triples, it suffices to show that
\[\left(NA_{\sigma_1,\vartheta'_1},NG_{\sigma_1},\vartheta'_1\right)\iso{G}\left(NA_{\sigma_2,\vartheta'_2},G_{\sigma_2},\vartheta'_2\right)\]
according to \eqref{eq:Kulshammer-Puig and CTC, 1} and where $\vartheta'_i$ is the character of the block $\beta'_{(\sigma_i,b_{\sigma_i})}$ corresponding to the character $\vartheta_i$ of the block $\beta_{(\sigma_i,b_{\sigma_i})}$. We now wish to exploit the $L$-block isomorphism given in \eqref{eq:Kulshammer-Puig and CTC, 3} together with the $A(\sigma_i,b_{\sigma_i})$-graded basic Morita equivalence between $\mathcal{O}NA_{\sigma_i,\beta'_{(\sigma_i,b_{\sigma_i})}}\beta'_{(\sigma_i,b_{\sigma_i})}$ and $\mathcal{O}M_{\wt{\sigma}_i, \wt{\beta}_{(\wt{\sigma}_i,\wt{b}_{\wt{\sigma}_i})}}\wt{\beta}_{(\wt{\sigma}_i,\wt{b}_{\wt{\sigma}_i})}$ through the work of \cite{Mar-Min21}. However, notice that here we consider a more general situation for the condition on block induction required to prove the $G$-block isomorphism that is not covered by the Harris--Kn\"orr correspondence considered in \cite{Mar-Min21}. For this, consider intermediate groups $NG_{\sigma_i}\leq J_i\leq NA_{\sigma_i,\vartheta_i'}$ where $G\leq J\leq GA_{\sigma_i,\vartheta'_i}$ and $J_i:=NA_{\sigma_i,\vartheta'_i}\cap J$. Observe here that $GA_{\sigma_1,\vartheta'_1}=GA_{\sigma_2,\vartheta'_2}$ since $LM_{\wt{\sigma}_1,\wt{\vartheta}_1}=LM_{\wt{\sigma}_2,\wt{\vartheta}_2}$ and $LM_{\wt{\sigma}_i,\wt{\vartheta}_i}/L$ is isomorphic to $GA_{\sigma_i,\vartheta'_i}/G$. Moreover, $A(\sigma_i,\vartheta'_i):=NA_{\sigma_i,\vartheta'_i}/NG_{\sigma_i}\simeq M_{\wt{\sigma}_i,\wt{\vartheta}_i}/L_{\wt{\sigma}_i}\leq A(\sigma_i,b_{\sigma_i})$ and \cite[Corollary 4.3]{Coc-Mar17} shows that $\mathcal{O}NA_{\sigma_i,\vartheta'_i}\beta'_{\sigma_i,b_{\sigma_i}}$ and $\mathcal{O}M_{\wt{\sigma}_i,\wt{\vartheta}_i}\wt{\beta}_{\wt{\sigma}_i,\wt{b}_{\wt{\sigma}_i}}$ are $A(\sigma_i,\vartheta'_i)$-graded basic Morita equivalent. Let now $(\Qr_1,\Qr_2)$ be projective representation associated to the $L$-block isomorphism \eqref{eq:Kulshammer-Puig and CTC, 3} and denote by $\Pr_i$ the projective representation associated with $\vartheta'_i$ that corresponds to $\Qr_i$ via the latter equivalence (see, for instance, \cite[Remark 6.5]{Mar-Min21-central}). Furthermore, let $L\leq K\leq LM_{\wt{\sigma}_i,\wt{\vartheta}_i}$ be the subgroup corresponding to $J$ via the isomorphism $GA_{\sigma_i,\vartheta'_i}/G\simeq LM_{\wt{\sigma}_i,\wt{\vartheta}_i}/L$. Let $\psi_i\in\irr(J_i\mid \vartheta'_i)$ and denote by $b_i$ the block to which $\psi_i$ belongs. Consider a projective representation $\mathcal{R}$ of $J/G$, set $\mathcal{R}_i:=\mathcal{R}_{J_i}$, and suppose that $\psi_i$ is afforded by the product $\Pr_{i,J_i}\otimes \mathcal{R}_i$ (see \cite[Theorem 3.3]{Spa17}). We need to show that $b_1^J=b_2^J$. Let $c_i$ be the block of $K_i$ corresponding to $b_i$ and where $K_i:=K\cap M_{\wt{\sigma}_i,\wt{\vartheta}_i}$. Denote by $\wt{\psi}_i\in\irr(K_i\mid \wt{\vartheta}_i)$ the character belonging to the block $c_i$ and corresponding to the character $\psi_i$ of $b_i$. Now, if we identify $\mathcal{R}$ with a projective representation $\wt{\mathcal{R}}$ of $K/L$ via the isomorphism $K/L\simeq J/G$, then we deduce that $\wt{\psi}_i$ is afforded by $\Qr_i\otimes \wt{\mathcal{R}}_i$ where $\wt{\mathcal{R}}_i:=\wt{\mathcal{R}}_{K_i}$. To conclude, we now apply a property of basic Morita equivalences (see, for instance, \cite{Zho06}). For this, set $C_i:=c_i^K$, let $B_i$ be the block of $J$ corresponding to $C_i$, and notice in particular that $B_i$ and $C_i$ are basic Morita equivalent. We claim that $b_i^J=B_i$. Consider defect pointed groups $P_{i,\gamma_i}$ and $\wt{P}_{i,\wt{\gamma}_i}$ of $J_{\{B_i\}}$ and $K_{\{C_i\}}$ respectively  such that there exists a bijection between the local pointed groups $Q_{i,\delta_i}$ contained in $P_{i,\gamma_i}$ and the local pointed groups $\wt{Q}_{i,\wt{\delta}_i}$ contained in $\wt{P}_{i,\wt{\delta}_i}$ (see \cite[1.5]{Zho06}). Every $Q_{i,\delta_i}$ determines a unique block, denoted by $b_{i,\delta_i}$, of $\c_J(Q_i)$ (see \cite[1.4]{Zho06}). Notice then that $(Q_i,b_{i,\delta_i})$ is a $B_i$-subpair. Similarly, every $\wt{Q}_{i,\wt{\delta}_i}$ determines a unique block of $\c_K(\wt{Q}_i)$, which we denote by $\wt{b}_{i,\wt{\delta}_i}$. Moreover, whenever $Q_{i}\c_J(Q_{i})\leq H\leq \n_J(Q_{i,\delta_i})$, we can find a unique block $b_{i,\delta_i}^\star$ of $Q_i\c_J(Q_i)$ covering $b_{i,\delta_i}$ and then the induced block $(b_{i,\delta_i}^\star)^H$ is the unique block of $H$ covering $b_{i,\delta_i}^\star$. A similar observation holds for $\wt{b}_{i,\wt{\delta}_i}$. In particular, consider a local pointed group $D(\wt{\sigma}_i)_{\wt{\delta}_i}$, corresponding to $D(\sigma_i)_{\delta_i}$, such that $c_i$ covers the block $\wt{b}_{i,\wt{\delta}_i}^\star$. Now, $c_i=(\wt{b}_{i,\wt{\delta}_i}^\star)^{K_i}$ corresponds to $b_i=(b_{i,\delta_i}^\star)^{J_i}$ and, as $(D(\sigma_i),b_{i,\delta_i})$ is a $B_i$-subpair, it follows that $b_i^J=B_i$ as claimed. Finally, recalling that $B_i$ corresponds to $C_i$ and noticing that $C_1=C_2$ according to \eqref{eq:Kulshammer-Puig and CTC, 3}, we deduce that $B_1=B_2$ as desired.
\end{proof}

Next, still assuming that $\mu$ extends to $A$ so that the K\"ulshammer--Puig cocycle is trivial, we show that if the block $B$ covers a block of $N$ with defect group $Z$, then $N$ must be contained in the centre of $G$. More precisely, we have the following result.

\begin{prop}
\label{prop:Reduction central defect groups}
Suppose that Hypothesis \ref{hyp:Minimal counterexample} holds for $G\unlhd A$. Consider a $p$-subgroup $Z\leq \z(G)$ and a block $B$ of $G$ whose defect groups strictly contain $Z$ and for which Conjecture \ref{conj:CTC} fails to hold. Let $Z\leq N\leq G$ with $N\unlhd A$ and $b$ be an $A$-invariant block of $N$ covered by $B$ with defect group $Z$. If the unique irreducible character $\mu$ of $b$ containing $Z$ in its kernel extends to $A$, then $N\leq \z(G)$.
\end{prop}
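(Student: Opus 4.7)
\textit{Proof proposal.} The plan is to argue by contradiction: suppose $N\not\leq \z(G)$, and use Lemma~\ref{lem:Kulshammer-Puig and CTC} combined with the minimality Hypothesis~\ref{hyp:Minimal counterexample} to produce Conjecture~\ref{conj:CTC} for $B$, contradicting the standing assumption. Since $\mu$ extends to $A$ the K\"ulshammer--Puig factor set is trivial, so we have at our disposal the central extensions $L$ of $G/N$ and $M$ of $A/N$ by $Z$, with $L\unlhd M$ and a distinguished block $C$ of $L$ corresponding to $B$. By Lemma~\ref{lem:Kulshammer-Puig and CTC}, it suffices to verify Conjecture~\ref{conj:CTC} for $C$ with respect to $Z\leq L\unlhd M$, and this I would obtain from Hypothesis~\ref{hyp:Minimal counterexample} applied to the pair $L\unlhd M$.

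The condition $\mathcal{S}_p(L)\subseteq \mathcal{S}_p(G)$ is immediate: since $L$ is a central extension of $G/N$ by the abelian $p$-group $Z$, one has $\mathcal{S}_p(L)=\mathcal{S}_p(L/Z)=\mathcal{S}_p(G/N)\subseteq \mathcal{S}_p(G)$.

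The crucial step is the strict index inequality $|M:\z(L)|<|A:\z(G)|$. Using $|M|=|A|\cdot |Z|/|N|$ this reduces to $|\z(G)|\cdot |Z|<|N|\cdot |\z(L)|$. To lower-bound $|\z(L)|$ I would exploit the explicit K\"ulshammer--Puig description of $L$ via the source algebra of $b$: since $b$ is nilpotent with central defect group $Z$, its (essentially unique) source module has the central subgroup $\z(G)$ acting by scalars by Schur's lemma, so the images of elements of $\z(G)$ lie in $\z(L)$, while the kernel of the resulting map $\z(G)\to \z(L)/Z$ is exactly $\z(G)\cap N$. This produces an embedding $\z(G)N/N\hookrightarrow \z(L)/Z$ and hence
\[|\z(L)|\geq |Z|\cdot \frac{|\z(G)|}{|\z(G)\cap N|}.\]
Substituting this bound gives
\[|M:\z(L)|\leq \frac{|A|\cdot |\z(G)\cap N|}{|N|\cdot |\z(G)|},\]
which is strictly less than $|A:\z(G)|=|A|/|\z(G)|$ precisely because the assumption $N\not\leq \z(G)$ forces $|\z(G)\cap N|<|N|$. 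Hypothesis~\ref{hyp:Minimal counterexample} then produces the bijection required by Conjecture~\ref{conj:CTC} for $C$, which via Lemma~\ref{lem:Kulshammer-Puig and CTC} propagates to $B$ and yields the desired contradiction.

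The main obstacle is making rigorous the structural claim that $\z(G)N/N$ embeds into $\z(L)/Z$. Establishing it requires unravelling the K\"ulshammer--Puig construction of $L$, using that $\mu$ extends to $A$ (so the relevant cohomological obstructions vanish) together with the $\overline{A}$-graded basic Morita equivalence of Theorem~\ref{thm:Graded basic Morita equivalences for nilpotent blocks}; intuitively, the scalars by which $\z(G)$ acts on the source algebra of $b$ integrate coherently into the defining data of $L$ and produce genuine central elements, but translating this intuition into a clean group-theoretic statement is the technical heart of the argument. A safer fallback would be to first invoke the standard reductions in the minimal counterexample setup (cf.\ \cite[Section 5]{Ros22}) to assume $\z(G)=Z$, in which case the inequality reduces instantly to $|Z|<|N|$, a consequence of $Z\leq N$ together with $N\not\leq \z(G)=Z$.
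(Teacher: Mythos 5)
Your overall strategy matches the paper's: argue by contradiction, pass to the K\"ulshammer--Puig extension $L\unlhd M$ and the block $C$, and propagate Conjecture \ref{conj:CTC} back via Lemma \ref{lem:Kulshammer-Puig and CTC}. You also correctly verify $\mathcal{S}_p(L)\subseteq \mathcal{S}_p(G)$. However, the central technical step is a genuine gap, and the proposed fallback does not close it.

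The unproven claim that $\z(G)N/N$ embeds into $\z(L)/Z$ is precisely the place where the argument could fail, and the informal appeal to Schur's lemma on the source algebra does not produce an actual group-theoretic embedding. A central extension $1\to Z\to L\to G/N\to 1$ satisfies $\z(L)/Z\leq\z(G/N)$, but there is no a priori reason for an element of $\z(G/N)$ coming from $\z(G)$ to lift to a central element of $L$: the K\"ulshammer--Puig group is built from the normaliser of a line in the source algebra, and being central in $G$ does not obviously translate to being central in $L$. You flag this yourself as the technical heart, but without it the inequality $|M:\z(L)|<|A:\z(G)|$ is not established. Your fallback of "first reduce to $\z(G)=Z$" is also not available: Hypothesis \ref{hyp:Minimal counterexample} together with the arguments of \cite[Section 5]{Ros22} gives $\O_p(G)\O_{p'}(G)\leq\z(G)$, i.e.\ $\F(G)\leq\z(G)$, but it does not let one assume $\O_{p'}(\z(G))=1$ while preserving the data $(N,b,\mu)$; Fong--Reynolds over $\O_{p'}(G)$ replaces $G$ by a central extension that still has a nontrivial central $p'$-part.

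The paper avoids this obstacle entirely by not trying to lower-bound $\z(L)$. Instead it sets $T:=\O_{p'}(L)$ and establishes the elementary chain
$|M:TZ|=|A/N:\O_{p'}(G/N)|\leq |A:N\O_{p'}(G)|\leq |A:N\z(G)|<|A:\z(G)|$,
the last inequality holding because $N\not\leq\z(G)$. Then, rather than applying Hypothesis \ref{hyp:Minimal counterexample} to $L\unlhd M$ directly, it passes to the Fong--Reynolds correspondent $C'$ of $C$ over a block $c$ of $T$ and, following \cite[Section 4]{Ros22}, to a central extension $\wt{L}\unlhd\wt{M}$ with $|\wt{M}:\z(\wt{L})|\leq|M_c:\z(L_c)T|\leq|M:TZ|$. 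This is the group to which the minimality hypothesis is applied, and the conclusion travels back to $C$ via \cite[Corollary 4.7 and Proposition 5.1]{Ros22}. In short, the paper bounds $|M:TZ|$ (which is a straightforward group-theoretic estimate) rather than $|M:\z(L)|$, and uses a further Fong-type reduction to make that bound applicable; your proposal tries to short-circuit this by a structural claim about $\z(L)$ that is not established.
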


\begin{proof}
We assume that $N$ is not contained in $\z(G)$ and show that Conjecture \ref{conj:CTC} holds for $B$, against the above assumption. By \cite[Lemma 2.3]{Ros22} we know that $Z=\O_p(G)$ and therefore $\O_p(G)\leq N$. Then, $\z(G)<N\z(G)\leq N\O_{p'}(G)$ and we get $|A:N\O_{p'}(G)|\leq |A:N\z(G)|<|A:\z(G)|$. Consider now the groups $L\unlhd M$ and the block $C$ of $L$ from the statement of Lemma \ref{lem:Kulshammer-Puig and CTC} and observe that $Z$ is strictly contained in the defect groups of $C$. This is because $B$ and $C$ are basic Morita equivalent and hence have isomorphic defect groups. Notice furthermore that, since $Z\leq \z(G)$, it follows that $Z\leq \z(L)$ and $L$ is a central extension of $G/N$. In particular, we have $\mathcal{S}_p(L)\subseteq \mathcal{S}_p(G/N)\subseteq \mathcal{S}_p(G)$. Set $T:=\O_{p'}(L)$ and notice that $TZ/Z=\O_{p'}(L/Z)$. Then $|M:TZ|=|M/Z:TZ/Z|=|A/N:\O_{p'}(G/N)|\leq |A:N\O_{p'}(G)|$ and we deduce from Hypothesis \ref{hyp:Minimal counterexample} that Conjecture \ref{conj:CTC} holds for every $L_1\unlhd M_1$ such that $\mathcal{S}_p(L_1)\subseteq \mathcal{S}_p(L)$ and $|M_1:\z(L_1)|\leq|M:ZT|$. If $c$ is a block of $T$ covered by $C$ and $C'$ is its Fong--Reynolds correspondent in $L_c$, then proceeding as in \cite[Section 4]{Ros22} we construct a central extension $\wt{M}$ of $M_c/T$ containing a normal subgroup $\wt{L}$ and a block $\wt{C}$ of $\wt{L}$ corresponding to $C'$ (see \cite[Theorem 4.2]{Ros22}). According to the discussion preceding \cite[Corollary 4.5]{Ros22}, and noticing that $C'$ has defect groups in common with $C$ which strictly contain $Z$, the $p$-subgroup $Z$ of $L_c$ corresponds to a $p$-subgroup of $\wt{L}$ denoted by $\wt{Z}_p$ and strictly contained in the defect groups of $\wt{C}$. Now, since $|\wt{M}:\z(\wt{L})|\leq|M_c:\z(L_c)T|\leq |M:ZT|$ and $\mathcal{S}_p(\wt{L})\subseteq \mathcal{S}_p(L)$, the above discussion implies that Conjecture \ref{conj:CTC} holds for the block $\wt{C}$. Applying \cite[Corollary 4.7]{Ros22} we deduce that Conjecture \ref{conj:CTC} holds for $C'$ and hence, proceeding as in the proof of \cite[Proposition 5.1]{Ros22}, we obtain Conjecture \ref{conj:CTC} for the block $C$. We can then apply Lemma \ref{lem:Kulshammer-Puig and CTC} to show that Conjecture \ref{conj:CTC} holds for the block $B$ as desired.
\end{proof}

Finally, using the results obtained in the previous section, we extend the above statement to the case where the block $b$ is not necessarily $A$-stable and the character $\mu$ might not extend to $A$.

\begin{cor}
\label{cor:Reduction central defect groups}
Suppose that Hypothesis \ref{hyp:Minimal counterexample} holds for $G\unlhd A$ and consider a $p$-subgroup $Z\leq \z(G)$ and a block $B$ of $G$ whose defect groups strictly contain $Z$ and for which Conjecture \ref{conj:CTC} fails to hold. If $Z\leq N\leq G$ with $N\unlhd A$ and $b$ is a block of $N$ covered by $B$ with defect group $Z$, then $N\leq \z(G)$.
\end{cor}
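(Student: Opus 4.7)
The plan is to reduce to Proposition \ref{prop:Reduction central defect groups}, whose two additional hypotheses --- that $b$ is $A$-invariant and that the canonical character $\mu$ extends to $A$ --- will be secured in turn by a stability argument and by a central $p$-extension.

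First I would invoke Proposition \ref{prop:Stable reduction}: under Hypothesis \ref{hyp:Minimal counterexample} and since Conjecture \ref{conj:CTC} fails for $B$, every block of $N$ covered by $B$ must be $A$-invariant, so in particular $b$ is $A$-invariant. As $b$ has defect group $Z\leq \z(N)$, it is a nilpotent block, and by \cite[Theorem 9.12]{Nav98} it carries a unique character $\mu\in\irr(b)$ with $Z\leq \ker(\mu)$; uniqueness of $\mu$, combined with the $A$-stability of $b$ and $Z$, forces $\mu$ to be $A$-invariant.

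Next I would construct a central $p$-extension in which $\mu$ extends. Fix a projective representation $\mathcal{P}$ of $A$ associated with $(A,N,\mu)$ with factor set $\alpha$, which by \cite[Theorem 3.5.7]{Nag-Tsu89} may be taken with values in a cyclic $p$-group $S$. Following Hypothesis \ref{hyp:Dade's correspondence}, let $\wh{A}$ be the corresponding central $p$-extension of $A$ by $S$, with canonical projection $\epsilon:\wh{A}\to A$, and set $\wh{H}:=\epsilon^{-1}(H)$ for every $H\leq A$; then the natural lift $\wh{\mu}$ of $\mu$ to $\wh{N}$ extends to $\wh{A}$ by construction. Since $\ker(\epsilon)=S$ is a central $p$-subgroup of $\wh{A}$, we have $\mathcal{S}_p(\wh{G})=\mathcal{S}_p(G)$ and $|\wh{A}:\z(\wh{G})|=|A:\z(G)|$, so Hypothesis \ref{hyp:Minimal counterexample} transfers verbatim to $\wh{G}\unlhd\wh{A}$. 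Moreover, via the standard inflation correspondence, characters of $G$ lying above $\mu$ correspond bijectively to characters of $\wh{G}$ lying above $\wh{\mu}$, in a way that preserves defect, block induction, and the relevant character-triple data; hence the failure of Conjecture \ref{conj:CTC} for $B$ at some non-negative integer $d$ entails the failure of the conjecture for the corresponding block $\wh{B}$ of $\wh{G}$ with respect to $\wh{Z}:=\epsilon^{-1}(Z)\leq\z(\wh{G})$, which is strictly contained in the defect groups of $\wh{B}$.

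Finally, the covered block $\wh{b}$ of $\wh{N}$ has defect group $\wh{Z}$, and its canonical character $\wh{\mu}$ extends to $\wh{A}$ by construction, so Proposition \ref{prop:Reduction central defect groups} applies to $\wh{G}\unlhd\wh{A}$, $\wh{N}$, $\wh{Z}$, $\wh{B}$ and $\wh{b}$, giving $\wh{N}\leq\z(\wh{G})$; projecting through $\epsilon$ yields $N\leq\z(G)$, as desired. The main obstacle is the transfer of the failure of Conjecture \ref{conj:CTC} from $B$ to $\wh{B}$: this requires a careful compatibility between character triples and the central $p$-extension, in the spirit of the translation carried out for Dade's correspondence in Section \ref{sec:Dade correspondence}, and closely modelled on the argument in the proof of Proposition \ref{prop:Dade correspondence, CTC implies CTC non-central}.
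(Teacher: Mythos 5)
Your overall strategy matches the paper's: secure $A$-invariance of $b$ and $\mu$ via Proposition \ref{prop:Stable reduction}, pass to a central extension in which $\mu$ extends, and apply Proposition \ref{prop:Reduction central defect groups}. However, the central extension you construct has the wrong kernel: you take $S$ to be a cyclic $p$-group, citing \cite[Theorem 3.5.7]{Nag-Tsu89}, whereas the paper takes $S$ to be a cyclic $p'$-group, citing Robinson \cite[p.212]{Rob02}. The Nagao--Tsushima result only bounds the order of the cohomology class of $\alpha$ by $|N|^2$, and $N$ is not a $p$-group here, so it says nothing about the $p$-part of the factor set; Robinson's observation is the genuinely non-trivial input, using that $\mu$ is the canonical character of a block of $N$ with central defect group $Z$ to conclude that $[\alpha]$ has $p'$-order.

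This is not a cosmetic discrepancy, and the rest of your argument would break with a nontrivial $p$-group $S$. First, inflation through $\epsilon$ would not preserve the $p$-defect of characters (it shifts by $\log_p|S|$, compare Proposition \ref{prop:Dade's correspondence}(iv)), so your claim that the inflation correspondence ``preserves defect'' fails. Second, when $S$ is a central $p$-subgroup, a single $p$-block of $\wh{G}$ contains irreducible characters lying over distinct linear characters of $S$; hence $\C^{d'}(\wh{B},\wh{Z})$ (for any $d'$) would contain pairs that are not inflations from $\C^{d}(B,Z)$, and the failure of Conjecture \ref{conj:CTC} for $B$ would not transfer to a failure for $\wh{B}$. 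Third, the transfer of $G$-block isomorphisms of character triples is handled by \cite[Corollary 4.5]{Spa17}, whose hypotheses require $S$ to be a central $p'$-subgroup. With $S$ a $p'$-group all of this works cleanly: the inflation correspondence is defect-preserving, \cite[Theorem 5.8.8]{Nag-Tsu89} matches blocks of $G$ with blocks of $\wh{G}$ lying over $1_S$, and the isomorphism-of-character-triple data transfers; moreover the $p$-subgroup one feeds into Proposition \ref{prop:Reduction central defect groups} is then the copy $Z_0$ of $Z$ in the direct factor $N_0$ of $\wh{N}=N_0\times S$, not $\wh{Z}=\epsilon^{-1}(Z)$, which is no longer a $p$-group once $S$ is $p'$.

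If you replace ``cyclic $p$-group'' by ``cyclic $p'$-group'', cite Robinson's observation instead of Nagao--Tsushima, and use $Z_0$ in place of $\wh{Z}$, your sketch becomes the paper's proof, and the remaining verifications go through as you outline.
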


\begin{proof}
As before, notice that $b$ has central defect groups and there exists a unique irreducible character $\mu$ belonging to $b$ whose kernel contains $Z$. First, observe that $b$ is $A$-invariant thanks to Proposition \ref{prop:Stable reduction} and then so is $\mu$ by its uniqueness property. Next, fix a projective representation $\Pr$ associated with the character triple $(A,N,\mu)$ and denote by $\alpha$ the factor set of $\Pr$ (not to be confused with the K\"ulshammer--Puig cocycle). As noted in \cite[p.212]{Rob02} it is no loss of generality to assume that $\alpha$ takes values in the set of $p'$-roots of unity. Now, consider the associated central extension $\wh{A}$ of $A$ by the cyclic $p'$-subgroup $S$ generated by the values of $\alpha$ (see \cite[Theorem 1.12]{Spa18} or \cite[Theorem 5.6]{Nav18}). The group $\wh{A}$ consists of pairs $(a,s)$ where $a\in A$ and $s\in S$ and we denote by $\epsilon:\wh{A}\to A$ the projection $(a,s)\mapsto a$ with kernel $S$. For every $X\leq A$, we define $\wh{X}:=\epsilon^{-1}(X)$ and $X_0:=\{(x,1)\mid x\in X\}$. Since $\alpha$ is a factor set of $A/N$, we deduce that $\wh{X}=X_0\times S$ whenever $X\leq N$. In this case, there is a natural isomorphism between $X_0$ and $X$. We denote by $\mu_0$ the irreducible character of $N_0$ corresponding to $\mu$. Our choice of $\Pr$ induces an irreducible representation of $\wh{A}$ affording an extension $\wh{\mu}$ of $\mu_0$. By \cite[Theorem 5.8.8]{Nag-Tsu89} there are blocks $\wh{B}$ of $\wh{G}$ and $b_0$ of $N_0$ corresponding (via $\epsilon$) to $B$ and $b$ respectively. Notice that $b_0$ is covered by $\wh{B}$ and has defect group $Z_0$ while the defect groups of $\wh{B}$ strictly contain $Z_0$. Furthermore Hypothesis \ref{hyp:Minimal counterexample} holds for $\wh{G}\unlhd \wh{A}$: using the fact that $\wh{G}$ is a central extension of $G$ we obtain $\mathcal{S}_p(\wh{G})=\mathcal{S}_p(G)$, while as $\z(\wh{G})=\wh{\z(G)}$ (see \cite[Theorem 4.1 (d)]{Spa17}) we have $|\wh{A}:\z(\wh{G})|=|A:\z(G)|$. Now, if Conjecture \ref{conj:CTC} fails for $\wh{B}$, then the hypothesis considered in the statement is satisfied with respect to $Z_0\leq N_0\leq \wh{G}\leq \wh{A}$ and $\wh{B}$ covering $b_0$. Moreover, if the result holds for these choices, then $N_0\leq \z(\wh{G})$ and therefore $N\leq \z(G)$. Therefore, replacing $A$ with $\wh{A}$, it is no loss of generality to assume that $\mu$ extends to $A$ in which case the result follows from Proposition \ref{prop:Reduction central defect groups}.

It remains to show that Conjecture \ref{conj:CTC} fails for $\wh{B}$. First, notice that according to \cite[Proposition 2.3 and (2.5)]{Dad94} there exists a bijection between $p$-chains $\sigma\in\mathfrak{N}(G,Z)$ and $p$-chains $\wh{\sigma}\in\mathfrak{N}(\wh{G},Z_0)$ such that $\wh{G_\sigma}=\wh{G}_{\wh{\sigma}}$. By \cite[Proposition 2.4 (b)]{Nav-Spa14I} there is a bijection between the blocks $C$ of $G_\sigma$ whose induction $C^G$ coincides with $B$ and the blocks $\wh{C}$ of $\wh{G}_{\wh{\sigma}}$ whose induction $\wh{C}^{\wh{G}}=\wh{B}$. Moreover, \cite[Theorem 5.8.8]{Nag-Tsu89} implies that the characters of $\wh{C}$ are obtained from the characters of $C$ via inflation through the map $\epsilon$. Since $p$ does not divide the order of $S$ it follows that this correspondence preserves the $p$-defect of characters. Thus, we have a bijection
\[\Gamma:\C^d(B,Z)/G\to \C^d\left(\wh{B},Z_0\right)/\wh{G}\]
which commutes with the action of $\n_A(Z)_B$ and that of $\n_{\wh{A}}(Z_0)_{\wh{B}}$ and preserves the length of $p$-chains. Now, if Conjecture \ref{conj:CTC} holds for $\wh{B}$, then we obtain an $\n_{\wh{A}}(Z_0)_{\wh{B}}$-equivariant bijection
\[\wh{\Omega}:\C^d\left(\wh{B},Z_0\right)_+/\wh{G}\to \C^d\left(\wh{B},Z_0\right)_-/\wh{G}\]
for every $d\geq 0$ that satisfies the required condition on character triples. Combining $\wh{\Omega}$ with $\Gamma$ we obtain an $\n_A(Z)_B$-equivariant bijection
\[\Omega:\C^d(B,Z)_+/G\to\C^d(B,Z)_-/G\]
for every $d\geq 0$. To obtain the required isomorphism of character triples for $\Omega$, we apply \cite[Corollary 4.5]{Spa17} together with the fact that analogous isomorphisms of character triples are given by $\wh{\Omega}$. Notice that the conditions on the centralisers assumed in \cite[Corollary 4.5]{Spa17}  are satisfied since $S$ is a central $p'$-subgroup and applying \cite[Theorem 4.1 (d)]{Spa17}. This shows that Conjecture \ref{conj:CTC} holds for $B$, against our assumption. Therefore, Conjecture \ref{conj:CTC} must fail for $\wh{B}$ as required.
\end{proof}

\subsection{Proof of Theorem \ref{thm:Minimal counterexample}}
\label{sec:Minimal counterexample proof}

In this section, we finally prove Theorem \ref{thm:Minimal counterexample}. We start with an elementary group-theoretic result which requires the use of coprime group actions. We refer the reader to the content of \cite[Chapter 5.3]{Gor}.

\begin{lem}
\label{lem:Coprime actions}
Let $U\leq D$ be $p$-subgroups of the finite group $G$ such that $\c_D(U)\leq U$ and consider $U\unlhd N\unlhd G$. If $\O^p(N)\cap U\leq \z(G)$, then $\O^p(N)\cap D\leq \z(G)$.
\end{lem}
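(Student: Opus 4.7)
My plan is to set $K := \O^p(N)$, $V := K \cap U$, and $W := K \cap D$, and to reduce the claim to showing $[K, U] = 1$. Once this is established, $W$ centralises $U$, so $W \leq \c_D(U) \leq U$ by hypothesis, and therefore $W \leq K \cap U = V \leq \z(G)$ as required. Note first that $U \unlhd N \geq K$ yields $[K, U] \leq K \cap U = V$ for free, so only the upgrade from $[K, U] \leq V$ to $[K, U] = 1$ remains to be proved.

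To obtain this, I will study the conjugation homomorphism $K \to \aut(U)$. Since $V \leq \z(G)$, every element of its image fixes $V$ pointwise; since $[K, U] \leq V$, every such automorphism $\phi$ also satisfies $\phi(u) u^{-1} \in V$ for every $u \in U$. Using that $V \leq \z(U)$ is abelian, a routine check shows that the assignment $\phi \mapsto d_\phi$, where $d_\phi(u) := \phi(u) u^{-1}$, is an injective group homomorphism from this image into $\mathrm{Hom}(U/V, V)$. Since $U/V$ and $V$ are both $p$-groups, so is $\mathrm{Hom}(U/V, V)$, and consequently $K/\c_K(U)$ is a $p$-group.

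Finally, I will exploit the identity $\O^p(K) = K$, which holds because any normal subgroup of $K$ with $p$-group quotient would give a normal subgroup of $N$ with $p$-group quotient properly containing $N/K$, contradicting the minimality defining $K = \O^p(N)$. Thus $K$ admits no non-trivial $p$-factor group, forcing the $p$-group $K/\c_K(U)$ to be trivial; that is, $[K, U] = 1$, and the argument closes. The only real obstacle is the passage from $[K, U] \leq V$ to $[K, U] = 1$: once rephrased as the $p$-group statement about $K/\c_K(U)$, the hypothesis $K = \O^p(N)$ does all the work, consistent with the appeal to coprime actions advertised just before the lemma.
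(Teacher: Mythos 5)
Your proof is correct, and it reaches the same conclusion by a recognizably different mechanism. The paper's proof first applies the three subgroups lemma to show $[M,M]$ centralises $U$ (where $M=\O^p(N)$), then observes that $M/[M,M]$ is a $p'$-group, and finally invokes the coprime-action theorem \cite[Theorem~3.2, Ch.~5]{Gor} to conclude that $M$ acts trivially on $U$ because the action stabilises the chain $1\leq \z(G)\cap U\leq U$. You instead give a direct, self-contained proof that the chain-stabiliser is a $p$-group, by embedding the image of $K\to\aut(U)$ into $\mathrm{Hom}(U/V,V)$ via $\phi\mapsto d_\phi$ (which works precisely because $V\leq\z(G)$ is abelian and central, so $d_\phi$ descends to $U/V$, is multiplicative, and $\phi\mapsto d_\phi$ is a monomorphism). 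You then kill this $p$-group quotient of $K$ with $\O^p(K)=K$. In effect, you are reproving the small piece of \cite[Theorem~3.2]{Gor} actually needed here, and you never need the three subgroups lemma; the paper, by contrast, leans on Gorenstein as a black box. Both arguments hinge equally on the idempotence of $\O^p$ and on $[K,U]\leq K\cap U\leq\z(G)$.

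One small imprecision: your justification of $\O^p(K)=K$ is stated slightly wrongly, since a normal subgroup of $K$ need not be normal in $N$. The standard and correct argument is that $\O^p(K)$ is characteristic in $K$, hence normal in $N$, and $N/\O^p(K)$ is a $p$-group (being an extension of the $p$-group $K/\O^p(K)$ by the $p$-group $N/K$), so $\O^p(K)\supseteq\O^p(N)=K$. This is easily repaired and does not affect the validity of the rest.
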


\begin{proof}
Let $M:=\O^p(N)$ and observe that $[M,U]\leq M\cap U\leq \z(G)\cap U$. In particular $[M,U,M]=1=[U,M,M]$ and the three subgroups lemma implies that $[M,M]$ centralises $U$. Now $M/[M,M]$ is a $p'$-group acting on the $p$-group $U$ and stabilising the normal chain $\z(G)\cap U\unlhd U$ (see \cite[Lemma 3.1]{Gor}). We can then apply \cite[Theorem 3.2]{Gor} to conclude that $M$ centralises $U$. Using the assumption $\c_D(U)\leq U$, we finally get $M\cap D=M\cap \c_D(U)\leq M\cap U\leq \z(G)$ as required.
\end{proof}

The above lemma allows us to construct certain nilpotent blocks to which Proposition \ref{prop:Auxiliary theorem} can be applied. This idea, together with Theorem \ref{thm:CTC equivalent to CTC non-central}, allows us to obtain the following result. Notice that the hypothesis assumed below is stronger than Hypothesis \ref{hyp:Minimal counterexample}.

\begin{prop}
\label{prop:Forcing bijections in minimal counterexample}
Let $G\unlhd A$ be finite groups and suppose that Conjecture \ref{conj:CTC} holds for every finite group $G_1\unlhd A_1$ such that $\mathcal{S}_p(G_1)\subseteq \mathcal{S}_p(G)$ and $|A_1:\z(G_1)|\leq|A:\z(G)|$. Let $B$ be a block of $G$ and $U\unlhd G$ a $p$-subgroup. Suppose that $U\leq N\leq G$ with $N\unlhd A$ and $\O^p(N)\cap U\leq \z(G)$ and that $B$ covers a block $b$ of $N$ whose defect groups strictly contain $U$. Then for every $d\geq 0$ there exists an $\n_A(U)_B$-equivariant bijection
\[\Omega:\C^d(B,U)_+/G\to\C^d(B,U)_-/G\]
such that
\[\left(A_{\sigma,\vartheta},G_\sigma,\vartheta\right)\iso{G}\left(A_{\rho,\chi},G_\rho,\chi\right)\]
for every $(\sigma,\vartheta)\in\C^d(B,U)_+$ and $(\rho,\chi)\in\Omega(\overline{(\sigma,\vartheta)})$.
\end{prop}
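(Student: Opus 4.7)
The plan is to split the argument into two cases according to whether a defect group of $B$ centralises $U$ within $U$ itself. By \cite[Theorem 9.26]{Nav98} I would first fix a defect group $D$ of $B$ such that $D_b := D \cap N$ is a defect group of $b$, and then distinguish the cases $\c_D(U) \leq U$ and $\c_D(U) \not\leq U$.

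In the case $\c_D(U) \not\leq U$, the contrapositive of Lemma \ref{lem:CTC non-central implies CTC} yields $\omega(B_\sigma, U) = \emptyset$ for every normal $p$-chain $\sigma \in \mathfrak{N}(G,U)$, hence $\underline{\C}^d(B,U)_\pm = \C^d(B,U)_\pm$. The hypothesis of the proposition is exactly the input required by Theorem \ref{thm:CTC equivalent to CTC non-central}, which therefore gives Conjecture \ref{conj:CTC non-central Z} for $G \unlhd A$. This furnishes, for each $\varphi \in \irr(U)$, an $\n_A(U)_{B,\varphi}$-equivariant bijection on $\underline{\C}^d(B,U,\varphi)_\pm/G_\varphi$ satisfying the $M$-block isomorphism of character triples for appropriate intermediate $M$. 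Assembling these over an $\n_A(U)_B$-transversal in $\irr(U)$ and using the standard Clifford-theoretic decomposition
\[
\underline{\C}^d(B,U)_\pm/G \;=\; \bigsqcup_{[\varphi]} \underline{\C}^d(B,U,\varphi)_\pm/G_\varphi,
\]
I would obtain an $\n_A(U)_B$-equivariant bijection on $\C^d(B,U)_\pm/G$. The upgrade from $M$-block isomorphisms to $G$-block isomorphisms follows by re-running the final argument of Proposition \ref{prop:CTC non-central implies CTC}: normality of $p$-chains gives $D(\sigma) \leq \O_p(G_\sigma) \leq D_\vartheta$ for a defect group $D_\vartheta$ of $\bl(\vartheta)$, and combined with the universal inclusion $G_\sigma \leq A_{\sigma,\vartheta}$ (since inner automorphisms fix irreducible characters) one obtains $\c_{A_{\sigma,\vartheta}G}(D_\vartheta) \leq A_{\sigma,\vartheta}$. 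The analogous inclusion holds for $(\rho,\chi)$, and \cite[Lemma 2.11]{Ros22} then yields $\iso{G}$; notably, this upgrade does not in fact require $U \leq \z(G)$.

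In the case $\c_D(U) \leq U$, the plan is to show that $b$ itself is nilpotent and apply Proposition \ref{prop:Auxiliary theorem} directly. Since $D_b \leq D$ we have $\c_{D_b}(U) \leq U$, and Lemma \ref{lem:Coprime actions} combined with the hypothesis $\O^p(N) \cap U \leq \z(G)$ forces $\O^p(N) \cap D_b \leq \z(G)$. Setting $M := \O^p(N)$, any block $b_M$ of $M$ covered by $b$ has, up to $N$-conjugation, a defect group contained in $D_b \cap M \leq \z(G) \cap M \leq \z(M)$; since blocks with defect groups contained in the centre of the ambient group are nilpotent (their inertial quotients being trivial), $b_M$ is nilpotent. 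As $N/M$ is a $p$-group, standard results on nilpotent block extensions by a $p$-group quotient---after passing, if necessary, to an $N$-invariant cover via the Fong--Reynolds correspondence---imply that $b$ is itself nilpotent. Since the defect groups of $b$ strictly contain $U$ by assumption, the hypotheses of Proposition \ref{prop:Auxiliary theorem} are met for $U \leq N \leq G \leq A$ and the block $B$ covering the nilpotent block $b$, and the proposition supplies the required $\n_A(U)_B$-equivariant bijection on $\C^d(B,U)_\pm/G$ inducing $G$-block isomorphisms of character triples.

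The main obstacle I expect lies in the nilpotency propagation in the second case: ensuring that the reduction to an $N$-invariant block $b_M$ via Fong--Reynolds is compatible with the data of the outer block $B$, and that the resulting situation still has defect groups strictly containing $U$ so that Proposition \ref{prop:Auxiliary theorem} applies verbatim. A secondary technical point is the bookkeeping in the first case for Clifford-theoretic assembly across varying $\varphi$, where $\n_A(U)_B$-equivariance must be tracked through the passage from $G_\varphi$-orbits to $G$-orbits.
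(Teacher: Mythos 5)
Your proposal is correct and takes essentially the same approach as the paper's proof: apply Theorem \ref{thm:CTC equivalent to CTC non-central} to obtain Conjecture \ref{conj:CTC non-central Z} for $G\unlhd A$, assemble the $\underline{\Omega}_\varphi$'s into a bijection on $\underline{\C}^d(B,U)_\pm/G$ (upgrading $\iso{M}$ to $\iso{G}$ via the argument at the end of Proposition \ref{prop:CTC non-central implies CTC}), and in the complementary situation derive nilpotency of $b$ via Lemma \ref{lem:Coprime actions} together with $\c_D(U)\le U$ and the result of Cabanes \cite[Theorem 2]{Cab87}, then invoke Proposition \ref{prop:Auxiliary theorem}. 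The only cosmetic difference is the case split: you branch on $\c_D(U)\le U$ versus $\c_D(U)\not\le U$ and pass through the contrapositive of Lemma \ref{lem:CTC non-central implies CTC}, while the paper builds $\underline{\Omega}$ unconditionally and then branches on whether $\omega^d(B_\sigma,U)$ is empty for all $\sigma$ (using Lemma \ref{lem:CTC non-central implies CTC} in the forward direction); the two dichotomies are logically interchangeable, the second case in your split being broader but still handled correctly by the nilpotency argument. Your two parenthetical observations are both accurate: the Fong--Reynolds pass to an $N$-invariant cover is indeed implicitly needed before applying Cabanes, and the upgrade to $G$-block isomorphisms borrowed from Proposition \ref{prop:CTC non-central implies CTC} does not in fact use $U\le\z(G)$.
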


\begin{proof}
In the above setting, the hypothesis of Theorem \ref{thm:CTC equivalent to CTC non-central} are satisfied and hence Conjecture \ref{conj:CTC non-central Z} holds for $G\unlhd A$. For every $\varphi\in\irr(U)$, we then obtain an $\n_A(U)_{B,\varphi}$-equivariant bijection
\[\underline{\Omega}_\varphi:\underline{\C}^d(B,U,\varphi)_+/G_\varphi\to\underline{\C}^d(B,U,\varphi)_-/G_\varphi
\]
satisfying the condition on isomorphisms of character triples given by Conjecture \ref{conj:CTC non-central Z}. If we fix an $\n_A(U)_B$-transversal $\mathcal{U}$ in $\irr(U)$ and an $\n_A(U)_{B,\varphi}$-transversal $\mathcal{T}_\varphi^+$ in $\underline{\C}^d(B,U,\varphi)_+$ for every $\varphi\in\mathcal{U}$, then we obtain an $\n_A(U)_B$-transversal
\[\mathcal{T}^+:=\left\lbrace\overline{(\sigma,\vartheta)}\enspace\middle|\enspace (\sigma,\vartheta)\in\mathcal{T}_\varphi^+,\varphi\in\mathcal{U}\right\rbrace\]
in $\underline{\C}^d(B,U)_+/G$. For every $\varphi\in\mathcal{U}$, we construct an $\n_A(U)_{B,\varphi}$-transversal $\mathcal{T}_\varphi^-$ in $\underline{\C}^d(B,U,\varphi)_-$ in bijection with $\mathcal{T}_\varphi^+$ via $\underline{\Omega}_\varphi$ (more precisely, the corresponding $G_\varphi$-orbits are mapped via $\underline{\Omega}_\varphi$) and hence obtain an $\n_A(U)_B$-transversal
\[\mathcal{T}^-:=\left\lbrace\overline{(\sigma,\vartheta)}\enspace\middle|\enspace (\sigma,\vartheta)\in\mathcal{T}_\varphi^-,\varphi\in\mathcal{U}\right\rbrace\]
in bijection with $\mathcal{T}^+$. Then, defining
\[\underline{\Omega}\left(\overline{(\sigma,\vartheta)}^x\right):=\overline{(\rho,\chi)}^x\]
for every $(\sigma,\vartheta)\in\mathcal{T}_\varphi^+$ whose $G_\varphi$-orbit corresponds to that of $(\rho,\chi)\in\mathcal{T}^-_\varphi$ via $\underline{\Omega}_\varphi$, every $\varphi\in\mathcal{U}$ and any $x\in\n_A(U)_B$, we obtain an $\n_A(U)_B$-equivariant bijection
\[\underline{\Omega}:\underline{\C}^d(B,U)_+/G\to\underline{\C}^d(B,U)_-/G\]
satisfying the condition on isomorphisms of character triples required in the statement above (see the proof of Proposition \ref{prop:CTC non-central implies CTC}). Now, if $\omega^d(B_\sigma,U)$ is empty for every $\sigma\in\mathfrak{N}(G,U)$, then $\underline{\C}^d(B,U)=\C^d(B,U)$ and we define $\Omega:=\underline{\Omega}$ to conclude the proof. On the other hand, if this is not the case, then $\c_D(U)\leq U$ for any defect group $D$ of $B$ according to Lemma \ref{lem:CTC non-central implies CTC}. Since by assumption $\O^p(N)\cap U\leq \z(G)$, we deduce that $\O^p(N)\cap D\leq \z(G)$ from Lemma \ref{lem:Coprime actions}. As a consequence, any block of $\O^p(N)$ covered by $B$ is nilpotent (in fact, it has central defect groups) and it follows from \cite[Theorem 2]{Cab87} that $b$ is nilpotent. In this case, Proposition \ref{prop:Auxiliary theorem} gives the desired result.
\end{proof}

We are finally able to prove Theorem \ref{thm:Minimal counterexample}.

\begin{proof}[Proof of Theorem \ref{thm:Minimal counterexample}]
We consider $G\unlhd A$, $Z\leq \z(G)$ a $p$-subgroup strictly contained in the defect groups of a block $B$ of $G$, and a non-negative integer $d$. In addition, we assume that Hypothesis \ref{hyp:Minimal counterexample} holds for $G\unlhd A$ and that Conjecture \ref{conj:CTC} fails with respect to these choices. Let $N$ be a non-central subgroup of $G$ such that $Z\leq N\unlhd A$ and consider a block $b$ of $N$ covered by $B$. According to Proposition \ref{prop:Stable reduction} and Corollary \ref{cor:Reduction central defect groups} we know that $b$ is $A$-invariant and $Z$ is strictly contained in its defect groups.

Suppose that $\sigma$ is a $p$-chain of $G$ with starting term $Z$ and whose largest term $D(\sigma)$ is not contained in $N$. If $\sigma=\{Z=D_0<D_1<\dots<D_n=D(\sigma)\}$, then we can find a minimal $1\leq j_\sigma\leq n$ such that $D_{j_\sigma}\nleq N$ and in this case we have $D_{j_\sigma-1}\leq D_{j_\sigma}\cap N<D_{j_\sigma}$. We now partition the set $\mathcal{D}^d_N(B,Z)$ into two subsets $\mathcal{D}^d_N(B,Z)^\circ$ and $\mathcal{D}^d_N(B,Z)^\bullet$ where $(\sigma,\vartheta)\in\mathcal{D}^d_N(B,Z)^\circ$ if $Z<D_{j_\sigma}\cap N$ while $(\sigma,\vartheta)\in\mathcal{D}^d_N(B,Z)^\bullet$ if $Z=D_{j_\sigma}\cap N$ (in which case $j_\sigma=1$). Notice that this partition is $\n_A(Z)_B$-stable and therefore it is enough to define $\Lambda_{N,Z}^B$ on each of the two subsets.

First, we define $\Lambda_{N,Z}^B$ with respect to the subset $\mathcal{D}^d_N(B,Z)^\circ$. Fix some $(\sigma,\vartheta)\in\mathcal{D}_N^d(B,Z)_+$ and consider $j_\sigma$ as defined above. If $D_{j_\sigma-1}=D_{j_\sigma}\cap N$, we define the $p$-chain $\rho$ obtained by deleting the term $D_{j_\sigma-1}$ from $\sigma$. In this case, $\rho$ is a $p$-chain of $G$ starting with $Z$ and such that $D(\rho)\nleq N$. Moreover, if we write $\rho=\{Z=Q_0<Q_1<\dots <Q_n=D(\rho)\}$, then we have $Q_{j_\rho}=D_{j_\sigma}$ and, since $Z<D_{j_\sigma}\cap N$ by assumption, we deduce that $Z<D_{j_\sigma}\cap N=Q_{j_\rho}\cap N$. On the other hand if $D_{j-1}< D_j\cap N$, then we define $ \rho$ to be the $p$-chain obtained by adding $D_{j_\sigma}\cap N$ as an intermediate term in the $p$-chain $\sigma$. As before $\rho$ is a $p$-chain of $G$ starting with $Z$ and such that $D(\rho)\nleq N$, $Q_{j_\rho}=D_{j_\sigma}$ and $Z<Q_{j_\rho}\cap N$. In both cases observe that $|\rho|=|\sigma|\pm 1$ and that the stabilisers $A_\sigma$ and $A_\rho$, and therefore also $G_\sigma$ and $G_\rho$, coincide. We can then define
\[\Lambda_{N,Z}^B\left(\overline{\sigma,\vartheta}\right):=\overline{(\rho,\vartheta)}\]
to get an $\n_A(Z)_B$-equivariant bijection
\begin{equation}
\label{eq:Proof on minimal counterexample 1}
\Lambda_{N,Z}^B:\mathcal{D}^d_N(B,Z)_+^\circ/G\to\mathcal{D}^d_N(B,Z)_-^\circ/G
\end{equation}
which satisfies the condition on block isomorphisms of character triples required by Theorem \ref{thm:Minimal counterexample}.

We now consider the subset $\mathcal{D}^d_N(B,Z)^\bullet$. If $\sigma=\{Z=D_0<D_1<\dots<D_n=D(\sigma)\}$ is a $p$-chain appearing in this situation, then $Z=D_1\cap N$. Let $\mathcal{U}$ be an $\n_A(Z)_B$-transversal in the set of $p$-subgroups $U$ of $G$ such that $Z<U$ and $Z=U\cap N$. For every $U\in\mathcal{U}$ we define the set $\C^d(B,Z)_U$ consisting of those pairs $(\sigma,\vartheta)\in\C^d(B,Z)$ such that the second term of the $p$-chain $\sigma$ coincides with $U$. Observe that by the properties of $U$ such pairs $(\sigma,\vartheta)$ belong to $\mathcal{D}^d_N(B,Z)^\bullet$. Our aim is now to construct bijections of the form
\[\Delta_{Z,U}^B:\C^d(B,Z)_{U,+}/\n_G(U)\to\C^d(B,Z)_{U,-}/\n_G(U)\]
which we then combine in order to define $\Lambda_{N,Z}^B$ on the set $\mathcal{D}^d_{N}(B,Z)^\bullet$.

Let $B'$ be a block of $\n_G(U)$ such that $(B')^G=B$. We claim that the assumption of Proposition \ref{prop:Forcing bijections in minimal counterexample} is satisfied with respect to $U$, $U\n_N(U)$, $\n_G(U)$, $\n_A(U)$ and $B'$. First, observe that since Conjecture \ref{conj:CTC} fails for our choice of $Z$, we have $Z=\O_p(G)$ by \cite[Lemma 2.3]{Ros22} and hence $|\n_A(U):\z(\n_G(U))|<|A:\z(G)|$ since $Z<U$. In particular, Conjecture \ref{conj:CTC} holds for every $G_1\unlhd A_1$ such that $\mathcal{S}_p(G_1)\subseteq \mathcal{S}_p(\n_G(U))$ and $|A_1:\z(G_1)|\leq |\n_A(U):\z(\n_G(U))|$ thanks to Hypothesis \ref{hyp:Minimal counterexample}. Next, $\O^p(U\n_N(U))=\O^p(\n_N(U))\leq N$ and therefore $\O^p(U\n_N(U))\cap U\leq N\cap U=Z\leq \z(\n_G(U))$, where the equality $Z=N\cap U$ holds since $U\in\mathcal{U}$. It remains to show that $B'$ covers a block of $U\n_N(U)$ whose defect groups strictly contain $U$. Recall, by the first paragraph, that $Z$ is strictly contained in the defect groups of a block $b$ of $N$ covered by $B$. Since $Z=N\cap U$, it follows that $U$ is not a defect group of $B$ and hence Brauer's First Main Theorem shows that $U$ is not a defect group of $B'$. Similarly, the unique block $c$ of $UN$ covering $b$ does not have $U$ has a defect group and nor does any block $b'$ of $U\n_N(U)=\n_{UN}(U)$ such that $(b')^{UN}=c$. Since any such block $b'$ is covered by $B'$ (see, for instance, \cite[Theorem B]{Kos-Spa15}), we deduce that the assumptions of Proposition \ref{prop:Forcing bijections in minimal counterexample} are satisfied with respect to $U$, $U\n_N(U)$, $\n_G(U)$, $\n_A(U)$ and $B'$ as claimed. We can now apply Proposition \ref{prop:Forcing bijections in minimal counterexample} to obtain an $\n_A(U)_{B'}$-equivariant bijection
\[\Delta^{B'}_U:\C^d(B',U)_+/\n_G(U)\to\C^d(B',U)_-/\n_G(U)\] 
such that
\begin{equation}
\label{eq:Proof on minimal counterexample 2}
\left(\n_A(U)_{\sigma,\vartheta},\n_G(U)_\sigma,\vartheta\right)\iso{\n_G(U)}\left(\n_A(U)_{\rho,\chi},\n_G(U)_\rho,\chi\right)
\end{equation}
for every $(\sigma,\vartheta)\in\C^d(B',U)_+$ and $(\rho,\chi)\in\Delta^{B'}_U(\overline{(\rho,\chi)})$ and where $\overline{(\sigma,\vartheta)}$ denotes here the $\n_G(U)$-orbit of $(\sigma,\vartheta)$. Noticing that $\n_A(U)_\sigma=A_\sigma$ for every $p$-chain $\sigma$ starting with $U$ and applying \cite[Lemma 2.11]{Ros22} to \eqref{eq:Proof on minimal counterexample 2} we deduce that
\begin{equation}
\label{eq:Proof on minimal counterexample 3}
\left(A_{\sigma,\vartheta},G_\sigma,\vartheta\right)\iso{G}\left(A_{\rho,\chi},G_\rho,\chi\right).
\end{equation}
If we denote by $B_U$ the union of those blocks of $\n_G(U)$ whose Brauer's induction to $G$ coincides with $B$, then it is immediate to verify that $\C^d(B_U,U)$ coincides with $\C^d(B,U)$ once we realise that $G_\sigma\leq \n_G(U)$ whenever $\sigma$ is a $p$-chain starting with $U$. Due to this observation, we can apply Lemma \ref{lem:Bijections for union of blocks} and combine the bijections $\Delta_U^{B'}$ to obtain an $\n_A(U)_B$-equivariant bijection
\[\Delta_U^B:\C^d(B,U)_+/\n_G(U)\to\C^d(B,U)_-/\n_G(U).\]
Consider now a pair $(\sigma,\vartheta)\in\C^d(B,Z)_{U,\pm}$ and observe that if we define $\sigma'$ to be the $p$-chain obtained by removing $Z$ from the chain $\sigma$, then $(\sigma',\vartheta)\in\C^d(B,U)_\mp$. This construction yields an $\n_A(Z,U)_B$-equivariant bijection between $\C^d(B,Z)_{U,\pm}$ and $\C^d(B,U)_\mp$ and, together with the map $\Delta_U^B$ given above, allows us to construct an $\n_A(Z,U)_B$-equivariant bijection
\[\Delta_{Z,U}^B:\C^d(B,Z)_{U,+}/\n_G(U)\to\C^d(B,Z)_{U,-}/\n_G(U).\]
Furthermore, we deduce from \eqref{eq:Proof on minimal counterexample 3} that
\begin{equation}
\label{eq:Proof on minimal counterexample 4}
\left(A_{\sigma,\vartheta},G_\sigma,\vartheta\right)\iso{G}\left(A_{\rho,\chi},G_\rho,\chi\right)
\end{equation}
for every $(\sigma,\vartheta)\in\C^d(B,Z)_{U,+}$ and $(\rho,\chi)\in\Delta^B_{Z,U}(\overline{(\sigma,\vartheta)})$. We recall one more time that here $\overline{(\sigma,\vartheta)}$ denotes the $\n_G(U)$-orbit of $(\sigma,\vartheta)$.

We now define the map $\Lambda_{N,Z}^B$ with respect to the set $\mathcal{D}^d_N(B,Z)^\bullet$. We fix an $\n_A(Z,U)_B$-transversal $\mathcal{T}_U^+$ in $\C^d(B,Z)_{U,+}$ and define a set $\mathcal{T}_U^-$ by picking exactly one pair $(\rho,\chi)$ from each $\n_G(U)$-orbit $\Delta_{Z,U}^B(\overline{(\sigma,\vartheta)})$ and while $(\sigma,\vartheta)$ runs over the set $\mathcal{T}_U^+$. Since $\Delta_{Z,U}^B$ is $\n_A(Z,U)_B$-equivariant, we deduce that $\mathcal{T}_U^-$ is an $\n_A(Z,U)_B$-transversal in $\C^d(B,Z)_{U,-}$. Then the sets
\[\mathcal{T}^+:=\left\lbrace \overline{(\sigma,\vartheta)}\enspace\middle|\enspace(\sigma,\vartheta)\in\mathcal{T}_U^+, U\in\mathcal{U}\right\rbrace\]
and
\[\mathcal{T}^-:=\left\lbrace \overline{(\rho,\chi)}\enspace\middle|\enspace(\rho,\chi)\in\mathcal{T}_U^-, U\in\mathcal{U}\right\rbrace\]
are $\n_A(Z)_B$-transversal in the sets $\mathcal{D}^d_N(B,Z)^\bullet_+/G$ and $\mathcal{D}^d_N(B,Z)_-/G$ respectively and where we now use the notation $\overline{(\sigma,\vartheta)}$ and $\overline{(\rho,\chi)}$ to indicate the corresponding $G$-orbits. We finally define
\[\Lambda_{N,Z}^B\left(\overline{(\sigma,\vartheta)}^x\right)=\overline{(\rho,\chi)}^x\]
for every $(\sigma,\vartheta)\in\mathcal{T}_U^+$ corresponding to $(\rho,\chi)\in\mathcal{T}_U^-$ via $\Delta_{Z,U}^B$, every $U\in\mathcal{U}$, and every $x\in\n_A(Z)_B$. This gives us an $\n_A(Z)_B$-equivariant bijection
\begin{equation}
\label{eq:Proof on minimal counterexample 5}
\Lambda_{N,Z}^B:\mathcal{D}^d_N(B,Z)_+^\bullet/G\to\mathcal{D}^d_N(B,Z)_-^\bullet/G
\end{equation}
that satisfies the required condition on block isomorphisms of character triples thanks to \eqref{eq:Proof on minimal counterexample 4}. Now the result follows by combining the bijections given in \eqref{eq:Proof on minimal counterexample 1} and \eqref{eq:Proof on minimal counterexample 5} together with the remark at the beginning of the proof.
\end{proof}

\section{The reduction}
\label{sec:Reduction}

In this section, we finally prove a reduction theorem for the Character Triple Conjecture. The strategy for this can be described as follows. Let $G\unlhd A$ be finite groups and $B$ a block of $G$ for which the conjecture fails to hold. Assuming that $G\unlhd A$ is minimal with respect to $|A:\z(G)|$, it follows from Theorem \ref{thm:Minimal counterexample} that it is enough to find a subgroup $N$ of $G$ such that $Z\leq N\unlhd A$ and $N\nleq \z(G)$ and to construct a bijection as in \eqref{eq:Cancellation subset}. As we will see later, we can apply this argument to $N:=\F^\star(G)$. Then the problem reduces to showing that the conjecture holds for $N$ once it is assumed for the quasi-simple components of $G$. This is exactly what we show in Theorem \ref{thm:CTC over simple groups} below.

\subsection{Consequences for covering groups}

We now review some of the arguments from \cite[Section 7 and 8]{Spa17}. In particular, we obtain a version of \cite[Corollary 8.2]{Spa17} compatible with block isomorphisms of character triples and where we relax the hypothesis in order to include groups that are not necessarily perfect. We start by stating a reformulation of \cite[Theorem 7.2]{Spa17}. 

\begin{lem}
\label{lem:Perfect}
Let $S$ be a non-abelian simple group such that Conjecture \ref{conj:CTC} holds at $p$ for every covering group of $S$. Then Conjecture \ref{conj:CTC} holds at $p$ for every covering group of $S^n$ for every $n\geq 1$.
\end{lem}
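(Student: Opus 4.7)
The plan is to bootstrap from the hypothesis for covering groups of $S$ to covering groups of $S^n$ by passing to universal covering groups via Lemma \ref{lem:CTC vs CTC+}. Since $S$ is non-abelian simple, the Schur multiplier of $S^n$ decomposes as $M(S^n)\simeq M(S)^n$, so the universal covering group of $S^n$ is canonically isomorphic to $\wt{S}^n$, where $\wt{S}$ denotes the universal covering group of $S$. Applying Lemma \ref{lem:CTC vs CTC+} to $K=S$ turns the hypothesis into the assertion that Conjecture \ref{conj:CTC+} holds at $p$ for $\wt{S}$, while applying it to $K=S^n$ reduces the conclusion to checking Conjecture \ref{conj:CTC+} at $p$ for $\wt{S}^n$.

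I would then fix a setting $G:=\wt{S}^n\unlhd A$ together with a central $p$-subgroup $Z\leq \z(G)$ and a block $B$ of $G$ whose defect groups strictly contain $Z$. Writing $B$ as an external tensor product $B_1\otimes\dots\otimes B_n$ of blocks of $\wt{S}$, and using the corresponding product decomposition for normal $p$-chains of $G$ starting at $Z$ and their character data, the sets $\C^d(B,Z)_\pm$ can be expressed factor by factor. Let $A_0\unlhd A$ be the subgroup stabilising each factor of the product, so that $A/A_0$ embeds into the symmetric group on $n$ letters. Over $A_0$, the bijections provided by Conjecture \ref{conj:CTC+} for each factor $\wt{S}$ combine, via tensor product of the associated projective representations and of the common kernels on $\z(\wt{S})^n$, to an $\n_{A_0}(Z)_B$-equivariant bijection
\[\Omega_0:\C^d(B,Z)_+/G\to \C^d(B,Z)_-/G\]
inducing the strong character-triple isomorphism required by Conjecture \ref{conj:CTC+} for the product. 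This relies on the fact that the kernel $\ker(\vartheta_{\z(G)})$ decomposes coordinatewise and that the relation $\iso{G/W}$ is multiplicative under direct products of character triples.

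The remaining step is to promote $A_0$-equivariance to full $\n_A(Z)_B$-equivariance in the presence of the permutation action of $A/A_0$ on the factors of $\wt{S}^n$. I would do this by fixing a transversal of the $\n_A(Z)_B$-action on the set of factor-indices, carrying out the factorwise construction on a representative and then transporting the resulting bijections along the orbit via the permutation action, so that whenever two factors $B_i$ and $B_j$ are identified by some $a\in A$ the chosen bijections for $B_i$ and $B_j$ correspond through $a$. This packaging is exactly the content of \cite[Theorem 7.2]{Spa17}. The main obstacle is the combinatorial bookkeeping required to make the projective representations and factor sets compatible with the wreath-product-style action of $A$ on the tensor decomposition; once this is handled, Conjecture \ref{conj:CTC+} for $\wt{S}^n$ follows formally, and an application of Lemma \ref{lem:CTC vs CTC+} in the reverse direction yields the statement of Lemma \ref{lem:Perfect}.
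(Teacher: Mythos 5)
Your proposal is correct and follows essentially the same route as the paper: both use Lemma \ref{lem:CTC vs CTC+} twice to translate the hypothesis into Conjecture \ref{conj:CTC+} for $\wt{S}$ and the conclusion into Conjecture \ref{conj:CTC+} for $\wt{S}^n$, and both then invoke \cite[Theorem 7.2]{Spa17} (equivalently, the inductive condition for Dade's Conjecture from \cite[Definition 6.7]{Spa17}) to pass from $\wt{S}$ to $\wt{S}^n$. The middle paragraph sketching the tensor/wreath-product bookkeeping is redundant, since you ultimately cite \cite[Theorem 7.2]{Spa17} for exactly that step — the paper just cites it directly.
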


\begin{proof}
By our assumption the inductive condition for Dade's Conjecture as stated in \cite[Definition 6.7]{Spa17} holds for $S$. Then \cite[Theorem 7.2]{Spa17} implies that Conjecture \ref{conj:CTC+} holds for $\wh{S}^n$ with respect to $\wh{S}^n\unlhd \wh{S}^n\rtimes \aut(\wh{S}^n)$. Since $\wh{S}^n$ is the universal covering group of $S^n$, Lemma \ref{lem:CTC vs CTC+} shows that Conjecture \ref{conj:CTC} holds for every covering group of $S^n$.
\end{proof}

Next, we extend the above result to direct products of, possibly non-isomorphic, non-abelian simple groups.

\begin{prop}
\label{prop:Perfect}
Let $S_1,\dots,S_r$ be non-isomorphic non-abelian simple groups and suppose that Conjecture \ref{conj:CTC} holds at $p$ for every covering group of every $S_i$. Then Conjecture \ref{conj:CTC} holds at $p$ for every covering group of $S_1^{n_1}\times \dots\times S_r^{n_r}$ for every $n_1,\dots,n_r\geq 1$. 
\end{prop}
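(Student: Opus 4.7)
The plan is to invoke Lemma \ref{lem:CTC vs CTC+} and reduce the statement to proving Conjecture \ref{conj:CTC+} for the universal covering group $\wh{G}$ of $G := S_1^{n_1} \times \dots \times S_r^{n_r}$. Since the $S_i$ are pairwise non-isomorphic non-abelian simple groups, $\wh{G}$ decomposes as a direct product $\wh{G} = \prod_{i=1}^r \wh{S_i}^{n_i}$ of the universal covering groups of the factors, and $\aut(\wh{G}) = \prod_{i=1}^r \aut(\wh{S_i}^{n_i})$, $\z(\wh{G}) = \prod_{i=1}^r \z(\wh{S_i})^{n_i}$, because no automorphism of $\wh{G}$ can permute the non-isomorphic summands $\wh{S_i}^{n_i}$.

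I would then proceed by induction on $r$. The base case $r = 1$ is exactly Lemma \ref{lem:Perfect}. For the inductive step, write $\wh{G} = H_1 \times H_2$ with $H_1 := \prod_{i=1}^{r-1} \wh{S_i}^{n_i}$ the universal covering group of $S_1^{n_1}\times\dots\times S_{r-1}^{n_{r-1}}$ and $H_2 := \wh{S_r}^{n_r}$ the universal covering group of $S_r^{n_r}$. By the inductive hypothesis together with Lemma \ref{lem:CTC vs CTC+}, Conjecture \ref{conj:CTC+} holds for $H_1$, and by Lemma \ref{lem:Perfect} combined with Lemma \ref{lem:CTC vs CTC+} it also holds for $H_2$. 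The task is then to combine the bijections for $H_1$ and $H_2$ into a single bijection witnessing Conjecture \ref{conj:CTC+} for $H_1\times H_2=\wh{G}$.

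This combination step is the core of the argument and would proceed along the same lines as the proof of \cite[Theorem 7.2]{Spa17}, which already carries out this construction in the special case $H_1=H_2=\wh{S}^n$. Blocks of $\wh{G}$ are outer tensor products $B=B_1\otimes B_2$ with defect groups $D=D_1\times D_2$ and central $p$-subgroups $Z\leq\z(\wh{G})$ factor as $Z = Z_1\times Z_2$ with $Z_i\leq \z(H_i)$; characters lying in $B_\sigma$ for a normal $p$-chain of $\wh{G}$ decompose as outer tensor products of characters of stabilisers in the two factors; and the poset of normal $p$-chains of $\wh{G}$ starting at $Z$ can be matched with the product of the analogous posets for $H_1$ and $H_2$ via the standard product construction for chains. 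The bijections $\Omega_{H_1}$ and $\Omega_{H_2}$ given by Conjecture \ref{conj:CTC+} for the factors then assemble into a bijection $\Omega_{\wh{G}}$ at the level of orbits.

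The main obstacle will be verifying the block isomorphism of character triples modulo the common central kernel $W$ that is required by Conjecture \ref{conj:CTC+}. This amounts to showing that the tensor product of projective representations associated to the factor character triples induces a projective representation associated to the product character triple, and that the kernel condition $\ker(\vartheta_{\z(\wh{G})})=\ker(\chi_{\z(\wh{G})})$ is inherited from the analogous conditions on the two factors (which is immediate given that $\z(\wh{G})=\z(H_1)\times\z(H_2)$ and $\vartheta=\vartheta_1\times\vartheta_2$). Since the $S_i$ are pairwise non-isomorphic, $\aut(\wh{G})$ preserves each factor $H_i$ individually, so the equivariance of the combined bijection follows directly from the equivariance of $\Omega_{H_1}$ and $\Omega_{H_2}$ under $\aut(H_1)$ and $\aut(H_2)$ respectively, making this step strictly simpler than in \cite[Theorem 7.2]{Spa17}, where permutations of isomorphic factors had to be tracked throughout.
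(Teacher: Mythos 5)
Your proposal is correct and follows essentially the same approach as the paper, with two minor organizational differences. First, you induct on $r$ and combine factors pairwise, whereas the paper treats all $r$ factors simultaneously: since $\aut(\wh{G}) = \prod_i \aut(\wh{S}_i^{n_i})$ and $\n_{\wh{A}}(\wh{V})_{\wh{C}} = \prod_i \n_{\wh{A}_i}(\wh{V}_i)_{\wh{C}_i}$ already decompose cleanly, there is no technical gain from going two at a time, but nothing is lost either. Second, you invoke Lemma~\ref{lem:CTC vs CTC+} in both directions (reducing to Conjecture~\ref{conj:CTC+} for $\wh{G}$ at the outset and then again to return to covering groups), whereas the paper starts from an arbitrary covering group $K$, lifts to $\wh{K}$ along $\epsilon:\wh{K}\to K$ with kernel $W$, assembles the bijection $\wh{\Omega}$ on $\wh{K}$, and then manually descends to $K$ by restricting to characters lying above $1_W$ and invoking \cite[Lemma 3.8 (b), Corollary 4.4, Theorem 5.3]{Spa17}; your route is arguably cleaner but relies on the same machinery packaged inside Lemma~\ref{lem:CTC vs CTC+}. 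The one place where you are vaguer than the paper is the combination step itself: you say to proceed ``along the same lines as \cite[Theorem 7.2]{Spa17},'' while the paper pinpoints \cite[Theorem 5.1]{Spa17} as the precise result showing that outer tensor products of block-isomorphic character triples remain block-isomorphic; this is the nontrivial ingredient making your ``assemble into a bijection $\Omega_{\wh{G}}$ at the level of orbits'' work, and an actual write-up would need to cite it rather than appeal to analogy.
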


\begin{proof}
Let $K$ be a covering group of $S_1^{n_1}\times \dots\times S_r^{n_r}$, so that $K$ is a perfect group and $K/\z(K)\simeq S_1^{n_1}\times \dots\times S_r^{n_r}$. Fix a $p$-subgroup $V\leq \z(K)$, a block $C$ of $K$ with defect groups strictly containing $V$, a non-negative integer $d$, and suppose that $K\unlhd A$. We want to prove Conjecture \ref{conj:CTC} for $V\leq K\unlhd A$, $C$ and $d$.

Let $\wh{S}_i$ be the universal covering group of $S_i$ and define $\wh{K}_i:=\wh{S}_i^{n_i}$ and $\wh{K}:=\bigtimes_i\wh{K}_i$. Set $\wh{A}_i:=\wh{K}_i\rtimes \aut(\wh{K}_i)$ and notice that $\wh{A}:=\wh{K}\rtimes \aut(\wh{K})=\bigtimes_i\wh{A}_i$. Since $\wh{K}$ is the universal covering group of $K$ (see \cite[Exercise 2, Chapter 11]{Asc86} and \cite[Corollary B.10]{Nav18}), there exists an epimorphism $\epsilon:\wh{K}\to K$ with central kernel $W$. Under $\epsilon$, the $p$-subgroup $V\leq \z(K)$ corresponds to a unique $p$-subgroup $W\leq \wh{V}\leq \z(\wh{K})$ and we can write $\wh{V}=\wh{V}_1\times \dots\times \wh{V}_r$ for $p$-subgroups $\wh{V}_i\leq \z(\wh{K}_i)$. The isomorphism $\wh{K}/\wh{V}\simeq K/V$ induces a length preserving bijection
\begin{equation}
\label{eq:Perfect 1}
\mathfrak{N}\left(\wh{K},\wh{V}\right)/\wh{K}\to\mathfrak{N}(K,V)/K
\end{equation}
such that, if the $\wh{K}$-orbit of $\wh{\sigma}\in\mathfrak{N}(\wh{K},\wh{V})$ corresponds to the $K$-orbit of $\sigma\in\mathfrak{N}(K,V)$, then the stabiliser $\wh{K}_{\wh{\sigma}}$ corresponds to $K_\sigma$ via $\epsilon$. Recall by that \cite[Proposition 6.10]{Spa17} it is no loss of generality to assume that all chains considered in this proof are radical. Then, every (radical) $p$-chain $\wh{\sigma}$ of $\wh{K}$ starting with $\wh{V}$ can be written as $\wh{\sigma}=(\wh{V}=\wh{D}_0<\wh{D}_1<\dots<\wh{D}_n)$ with $\wh{D}_j=\wh{D}_{j,1}\times \dots\times \wh{D}_{j,r}$. Then $\wh{\sigma}_i=(\wh{V}_i=\wh{D}_{0,i}<\wh{D}_{1,i}<\dots<\wh{D}_{n,i})$ is a $p$-chain of $\wh{K}_i$ starting with $\wh{V}_i$ and we have $\wh{K}_{\wh{\sigma}}=\bigtimes_i \wh{K}_{i,\wh{\sigma}_i}$. Finally, observe that by \cite[Theorem 5.8.8 and Theorem 5.8.11]{Nag-Tsu89} there is a unique block $\wh{C}$ of $\wh{K}$ corresponding to the block $C$ of $K$ via $\epsilon$. We can write $\wh{C}=\wh{C}_1\otimes \dots\otimes \wh{C}_r$ for unique blocks $\wh{C}_i$ of $\wh{K}_i$. Since $V$ is strictly contained in the defect groups of $C$, we conclude that the defect groups of $\wh{C}$ and of $\wh{C}_i$ strictly contain $\wh{V}$ and $\wh{V}_i$ respectively.

Now, by Lemma \ref{lem:Perfect}, we know that Conjecture \ref{conj:CTC} holds for every covering group of $S_i^{n_i}$ and hence, as $\wh{K}_i$ is the universal covering group of $S_i^{n_i}$, Lemma \ref{lem:CTC vs CTC+} shows that Conjecture \ref{conj:CTC+} holds for $\wh{K}$. Thus, we obtain an $\n_{\wh{A}_i}(\wh{V}_i)_{\wh{C}_i}$-equivariant bijection
\[\wh{\Omega}_i^{d_i}:\C^{d_i}\left(\wh{C}_i,\wh{V}_i\right)_+/\wh{K}_i\to\C^{d_i}\left(\wh{C}_i,\wh{V}_i\right)_-/\wh{K}_i\]
satisfying the properties required by Conjecture \ref{conj:CTC+}. By combining the bijections $\wh{\Omega}_i^{d_i}$ with $d_i$ running over all non-negative integers, we obtain a map
\[\wh{\Omega}_i:\C\left(\wh{C}_i,\wh{V}_i\right)_+/\wh{K}_i\to\C\left(\wh{C}_i,\wh{V}_i\right)_-/\wh{K}_i.\]
As before, $\wh{\Omega}_i$ is an $\n_{\wh{A}_i}(\wh{V}_i)_{\wh{C}_i}$-equivariant bijection satisfying the conclusions of Conjecture \ref{conj:CTC+} and such that $d(\wh{\vartheta}_i)=d(\wh{\chi}_i)$ for every $(\wh{\sigma}_i,\wh{\vartheta}_i)\in\C(\wh{C}_i,\wh{V}_i)_+$ and $(\wh{\rho}_i,\wh{\chi}_i)\in\wh{\Omega}_i(\overline{(\wh{\sigma}_i,\wh{\vartheta}_i)})$. Since $\n_{\wh{A}}(\wh{V})_{\wh{C}}=\bigtimes_i\n_{\wh{A}_i}(\wh{V}_i)_{\wh{C}_i}$ and recalling the discussion on the $p$-chains of $\wh{K}$ from the previous paragraph, we obtain an $\n_{\wh{A}}(\wh{V})_{\wh{C}}$-equivariant bijection
\[\wh{\Omega}:\C\left(\wh{C},\wh{V}\right)_+/\wh{K}\to\C\left(\wh{C},\wh{V}\right)_-/\wh{K}.\]
By \cite[Theorem 5.1]{Spa17}, the map $\wh{\Omega}$ satisfies the condition on block isomorphisms of character triples from Conjecture \ref{conj:CTC+}. Moreover $d(\wh{\vartheta})=d(\wh{\chi})$ for every $(\wh{\sigma},\wh{\vartheta})\in\C(\wh{C},\wh{V})_+$ and $(\wh{\rho},\wh{\chi})\in\wh{\Omega}(\overline{(\wh{\sigma},\wh{\vartheta})})$. In particular, if we consider $a\geq 0$ such that $p^a=|W|_p$ and we define $\wh{d}:=d+a$, then $\wh{\Omega}$ restricts to an $\n_{\wh{A}}(\wh{V},W)_{\wh{C}}$-equivariant bijection
\[\wh{\Omega}_0:\C^{\wh{d}}\left(\wh{C},\wh{V}, 1_W\right)_+/\wh{K}\to\C^{\wh{d}}\left(\wh{C},\wh{V}, 1_W\right)_-/\wh{K}\]
where $\C^{\wh{d}}(\wh{C},\wh{V}, 1_W)_\pm$ is the set of pairs $(\wh{\sigma},\wh{\vartheta})\in\C^{\wh{d}}(\wh{C},\wh{V})_\pm$ such that $\wh{\vartheta}\in\irr(\wh{K}_{\wh{\sigma}})$ lies above $1_W$. By \eqref{eq:Perfect 1} it follows that the set $\C^{\wh{d}}(\wh{C},\wh{V}, 1_W)_\pm/\wh{K}$ is in bijection with $\C^d(C,V)_\pm/K$ and therefore $\wh{\Omega}_0$ induces a bijection
\[\Omega:\C^d(C,V)_+/K\to\C^d(C,V)_-/K.\]
As $\n_{\wh{A}}(W)$ induces all automorphisms of $K$ (see \cite[Corollary B.8]{Nav18}) we deduce that $\Omega$ is $\n_A(V)_C$-equivariant and it remains to prove that
\begin{equation}
\label{eq:Perfect 2}
\left(A_{\sigma,\vartheta},K_\sigma,\vartheta\right)\iso{K}\left(A_{\rho,\chi},K_\rho,\chi\right)
\end{equation}
for every $(\sigma,\vartheta)\in\C^d(C,V)_+$ and $(\rho,\chi)\in\Omega(\overline{(\sigma,\vartheta)})$. For this, let $(\wh{\sigma},\wh{\vartheta})\in\C^{\wh{d}}(\wh{C},\wh{V},1_W)_+$ and $(\wh{\rho},\wh{\chi})\in\C^{\wh{d}}(\wh{C},\wh{V},1_W)_-$ correspond to $(\sigma,\vartheta)$ and $(\rho,\chi)$ respectively and notice that $\vartheta$ and $\chi$ can be identified via $\epsilon$ with $\overline{\wh{\vartheta}}$ and $\overline{\wh{\chi}}$ respectively and where $\overline{\wh{\vartheta}}$ and $\overline{\wh{\chi}}$ are the characters of $\wh{K}_{\wh{\sigma}}/W$ and $\wh{K}_{\wh{\rho}}/W$ corresponding to $\wh{\vartheta}$ and $\wh{\chi}$ via inflation of characters. Since $\wh{\Omega}_0$ satisfies the conclusions of Conjecture \ref{conj:CTC+} and using \cite[Lemma 3.8 (b) and Corollary 4.4]{Spa17}, we deduce that
\begin{equation}
\label{eq:Perfect 3}
\left(\n_{\wh{A}}(W)_{\wh{\sigma},\wh{\vartheta}}/W,\wh{K}_{\wh{\sigma}}/W,\overline{\wh{\vartheta}}\right)\iso{\wh{K}/W}\left(\n_{\wh{A}}(W)_{\wh{\rho},\wh{\chi}},\wh{K}_{\wh{\rho}}/W,\overline{\wh{\chi}}\right).
\end{equation}
We then deduce \eqref{eq:Perfect 2} from \eqref{eq:Perfect 3} by applying \cite[Theorem 5.3]{Spa17}.
\end{proof}

Next, we consider the case where a covering group $K$ of a product of non-abelian simple groups is normally embedded in a larger group $G$. Our next result can be seen as a version of \cite[Corollary 8.2]{Spa17} compatible with $G$-block isomorphisms of character triples.

\begin{cor}
\label{cor:Perfect, lifting}
Let $G\unlhd A$ be finite groups and consider $V\leq K\unlhd A$ with $V\leq \z(G)$ a $p$-subgroup and $K$ a non-central perfect subgroup of $G$ such that $K/(\z(G)\cap K)$ is isomorphic to a direct product of non-abelian simple groups all of whose covering groups satisfy Conjecture \ref{conj:CTC} at the prime $p$. If $B$ is a block of $G$ that covers blocks of $K$ with defect groups strictly containing $V$, then there exists an $\n_A(V)_B$-equivariant bijection
\[\Omega^B_{K,V}:\C^d_K(B,V)_+/G\to\C^d_K(B,V)_-/G\]
such that
\[\left(A_{\sigma,\vartheta},G_\sigma,\vartheta\right)\iso{G}\left(A_{\rho,\chi},G_\rho,\chi\right)\]
for every $(\sigma,\vartheta)\in\C^d_K(B,V)_+$ and any $(\rho,\chi)\in\Omega_{K,V}^B(\overline{(\sigma,\vartheta)})$.
\end{cor}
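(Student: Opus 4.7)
The plan is to reduce Conjecture \ref{conj:CTC} for $B$ to the same conjecture for the blocks of $K$ covered by $B$, and then to pass from $K$-characters to $G$-characters via Clifford theory. Without loss of generality I would replace $A$ by $A_B$ and assume that $B$ is $A$-invariant, so that the $A$-conjugation action permutes the set $\mathcal{B}$ of blocks of $K$ covered by $B$.

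The first step is to observe that Conjecture \ref{conj:CTC} holds for $K$. Indeed, $K$ is perfect and $\z(G) \cap K \leq \z(K)$, so $K$ is a covering group of $K/(\z(G) \cap K)$, which by hypothesis is a direct product of non-abelian simple groups all of whose covering groups satisfy Conjecture \ref{conj:CTC} at the prime $p$. Hence Proposition \ref{prop:Perfect} yields Conjecture \ref{conj:CTC} for $K$ at the prime $p$.

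Next, for each $b \in \mathcal{B}$ the hypothesis guarantees that $V$ is strictly contained in the defect groups of $b$. Applying Conjecture \ref{conj:CTC} to $K \unlhd A$ and $b$ produces an $\n_A(V)_b$-equivariant bijection
\[\Omega_b \colon \C^d(b, V)_+/K \to \C^d(b, V)_-/K\]
inducing $K$-block isomorphisms of character triples. Since $A_{\mathcal{B}} = A$ the hypothesis of Lemma \ref{lem:Bijections for union of blocks} is trivially met, and that lemma assembles the bijections $\Omega_b$ into a single $\n_A(V)_B$-equivariant bijection
\[\Omega_{\mathcal{B}} \colon \C^d(\mathcal{B}, V)_+/K \to \C^d(\mathcal{B}, V)_-/K\]
with the same compatibility property.

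The technical core of the argument, and the step I expect to be the main obstacle, is to lift $\Omega_{\mathcal{B}}$ to the desired bijection at the level of $G$. For any $\sigma \in \mathfrak{N}(K, V) \subseteq \mathfrak{N}(G, V)$ the normal inclusion $K \unlhd G$ yields $K_\sigma = K \cap G_\sigma \unlhd G_\sigma$, and Clifford theory together with \cite[Theorem 9.14]{Nav98} places the $K_\sigma$-orbits of characters in $\irr^d(\mathcal{B}_\sigma)$ in bijection with the characters in $\irr^d(B_\sigma)$ via induction of Clifford correspondents. Applying \cite[Proposition 2.10]{Ros22} uniformly across the pairs of chains matched by $\Omega_{\mathcal{B}}$ (in direct analogy with the use of that proposition in the proof of Corollary \ref{cor:Bijections over nilpotent blocks}, with $A_0 = A$ and $J = G$) then produces the required $\n_A(V)_B$-equivariant bijection $\Omega^B_{K,V}$ and upgrades the $K$-block isomorphisms delivered by $\Omega_{\mathcal{B}}$ to the desired $G$-block isomorphisms of character triples. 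The subtlety here lies in maintaining coherence of the Clifford-theoretic lifting as the chain $\sigma$ is replaced by the matched chain $\rho$, which is handled by invoking \cite[Lemma 2.11]{Ros22} together with transitivity of $\iso{G}$ to patch the individual character-triple isomorphisms obtained fiberwise into a single $G$-block isomorphism relating $(A_{\sigma, \vartheta}, G_\sigma, \vartheta)$ and $(A_{\rho, \chi}, G_\rho, \chi)$.
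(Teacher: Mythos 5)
Your high-level strategy is broadly the same as the paper's --- reduce to $K$ via Proposition \ref{prop:Perfect}, then lift to $G$ via Clifford theory and $K$-block isomorphisms of character triples --- but there are two genuine gaps that mean the proof as written does not go through.

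First, you apply Conjecture \ref{conj:CTC} to $K$ at the \emph{same} defect $d$, producing bijections $\Omega_b\colon \C^d(b,V)_+/K\to\C^d(b,V)_-/K$. This is not the right thing to lift. A character $\vartheta\in\irr^d(B_\sigma)$ (a character of $G_\sigma$ of defect $d$) lies above a character $\varphi\in\irr(K_\sigma)$ whose defect $d(\varphi)$ is \emph{not} $d$; indeed $d(\varphi)=d-d_{K_\sigma}(\vartheta)$ varies with $\vartheta$. So to parameterize the characters of $\irr^d(B_\sigma)$ by their $K_\sigma$-constituents, you need the bijections for $K$ at \emph{all} defects $d'$ simultaneously, combined into a single defect-preserving map $\Omega\colon\C(C,V)_+/K\to\C(C,V)_-/K$; only then can you use $d(\varphi)=d(\psi)$ together with \cite[Lemma 2.9 (iii)]{Ros22} to show the Clifford-lifted bijection preserves defect $d$ on the $G$-level. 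Fixing $d$ up front is a real mistake, not a notational shortcut.

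Second, \cite[Proposition 2.10]{Ros22} cannot be applied ``uniformly across the pairs of chains matched by $\Omega_{\mathcal{B}}$'' in the way you describe, and the analogy with Corollary \ref{cor:Bijections over nilpotent blocks} is misleading. In that corollary the ambient groups $G$ and $\n_G(Q)$ are \emph{fixed}, and Proposition 2.10 lifts a single character bijection $\Psi\colon\irr(b)\to\irr(b_0)$ between two fixed normal subgroups. Here the stabilizers $K_\sigma$, $K_\rho$, $G_\sigma$, $G_\rho$ all vary with the chains being matched, so there is no single instance of Proposition 2.10 to apply. What is actually needed is a chain-by-chain construction: for each matched pair $(\sigma,\varphi)$ and $(\rho,\psi)$ with $(A_{\sigma,\varphi},K_\sigma,\varphi)\iso{K}(A_{\rho,\psi},K_\rho,\psi)$, apply \cite[Lemma 2.9 (ii)]{Ros22} to obtain a bijection $f\colon\irr(G_{\sigma,\varphi}\mid\varphi)\to\irr(G_{\rho,\psi}\mid\psi)$, check that $f$ respects block induction to $B$ and preserves defect, induce through the Clifford correspondence to $G_\sigma$ and $G_\rho$ via \cite[Proposition 2.8]{Ros22}, and track $\n_A(V)_C$-equivariance carefully via a choice of transversals in $\mathfrak{N}(K,V)$ and in the character sets so that the assembled map inherits $\n_A(V)_B=G\n_A(V)_C$-equivariance. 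Your reference to \cite[Lemma 2.11]{Ros22} for the upgrade from $KG_{\sigma,\varphi}$-block isomorphisms to $G$-block isomorphisms is the right ingredient, but it sits in the middle of this transversal-by-transversal argument, not as a global patching step at the end. Without these details the proposal leaves the central construction unverified.
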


\begin{proof}
We may assume without loss of generality that $B$ is $A$-invariant and hence obtain $\n_A(V)_B=G\n_A(V)_C$ for any block $C$ of $K$ covered by $B$ (see \cite[Corollary 9.3]{Nav98}). Since $K$ is perfect, Proposition \ref{prop:Perfect} implies that Conjecture \ref{conj:CTC} holds for $K$. In particular, recalling that $C$ has defect groups strictly containing $V$ by assumption, for every non-negative integer $d'$ there exists an $\n_A(V)_C$-equivariant bijection
\[\Omega^{d'}:\C^{d'}(C,V)_+/K\to\C^{d'}(C,V)_-/K\]
such that
\begin{equation}
\label{eq:Perfect, lifting 1}
\left(A_{\sigma,\varphi},K_\sigma,\varphi\right)\iso{K}\left(A_{\rho,\psi},K_\rho,\psi\right)
\end{equation}
for every $(\sigma,\varphi)\in\C^{d'}(C,V)_+$ and $(\rho,\psi)\in\Omega^{d'}(\overline{(\sigma,\varphi)})$ where $\overline{(\sigma,\varphi)}$ denotes the $K$-orbit of $(\sigma,\varphi)$. Combining the bijections $\Omega^{d'}$ for every non-negative $d'$ we obtain a bijection $\Omega:\C(C,V)_+/K\to\C(C,V)_-/K$ with similar properties and such that $d(\varphi)=d(\psi)$ for every $(\sigma,\varphi)\in\C(C,V)_+$ and $(\rho,\psi)\in\Omega(\overline{(\sigma,\varphi)})$.

Next, fix an $\n_A(V)_C$-transversal $\mathcal{T}_{1,+}$ in $\mathfrak{N}(K,V)_+$ and an $\n_A(V)_{C,\sigma}$-transversal $\mathcal{T}_{2,+}^{\sigma}$ in $\irr(C_\sigma)$ for each $\sigma\in\mathcal{T}_{1,+}$. We recall here once again that we assume, as we may, that all $p$-chains considered in our study are normal. The set
\[\mathcal{T}_+:=\left\lbrace\overline{(\sigma,\varphi)}\enspace\middle|\enspace\sigma\in\mathcal{T}_{1,+}, \varphi\in\mathcal{T}_{2,+}^{\sigma}\right\rbrace\]
is an $\n_A(V)_C$-transversal in $\C(C,V)_+/K$. Moreover, since $\Omega$ is $\n_A(V)_C$-equivariant, we can find an $\n_A(V)_C$-transversal $\mathcal{T}_{1,-}$ in $\mathfrak{N}(K,V)_-$ and, for each $\rho\in\mathcal{T}_{1,-}$, an $\n_A(V)_{C,\rho}$-transversal $\mathcal{T}_{2,-}^{\rho}$ in $\irr(C_\rho)$ such that
\[\mathcal{T}_-:=\left\lbrace\overline{(\rho,\psi)}\enspace\middle|\enspace \rho\in\mathcal{T}_{1,-}, \psi\in\mathcal{T}_{2,-}^{\rho}\right\rbrace\]
is an $\n_A(V)_C$-transversal in $\C(C,V)_-/K$ and there is a bijection
\begin{align}
\label{eq:Perfect, lifting 2}
\mathcal{T}_+&\to\mathcal{T}_-
\\
\overline{(\sigma,\varphi)}&\mapsto\Omega\left(\overline{(\sigma,\varphi)}\right).\nonumber
\end{align}

Our aim is now to find $\n_A(V)_B$-transversals $\mathcal{S}_+$ in $\C^d_K(B,V)_+/G$ and $\mathcal{S}_-$ in $\C^d_K(B,V)_-/G$ in bijection with each other and inducing $G$-block isomorphisms of character triples. Let $(\sigma,\varphi)\in\mathcal{T}_+$ and $(\rho,\psi)\in\mathcal{T}_-$ whose $K$-orbits correspond under \eqref{eq:Perfect, lifting 2}. By Clifford theory (see also \cite[Theorem B]{Kos-Spa15}) we deduce that
\begin{equation}
\label{eq:Perfect, lifting 3}
\irr^d(B_\sigma)=\coprod\limits_{\varphi\in\mathcal{T}_{2,+}^\sigma}\irr^d(B_\sigma\mid \varphi)
\end{equation}
and
\begin{equation}
\label{eq:Perfect, lifting 4}
\irr^d(B_\rho)=\coprod\limits_{\psi\in\mathcal{T}_{2,-}^\rho}\irr^d(B_\rho\mid \psi).
\end{equation}
We choose an $\n_A(V)_{\sigma,\varphi}$-transversal $\mathcal{S}^\sigma_{2,+}(\varphi)$ in $\irr^d(B_\sigma\mid \varphi)$. If $\irr^d(B_{\sigma,\varphi}\mid \varphi)$ denotes the set of Clifford correspondents over $\varphi$ of elements in $\irr^d(B_\sigma\mid \varphi)$, then the set $\mathcal{S}_{2,+}^\sigma(\varphi)_0$ of characters $\varphi_0\in\irr^{d}(B_{\sigma,\varphi}\mid\varphi)$ such that $\varphi_0^{G_{\sigma}}\in\mathcal{S}_{2,+}^\sigma(\varphi)$ is an $\n_A(V)_{\sigma,\varphi}$-transversal in $\irr^d(B_{\sigma,\varphi}\mid \varphi)$. Since \eqref{eq:Perfect, lifting 1} is satisfied for our choice of $(\sigma,\varphi)$ and $(\rho,\psi)$, using \cite[Lemma 2.9 (ii)]{Ros22} we obtain a bijection
\[f:\irr(G_{\sigma,\varphi}\mid \varphi)\to\irr(G_{\rho,\psi}\mid \psi)\]
satisfying
\[\left(A_{\sigma,\varphi,\vartheta_0},G_{\sigma,\varphi},\vartheta_0\right)\iso{KG_{\sigma,\varphi}}\left(A_{\rho,\psi,\chi_0},G_{\rho,\psi},\chi_0\right)\]
for every $\vartheta_0\in\irr(G_{\sigma,\varphi}\mid \varphi)$ and $\chi_0:=f(\vartheta_0)$. Then, \cite[Lemma 2.11]{Ros22} shows that
\begin{equation}
\label{eq:Perfect, lifting 5}
\left(A_{\sigma,\varphi,\vartheta_0},G_{\sigma,\varphi},\vartheta_0\right)\iso{G}\left(A_{\rho,\psi,\chi_0},G_{\rho,\psi},\chi_0\right).
\end{equation}
Notice that $f$ maps $\irr(B_{\sigma,\varphi}\mid \varphi)$ to $\irr(B_{\rho,\psi}\mid \psi)$ while, since $d(\varphi)=d(\psi)$ by the construction of $\Omega$, \cite[Lemma 2.9 (iii)]{Ros22} implies that $f$ preserves the defect of characters. Recalling that $f$ is a strong isomorphism of character triples (see \cite[Theorem 3.3]{Spa17}), we deduce that the image $\mathcal{S}_{2,-}^\rho(\psi)_0$ of $\mathcal{S}_{2,+}^\sigma(\varphi)_0$ under $f$ is an $\n_A(V)_{\rho,\psi}$-transversal in $\irr^d(B_{\rho,\psi}\mid \psi)$. Then, by the Clifford correspondence, the set $\mathcal{S}_{2,-}^\rho(\psi)$ of induced characters $\chi_0^{G_\rho}$ for $\chi_0\in\mathcal{S}_{2,-}^\rho(\psi)_0$ is an $\n_A(V)_{\rho,\psi}$-transversal in $\irr^d(B_{\rho}\mid \psi)$ and there exists a bijection
\begin{align*}
\mathcal{S}_{2,+}^\sigma(\varphi)&\to\mathcal{S}_{2,-}^\rho(\psi)
\\
\vartheta&\mapsto \chi
\end{align*}
such that
\begin{equation}
\label{eq:Perfect, lifting 6}
\left(A_{\sigma,\vartheta},G_{\sigma},\vartheta\right)\iso{G}\left(A_{\rho,\chi},G_{\rho},\chi\right)
\end{equation}
where \eqref{eq:Perfect, lifting 6} follows from \eqref{eq:Perfect, lifting 5} via an application of \cite[Proposition 2.8]{Ros22}. To conclude, we observe that
\[\mathcal{S}_{2,+}^\sigma:=\coprod\limits_{\varphi\in\mathcal{T}_{2,+}^\sigma}\mathcal{S}^\sigma_{2,+}(\varphi)\]
is an $\n_A(V)_{C,\sigma}$-transversal in $\irr^d(B_\sigma)$ by \eqref{eq:Perfect, lifting 3} and similarly that
\[\mathcal{S}_{2,-}^\rho:=\coprod\limits_{\psi\in\mathcal{T}_{2,-}^\rho}\mathcal{S}^\rho_{2,-}(\psi)\]
is an $\n_A(V)_{C,\rho}$-transversal in $\irr^d(B_\rho)$ by \eqref{eq:Perfect, lifting 4}. As a consequence, and recalling that $\n_A(V)_B=G\n_A(V)_C$, the sets
\[\mathcal{S}_+:=\left\lbrace\overline{(\sigma,\vartheta)}\enspace\middle|\enspace\sigma\in\mathcal{T}_{1,+}, \vartheta\in\mathcal{S}_{2,+}^\sigma\right\rbrace\]
and
\[\mathcal{S}_-:=\left\lbrace\overline{(\rho,\chi)}\enspace\middle|\enspace\rho\in\mathcal{T}_{1,-}, \chi\in\mathcal{S}_{2,-}^\rho\right\rbrace\]
are $\n_A(V)_B$-transversals in $\C^d_K(B,V)_+/G$ and $\C^d_K(B,V)_-/G$ respectively and there exists a bijection
\begin{align*}
\Omega^B_{K,V}:\mathcal{S}_+&\to\mathcal{S}_-
\\
\overline{(\sigma,\vartheta)}&\mapsto \overline{(\rho,\chi)}
\end{align*}
with $(\sigma,\vartheta)$ and $(\rho,\chi)$ satisfying \eqref{eq:Perfect, lifting 6}. We can now conclude the proof by extending $\Omega^B_{K,V}$ to a bijection between $\C^d_K(B,V)_+/G$ and $\C^d_K(B,V)_-/G$.
\end{proof}

Before proceeding further, we point out that in the proof of \cite[Theorem 8.4]{Spa17}, the conclusion of \cite[Corollary 8.2]{Spa17} is applied with respect to the group $K\z(G)$ where $K$ is the layer of $G$ and hence $K\z(G)$ is the generalised Fitting subgroup. However, notice that the latter is not necessarily perfect and hence the hypothesis of \cite[Corollary 8.2]{Spa17} is not satisfied. Nevertheless, this apparent issue can be easily circumvented by extending the result of the corollary to groups that are not necessarily perfect. While this observation is implicit in the proof of \cite[Theorem 8.4]{Spa17}, for the reader's convenience we now state it precisely in the case of our Corollary \ref{cor:Perfect, lifting}. For this, we need a couple of group theoretic considerations.

\begin{lem}
\label{lem:Central product and chains}
Let $K\unlhd N$ be finite groups with $N=K\z(N)$ and $\z(K)=K\cap \z(N)$. Assume that $\O_p(N)\leq \z(N)$ and notice that $\O_p(K)=\O_p(N)\cap K\leq \z(K)$. Denote by $\mathfrak{P}_N$ the set of $p$-subgroups of $N$ containing $\O_p(N)$ and by $\mathfrak{P}_K$ the set of $p$-subgroups of $K$ containing $\O_p(K)$. Then the map
\begin{align*}
\mathfrak{P}_N&\to\mathfrak{P}_K
\\
P&\mapsto P\cap K
\end{align*}
is a bijection with inverse $Q\mapsto \O_p(N)Q$.
\end{lem}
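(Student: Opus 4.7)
The plan is to verify that both maps are well-defined and then check that their compositions are the identity. Well-definedness is essentially immediate from the hypotheses: since $\O_p(N)\leq \z(N)$ is central, the product $\O_p(N)Q$ is a subgroup and a $p$-group (as a product of two $p$-subgroups, one of which is central), clearly containing $\O_p(N)$; conversely, $P\cap K$ is a $p$-subgroup of $K$ containing $\O_p(N)\cap K=\O_p(K)$ since $\O_p(N)\leq P$.

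One of the two compositions is an easy direct check. Namely, given $Q\in\mathfrak{P}_K$, the inclusion $Q\leq \O_p(N)Q\cap K$ is trivial; for the reverse, if $x=zq\in K$ with $z\in\O_p(N)$ and $q\in Q$, then $z=xq^{-1}\in K$, so $z\in\O_p(N)\cap K=\O_p(K)\leq Q$, and hence $x=zq\in Q$. This exploits only the hypothesis $\O_p(K)=\O_p(N)\cap K\leq Q$.

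The main work lies in the other composition: showing $P=\O_p(N)(P\cap K)$ for $P\in\mathfrak{P}_N$. The inclusion $\supseteq$ is immediate, so fix $x\in P$; the task is to produce $k\in P\cap K$ and $z\in\O_p(N)$ with $x=zk$. Using $N=K\z(N)$ one can write $x=k_0z_0$ with $k_0\in K$ and $z_0\in\z(N)$, but $z_0$ need not be a $p$-element. The key step will be to adjust this decomposition by a suitable element $c\in\z(K)=K\cap\z(N)$, replacing $(k_0,z_0)$ by $(k_0c,c^{-1}z_0)$, so as to kill the $p'$-part of $z_0$. To find $c$, I will use the decomposition $\z(N)=\O_p(N)\times T$, where $T$ is the $p'$-part of $\z(N)$, and observe that in the abelian group $\langle x,z_0\rangle$ the identity $x=k_0z_0$ forces $(k_0)_{p'}\cdot(z_0)_{p'}=x_{p'}=1$ because $x\in P$ is a $p$-element and $z_0$ is central; since $(k_0)_{p'}\in K$, this forces $(z_0)_{p'}\in K\cap\z(N)=\z(K)$, and the choice $c:=(z_0)_{p'}\in\z(K)$ does the job.

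With $c$ in hand, the new $z'_0:=c^{-1}z_0$ lies in $\O_p(N)$ and the new $k'_0:=k_0c=x(z'_0)^{-1}$ lies simultaneously in $K$ and in $P$ (since $\O_p(N)\leq P$), hence in $P\cap K$, completing the proof that $x\in\O_p(N)(P\cap K)$. The only delicate point is extracting $(z_0)_{p'}$ from $\z(K)$ rather than merely from $\z(N)$; once this is done the two verifications knit together to give the claimed bijection.
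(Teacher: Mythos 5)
Your proof is correct, and your handling of the harder composition $\beta(\alpha(P))=P$ takes a genuinely different route from the paper's. The paper argues globally: it first observes that $N/K\O_p(N)$ is a $p'$-group (using $\z(N)=\O_p(N)\times\O_{p'}(\z(N))$), deduces that the $p$-group $P$ must lie in $K\O_p(N)$, and then finishes in one line by Dedekind's modular law, $(P\cap K)\O_p(N)=P\cap K\O_p(N)=P$. You instead work element-by-element: write $x=k_0z_0$, pass to the abelian subgroup $\langle x,z_0\rangle$, compare $p'$-parts to show $(z_0)_{p'}=(k_0)_{p'}^{-1}\in K\cap\z(N)=\z(K)$, and shift by $c=(z_0)_{p'}$ to land the two factors in $\O_p(N)$ and $P\cap K$. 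The key observations are really the same in disguise---both exploit that the $p'$-part of $\z(N)$ is already absorbed into $K$ via $\z(K)$---but the paper packages this as a statement about the quotient $N/K\O_p(N)$ and invokes the modular law, which is shorter, while your version makes the adjustment explicit and avoids the modular law entirely. Both are valid; the paper's is more economical, yours is more elementary and self-contained. (Your easier direction $\alpha(\beta(Q))=Q$ is the same computation as the paper's, just carried out on elements rather than via the modular law.)
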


\begin{proof}
Since $\O_p(N)\cap K=\O_p(K)$ we deduce that the map $\alpha:\mathfrak{P}_N\to\mathfrak{P}_K$, $P\mapsto P\cap K$ is well defined. We also consider the map $\beta:\mathfrak{P}_K\to\mathfrak{P}_N$, $Q\mapsto Q\O_p(N)$. Let $P\in\mathfrak{P}_N$ and $Q\in\mathfrak{P}_K$. By elementary group theory we have $\alpha(\beta(Q))=Q\O_p(N)\cap K=Q(\O_p(N)\cap K)=Q\O_p(K)=Q$. On the other hand, noticing that $N/K\O_p(N)$ is a $p'$-group, we deduce that $P\leq K\O_p(N)$ and therefore $\beta(\alpha(P))=(P\cap K)\O_p(N)=P\cap K\O_p(N)=P$. This shows that $\alpha$ and $\beta$ are inverses of each other. 
\end{proof}

\begin{cor}
\label{cor:Central product chains}
Let $K$ and $N$ be as in Lemma \ref{lem:Central product and chains} and suppose furthermore that $K,N\unlhd G$. Then there is a bijection
\[\mathfrak{P}(N,\O_p(N))/G\to\mathfrak{P}(K,\O_p(K))/G\] 
such that $G_\sigma=G_{\rho}$ and $|\sigma|=|\rho|$ for every $\sigma\in\mathfrak{P}(N,\O_p(N))$ whose $G$-orbit corresponds to the one of $\rho\in\mathfrak{P}(K,\O_p(K))$.
\end{cor}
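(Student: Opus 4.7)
The plan is to apply Lemma \ref{lem:Central product and chains} termwise to each $p$-chain. Concretely, given $\sigma = \{\O_p(N) = D_0 < D_1 < \cdots < D_n\} \in \mathfrak{P}(N,\O_p(N))$, I will define $\rho$ to be the sequence of intersections $\{\O_p(K) = D_0 \cap K < D_1 \cap K < \cdots < D_n \cap K\}$. The first task is to verify that $\rho$ is a genuine $p$-chain of $K$ of the same length as $\sigma$: each term lies in $\mathfrak{P}_K$ (starting with $\O_p(N) \cap K = \O_p(K)$ since $\O_p(N) \leq \z(N)$ and $K \cap \z(N) = \z(K)$), and the strict inclusions are preserved because the map $P \mapsto P \cap K$ of Lemma \ref{lem:Central product and chains} is injective on $\mathfrak{P}_N$.

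Next I will show that the assignment $\sigma \mapsto \rho$ is bijective on chains (before passing to $G$-orbits). The inverse sends a chain $\rho = \{Q_0 < Q_1 < \cdots < Q_n\} \in \mathfrak{P}(K,\O_p(K))$ to the chain $\{\O_p(N)Q_0 < \O_p(N)Q_1 < \cdots < \O_p(N)Q_n\}$, which lies in $\mathfrak{P}(N,\O_p(N))$ by the inverse map of Lemma \ref{lem:Central product and chains}. The two constructions are mutually inverse since $\alpha \circ \beta$ and $\beta \circ \alpha$ are the identity on $\mathfrak{P}_K$ and $\mathfrak{P}_N$ respectively.

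The third step is to check $G$-equivariance and equality of stabilisers. Since $N \unlhd G$, the subgroup $\O_p(N)$ is characteristic in $N$ and hence normal in $G$; similarly $K \unlhd G$. Thus $G$ acts on both $\mathfrak{P}_N$ and $\mathfrak{P}_K$, and the intersection map is $G$-equivariant because $(P \cap K)^g = P^g \cap K$. Descending to orbits, this yields the desired bijection $\mathfrak{P}(N,\O_p(N))/G \to \mathfrak{P}(K,\O_p(K))/G$, which preserves length by construction. For stabilisers, the inclusion $G_\sigma \leq G_\rho$ is immediate; conversely, if $g \in G_\rho$, then $(D_i \cap K)^g = D_i \cap K$ for every $i$, and applying the inverse map $\beta$ (which commutes with the $G$-action on $\mathfrak{P}_K$) gives $D_i^g = \O_p(N)(D_i \cap K)^g = \O_p(N)(D_i \cap K) = D_i$, so $g \in G_\sigma$.

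None of these steps is really difficult; the only subtlety is keeping track of the fact that all terms of $\sigma$ automatically contain $\O_p(N)$ (since they contain $D_0 = \O_p(N)$) and all terms of $\rho$ contain $\O_p(K)$, so that Lemma \ref{lem:Central product and chains} genuinely applies termwise. Once this is in place, everything else is a direct book-keeping argument and the corollary follows.
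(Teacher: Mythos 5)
Your proof is correct and takes essentially the same route as the paper: apply the bijection $\alpha$ from Lemma \ref{lem:Central product and chains} termwise to each chain, note that the resulting map on chains is a length-preserving bijection, and then use $K,N \unlhd G$ to pass to $G$-orbits and match stabilisers. Your write-up is more detailed (in particular spelling out why strict inclusions survive and why $G_\rho \leq G_\sigma$ via the inverse map $\beta$), but the argument is the same.
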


\begin{proof}
Let $\alpha:\mathfrak{P}_N\to\mathfrak{P}_K$ be the bijection given by Lemma \ref{lem:Central product and chains}. If $\sigma=\{D_i\}_i$ is a $p$-chain in $\mathfrak{P}(N,\O_p(N))$, then $\rho:=\{\alpha(D_i)\}_i$ is a $p$-chain in $\mathfrak{P}(K,\O_p(K))$. Using the fact that $\alpha$ is a bijection, it follows that the map $\sigma\mapsto \rho$ induces a length preserving bijection between $\mathfrak{P}(N,\O_p(N))/G$ and $\mathfrak{P}(K,\O_p(K))/G$. Moreover, since $K, N\unlhd G$, we deduce that $G_\sigma=G_\rho$.
\end{proof}

We can now extend Corollary \ref{cor:Perfect, lifting} to the case where the group $K$ (denoted below by $N$) is not perfect. This finally shows how to construct the bijection from \eqref{eq:Cancellation subset}.

\begin{theo}
\label{thm:CTC over simple groups}
Let $G\unlhd A$ be finite groups and consider $Z\leq N\unlhd A$ with $Z\leq \z(G)$ a $p$-subgroup and $N$ a non-central subgroup of $G$ such that $N/(\z(G)\cap N)$ is isomorphic to a direct product of non-abelian simple groups all of whose covering groups satisfy Conjecture \ref{conj:CTC} at the prime $p$. If $B$ is a block of $G$ that covers blocks of $N$ with defect groups strictly containing $Z$, then there exists an $\n_A(Z)_B$-equivariant bijection
\[\Omega^B_{N,Z}:\C^d_N(B,Z)_+/G\to\C^d_N(B,Z)_-/G\]
such that
\[\left(A_{\sigma,\vartheta},G_\sigma,\vartheta\right)\iso{G}\left(A_{\rho,\chi},G_\rho,\chi\right)\]
for every $(\sigma,\vartheta)\in\C^d_N(B,Z)_+$ and any $(\rho,\chi)\in\Omega_{N,Z}^B(\overline{(\sigma,\vartheta)})$.
\end{theo}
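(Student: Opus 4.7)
The plan is to reduce Theorem \ref{thm:CTC over simple groups} to the perfect case handled by Corollary \ref{cor:Perfect, lifting}. Let $K$ denote the ultimate derived subgroup of $N$, which is perfect since $N$ is finite, and set $W := \z(G) \cap N$. Because $N/W$ is a direct product of non-abelian simple groups, hence perfect, we have $N = KW$. Moreover $W \leq \z(G)$ centralises $N \leq G$, so $W \leq \z(N)$, and conversely $\z(N/W) = 1$ forces $\z(N) \leq W$, whence $W = \z(N)$. It follows easily that $\z(K) = K \cap W$ and that $K/\z(K) \simeq N/W$ is the same direct product of non-abelian simple groups. Thus $K$ is a non-central perfect normal subgroup of $G$ to which Corollary \ref{cor:Perfect, lifting} applies, with the same hypothesis on covering groups.

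I next set up a correspondence between normal $p$-chains in $\mathfrak{N}(N, Z)$ and normal $p$-chains in $\mathfrak{N}(K, V)$, where $V := \O_p(K) = K \cap \O_p(N)$ is a central $p$-subgroup of $K$ satisfying $V \leq W \leq \z(G)$. Since $N/W$ has no non-trivial normal $p$-subgroup, $\O_p(N) \leq W = \z(N)$, and the assumption $Z \leq \z(G) \cap N = W$ gives $Z \leq \O_p(N)$. When $Z = \O_p(N)$, Corollary \ref{cor:Central product chains} yields a $G$-equivariant, length-preserving bijection $\mathfrak{N}(N, \O_p(N))/G \to \mathfrak{N}(K, V)/G$ with equal stabilisers in $G$ for corresponding chains. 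More generally, when $Z < \O_p(N)$, the cancellation argument of \cite[Lemma 2.3]{Ros22} supplies a sign-reversing $\n_A(Z)_B$-equivariant involution on the chains in $\C^d_N(B,Z)$ whose second term fails to contain $\O_p(N)$, reducing everything to the case $Z = \O_p(N)$.

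For the character-level construction, I apply Corollary \ref{cor:Perfect, lifting} to $K$ and $V$ with respect to the block $B$, obtaining an $\n_A(V)_B$-equivariant bijection $\Omega^B_{K,V}$ on $\C^d_K(B, V)/G$ satisfying the required $G$-block isomorphism of character triples. The central product decomposition $N = K \cdot W$ with $K \cap W = \z(K)$ and $W$ abelian allows me to translate irreducible characters $\vartheta \in \irr(N_\sigma)$ into pairs consisting of an irreducible character of $K_\tau$, where $\tau$ is the chain of $K$ corresponding to $\sigma$, and a linear character of $W$ compatible on $\z(K)$. This translation preserves $p$-defect, block induction to $G$, and the Clifford-theoretic data underlying $\iso{G}$. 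Transporting $\Omega^B_{K,V}$ through the chain bijection and this character identification produces the desired bijection $\Omega^B_{N,Z}$.

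The main technical obstacle lies in the final compatibility check: verifying that the central-product translation of characters between $K$ and $N$ preserves both the block induction condition $\bl(\vartheta)^G = B$ and the full strength of the $G$-block isomorphism relation. This amounts to a Clifford-theoretic argument modelled on \cite[Proposition 2.8]{Ros22} and \cite[Lemma 2.11]{Ros22}, together with a verification that centralisers of defect groups lie in the required subgroups (exploiting $D(\sigma) \leq \O_p(G_\sigma) \leq D$ for defect groups $D$), exactly as in the final step of the proof of Corollary \ref{cor:Perfect, lifting}.
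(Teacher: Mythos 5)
Your overall plan matches the paper's proof: both pass to $K := [N,N]$, set $V := \O_p(K)$, reduce to $Z = \O_p(N)$ via the argument of \cite[Lemma 2.3]{Ros22}, invoke Corollary \ref{cor:Perfect, lifting} for $(K,V)$, and transport through the chain bijection of Corollary \ref{cor:Central product chains}. However, the final ``central-product translation'' step rests on a misreading of the data and is both unnecessary and wrong as stated.

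The characters $\vartheta$ appearing in a pair $(\sigma,\vartheta)\in\C^d_N(B,Z)$ are \emph{not} elements of $\irr(N_\sigma)$: by Definition \ref{def:C^d(B) non-central}, $\vartheta$ is an irreducible character of the full chain stabiliser $G_\sigma$ satisfying $\bl(\vartheta)^G = B$, and the constraint $D(\sigma)\leq N$ only restricts the chain, not where the character lives. Consequently, the central product $N = K\cdot W$ gives no decomposition of $\irr(G_\sigma)$, and the proposed translation ``$\vartheta \mapsto$ (character of $K_\tau$, linear character of $W$)'' does not make sense for the objects being manipulated. The correct (and much simpler) observation is that Corollary \ref{cor:Central product chains} gives $G_\sigma = G_\rho$ and $|\sigma|=|\rho|$ for corresponding chains $\sigma\in\mathfrak{N}(N,Z)$ and $\rho\in\mathfrak{N}(K,V)$, so $\irr^d(B_\sigma) = \irr^d(B_\rho)$ \emph{literally as sets of characters of the same group}: the transport of characters is the identity map, and the block isomorphism of character triples for $(A_{\sigma,\vartheta},G_\sigma,\vartheta)$ carries over verbatim. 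This eliminates the ``main technical obstacle'' you identify, which is an artefact of the misstep. You also do not verify that $B$ covers a block of $K$ whose defect groups strictly contain $V$ (needed to invoke Corollary \ref{cor:Perfect, lifting}); this follows from the hypothesis on $N$ together with Lemma \ref{lem:Central product and chains} applied to a defect group $D$ of $B$ chosen so that $D\cap N$ and $D\cap K$ are defect groups of the covered blocks of $N$ and $K$.
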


\begin{proof}
To start notice that if $Z<\O_p(N)$, then the argument used in \cite[Lemma 2.3]{Ros22} shows how to construct the required bijection $\Omega^B_{N,Z}$. Thus, we may assume without loss of generality that $Z=\O_p(N)$. Let $K:=[N,N]$, $V:=K\cap Z=\O_p(K)$ and observe that $K$ is a perfect group such that $K\z(N)=N$ (see \cite[(33.3)]{Asc86}). Moreover $\z(N)=N\cap \z(G)$, $\z(K)=K\cap \z(N)=K\cap \z(G)$ and we have $K/\z(K)\simeq N/\z(N)$. Next, assume that $B$ covers a block $b$ of $N$ which covers a block $c$ of $K$ and let $D$ be a defect group of $B$ such that $D\cap N$ and $D\cap K$ are defect groups of $b$ and $c$ respectively. By assumption $\O_p(N)=Z<D\cap N$ and hence $\O_p(K)=V<D\cap K$ by Lemma \ref{lem:Central product and chains}. This shows that the hypothesis of Corollary \ref{cor:Perfect, lifting} is satisfied with respect to the groups $V$, $K$, $G$, $A$, and the block $B$. Then, there exist a bijection
\[\Omega^B_{K,V}:\C^d_K(B,V)_+/G\to\C^d_K(B,V)_-/G\]
with the required properties. Using $\Omega^B_{K,V}$ and applying Corollary \ref{cor:Central product chains} we then obtain the required bijection
\[\Omega^B_{N,Z}:\C^d_N(B,Z)_+/G\to\C^d_N(B,Z)_-/G.\]
This concludes the proof.
\end{proof}

\subsection{Proof of Theorem \ref{thm:Main reduction}}

We can prove the main result of this paper which we now restate for the reader's convenience. Notice that Theorem \ref{thm:Main reduction} follows immediately by the following result. Recall once again that a simple group $S$ is said to be involved in $G$ if there exist subgroups $N\unlhd H\leq G$ such that $S$ is isomorphic to $H/N$.

\begin{theo}
\label{thm:Reduction for CTC}
Let $G$ be a finite group and $p$ a prime number. If Conjecture \ref{conj:CTC} holds at the prime $p$ for every covering group of every non-abelian simple group involved in $G$ and whose order is divisible by $p$, then it holds for $G$ at the prime $p$.
\end{theo}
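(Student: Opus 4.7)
Assume for contradiction that the theorem fails, and among all counterexamples choose $G \unlhd A$ so that $|A : \z(G)|$ is minimal. Then Hypothesis \ref{hyp:Minimal counterexample} holds for $G \unlhd A$, and Conjecture \ref{conj:CTC} fails for some data: a $p$-subgroup $Z \leq \z(G)$, a block $B \in \Bl(G)$ whose defect groups strictly contain $Z$, and an integer $d \geq 0$. By \cite[Lemma 2.3]{Ros22} we have $Z = \O_p(G)$; moreover $G$ cannot be abelian, as otherwise no block of $G$ would have defect groups strictly exceeding $Z$.

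The first step is to establish $\O_{p'}(G) \leq \z(G)$. Let $N_0 := \O_{p'}(G) Z$, an $A$-normal subgroup of $G$ whose unique Sylow $p$-subgroup is $Z$. Every block of $N_0$ therefore has defect group exactly $Z$; in particular $B$ covers some such block. Corollary \ref{cor:Reduction central defect groups} then forces $N_0 \leq \z(G)$, which combined with $\O_p(G) = Z \leq \z(G)$ gives $\F(G) \leq \z(G)$. Because $G$ is non-abelian, the Bender relation $\c_G(\F^\star(G)) \leq \F^\star(G)$ prevents $\F^\star(G) \leq \z(G)$ (else $G = \c_G(\F^\star(G)) \leq \z(G)$), so the layer $E(G)$ is non-central.

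Set $N := E(G) \z(G)$, an $A$-normal non-central subgroup of $G$. The subgroup $\z(E(G))$ is abelian and characteristic in $G$, hence contained in $\F(G) \leq \z(G)$; consequently $E(G) \cap \z(G) = \z(E(G))$, and
\[
N/(\z(G) \cap N) \;\simeq\; E(G)/\z(E(G)) \;\simeq\; \prod_{i=1}^{r} L_i/\z(L_i),
\]
where $L_1, \ldots, L_r$ are the components of $G$. Each simple factor $S_i := L_i/\z(L_i)$ is non-abelian and involved in $G$. For those $S_i$ with $p \mid |S_i|$, the hypothesis of the theorem provides Conjecture \ref{conj:CTC} for every covering group of $S_i$; for the remaining factors (of order coprime to $p$), the conjecture for any covering group is verified directly, since all $p$-subgroups of such a covering group are central and the bijections demanded by Conjecture \ref{conj:CTC} collapse to trivial identifications. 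Hence the hypotheses of Theorem \ref{thm:CTC over simple groups} are satisfied by $N$.

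Finally, Corollary \ref{cor:Reduction central defect groups} ensures $B$ covers blocks of $N$ whose defect groups strictly contain $Z$. Theorem \ref{thm:Minimal counterexample} supplies an $\n_A(Z)_B$-equivariant bijection $\Lambda_{N,Z}^B$ on the orbits of $\D^d_N(B,Z)$ inducing $G$-block isomorphisms of character triples, while Theorem \ref{thm:CTC over simple groups} supplies an analogous bijection $\Omega_{N,Z}^B$ on the orbits of $\C^d_N(B,Z)$. Since the partition $\C^d(B,Z) = \C^d_N(B,Z) \sqcup \D^d_N(B,Z)$ is $\n_A(Z)_B$-stable, gluing produces an $\n_A(Z)_B$-equivariant bijection $\C^d(B,Z)_+/G \to \C^d(B,Z)_-/G$ satisfying the conclusion of Conjecture \ref{conj:CTC}, contradicting the choice of counterexample. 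The principal obstacle is the sequence of structural reductions in the second and third paragraphs — in particular the reduction $\O_{p'}(G) \leq \z(G)$ — which is precisely what unlocks the identification $E(G) \cap \z(G) = \z(E(G))$ and thereby the clean product-of-simple-groups description of $N/(\z(G) \cap N)$ required to invoke Theorem \ref{thm:CTC over simple groups}.
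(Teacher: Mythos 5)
Your proof is correct and follows the same overall strategy as the paper: take a minimal counterexample $G\unlhd A$ (minimising $|A:\z(G)|$), reduce to $Z=\O_p(G)$, take $N=\F^\star(G)=E(G)\z(G)$, split $\C^d(B,Z)$ along the $\n_A(Z)_B$-stable partition into chains landing in $N$ and chains not, and invoke Theorem \ref{thm:Minimal counterexample} on the latter piece and Theorem \ref{thm:CTC over simple groups} on the former.

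The one genuine difference is how you establish $\F(G)\leq\z(G)$, which is needed to identify $N/(\z(G)\cap N)$ with $E(G)/\z(E(G))$ and hence with a direct product of non-abelian simple groups. The paper quotes the proof of \cite[Theorem 5.2]{Ros22} for the fact that $\O_p(G)\O_{p'}(G)\leq\z(G)$ under Hypothesis \ref{hyp:Minimal counterexample}. You instead give a direct, self-contained derivation within this paper's machinery: you set $N_0:=\O_{p'}(G)Z$, observe that $N_0=\O_{p'}(G)\times Z$ (since $Z\leq\z(G)$) and hence every block of $N_0$ has defect group exactly $Z$, and then apply Corollary \ref{cor:Reduction central defect groups} to force $N_0\leq\z(G)$. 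This is a clean variant and makes the proof slightly more autonomous. One small redundancy worth pointing out: once $\O_{p'}(G)\leq\z(G)$ is established, no component $L_i$ of $G$ can have $p\nmid|L_i/\z(L_i)|$ — such a component would be a subnormal $p'$-subgroup (the Schur multiplier of a $p'$-group is a $p'$-group), hence contained in $\O_{p'}(G)\leq\z(G)$, contradicting quasi-simplicity. So the case of simple factors of order coprime to $p$ that you handle separately is in fact vacuous; the paper simply states that all simple factors lie in $\mathcal{S}_p(G)$, and this is the reason why.
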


\begin{proof}
For any finite group $H$, denote by $\mathcal{S}_p(H)$ the set of non-abelian simple groups involved in $H$ whose order is divisible by $p$. Suppose that the above assertion is false and assume that $G\unlhd A$ is a counterexample. We may assume that $G$ and $A$ have been minimized with respect to $|A:\z(G)|$. In particular, Conjecture \ref{conj:CTC} holds for every finite groups $G_1\unlhd A_1$ such that:
\begin{enumerate}
\item Conjecture \ref{conj:CTC} holds at $p$ for every covering group of every simple group in $\mathcal{S}_p(G_1)$; and
\item $|A_1:\z(G_1)|<|A:\z(G)|$.
\end{enumerate}
and hence Hypothesis \ref{hyp:Minimal counterexample} is satisfied for $G\unlhd A$. Fix a $p$-subgroup $Z\leq \z(G)$, a $p$-block $B$ of $G$ with defect groups strictly larger than $Z$ and a non-negative integer $d$ for which Conjecture \ref{conj:CTC} fails to hold. Set $N:=\F^\star(G)$ and notice that $Z\leq N\unlhd A$ is a non-central subgroup of $G$ (notice that $G$ cannot be abelian). Applying Theorem \ref{thm:Minimal counterexample}, we obtain an $\n_A(Z)_B$-equivariant bijection
\[\Lambda_{N,Z}^B:\mathcal{D}^d_N(B,Z)_+/G\to\D^d_N(B,Z)_-/G\]
inducing $G$-block isomorphisms of character triples. Next, we point out that the proof of \cite[Theorem 5.2]{Ros22} shows that $\O_p(G)\O_{p'}(G)\leq \z(G)$ whenever Hypothesis \ref{hyp:Minimal counterexample} is satisfied. In particular, under our assumptions we deduce that $\F(G)=\z(G)$ and therefore $N/(\z(G)\cap N)$ is isomorphic to a direct product of non-abelian simple groups all of which lie in $\mathcal{S}_p(G)$. We can now apply Theorem \ref{thm:CTC over simple groups} and obtain an $\n_A(Z)_B$-equivariant bijection
\[\Omega^B_{N,Z}:\C^d_N(B,Z)_+/G\to\C^d_N(B,Z)_-/G\]
inducing $G$-block isomorphisms of character triples. Finally, we define
\[\Omega\left(\overline{\left(\sigma,\vartheta\right)}\right):=\begin{cases}
\Lambda_{N,Z}^B\left(\overline{\left(\sigma,\vartheta\right)}\right), \text{ if } (\sigma,\vartheta)\in\mathcal{D}^d_N(B,Z)_+
\\
\Omega_{N,Z}^B\left(\overline{\left(\sigma,\vartheta\right)}\right), \text{ if } (\sigma,\vartheta)\in\mathcal{C}^d_N(B,Z)_+
\end{cases}\]
for every $(\sigma,\vartheta)\in\C^d(B,Z)_+$ which is the union of $\C^d_N(B,Z)_+$ and $\D^d_N(B,Z)_+$. This concludes the proof.
\end{proof}

Notice that to prove the above result we still have to go beyond Conjecture \ref{conj:CTC} and use its generalisation stated in Conjecture \ref{conj:CTC non-central Z}. However, a fully self contained reduction can be deduced from Theorem \ref{thm:Reduction for CTC} together with Theorem \ref{thm:CTC equivalent to CTC non-central}.

\begin{theo}
\label{thm:Reduction for CTC non-central}
Let $G$ be a finite group and $p$ a prime number. If Conjecture \ref{conj:CTC non-central Z} holds at the prime $p$ for every covering group of every non-abelian simple group involved in $G$ and whose order is divisible by $p$, then it holds for $G$ at the prime $p$.
\end{theo}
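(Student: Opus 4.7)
The plan is to deduce this theorem as a direct formal consequence of the two main technical results already established in the paper, namely Theorem \ref{thm:Reduction for CTC} and Theorem \ref{thm:CTC equivalent to CTC non-central}. The strategy is a three-step chain: pass from Conjecture \ref{conj:CTC non-central Z} for quasi-simple groups down to Conjecture \ref{conj:CTC} for quasi-simple groups via Proposition \ref{prop:CTC non-central implies CTC}, then invoke the reduction of Theorem \ref{thm:Reduction for CTC} to obtain Conjecture \ref{conj:CTC} for all finite groups whose simple sections involved are among those of $G$, and finally lift back to Conjecture \ref{conj:CTC non-central Z} for $G$ using Theorem \ref{thm:CTC equivalent to CTC non-central}.

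To begin, I would fix an arbitrary embedding $G\unlhd A$ and reduce the problem to verifying the hypothesis of Theorem \ref{thm:CTC equivalent to CTC non-central}; namely, that Conjecture \ref{conj:CTC} holds at the prime $p$ for every pair $G_1\unlhd A_1$ satisfying $\mathcal{S}_p(G_1)\subseteq \mathcal{S}_p(G)$ and $|A_1:\z(G_1)|\leq|A:\z(G)|$. The key observation is that for any covering group $K$ of a simple group $S\in\mathcal{S}_p(G)$, the quotient $K/\z(K)\cong S$ is simple, so every normal $p$-subgroup of $K$ automatically lies in $\z(K)$. Consequently, applying Proposition \ref{prop:CTC non-central implies CTC} block-by-block (for each central $p$-subgroup strictly contained in the defect groups of the block) shows that Conjecture \ref{conj:CTC non-central Z} for $K$ implies Conjecture \ref{conj:CTC} for $K$. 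Therefore, our hypothesis transfers to give Conjecture \ref{conj:CTC} for every covering group of every non-abelian simple group in $\mathcal{S}_p(G)$.

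Now, for any finite group $G_1\unlhd A_1$ with $\mathcal{S}_p(G_1)\subseteq \mathcal{S}_p(G)$, every covering group of every simple group in $\mathcal{S}_p(G_1)$ is likewise a covering group of some simple group in $\mathcal{S}_p(G)$, so Conjecture \ref{conj:CTC} holds for each such covering group by the previous paragraph. Invoking Theorem \ref{thm:Reduction for CTC} then yields Conjecture \ref{conj:CTC} for $G_1$ itself (that is, for every normal embedding of $G_1$, including the given one $G_1\unlhd A_1$). At this point the full hypothesis of Theorem \ref{thm:CTC equivalent to CTC non-central} is in hand, and applying that theorem to $G\unlhd A$ directly delivers Conjecture \ref{conj:CTC non-central Z} for the pair $G\unlhd A$. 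Since $A$ was arbitrary, this establishes Conjecture \ref{conj:CTC non-central Z} for $G$ at the prime $p$.

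I do not anticipate any genuine obstacle in writing this up: the entire argument is a formal bookkeeping exercise stitching together Proposition \ref{prop:CTC non-central implies CTC}, Theorem \ref{thm:Reduction for CTC}, and Theorem \ref{thm:CTC equivalent to CTC non-central}. The only minor point meriting explicit mention is that the defect-strictly-containing-$Z$ restriction in Proposition \ref{prop:CTC non-central implies CTC} matches exactly the restriction already imposed by Conjecture \ref{conj:CTC}, so nothing additional needs to be checked; all the substantive mathematics has already been carried out in the preceding sections.
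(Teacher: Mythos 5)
Your proposal is correct and follows essentially the same three-step route as the paper: pass from Conjecture \ref{conj:CTC non-central Z} to Conjecture \ref{conj:CTC} for covering groups, apply Theorem \ref{thm:Reduction for CTC}, then lift back via Theorem \ref{thm:CTC equivalent to CTC non-central}. The only difference is cosmetic: for the first step you cite Proposition \ref{prop:CTC non-central implies CTC}, whereas the paper invokes the equivalence discussed in Remark \ref{rmk:CTC non-central vs CTC for quasi-simple} (which actually emphasises the converse implication for quasi-simple groups); your citation is arguably the more precise one for the direction actually needed.
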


\begin{proof}
Observe that our hypothesis is equivalent to that of Theorem \ref{thm:Reduction for CTC} as explained in Remark \ref{rmk:CTC non-central vs CTC for quasi-simple}. In particular, we deduce that Conjecture \ref{conj:CTC} holds at the prime $p$ for every finite group $G_1$ satisfying $\mathcal{S}_p(G_1)\subseteq \mathcal{S}_p(G)$. We can now apply Theorem \ref{thm:CTC equivalent to CTC non-central} to conclude that Conjecture \ref{conj:CTC non-central Z} holds for $G$ at the prime $p$ as desired.
\end{proof}

\bibliographystyle{alpha}
\bibliography{References}

\vspace{1cm}

{\sc{Department of Mathematical Science, Loughborough University, LE$11$ $3$TU, UK}}

\textit{Email address:} \href{mailto:damiano.rossi.math@gmail.com}{damiano.rossi.math@gmail.com}

\end{document}